\documentclass[11pt,a4paper]{article}
\usepackage[T1]{fontenc}
\usepackage{lmodern}
\usepackage[margin=1in]{geometry}
\usepackage{amsmath,amssymb,amsthm,mathtools,bm,mathrsfs}
\usepackage{slashed}
\usepackage{microtype,booktabs,array,longtable,needspace}
\usepackage{pdflscape}
\usepackage[hidelinks]{hyperref}
\numberwithin{equation}{section}
\newtheorem{theorem}{Theorem}[section]
\newtheorem{proposition}[theorem]{Proposition}
\newtheorem{lemma}[theorem]{Lemma}
\newtheorem{corollary}[theorem]{Corollary}
\newtheorem{maintheorem}{Theorem}

\theoremstyle{definition}

\theoremstyle{remark}
\newtheorem{remark}[theorem]{Remark}
\newtheorem{example}[theorem]{Example}
\newcommand{\dd}{\mathrm d}
\newcommand{\eps}{\epsilon}
\newcommand{\R}{\mathbb R}
\newcommand{\C}{\mathbb C}
\newcommand{\id}{\operatorname{id}}

\newcommand{\tr}{\operatorname{tr}}
\newcommand{\End}{\operatorname{End}}
\newcommand{\ad}{\operatorname{ad}}

\newcommand{\CS}{\operatorname{CS}}
\newcommand{\sg}{\mathfrak s}
\newcommand{\Bsp}{\beta_\Delta}
\newcommand{\red}[1]{{{#1}^{[0]}}}

\newcommand{\cS}{\mathcal S}
\newcommand{\cT}{\mathcal T}

\newcommand{\cL}{\mathcal L}

\newcommand{\ph}{\mathrm{ph}}
\newcommand{\LC}{\mathrm{LC}}

\DeclareMathOperator{\vol}{vol}
\DeclareMathOperator{\Spin}{Spin}
\DeclareMathOperator{\SO}{SO}

\allowdisplaybreaks[1]

\usepackage{etoolbox}
\makeatletter
\patchcmd{\LT@output}{\vss}{\vfil}{}{}
\patchcmd{\LT@output}{\vss}{\vfil}{}{}
\makeatother
\hypersetup{pdftitle={Cyclic $L_3$ Models and Holomorphic Reduction in Heterotic $G_2$ Deformation Theory},pdfauthor={Bram Brongers}}
\title{Cyclic $L_3$ Models and Holomorphic Reduction in Heterotic $G_2$ Deformation Theory}
\author{Bram Brongers\\
\href{mailto:brambrongers98@gmail.com}{\texttt{brambrongers98@gmail.com}}}
\date{}
\begin{document}
\maketitle
\begin{abstract}
We construct a finite cyclic $L_3$ presentation 
(that is, a cyclic $L_\infty$ algebra with brackets vanishing above arity three)
of the variational deformation theory defined by the heterotic \(G_2\) superpotential 
near a torsion-free standard embedding, to first
order in $\alpha'$. Auxiliary fields for the inverse coframe, spinor
derivative, flux and induced tangent connection give a shifted-cotangent
Hamiltonian of degree at most four. Eliminating the auxiliary fields
recovers the physical functional. On the flat seven-torus, this functional
has a nonzero quintic Taylor coefficient in local physical spinor coordinates.

For $Y=S^1\times X$, with $X$ a Calabi--Yau threefold, we restrict to
the product locus with horizontal gauge and gerbe data. The functional
reduces to
\[
 \frac L4\operatorname{Im}\int_X(H+i\,d\omega)\wedge\Omega,
\]
where $L$ is the circle length and $\Omega=e^{-2\Phi}\Psi$ is the
weighted complex volume. The induced $SU(3)$ structure selects a
Hermitian tangent Courant subalgebroid. Transverse isotropic lifts in
its complexification encode the holomorphic connection and string
equations, together with the induced tangent connection, weighted
canonical line and gerbe hierarchy. Their Taylor brackets form an
$L_3$ algebra whose string part agrees with the known holomorphic
heterotic algebra.

We then show that this holomorphic theory is obtained by characteristic
Hamiltonian reduction of a finite contractible stabilization of the product
variational theory, in every
cochain degree. While the quotient captures the holomorphic equations,
conformal balance, gauge primitivity and leading-flux admissibility 
are additional conditions on a real representative.
\end{abstract}
\clearpage
\setcounter{tocdepth}{1}
\begingroup\small
\tableofcontents
\endgroup
\clearpage
\section{Introduction and main results}
\label{sec:introduction}

The heterotic $G_2$ superpotential couples the geometry of a $G_2$
structure to gauge connections and flux \cite{G2Superpotential}.
Its string Courant formulation gives a variational deformation theory
near the standard embedding \cite{BrongersPaperI}. On a Calabi--Yau
threefold, finite deformations of the holomorphic heterotic system are
governed by an $L_3$ algebra \cite{FiniteDeformations}. This raises the question of how
the seven-dimensional variational theory is related to the holomorphic
deformation theory obtained after reduction on $S^1\times X$. We answer
this question in two stages. First, we construct a finite cyclic $L_3$ presentation
of the seven-dimensional theory. We then show that, on $S^1\times X$,
the Calabi--Yau holomorphic theory arises from it by characteristic
Hamiltonian reduction.

The seven-dimensional functional depends on an inverse metric, a
normalized spinor and the Hull connection induced by the metric and
flux. These expressions need not be polynomial in physical
coordinates. We introduce independent auxiliary fields and impose
their defining relations at first differential order. An axial
torsion identity makes the flux constraint cubic, so the
shifted-cotangent Hamiltonian is quartic. Its Taylor brackets at
the standard embedding therefore form a cyclic $L_3$ algebra.
The auxiliary equations have a local formal inverse, and their
Hamiltonian theory is a finite contractible stabilization of the
physical variational theory.

On the product locus in $S^1\times X$, circle reduction gives a
six-dimensional string Courant algebroid with both tangent and
gauge adjoint bundles. A connection preserving the induced
Hermitian tangent structure then defines a Courant subalgebroid.
Transverse isotropic lifts in its complexification encode the
holomorphic connection and string equations. Their deformation
algebra also includes the tangent partial connection induced by the
complex distribution, the weighted canonical line and the gerbe
gauge hierarchy. The final comparison identifies this algebra as a
characteristic quotient of the stabilized product variational
theory.

\subsection{Setting and main results}
Let $Y$ be a closed spin seven-manifold with a fixed torsion-free
standard embedding. We use the coefficient convention of
\cite[Section~2.4]{BrongersPaperI}:
\[
 \mathbb D=\mathbb R[\epsilon]/(\epsilon^2),\qquad
 \epsilon=\alpha',\qquad
 \mathbb D_0=\mathbb D/(\epsilon).
\]
Geometric fields are $\mathbb D$-valued, while gauge fields are
$\mathbb D_0$-valued,\footnote{At this order, the gauge connection is
kept only at leading order in $\alpha'$. An $\epsilon$-order correction
to the gauge connection contributes to the $\alpha'$-weighted anomaly
terms only at order $\epsilon^2$, while the gauge instanton equation is
imposed on its leading coefficient.} and their variational duals have
coefficients in $\epsilon\mathbb D$.

For the product reduction, let $Y=S^1\times X$, where $X$ is a closed
Calabi--Yau threefold. We choose the circle spin structure for which the
background parallel spinor is invariant, fix the standard-embedding
bundle identifications, and use a gerbe presentation pulled back from
$X$. Whenever a nondegenerate Courant pairing is required, we first work
at nonzero real anomaly coefficient $z$ and then specialize the resulting
regular identities to $z=\epsilon$.

\begin{maintheorem}[Finite cyclic presentation]
\label{thm:main-composite}\label{thm:intro-finite}
There is a finite first-order Hamiltonian resolution
\[
(\mathcal M,\omega,S,Q)
\]
of the heterotic $G_2$ variational theory such that
\[
\deg S\leq 4,
\qquad
S=S_2+S_3+S_4,
\qquad
Q=Q_1+Q_2+Q_3
\]
at the standard embedding. Hence its Taylor brackets satisfy
\[
\ell_n=0\qquad (n\geq 4)
\]
and define a cyclic $L_3$ algebra.

Moreover, the resolution is locally Hamiltonian $Q$-isomorphic
to the physical-coordinate variational theory times a finite contractible
Hamiltonian factor $\mathcal K$.
On the flat seven-torus, the physical functional obtained after eliminating
the auxiliary fields has a nonzero quintic Taylor coefficient at the
standard embedding in the local physical spinor coordinates of
Section~\ref{subsec:quintic-coefficient}.
\end{maintheorem}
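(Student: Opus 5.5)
The proof has three parts: build the resolution, establish the $Q$-isomorphism with a contractible factor, and compute the quintic coefficient.

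\emph{Construction.} I start from the string Courant formulation of the superpotential in \cite{BrongersPaperI}. There the physical functional is written in terms of the inverse metric $g^{-1}$, the normalized spinor $\eta$, the flux $H$ and the Hull connection $\nabla^-$ built from $g$ and $H$. For each non-polynomial ingredient I introduce an independent auxiliary field:
\begin{itemize}
\item an inverse coframe $\tilde e$, with constraint $\tilde e\cdot e=\id$;
\item a spinor-derivative field $\psi$, with constraint $\psi=\nabla\eta$;
\item a flux field $h$;
\item a tangent connection $\theta$, with constraint $\theta=\omega_{\LC}(e)+\tfrac12 h$.
\end{itemize}
Each constraint is imposed by a Lagrange multiplier, which is a variational dual with coefficients in $\epsilon\mathbb D$. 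The multipliers, the original fields, and the ghosts and antifields of diffeomorphisms, local Lorentz, gauge and gerbe symmetries assemble into a graded symplectic manifold $(\mathcal M,\omega)$ of shifted-cotangent type.

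The BV Hamiltonian $S$ is the rewritten functional plus the multiplier terms plus the ghost--antifield terms. I then check its degree term by term.
\begin{itemize}
\item The rewritten functional is polynomial of degree at most $3$ in the new fields.
\item Multiplier times constraint is of degree at most $4$, provided each constraint is at most cubic.
\item The ghost terms (Lie derivatives, Lorentz and gauge rotations) are at most cubic, as usual.
\end{itemize}
The delicate constraint is the flux one: $H=dB+\tfrac{\epsilon}{4}(\CS(A)-\CS(\theta))$ contains a cubic Chern--Simons term. Using the axial torsion identity, I express the torsion of $\theta$ relative to the coframe as the totally antisymmetric part. This keeps the coupling between $h$ and $\theta$ at most cubic, so multiplier times constraint stays quartic. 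Since the standard embedding is a critical point, expanding $S$ there gives $S=S_2+S_3+S_4$. Then $Q=\{S,-\}$ has components $Q_1+Q_2+Q_3$, and the Taylor brackets $\ell_n$ vanish for $n\ge4$. Cyclicity follows from the standard correspondence between Hamiltonian $Q$-structures and cyclic $L_\infty$ algebras, with pairing $\omega$.

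\emph{Contractibility.} The auxiliary constraints are first-order and solvable for the auxiliary fields: $\tilde e$ is a local formal inverse of $e$, and $\psi,h,\theta$ are given explicitly. I therefore change coordinates by setting each auxiliary field equal to its solution plus a new field $u$. After this change the multiplier terms become $\lambda\cdot u$ plus terms that are at least quadratic in $(u,\lambda)$. A formal Darboux/Morse-lemma argument in the BV setting (homological perturbation, or an explicit symplectomorphism generated by a degree $-1$ function) then removes the mixed terms. The result is a Hamiltonian isomorphism onto (physical theory)$\times\mathcal K$, where $\mathcal K$ has quadratic Hamiltonian $\lambda u$ together with the corresponding ghost pairs, so $\mathcal K$ is contractible. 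Setting $u=\lambda=0$ recovers the physical functional. Preserving $\mathbb D$ versus $\mathbb D_0$ coefficients is routine.

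\emph{Quintic coefficient.} On $T^7$ with a flat metric and a constant $G_2$ structure, I take the physical spinor coordinates $\eta=(\eta_0+\xi)/|\eta_0+\xi|$ and expand the physical functional to fifth order in $\xi$ and the metric perturbation. The normalization $|\cdot|^{-1}$ produces an infinite Taylor series. I will pick an explicit test configuration: a single Fourier mode, or a constant $\xi$ times a linear metric perturbation. On it the quintic term reduces to a nonzero numerical multiple of $\int\langle\xi,\xi\rangle^2(\cdots)$, and I verify the coefficient is nonzero by direct evaluation.

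I expect the main obstacle to be keeping the flux/Hull-connection constraint at most cubic. That is where the axial torsion identity has to be applied carefully, and I would try it first in an orthonormal-frame formalism.
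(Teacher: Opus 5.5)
Your overall architecture matches the paper's: auxiliary fields with multipliers, an axial rewriting of the flux equation, generalized-auxiliary-field elimination and an explicit torus computation. As written, however, the degree bound does not close and the quintic claim is not established.

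\textbf{The degree bound.} The connection constraint $\theta=\omega_{\LC}(e)+\tfrac12h$ is not cubic. Even with an inverse coframe $\tilde e$, $\omega_{\LC}(e)$ has the form $e\,\tilde e\,\tilde e\,\partial e$, so its multiplier term is quintic. If $h$ is a section of $\Lambda^3E^*$ (as needed to keep $c(h)$ independent of the coframe), the three-form $e^*h$ in the flux equation is quartic as well. The paper never writes $\omega_{\LC}$. It imposes the first-order torsion equation $T=D_\Theta e+K_e(h)$, which is at most cubic and is equivalent to $\Theta=\Theta_{\LC}(e)+C_e(e^*h)$ because $k\mapsto k\wedge e$ is pointwise invertible. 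It then imposes $C_H=-\tfrac13\langle e\wedge D_\Theta e\rangle_E-\dd B+\tfrac\epsilon4\cT_3(\Theta^{[0]},A-\Theta^{[0]})$, which is cubic and, by $-\tfrac13\langle e\wedge D_\Theta e\rangle_E=e^*h-\tfrac13\langle e\wedge T\rangle_E$, equivalent on $T=0$ to the flux equation (Lemma~\ref{lem:axial-contraction}).

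You also never resolve the unit normalization of the spinor, which is itself a source of nonpolynomiality. The paper uses an unnormalized $\zeta$, an independent density $\nu$, the densitized inverse coframe $U\in\Omega^6(E^*)$ with $C_U=e\wedge U-\nu\,\id_E$, and a local scale symmetry. That scale symmetry and the independent frame rotations split off as the extra contractible factors $\mathcal K(\R,0;\mathbb D)$ and $\mathcal K(\mathfrak{so}(E),0;\mathbb D)$, via $t=\log(\nu/\vol_e)$ and a polar decomposition. Your multiplier--auxiliary pairs do not produce these factors.

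Two further points need correcting. First, the multipliers must be $\mathbb D$-valued. With $\epsilon\mathbb D$-valued multipliers, $\langle m,C\rangle=\epsilon\langle\bar m,C^{[0]}\rangle$ imposes only $C^{[0]}=0$ and loses the order-$\epsilon$ Green--Schwarz term. Second, the standard embedding is stationary only with the nonzero multiplier $m_\ast=-(\mathcal J_\ast^\dagger)^{-1}(D_yW_{\rm geom})_\ast$; for instance, the $\Xi$-derivative of $\langle J_\zeta(\Xi)\wedge U\rangle_E$ is nonzero there. This is what gives $Q(b_\ast)=0$, and the same multiplier shift is needed before your $\lambda u$ normal form holds.

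\textbf{The quintic coefficient.} This part is only a plan, and the configurations you name give zero. Fix the flat metric and density $w\vol_0$, and write the spinor perturbation as $c_0(v^\flat)\chi_0$; these vectors span the tangent space of the unit sphere at $\chi_0$. Along $tv$ the derivative of the normalization factor drops out, and exactly $W(t)=w\int\frac{-t\operatorname{div}_0v+t^2\mathcal Q(v)}{1+t^2|v|_0^2}\vol_0$. Hence odd coefficients come only from the divergence term, and $[t^5]W=-w\int\operatorname{div}_0v\,|v|_0^4\vol_0$.
\begin{itemize}
\item A constant perturbation gives $W\equiv0$. Its first-order coupling to a metric perturbation $h$ is linear in $\partial h$ with constant coefficients, so it integrates to zero.
\item A single Fourier mode of frequency $k\neq0$ also gives zero. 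Here $\operatorname{div}_0v$ contains only the frequencies $\pm k$, while $|v|_0^4$ contains only even multiples of $k$.
\end{itemize}
You need a mixture of harmonics for which $\operatorname{div}_0v\,|v|_0^4$ is not exact. The paper's choice $v=\cos y^0E_0+(1+\sin y^0)E_1$ gives $[t^5]W=8\pi wV_6\neq0$.
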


The resolution and degree bound are constructed in
Sections~\ref{sec:finite-resolution}--\ref{sec:cyclic-l3};
Theorem~\ref{thm:parent-equivalence} proves Hamiltonian
stabilization.

Let $\eta$ be the unit circle one-form, $L=\int_{S^1}\eta$,
$(\omega,\Psi)$ the induced $SU(3)$ structure and
$\Omega=e^{-2\Phi}\Psi$. The product locus has unit radius and
horizontal gauge and gerbe data.

\begin{maintheorem}[Product and holomorphic reduction]
\label{thm:intro-product}
For $Y=S^1\times X$ and the product locus, isotropic circle reduction gives
\[
 \mathbb E_X
 \simeq
 TX\oplus\ad P_T\oplus\ad P_G\oplus T^*X,
\]
and
\[
 W_{\mathrm{Courant},G_2}\big|_{\rm product}
 =
 \frac L4\operatorname{Im}
 \int_X(H+i\,\dd\omega)\wedge\Omega.
\]

If $P_H\subset P_T$ is the Hermitian tangent reduction and $\Gamma$
is a compatible connection, then
\[
 \mathbb E_H
 =
 TX\oplus\ad P_H\oplus\ad P_G\oplus T^*X
 \subset \mathbb E_X
\]
is a Courant subalgebroid. The transverse isotropic-lifting problem in
$\mathbb E_{H,\mathbb C}$, with tangent connection
\[
 \kappa_e=K_{\Gamma_0}(\mu),\qquad
 \kappa=\kappa_e^{[0]},
\]
weighted canonical line and gerbe hierarchy, is governed by a
homological vector field
\[
 Q_N=Q_1+Q_2+Q_3,\qquad Q_N^2=0.
\]
Hence its Taylor brackets satisfy
\[
 \ell_n=0\qquad(n\geq4).
\]
Before imposing the tangent graph and adjoining the canonical-line
component, the string operations agree with those of
\cite{FiniteDeformations}, after the coefficient and trace normalizations
of Section~\ref{subsec:unary-structure}:
\[
 \ell_1=-\ell^{\rm FD}_1,\qquad
 \ell_2=\ell^{\rm FD}_2,\qquad
 \ell_3=\ell^{\rm FD}_3.
\]

For the coefficient-restored theory at nonzero real anomaly coefficient,
every Maurer--Cartan solution defines
an involutive transverse lift $L\subset\mathbb E_{H,\mathbb C}$ and
therefore a holomorphic string algebroid (Corollary~\ref{cor:hol-string-algebroid})
\[
 \mathcal Q_L=L^\perp/L,
 \qquad
 \dd\Omega=0.
\]
\end{maintheorem}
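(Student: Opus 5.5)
The proof splits into four parts: the circle reduction of the Courant algebroid and superpotential, the Hermitian subalgebroid, the homological vector field on the isotropic-lifting problem together with the comparison to \cite{FiniteDeformations}, and the Maurer--Cartan statement.

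\emph{Reduction of the algebroid and the superpotential.} Start from the seven-dimensional string Courant algebroid $TY\oplus\ad P_T\oplus\ad P_G\oplus T^*Y$ of \cite{BrongersPaperI}. On the product locus all data are pulled back from $X$: the gauge connection and gerbe are horizontal, and the metric is $\eta^2+g_X$ at unit radius. The circle vector field $\partial_\theta$ therefore lifts to an isotropic Courant symmetry. Reducing by it kills the $\langle\partial_\theta,\eta\rangle$ hyperbolic plane and leaves $\mathbb E_X\simeq TX\oplus\ad P_T\oplus\ad P_G\oplus T^*X$, with the tangent adjoint bundle restricted along $X$.

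For the functional, decompose $\varphi=\eta\wedge\omega+\operatorname{Re}\Psi$ and $\psi=\tfrac12\omega^2-\eta\wedge\operatorname{Im}\Psi$. Insert these, the horizontal flux $H$ and the dilaton weight $e^{-2\Phi}$ into $W_{\mathrm{Courant},G_2}$. Fubini then produces the factor $L=\int_{S^1}\eta$. The terms $H\wedge\psi$ and $\varphi\wedge \dd\varphi$ each collapse to a single wedge with $\operatorname{Re}\Psi$ or $\operatorname{Im}\Psi$. Type reasons remove the rest: $\omega\wedge\Omega$ vanishes, and $\dd\operatorname{Re}\Psi$ is horizontal. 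The surviving combination is $\tfrac L4\operatorname{Im}\int_X(H+i\,\dd\omega)\wedge\Omega$. The constant $1/4$ has to be matched with the normalization of \cite{BrongersPaperI}, so this step is bookkeeping, but it must be done carefully.

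\emph{Hermitian subalgebroid.} $\mathbb E_H$ is a subbundle, so we only need closure under the Dorfman bracket. The bracket of two sections has an $\ad P_T$ component of the form $F_\Gamma(X,Y)+\dd^\Gamma_X r-\dd^\Gamma_Y s+[r,s]$. Every term lies in $\ad P_H$ precisely because $\Gamma$ is compatible with the reduction, i.e. it is a $P_H$-connection. The anchor and the pairing restrict without change.

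\emph{Homological vector field and comparison.} Complexify and write a transverse isotropic lift as a graph over $T^{0,1}X$. The graph components are: $\mu\in\Omega^{0,1}(T^{1,0})$, gauge fields $\alpha\in\Omega^{0,1}(\ad P_G)$ and $\kappa\in\Omega^{0,1}(\ad P_H)$, and a $B$-field component. To these we adjoin the canonical-line field and the gerbe ghosts of degrees $0$ and $-1$. Involutivity of the graph (Nijenhuis and Courant integrability of $L$) is polynomial in the fields, of degree at most three, because the Courant bracket is bilinear and the pairing correction term is cubic. Writing $Q_N$ as the Chevalley--Eilenberg-type vector field of the gauge action plus these equations gives $Q_N=Q_1+Q_2+Q_3$.

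To prove $Q_N^2=0$, I plan to identify $Q_N$ with the derived-bracket vector field $\{\Theta,-\}$. Here $\Theta$ is the degree-three cubic Hamiltonian of the Courant algebroid $\mathbb E_{H,\mathbb C}$, restricted to the Lagrangian graph. Then $\{\Theta,\Theta\}=0$ gives $Q_N^2=0$. The relation $\kappa=K_{\Gamma_0}(\mu)^{[0]}$ is imposed afterwards as a graph condition, and I would verify that it is preserved by $Q_N$. I expect the main obstacle to be the canonical-line component and the degeneracy at $z=\epsilon$. The plan is to work at generic real $z$, check the identities there as polynomial identities, and then specialize.

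For the comparison with \cite{FiniteDeformations}, expand the brackets term by term. The sign $\ell_1=-\ell_1^{\rm FD}$ comes from the orientation convention for the differential in Section~\ref{subsec:unary-structure}, and the higher brackets match after the trace rescaling.

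\emph{Maurer--Cartan solutions.} A Maurer--Cartan element is exactly an involutive transverse lift. Hence $L^\perp/L$ inherits a holomorphic Courant structure, which is the holomorphic string algebroid of Corollary~\ref{cor:hol-string-algebroid}. The canonical-line equation yields $\dd\Omega=0$.
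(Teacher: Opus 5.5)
Your four-part decomposition matches the paper's. The circle reduction, the Hermitian subalgebroid, the normalization comparison of Section~\ref{subsec:unary-structure} and the string-algebroid step are essentially the arguments used there. The genuine gap is nilpotence.

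You propose to identify $Q_N$ with $\{\Theta,-\}$ for the cubic Hamiltonian of $\mathbb E_{H,\mathbb C}$ ``restricted to the Lagrangian graph''. However, a transverse isotropic lift has rank three, while Lagrangian subbundles of $\mathbb E_{H,\mathbb C}$ (rank $6+9+21+6$) have rank $21$. The deformation fields $(x,\rho,\mu,b)$ have $\rho$ valued in the whole complexified adjoint bundle and $b$ in shifted form degree, so they are not $\Gamma(\wedge^\bullet A^*)$ for any Lagrangian $A$.

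The derived-bracket theorems you are implicitly invoking (Liu--Weinstein--Xu, Roytenberg, Voronov) need a Lagrangian subbundle with a Lagrangian complement, so that the graph functions form an abelian subalgebra whose projection kernel is a subalgebra. For a rank-three isotropic lift, the complement contains the nondegenerate block $\mathcal Q_0$, and that hypothesis fails. Hence $\{\Theta,\Theta\}=0$ does not give $Q_N^2=0$.

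There are further problems with the setup:
\begin{itemize}
\item $\{\Theta,-\}$ is a vector field on $T^*[2]\mathbb E[1]$, not on the graded field space $\bigoplus_pZ_p$.
\item ``CE vector field of the gauge action plus the equations'' does not define $Q_N$ in cochain degrees $2$ to $4$.
\item The canonical-line component $q\mathbf u=\bar\partial\mathbf u-\rho_\Omega(\mu)\mathbf u$ and the $\mu$-dependence of all coefficients through the moving ideal are not Courant data. You flag these as obstacles but give no argument.
\end{itemize}
A derived-bracket proof of the string part would have to start from the background holomorphic Courant algebroid $\mathcal Q_0$ with Dolbeault coefficients. Even then, the canonical line and the tangent graph would still need separate treatment.

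The paper instead packages all cochain degrees into total fields $Y=\sum_py_pt_p$, $\mathbf b$, $\mathbf u$. It reads $Q_N$ off the intrinsic equations $D\mathcal I_\mu\subset\mathcal I_\mu$, $D\boldsymbol\Omega=0$, partial flatness, and the relative string equation modulo $\mathcal I_\mu^2$. Here $\theta^a=\dd z^a+\mu^a$ is total-odd and $D=d_X+Q_N$ acts on the free jet algebra without assuming $Q_N^2=0$. Nilpotence is then proved componentwise:
\begin{itemize}
\item $D^2\theta^i=0$ by direct computation.
\item The curved Bianchi identity plus transversality give $Q_N^2\rho=0$.
\item The top coefficient of $D^2(\mathbf u\theta^1\theta^2\theta^3)$ gives $Q_N^2\mathbf u=0$.
\item The Chern--Simons identity modulo $\mathcal I^2$, with independence of the linear and constant ideal coefficients, gives $Q_N^2x=Q_N^2\mathbf b=0$.
\end{itemize}
All of this holds in the mixed coefficients without dividing by $\epsilon$. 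The tangent graph is preserved because the unary and binary operations respect it by the Atiyah identities of Lemma~\ref{lem:induced-action-compatibility}, and $\ell_3$ has no tangent component.

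A smaller slip in the functional reduction: $\eta\wedge\omega\wedge\dd\rho_X$ is not removed by type. Differentiating $\omega\wedge\rho_X=0$ turns it into $-\eta\wedge\dd\omega\wedge\rho_X$, so $\dd\varphi\wedge\varphi=-2\eta\wedge\dd\omega\wedge\rho_X$. Dropping the term would halve the $\dd\omega$ contribution.
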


Section~\ref{sec:product-reduction} proves the product reduction
and identifies the physical equations. On the product locus,
volume closure, gauge type and the torsion equation take the form
$\dd\Omega=0$, $F_A^{0,2}=0$ and $H=-\dd^c\omega$.
Balance, primitivity and $H^{[0]}=0$ are additional physical
conditions (Proposition~\ref{prop:product-bps-dictionary}).
This separation motivates the holomorphic construction in
Section~\ref{sec:holomorphic-lifting}.
Section~\ref{subsec:unary-structure} identifies its tangent complex
and gives a constant-mode example with nonzero ternary bracket.

The physical product functional is written using the Hull connection, whereas
the holomorphic construction uses a Hermitian tangent representative. Passing
between these representatives changes both the connection and the gerbe
coordinate and produces a first-order change in the functional, which is
removed by a finite-jet correction. This gives compatible representatives in
which the full graded Hamiltonian theories can be compared.

The final step is a characteristic reduction. If
\(i:C\hookrightarrow(M,\omega)\) is a coisotropic inclusion, a projection
\(\pi:C\to M_{\rm red}\) is a characteristic quotient when
\[
\ker D\pi=\ker i^*\omega.
\]

\begin{maintheorem}[Hamiltonian realization]
\label{thm:intro-hamiltonian}
After finite contractible Hamiltonian stabilization, the product
variational theory has a local real formal Hamiltonian presentation
$(\widehat{\mathcal M},\widehat\omega,\widehat S)$ and a preserved
coisotropic subspace $i:C\hookrightarrow \widehat{\mathcal M}$ with a
characteristic projection $\pi:C\to\mathcal M_N$ such that
\[
 i^*\widehat\omega=\pi^*\omega_N,\qquad
 i^*\widehat S=\pi^*I_N,\qquad
 \ker i^*\widehat\omega=\ker D\pi.
\]
Here $(\omega_N,I_N)$ is the real variational Hamiltonian structure
obtained by applying $\frac L4\operatorname{Im}$ to the complex symplectic
form and Hamiltonian of the holomorphic lifting theory. Its characteristic quotient carries
$Q_N$ in every cochain degree.
\end{maintheorem}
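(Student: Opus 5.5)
The proof proceeds in four steps. First, build compatible representatives. Then construct the stabilization. Then exhibit the coisotropic subspace and its characteristic quotient. Finally, transport the homological vector field degree by degree.

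\emph{Step 1: compatible representatives.} Theorem~\ref{thm:intro-product} identifies the product variational functional with $\frac L4\operatorname{Im}\int_X(H+i\,\dd\omega)\wedge\Omega$, written in the Hull representative. The holomorphic lifting theory uses instead a Hermitian tangent connection $\Gamma$ and a correspondingly shifted gerbe coordinate. I would write the change of representative as a finite-jet field redefinition $(\nabla^{\rm Hull},B)\mapsto(\Gamma,B')$. The induced first-order change in the functional is a Chern--Simons-type transgression term $\CS(\nabla^{\rm Hull},\Gamma)$ paired with $\Omega$. I would absorb it into $B'$ by the finite-jet correction mentioned in the introduction. Because this redefinition is a local formal diffeomorphism, the product theory in the new coordinates remains a Hamiltonian presentation, and nothing is lost.

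\emph{Step 2: stabilization.} Following the proof of Theorem~\ref{thm:intro-finite} (Theorem~\ref{thm:parent-equivalence}), I would adjoin the auxiliary fields needed by the holomorphic theory that are absent from the real product theory. These are the $(0,1)$-type components of the transverse lift, the tangent partial connection $\kappa$, the weighted canonical-line coordinate, and the gerbe hierarchy ghosts. Each is adjoined together with its shifted-cotangent partner and a quadratic pairing term, so the added factor $\mathcal K$ is contractible. This gives $(\widehat{\mathcal M},\widehat\omega,\widehat S)=(\mathcal M_{\rm prod}\times\mathcal K,\ \omega_{\rm prod}+\omega_{\mathcal K},\ S_{\rm prod}+S_{\mathcal K})$. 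I would then apply a Hamiltonian $Q$-isomorphism (a symplectomorphism generated by a degree-zero function mixing $\mathcal K$ with the physical fields) to move to coordinates in which the complexified fields of $\mathbb E_{H,\mathbb C}$ appear linearly.

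\emph{Step 3: coisotropic subspace and quotient.} I would define $C\subset\widehat{\mathcal M}$ by setting to zero the components that are "imaginary" relative to the real structure determined by $\frac L4\operatorname{Im}$, together with their conjugate momenta. Concretely, a complex field $\phi$ with momentum $\phi^+$ pairs under the real form $\operatorname{Im}(\delta\phi^+\wedge\delta\phi)$. Restricting to a half of the components makes this degenerate precisely along the complementary directions, so $C$ is coisotropic. The map $\pi:C\to\mathcal M_N$ then forgets those null directions, which gives $\ker i^*\widehat\omega=\ker D\pi$ and $i^*\widehat\omega=\pi^*\omega_N$. The Hamiltonian identity $i^*\widehat S=\pi^*I_N$ should follow from the product formula of Theorem~\ref{thm:intro-product}: the holomorphic Hamiltonian built from $Q_N$ contains $(H+i\,\dd\omega)\wedge\Omega$, and its $\frac L4\operatorname{Im}$ reproduces the stabilized real functional on $C$. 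That $C$ is preserved means $\{\widehat S,\cdot\}$ is tangent to $C$. This holds because the vanishing ideal of $C$ is generated by first-class functions whose brackets with $\widehat S$ lie back in the ideal, and I would check this order by order in $S_2,S_3,S_4$.

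\emph{Step 4: every cochain degree.} Since $\widehat S$ is constant along characteristic leaves on $C$, its Hamiltonian vector field descends to the quotient. The descended field equals the Hamiltonian vector field of $I_N$, which is $Q_N$ by definition of the real variational structure. I would verify this in each ghost degree separately, including the gerbe hierarchy and the canonical-line ghosts, by checking that the corresponding coordinates of $C$ map bijectively onto those of $\mathcal M_N$.

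I expect the main obstacle to be Step 3's Hamiltonian identity at cubic and quartic order. The Taylor brackets $\ell_3$ of $Q_N$ come from the holomorphic string bracket, whereas $\widehat S_4$ comes from the $G_2$ auxiliary resolution. Matching them requires the Step 1 finite-jet correction to be chosen so that all non-holomorphic cubic terms lie in the ideal of $C$. I would first attempt this with the constant-mode computation of Section~\ref{subsec:unary-structure}, and then globalize.
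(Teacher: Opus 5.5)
Your outline has the right shape (representative change, stabilization, coisotropic restriction, descent). However, the stabilizing variables and the characteristic directions are misidentified, and two order-$\epsilon$ corrections are missing. As a result, neither $i^*\widehat\omega=\pi^*\omega_N$ nor $i^*\widehat S=\pi^*I_N$ follows.

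\textbf{Stabilization and characteristic directions.} The holomorphic variables are not new fields to be adjoined:
\begin{itemize}
\item $(\mu,u)$ are coordinates on $\Omega=u\prod_a(\dd z^a+\mu^a)$;
\item $(x,b)$ are coordinates on the complex two-form $B+i\omega$, with its connection compensators, modulo the square of the moving ideal;
\item $\alpha$ is the partial component of $A$;
\item $\kappa$ is dependent, through the graph $K_{\Gamma_0}(\mu^{[0]})$.
\end{itemize}
What the real theory lacks is the other half of the complexified symmetries. The paper therefore adjoins the \emph{representative} coordinates $z=(r_H,m,r_B)$: a Hermitian reduction of the gauge bundle, a real gerbe one-form and its scalar. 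They come with contractible partners $\beta_\sigma,\widehat u_B,c_I$ as in \eqref{eq:product-unit-pairs}, giving seven pairs over $\mathbb D$ and twenty-one over $\mathbb D_0$. These partners become the imaginary halves of the complex gauge, gerbe and scalar ghosts. A real connection corresponds to its partial connection together with $r_H$ via \eqref{eq:product-compatible-connection}. The one-form $m$ shifts $\omega$ by $-\frac12(\dd m)^{1,1}$ without changing the holomorphic data. In the final chart $\widehat\omega=\omega_N+\omega_{\mathcal R}$, and $C=\{\varpi=0\}$ sets only the momenta conjugate to $z$ to zero, so $\ker i^*\widehat\omega=\langle\partial_z\rangle$. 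Your $C$ cannot play this role, for two reasons:
\begin{itemize}
\item $\mathcal M_N$ is the realification, so $\omega_N$ is already nondegenerate on both the real and imaginary parts of every complex field.
\item Setting coordinates and momenta to zero together gives a symplectic or a Lagrangian locus, depending on which momenta you mean. Neither is a coisotropic with quotient $\mathcal M_N$.
\end{itemize}

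\textbf{Missing first-order corrections.} Even in the common chart, the real tangent connection $\mathcal C=\Gamma_C+j$ differs from the holomorphic graph $\widehat N$. Hence $\omega_C=\omega_N+\omega_{\mathcal R}+\jmath_\epsilon\delta\beta$, with $\beta$ as in \eqref{eq:comparison-beta}, and this must first be removed by a Moser field with $\iota_V\omega_0=-\beta$.

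After that, the Hamiltonians still differ on $C^{[0]}$ by a relative tangent Chern--Simons term. You hope this term lies in the ideal of $C$, but it is nonzero on $C$. It is only $Q_0$-exact there: $(S_1'-I_1)|_{C^{[0]}}=Q_0\mathcal G$. The paper proves this by transgressing along a gauge path, using two vanishings. One transgression vanishes because its horizontal degree exceeds six; the other vanishes because it has a factor in the annihilator ideal of $\widehat\Omega$. The term is then removed by the canonical transformation $\id+X_{\jmath_\epsilon\mathcal G}$, whose generator has ghost number $-1$, not $0$ as in your Step~2.

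Your Step~1 has the same kind of gap. The part of the Hull--Hermitian change that can be absorbed into the gerbe coordinate is already the shift $B_\Gamma=B_H-\frac\epsilon4P(t,a_H)$. A further shift $B\mapsto B+\epsilon\beta'$ changes the functional only by $-\frac L4\epsilon\operatorname{Im}\int_X\beta'\wedge\dd\Omega^{[0]}$. By contrast, $\mathcal D_\Gamma$ also involves the residuals $\mathcal K^{1,2},\mathcal K^{0,3}$ of $\mathcal K=\dd B+i\,\dd\omega$, which pair with variations of $\Omega$. What is needed is the factorization $\mathcal D_\Gamma=\int_XE_0(\mathcal V_\Gamma)$ through the leading Euler--Lagrange covector. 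It uses the pointwise invertibility of $\mathsf A_{\rm EL}$ and shifts all geometric fields, not just $B$.

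Finally, the product Hamiltonian after auxiliary elimination is not polynomial, so there is no $S_2+S_3+S_4$ to check order by order. A constant-mode computation cannot establish these local identities either.
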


This is proved in Section~\ref{sec:hamiltonian-reduction} as
Theorem~\ref{thm:hamiltonian-realization}. The preceding coordinate
maps include the Hull--Hermitian representative change, a cotangent
polarization, a Moser transformation and a canonical transformation
from tangent transgression. The characteristic quotient need not
preserve cohomology: the real product and holomorphic theories have
different constant scalar reducibility sheaves.

\subsection{Relation to previous work}
The superpotential of de la Ossa--Larfors--Magill--Svanes is the
variational starting point \cite{G2Superpotential}. The infinitesimal
heterotic $G_2$ complex and its restriction to Strominger--Hull
geometry are studied in \cite{DLSHeterotic,DLSRestrictions}; the
physical moduli, metric and dimensional reduction are treated in
\cite{MSS,MSSMetric}. The second-variation complex in
\cite{QuantumG2} and the $G$-structure BPS complexes in \cite{BPS}
concern related linearized theories. 
The physical and variational constructions used here are established in \cite{BrongersPaperI}. 

The Courant Lie-$2$ symmetry is due to Roytenberg--Weinstein
\cite{CourantLie2}. We use generalized Dirac operators
\cite{GFDirac} and isotropic Courant reduction \cite{BCGReduction}.
The auxiliary construction is related to parent formulations
\cite{GrigorievParent,GrigorievRudinsky}; its quartic bound depends
on the axial torsion identity and the mixed coefficient convention.

Ashmore--de la Ossa--Minasian--Strickland-Constable--Svanes constructed
the finite heterotic $SU(3)$ deformation algebra
\cite{FiniteDeformations}. We derive its string brackets from
transverse isotropic lifts, impose the induced tangent graph,
adjoin the canonical-line module and prove the Hamiltonian
realization from seven dimensions.
Garc\'ia-Fern\'andez--Rubio--Tipler constructed holomorphic string
algebroids and studied their gauge theory
\cite{StringAlgebroids,GaugeStringAlgebroids}. These algebroids
arise here from involutive lifts with the fixed bundle and gerbe
data. The physical comparison uses the leading-flux restriction of
\cite[Section~2.5]{BrongersPaperI}.

\section[Heterotic G2 background and conventions]{Heterotic $G_2$ background and conventions}\label{sec:courant-g2}\label{sec:geometry}
We recall the background data and the functional from
\cite[Section~2]{BrongersPaperI} that enter the auxiliary resolution.

Let $Y$ be a closed spin seven-manifold, and let $P_T$ and $P_G$ be
principal $SO(7)$-bundles. Write
\[
E_T=P_T\times_{SO(7)}\mathbb R^7,
\qquad
E_G=P_G\times_{SO(7)}\mathbb R^7,
\]
and fix isometric identifications \(E_T \simeq E \simeq E_G \)
at the standard embedding, together with the spin lift of $P_T$.

We also fix a differential trivialization of the anomaly class
determined by $(P_T,P_G)$. On a good cover $\{U_i\}$, choose a
Čech--Deligne representative
\[
\check B=(B_i,\Lambda_{ij},g_{ijk}),
\qquad
B_i\in\Omega^2(U_i),\quad
\Lambda_{ij}\in\Omega^1(U_{ij}),\quad
g_{ijk}:U_{ijk}\to U(1).
\]
The symbol $B$ denotes the local two-form coordinate $B_i$ in a chosen
trivialization. Changes of $(B_i,\Lambda_{ij},g_{ijk})$ are the gerbe
gauge transformations. Fix also the string Courant extension below,
whose anomaly polynomial uses the invariant form
\[
 P(u,v)=-\tr_7(uv).
\]
We work formally near the torsion-free standard embedding
$E\simeq(TY,g_0)$ and $A_0=\Theta_0$, the Levi--Civita connection. For
$z\in\mathbb R^\times$, put $\mathfrak k=\ad P_T\oplus\ad P_G$ and
\[
 \mathfrak c_z((u_T,u_G),(v_T,v_G))
 =\frac z4\bigl(P(u_T,v_T)-P(u_G,v_G)\bigr).
\]
The string Courant extension is
\begin{equation}\label{eq:ii-courant-extension}
0\longrightarrow T^*Y\xrightarrow{j}\mathbb E
 \longrightarrow\operatorname{At}(P_T\times_Y P_G)\longrightarrow0.
\end{equation}
In a connection splitting its anchor, pairing and anomaly identity are
\begin{equation}\label{eq:ii-courant-pairing}
\begin{split}
a_{\mathbb E}(X+u+\eta)&=X,\\
\langle X+u+\eta,Y+v+\zeta\rangle
 &=\tfrac12\bigl(\zeta(X)+\eta(Y)\bigr)+\mathfrak c_z(u,v),\\
dH&=\mathfrak c_z(F_C,F_C),\qquad C=(C_T,A).
\end{split}
\end{equation}
Here $\operatorname{At}$ is the Atiyah algebroid, $F_C$ the curvature and
$H$ the splitting three-form \cite[(2.3)--(2.4)]{BrongersPaperI}.

An admissible metric $V_+\subset\mathbb E$ determines $g$, an isotropic
splitting $s$, $C$ and $H$ by
\cite[Definition~2.1 and Proposition~2.2]{BrongersPaperI}. A spin-compatible
isometry $e:(TY,g)\to E$ identifies the tangent bundles. We impose the
Hull condition \cite[(2.7)]{BrongersPaperI}:
\begin{equation}\label{eq:ii-hull-compatibility}
e^{-1}\nabla_X^{C_T}(eY)=\nabla_X^gY-\tfrac12g^{-1}H(X,Y,\cdot).
\end{equation}
For a compatible unit spinor line $[\chi]\subset S(V_+)$ and positive
density $\varrho$, set
\begin{equation}\label{eq:ii-density}
\varrho=e^{-2\Phi}\vol_g,\qquad\zeta_{\rm ph}=e^{-\Phi}\chi.
\end{equation}
Write $\varphi$ for the positive form and $\psi=*_{g}\varphi$. Physical
splitting coordinates are $(\varphi,B,\Phi,A)$, with $B$ in the fixed
differential-gerbe presentation.

The first-order coefficients are \cite[Section~2.4]{BrongersPaperI}
\begin{equation}\label{eq:ii-mixed-coefficients}
\begin{gathered}
\mathbb D=\mathbb R[\epsilon]/(\epsilon^2),\qquad\epsilon=\alpha',\qquad
\mathbb D_0=\mathbb D/(\epsilon),\\
\jmath_\epsilon:\mathbb D_0\longrightarrow\epsilon\mathbb D,
\qquad[x]\longmapsto\epsilon x.
\end{gathered}
\end{equation}
Geometric variables are $\mathbb D$-valued and leading gauge variables
$\mathbb D_0$-valued. Every argument of an explicit $\epsilon$ is first
reduced modulo $\epsilon$, denoted $q^{[0]}$. Whenever a
nondegenerate Courant pairing is required, we establish the identity for
$z\ne0$ and then specialize polynomially to $z=\epsilon$ with the mixed
coefficient modules.

We use the canonical Courant Lie-$2$ action and relative Green--Schwarz
hierarchy \cite[Appendix~D.1--D.2]{BrongersPaperI}, with left graded
derivations and the pairing \eqref{eq:ii-courant-pairing}.
Local formal maps, variational duals and shifted-cotangent constructions
use \cite[Section~3 and Appendix~C]{BrongersPaperI}.

Let $D_g$ be the spin Dirac operator, $c_g$ Clifford multiplication and
$\beta_\Delta$ the symmetric real spinor pairing, with the signs of
\cite[Section~2.2--2.3 and Appendix~A]{BrongersPaperI}. The canonical
positive generalized Dirac operator determined by $(V_+,\operatorname{div}_{\varrho})$ is
\begin{equation}\label{eq:ii-generalized-dirac}
\slashed D^+=D_g+\tfrac14c_g(H)-c_g(d\Phi).
\end{equation}
On the Hull-compatible configuration space its expectation value is
\begin{equation}\label{eq:ii-dirac-functional}
\begin{split}
W_{\mathrm{Courant},G_2}
 &=\int_Y\varrho\,\beta_\Delta(\chi,\slashed D^+\chi)=W_{\rm ext}\\
 &=\frac14\int_Ye^{-2\Phi}
   \left(H\wedge\psi-\frac12d\varphi\wedge\varphi\right)
\end{split}
\end{equation}
by \cite[Propositions~4.1--4.2]{BrongersPaperI}. The corresponding physical
variational theory is constructed in \cite[Section~4]{BrongersPaperI}.

\section{A finite first-order resolution}\label{sec:finite-resolution}\label{sec:first-order}
The physical functional depends nonpolynomially on the inverse coframe,
spinor normalization and induced Hull connection. We replace these
expressions by independent variables and impose their defining relations.
Using the axial torsion three-form in the flux constraint keeps the
multiplier term of degree at most four.

\subsection{Fields and constraints}

Let $\Delta$ be the real spinor bundle associated to the fixed bundle $E$.
Write $\Bsp$ for the transported symmetric spinor pairing.
The field $e$ determines
\[
 g_e=e^*\langle-,-\rangle_E.
\]
We write $\vol_e$ for its volume form and $\Theta_{\LC}(e)$ for its
Levi--Civita connection transported to $E$ by $e$.

The variables $(e,\zeta,B,\nu,A)$ represent the physical coframe, spinor,
gerbe, density and gauge data. The variables $(U,\Xi,\Theta,h)$ are auxiliary
and represent the densitized inverse coframe, spinor derivative, induced
tangent connection and flux. Their coefficient modules are listed in
Table~\ref{tab:first-order-fields}. We use the formal open locus where
$\nu^{[0]}$ is positive and $\zeta^{[0]}$ is nowhere zero.
\begin{table}[ht]
\centering
\begin{tabular}{@{}lp{10.5cm}@{}}
\toprule
Field & Space\\
\midrule
$e$
& $\{e\in\Omega^1(E)\otimes\mathbb D:
   e^{[0]}:TY\to E\text{ is a fibrewise isomorphism}\}$
\\[1mm]
$\zeta$ & $\Gamma(\Delta)\otimes\mathbb D$\\[1mm]
$B$ & $\Omega^2(Y)\otimes\mathbb D$\\[1mm]
$\nu$ & $\Omega^7(Y)\otimes\mathbb D$\\[1mm]
$A$ & $A_0+\Omega^1(\mathfrak{so}(E))\otimes\mathbb D_0$\\[1mm]
\addlinespace
$U$ & $\Omega^6(E^*)\otimes\mathbb D$\\[1mm]
$\Xi$ & $\Omega^1(\Delta)\otimes\mathbb D$\\[1mm]
$\Theta$ & $\Theta_0+\Omega^1(\mathfrak{so}(E))\otimes\mathbb D$\\[1mm]
$h$ & $\Gamma(\Lambda^3E^*)\otimes\mathbb D$\\
\bottomrule
\end{tabular}
\caption{Independent fields of the first-order presentation.}
\label{tab:first-order-fields}
\end{table}

Since $A$ and $\Theta^{[0]}$ are connections on the same $SO(E)$-bundle,
their difference is
\[
 a:=A-\Theta^{[0]}
 \in\Omega^1(\mathfrak{so}(E))\otimes\mathbb D_0.
\]

For $h\in\Gamma(\Lambda^3E^*)$, define $K_e(h)\in\Omega^2(E)$ by
\begin{equation}\label{eq:Ke-definition}
 \bigl\langle K_e(h)(X,Y),s\bigr\rangle_E=h(s,eX,eY)
\end{equation}
for $X,Y\in TY$ and $s\in E$. We use the fibre metric to pair vector-bundle
valued forms by exterior multiplication and fibre contraction. On exterior
powers of $E^*$ we use the induced metric pairing with the standard $1/p!$
normalization in degree $p$.

For a connection $\theta$ and adjoint-valued one-form $a$, the relative
transgression is
\begin{equation}\label{eq:ii-transgression-convention}
 \cT_3(\theta,a)=2P(a,R_\theta)+P(a,D_\theta a)
                         +\tfrac13P(a,[a,a]),
\end{equation}
where $R_\theta$ is the curvature, as in \cite[(2.14)]{BrongersPaperI}.

Define the four constraints by
\begin{equation}\label{eq:first-order-constraints}
\begin{aligned}
 C_U&:=e\wedge U-\nu\,\id_E
 &&\in\Omega^7(\End E)\otimes\mathbb D,\\
 C_\Xi&:=\Xi-D_\Theta\zeta
 &&\in\Omega^1(\Delta)\otimes\mathbb D,\\
 T&:=D_\Theta e+K_e(h)
 &&\in\Omega^2(E)\otimes\mathbb D,\\
 C_H&:=-\frac13\langle e\wedge D_\Theta e\rangle_E-dB
       +\frac{\epsilon}{4}\cT_3(\Theta^{[0]},a)
 &&\in\Omega^3(Y)\otimes\mathbb D.
\end{aligned}
\end{equation}
Here $D_\Theta$ is the induced covariant exterior derivative, and we write
$C=(C_U,C_\Xi,T,C_H)$. The equations $C_U=C_\Xi=0$ identify $U$ with the
densitized inverse coframe and $\Xi$ with the spinor derivative. The equation
$T=0$ determines the tangent connection with torsion $-e^*h$, while $C_H=0$
identifies its axial three-form with the relative Chern--Simons flux.

The corresponding constraint multipliers are
\[
\begin{aligned}
 m_U&\in\Omega^0((\End E)^*)\otimes\mathbb D,
 &m_\Xi&\in\Omega^6(\Delta^*)\otimes\mathbb D,\\
 m_T&\in\Omega^5(E^*)\otimes\mathbb D,
 &m_H&\in\Omega^4(Y)\otimes\mathbb D.
\end{aligned}
\]

The three-form represented by $h$ and $e$ is
\begin{equation}\label{eq:Hh-definition}
 H_h:=e^*h\in\Omega^3(Y)\otimes\mathbb D.
\end{equation}
For the contorsion map
$C_g:\Omega^3(Y)\to\Omega^1(\mathfrak{so}(TY,g))$, we use
\cite[(2.15)]{BrongersPaperI}:
\begin{equation}\label{eq:contorsion}
 g\bigl(Y,C_g(H)_X Z\bigr)=\tfrac12H(X,Y,Z).
\end{equation}
Let $C_e(H)$ be $C_{g_e}(H)$ transported from $TY$ to $E$ through $e$:
\[
 C_e(H)_X=e\circ C_{g_e}(H)_X\circ e^{-1}.
\]

\begin{lemma}[Axial contraction]
\label{lem:axial-contraction}
For the independent fields,
\begin{equation}\label{eq:axial-identity}
 -\frac13\langle e\wedge D_\Theta e\rangle_E
 =H_h-\frac13\langle e\wedge T\rangle_E.
\end{equation}
Consequently, the equations $T=0$ and $C_H=0$ are equivalent to
\begin{equation}\label{eq:determined-graph-small}
\begin{aligned}
 \Theta^u&=\Theta_{\LC}(e)+C_e(dB),\\
 H&=dB-\frac{\epsilon}{4}
 \cT_3\bigl((\Theta^u)^{[0]},A-(\Theta^u)^{[0]}\bigr),\\
 \Theta&=\Theta_{\LC}(e)+C_e(H),
 &H_h&=H.
\end{aligned}
\end{equation}
\end{lemma}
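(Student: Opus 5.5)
The identity \eqref{eq:axial-identity} is pointwise and algebraic once we substitute the definition of $T$. The equivalence then follows from the standard uniqueness of a metric connection with prescribed totally skew torsion, together with a triangular solve of the flux equation modulo $\epsilon$.

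\emph{Step 1 (the identity).} By definition of $T$ in \eqref{eq:first-order-constraints}, $D_\Theta e=T-K_e(h)$. Hence
\[
-\tfrac13\langle e\wedge D_\Theta e\rangle_E=-\tfrac13\langle e\wedge T\rangle_E+\tfrac13\langle e\wedge K_e(h)\rangle_E ,
\]
and it remains to show $\tfrac13\langle e\wedge K_e(h)\rangle_E=e^*h=H_h$. Evaluate on $(X,Y,Z)$. The wedge of the $E$-valued one-form $e$ with the $E$-valued two-form $K_e(h)$, followed by fibre contraction, gives the cyclic sum
\[
\langle eX,K_e(h)(Y,Z)\rangle+\langle eY,K_e(h)(Z,X)\rangle+\langle eZ,K_e(h)(X,Y)\rangle .
\]
By \eqref{eq:Ke-definition} each term equals $h(eX,eY,eZ)$, by total antisymmetry of $h$. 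The cyclic sum is therefore $3\,(e^*h)(X,Y,Z)$, which gives the claim.

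The only care needed is the wedge convention for vector-valued forms, namely that $(\alpha\wedge\beta)(X,Y,Z)$ for a $1$-form and a $2$-form is the three-term cyclic sum with unit coefficients. This is what makes the factor $\tfrac13$ exact. I would check it against the stated $1/p!$ pairing normalization. Note that no invertibility of $e$ is used in this step.

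\emph{Step 2 (solving $T=0$).} On the formal locus, $e^{[0]}$ is invertible, so $e$ is invertible over $\mathbb D$. Transport $\Theta$ to a metric connection $\nabla=e^{-1}\circ\nabla^\Theta\circ e$ on $(TY,g_e)$. Its torsion is $e^{-1}D_\Theta e$, so $T=0$ says that $\nabla$ has torsion $-e^{-1}K_e(h)$. The $g_e$-lowered form of this torsion is $-e^*h=-H_h$, which is totally skew.

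By the fundamental lemma (uniqueness of a metric connection with given torsion, via the Koszul formula, valid over $\mathbb D$), the connection is determined: $\nabla=\nabla^{g_e}+C_{g_e}(H_h)$. Here the sign of the contorsion convention \eqref{eq:contorsion} has to be matched against \eqref{eq:ii-hull-compatibility} and the sign of the torsion. This gives $\Theta=\Theta_{\LC}(e)+C_e(H_h)$.

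Conversely, this $\Theta$ satisfies $T=0$. One checks this directly: $C_e(H)$ is skew, so $\Theta$ is metric, and its torsion contracts to $-H$.

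\emph{Step 3 (the flux equation).} Given $T=0$, Step 1 turns $C_H=0$ into
\[
H_h=dB-\tfrac\epsilon4\cT_3\bigl(\Theta^{[0]},A-\Theta^{[0]}\bigr).
\]
This is a fixed-point equation, because $\Theta$ depends on $H_h$. The key observation is that $\cT_3$ is multiplied by $\epsilon$ and only sees reductions mod $\epsilon$. Reducing Step 2 mod $\epsilon$ gives $H_h^{[0]}=dB^{[0]}$, so
\[
\Theta^{[0]}=\Theta_{\LC}(e)^{[0]}+C_e(dB)^{[0]}=(\Theta^u)^{[0]} .
\]
Substituting this back yields $H_h=H$ with $H$ as in \eqref{eq:determined-graph-small}, and then $\Theta=\Theta_{\LC}(e)+C_e(H)$. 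Conversely, the displayed graph satisfies both $T=0$ (by Step 2) and $C_H=0$ (by Step 1 and the same mod-$\epsilon$ reduction).

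The main obstacle is sign and normalization bookkeeping rather than any conceptual difficulty. Three places need checking: the factor $3$ in Step 1, the sign relating torsion $-e^*h$ to $+C_e(H)$ in Step 2, and the fact that $e^*h$ and $h$ carry matching gauge and coefficient modules under the mixed $\mathbb D/\mathbb D_0$ convention. I would fix all three by testing on a flat $\R^7$ model with constant $h$.
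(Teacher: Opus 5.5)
Your proposal is correct and follows the paper's proof essentially step for step. It uses the same contraction $\langle e\wedge K_e(h)\rangle_E=3e^*h$, the same reduction of $T=0$ to prescribing the torsion of a metric connection, and the same mod-$\epsilon$ triangular solve of the flux equation. Your Koszul-uniqueness argument in Step~2 is what the paper packages as the pointwise isomorphism $L_e(k)=k\wedge e$ together with $L_e(C_e(H))=-K_e\bigl((e^{-1})^*H\bigr)$. That identity also confirms the sign you flagged: $\nabla^{g}+C_g(H)$ has lowered torsion $-H$.
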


\begin{proof}
By \eqref{eq:Ke-definition},
\begin{equation}\label{eq:axial-Ke}
 \langle e\wedge K_e(h)\rangle_E=3e^*h=3H_h.
\end{equation}
Since $T=D_\Theta e+K_e(h)$, this gives
\[
 \langle e\wedge T\rangle_E
 =\langle e\wedge D_\Theta e\rangle_E+3H_h,
\]
which is \eqref{eq:axial-identity}.

For fixed $e$, define
\[
 L_e:\Omega^1(\mathfrak{so}(E))\longrightarrow\Omega^2(E),
 \qquad L_e(k)=k\wedge e,
\]
where
\[
 (k\wedge e)(X,Y)=k_X(eY)-k_Y(eX).
\]
This is a pointwise isomorphism; its explicit inverse is recorded in
Lemma~\ref{lem:constraint-inverse}. The Levi--Civita connection satisfies
$D_{\Theta_{\LC}(e)}e=0$. If
\[
 \widehat H:=(e^{-1})^*H\in\Gamma(\Lambda^3E^*),
 \qquad e^*\widehat H=H,
\]
then the contorsion convention \eqref{eq:contorsion} gives
\[
 L_e(C_e(H))=-K_e(\widehat H).
\]
Hence $T=0$ is equivalent to
\[
 \Theta=\Theta_{\LC}(e)+C_e(H_h).
\]

When $T=0$, equation \eqref{eq:axial-identity} reduces the flux constraint to
\[
 H_h=dB-\frac{\epsilon}{4}
 \cT_3(\Theta^{[0]},A-\Theta^{[0]}).
\]
Reduction modulo $\epsilon$ gives $H_h^{[0]}=dB^{[0]}$, and therefore
\[
 \Theta^{[0]}
 =\bigl(\Theta_{\LC}(e)+C_e(dB)\bigr)^{[0]}=(\Theta^u)^{[0]}.
\]
Substitution in the transgression term gives
\eqref{eq:determined-graph-small}.
\end{proof}

For fixed $(e,\zeta,B,\nu,A)$, the constraint map is locally formally
invertible in $(U,\Xi,\Theta,h)$ by Lemma~\ref{lem:constraint-inverse}.

\subsection{The polynomial functional and scale symmetry}

Since $\vol_e$ is nowhere vanishing, there is a unique function $\rho$ such
that $\nu=\rho\,\vol_e$. On $C_U=0$,
\begin{equation}\label{eq:densitized-frame}
 U(s)=\iota_{e^{-1}s}\nu
 \qquad(s\in\Gamma(E)).
\end{equation}
Thus $U$ is the densitized inverse coframe determined by $(e,\nu)$.

The fields
\[
 (\nu,U,\zeta,\Xi)
 \in\Omega^7\oplus\Omega^6(E^*)\oplus\Gamma(\Delta)\oplus\Omega^1(\Delta)
\]
carry a local rescaling symmetry. Let $u\in C^\infty(Y,\mathbb D)^\times$
have positive leading coefficient. The formal functions $u^{1/2}$,
$u^{-1/2}$ and $\log u$ are taken on the branches through their background
values. The rescaling action is defined by
\begin{equation}\label{eq:finite-scale}
\begin{aligned}
 \nu'&=u\nu,&U'&=uU,\\
 \zeta'&=u^{-1/2}\zeta,
 &\Xi'&=u^{-1/2}\left(\Xi-\frac12d\log u\,\zeta\right).
\end{aligned}
\end{equation}
All other base fields are fixed. The induced action on the constraint
multipliers is included in the symmetry differential below.

The combinations
\begin{equation}\label{eq:physical-scale-invariants}
 \zeta_{\ph}=\rho^{1/2}\zeta,
 \qquad
 \varrho=\Bsp(\zeta,\zeta)\nu,
 \qquad
 \chi=\frac{\zeta}{\sqrt{\Bsp(\zeta,\zeta)}}
\end{equation}
are invariant under \eqref{eq:finite-scale}, since
$\rho'=u\rho$ and $\Bsp(\zeta',\zeta')=u^{-1}\Bsp(\zeta,\zeta)$. The square
roots use the branches fixed above.

We use polynomial degree in the jet coordinates centred at the background,
with connection variables represented by their displacements from the
background connections. The axial form of the flux constraint in
\eqref{eq:first-order-constraints} is chosen so that the resulting
functional has bounded degree. Indeed, imposing instead
\[
 H_h=dB-\frac{\epsilon}{4}\cT_3
\]
would introduce the term \(H_h=e^*h\), which is quartic in \((e,h)\);
pairing this equation with a multiplier would therefore give a quintic
term. By contrast,
\[
 C_H
 =
 -\frac13\langle e\wedge D_\Theta e\rangle_E
 -dB+\frac{\epsilon}{4}\cT_3
\]
has degree at most three.

Let
\[
 c_E:\Lambda^\bullet E^*\longrightarrow \End(\Delta)
\]
denote Clifford multiplication on the fixed spinor bundle, with the
conventions of \cite[Section~2.3]{BrongersPaperI}. For
\(\zeta\in\Gamma(\Delta)\otimes\mathbb D\), define
\[
 \Phi_\zeta\in\Gamma(\Lambda^3E^*)\otimes\mathbb D,
 \qquad
 \Phi_\zeta(s_1,s_2,s_3)
 =
 \Bsp\!\left(
   \zeta,
   c_E(s_1^\flat\wedge s_2^\flat\wedge s_3^\flat)\zeta
 \right),
\]
and for
\(\Xi\in\Omega^1(\Delta)\otimes\mathbb D\), define
\[
 J_\zeta(\Xi)\in\Omega^1(E^*)\otimes\mathbb D
\]
by
\[
 \bigl\langle J_\zeta(\Xi)(X),s\bigr\rangle
 =
 \Bsp\!\left(\zeta,c_E(s^\flat)\Xi_X\right).
\]
The resolved functional is
\begin{equation}\label{eq:first-order-source}
\begin{split}
 W_{\rm aux}=\int_Y\bigl[
   &\langle J_\zeta(\Xi)\wedge U\rangle_E
    +\langle h,\Phi_\zeta\rangle_E\,\nu\\
   &+\langle m_U,C_U\rangle
    +\langle m_\Xi,C_\Xi\rangle
    +\langle m_T,T\rangle
    +m_H\wedge C_H
 \bigr].
\end{split}
\end{equation}
Each term has polynomial degree at most four. Hence \(\deg W_{\rm aux}\leq 4\).

\begin{proposition}[Agreement with the geometric functional]\label{prop:source-identity}
On the constraint graph,
\begin{equation}\label{eq:source-on-graph}
 W_{\rm aux}\big|_{C=0}=\int_Y
 \left[\Bsp(\zeta_\ph,D_g\zeta_\ph)
       +\tfrac14\Bsp(\zeta_\ph,H\mathbin{\cdot}_g\zeta_\ph)\right]\vol_g.
\end{equation}
\end{proposition}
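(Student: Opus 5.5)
On the constraint graph the four multiplier terms vanish identically, so $W_{\rm aux}|_{C=0}$ reduces to
\[
\int_Y\bigl[\langle J_\zeta(\Xi)\wedge U\rangle_E+\langle h,\Phi_\zeta\rangle_E\,\nu\bigr].
\]
The plan is to evaluate these two terms separately after substituting the graph values. The substitutions are: $U$ from \eqref{eq:densitized-frame}, $\Xi=D_\Theta\zeta$, and $\Theta=\Theta_{\LC}(e)+C_e(H)$ with $H_h=H$ from Lemma~\ref{lem:axial-contraction}. The final step is to rewrite everything in the scale-invariant variable $\zeta_\ph=\rho^{1/2}\zeta$ of \eqref{eq:physical-scale-invariants}, using $\nu=\rho\vol_e$ and identifying $g=g_e$ through $e$.

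\emph{The kinetic term.} With $U(s)=\iota_{e^{-1}s}\nu$, the wedge-contraction $\langle J_\zeta(\Xi)\wedge U\rangle_E$ becomes a trace over a local orthonormal frame $e_a$ of $TY$:
\[
\sum_a\Bsp\bigl(\zeta,c(e_a)\Xi_{e_a}\bigr)\,\nu=\Bsp(\zeta,c_g(e^a)D^\Theta_{e_a}\zeta)\,\rho\vol_g,
\]
up to a convention-dependent sign that I would check against \cite[Appendix~A]{BrongersPaperI}. The twisted Dirac operator is then split as
\[
c(e^a)D^\Theta_{e_a}=D_g+c(e^a)\tfrac14 (C_e(H)_{e_a})_{bc}c(e^b)c(e^c).
\]
The contorsion of the three-form $H$ is totally antisymmetric, so by \eqref{eq:contorsion} the Clifford sum collapses to a multiple of $c_g(H)$. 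The $c(e^a)c(e^b)c(e^c)$ product also contains a one-form piece, which is a trace of $H$ and therefore vanishes. I expect this piece to yield $\tfrac{3}{4}\cdot\tfrac12\cdot$(normalization) $c_g(H)$, i.e.\ the standard result that the Dirac operator of the connection with torsion $H$ is $D_g+\tfrac{3}{4}c(H)$ in the $1/p!$ normalization. I will fix the constant by matching conventions.

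Next, rescaling $\zeta=\rho^{-1/2}\zeta_\ph$ gives
\[
D_g\zeta=\rho^{-1/2}\bigl(D_g\zeta_\ph-\tfrac12 c_g(d\log\rho)\zeta_\ph\bigr).
\]
The extra term contributes $\Bsp(\zeta_\ph,c_g(v)\zeta_\ph)$ for a one-form $v$. This vanishes because Clifford multiplication by one-forms is skew with respect to the symmetric real pairing $\Bsp$ in dimension seven. The same argument removes the constant-scale dependence, consistent with the scale symmetry \eqref{eq:finite-scale}.

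\emph{The flux term.} By construction of $\Phi_\zeta$ and the $1/3!$ pairing normalization,
\[
\langle h,\Phi_\zeta\rangle_E=\Bsp(\zeta,c_E(h)\zeta).
\]
On the graph $e^*h=H$, so this term equals $\rho^{-1}\Bsp(\zeta_\ph,H\cdot_g\zeta_\ph)$ and contributes $\Bsp(\zeta_\ph,H\cdot\zeta_\ph)\vol_g$. Adding it to the torsion contribution from the kinetic term, $\tfrac34 c(H)$ combined with $c(H)$ from $h$ with the appropriate relative sign, should produce the net coefficient $\tfrac14$ in \eqref{eq:source-on-graph}.

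\emph{Main obstacle.} The main obstacle is this coefficient bookkeeping. The torsion contribution from $C_e(H)$ inside $D_\Theta$ must combine with the explicit $\langle h,\Phi_\zeta\rangle$ term to give exactly $\tfrac14$. This depends on the sign in the Clifford relation, the sign of $K_e$ versus $C_g$ (the identity $L_e(C_e(H))=-K_e(\widehat H)$ used in Lemma~\ref{lem:axial-contraction}), and the $1/p!$ normalizations. I would first do the computation in a flat model with constant $H$ and $e=\mathrm{id}$, where everything is algebraic, to pin down the signs. I would then invoke tensoriality to conclude the general case.
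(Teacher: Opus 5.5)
Your proposal is correct and follows the paper's proof essentially step for step. The multiplier terms drop on the graph, and the kinetic term becomes $\Bsp(\zeta,\mathscr D_\Theta\zeta)\nu$ via $\alpha\wedge\iota_v\nu=\alpha(v)\nu$, with no extra sign. The flux term gives $\Bsp(\zeta,c_g(H)\zeta)\nu$, and the $\rho^{1/2}$-derivative drops out because one-form Clifford multiplication is skew for $\Bsp$.

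The only difference is that the paper cites the spinorial contorsion identity $\mathscr D_\Theta=D_g-\tfrac34c_g(H)$ from \cite{BrongersPaperI} instead of deriving it. The minus sign is not a choice you tune to hit $\tfrac14$: it is fixed because the graph connection has torsion $-H$, so the net coefficient is $1-\tfrac34=\tfrac14$.
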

\begin{proof}
Set $g=g_e$ and let $\nabla^\Theta$ be the induced spinor connection. On the
graph, $U(s)=\iota_{e^{-1}s}\nu$ and $\Xi=D_\Theta\zeta$. Hence
\[
 \langle J_\zeta(D_\Theta\zeta)\wedge U\rangle_E
 =\Bsp(\zeta,\mathscr D_\Theta\zeta)\nu,
 \qquad \mathscr D_\Theta=\mathrm{cl}_g\circ\nabla^\Theta,
\]
using $\alpha\wedge\iota_v\nu=\alpha(v)\nu$. By
\eqref{eq:determined-graph-small},
$\Theta=\Theta_{\LC}(e)+C_e(H)$ and $H_h=H$. The spinorial contorsion identity
\cite[Section~2.3 and Appendix~A.3]{BrongersPaperI} gives
$\mathscr D_\Theta=D_g-3c_g(H)/4$, while
\[
 \langle h,\Phi_\zeta\rangle_E\nu
 =\Bsp(\zeta,c_g(H)\zeta)\nu.
\]
The sum is
$[\Bsp(\zeta,D_g\zeta)+\Bsp(\zeta,c_g(H)\zeta)/4]\nu$. Substituting
$\nu=\rho\vol_g$ and $\zeta_\ph=\rho^{1/2}\zeta$ gives
\eqref{eq:source-on-graph}. The derivative of $\rho^{1/2}$ has zero pairing
by \cite[(A.2)]{BrongersPaperI}. The generalized Dirac identity
\cite[Propositions~4.1--4.2]{BrongersPaperI} identifies this expression with
$W_{\rm ext}$.
\end{proof}

\subsection{Symmetry differential}\label{sec:auxiliary-symmetry}
Let $\mathscr B$ be the graded formal base obtained by adjoining the
symmetry coordinates in the following table to the fields and constraint
multipliers above:
\begin{center}
\begin{tabular}{lllc}
\toprule
Ghost&Bundle&Symmetry&Ghost number\\
\midrule
$\xi$&$TY\otimes\mathbb D$&diffeomorphisms&1\\
$r$&$\mathfrak{so}(E)\otimes\mathbb D$&frame rotations&1\\
$\lambda$&$\mathfrak{so}(E)\otimes\mathbb D_0$&gauge transformations&1\\
$\Lambda$&$T^*Y\otimes\mathbb D$&gerbe transformations&1\\
$\sigma$&$\mathbb D$&scale transformations&1\\
$c$&$\mathbb D$&gerbe reducibility&2\\
\bottomrule
\end{tabular}
\end{center}
For the gerbe symmetry, $\delta_\Lambda B=\dd\Lambda$, so $\Lambda$ and
$\Lambda+\dd c$ generate the same transformation; $c$ is the corresponding
degree-two reducibility parameter. The frame parameter $r$ is independent in
the auxiliary presentation. On the physical metric-frame slice it is replaced
by the metric-dependent rotation of
\cite[(D.3), (D.7)]{BrongersPaperI}.

Let
\[
 f\in\operatorname{Der}^1\mathscr O(\mathscr B)
\]
be the odd derivation defined as follows. On the tensor fields,
\[
\begin{aligned}
 fe&=-\mathcal L_\xi e+r\cdot e,\\
 fU&=-\mathcal L_\xi U-r^{\mathsf t}\!\cdot U+\sigma U,\\
 f\nu&=-\mathcal L_\xi\nu+\sigma\nu,\\
 f\zeta&=-\mathcal L_\xi\zeta+s(r)\zeta-\frac12\sigma\zeta,\\
 fh&=-\mathcal L_\xi h+\rho_{\Lambda^3E^*}(r)h.
\end{aligned}
\]
The spinor derivative and the two connections transform by
\[
\begin{aligned}
 f\Xi&=-\mathcal L_\xi\Xi+s(r)\Xi
       -\frac12\sigma\Xi-\frac12\,d\sigma\,\zeta,\\
 f\Theta&=-\mathcal L_\xi\Theta-D_\Theta r,\\
 fA&=-\mathcal L_\xi A-D_A\bigl(r^{[0]}+\lambda\bigr).
\end{aligned}
\]
Put
\[
 a=A-\Theta^{[0]},\qquad
 b_\lambda=
 \frac12P(\lambda,F_A)
 -\frac14P(a,D_A\lambda).
\]
The gerbe coordinates satisfy
\[
\begin{aligned}
 fB&=-\mathcal L_\xi B-d\Lambda-\epsilon b_\lambda,\\
 f\Lambda&=dc-\mathcal L_\xi\Lambda
 -\frac{\epsilon}{4}P(\lambda,D_A\lambda),\\
 fc&=-\xi(c)+\frac{\epsilon}{24}P(\lambda,[\lambda,\lambda]).
\end{aligned}
\]
Finally,
\[
\begin{aligned}
 f\xi&=-\frac12[\xi,\xi],\\
 fr&=\frac12[r,r]-\xi(r),\\
 f\lambda&=\frac12[\lambda,\lambda]-\xi(\lambda)
            +[\lambda,r^{[0]}],\\
 f\sigma&=-\xi(\sigma).
\end{aligned}
\]
The derivation acts from the left and commutes with \(d\).

The four constraints transform covariantly:
\[
 fC_i
 =
 -\mathcal L_\xi C_i+\rho_i(r)C_i+w_i\sigma C_i,
 \qquad
 (w_U,w_\Xi,w_T,w_H)
 =
 \left(1,-\frac12,0,0\right),
\]
where \(C_i=(C_U,C_\Xi,T,C_H)\) and \(\rho_i\) is the induced
\(SO(E)\)-representation. We define the multiplier transformations by
the dual action,
\[
 fm_i
 =
 -\mathcal L_\xi m_i-\rho_i(r)^{\mathsf t}m_i-w_i\sigma m_i.
\]

\begin{proposition}[Closure and invariance]
\label{prop:gauge-theorem}
The derivation \(f\) satisfies
\[
 f^2=0,\qquad fW_{\rm aux}=0,
\]
where the second equality is understood modulo horizontal divergences.
\end{proposition}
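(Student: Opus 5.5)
The proof splits into two parts: closure $f^2=0$, checked generator by generator, and invariance $fW_{\rm aux}=0$, obtained by showing that every summand of \eqref{eq:first-order-source} is a scalar top form built covariantly from the fields.

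\textbf{Closure.} The operator $f$ is the Chevalley--Eilenberg differential of a Lie algebra (and Lie 2-algebra) action. The Lie algebra is the semidirect product of vector fields, $\mathfrak{so}(E)$-valued frame rotations and positive rescalings, extended by the gauge and gerbe hierarchy. Since $f$ is an odd derivation, it suffices to check $f^2=0$ on generators. I proceed in this order.
\begin{enumerate}
\item The ghosts. $f^2\xi=0$ is the Jacobi identity for vector fields. For $r$ and $\sigma$, $f^2=0$ follows from the standard $\xi$--$r$ semidirect product computation; the signs $+\tfrac12[r,r]$ and $-D_\Theta r$ must match the left-derivation convention. For $\lambda$, the extra term $[\lambda,r^{[0]}]$ arises because $fA$ involves $r^{[0]}+\lambda$. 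The cleanest check is to substitute $\lambda'=\lambda+r^{[0]}$: then $\lambda'$ is the ordinary gauge ghost of $A$, and the formulas become the standard BRST differential.
\item The tensor fields. $f^2=0$ on $e,U,\nu,\zeta,h$ follows from the fact that $r\mapsto\rho(r)$ and $\sigma\mapsto w\sigma$ are representations that commute with each other and with Lie derivatives. For $\Xi$, the inhomogeneous term $-\tfrac12 d\sigma\,\zeta$ must cancel against the scale transformation of $\zeta$. This works because $f\sigma$ contains no $\sigma^2$ term and $d$ commutes with $f$. The same argument applies to the connections $\Theta$ and $A$.
\item The gerbe coordinates $(B,\Lambda,c)$. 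Here I invoke the canonical relative Green--Schwarz hierarchy of \cite[Appendix~D.1--D.2]{BrongersPaperI}, which already gives $f^2=0$ for the triple $(b_\lambda,\tfrac14P(\lambda,D_A\lambda),\tfrac1{24}P(\lambda,[\lambda,\lambda]))$ with the descent relations. The only new feature is the relative term $-\tfrac14P(a,D_A\lambda)$ with $a=A-\Theta^{[0]}$. Since $a$ transforms tensorially under $\lambda'$, I reduce to the cited hierarchy in the variable $\lambda'$, noting that $r^{[0]}$ shifts both $A$ and $\Theta^{[0]}$.
\end{enumerate}
Finally, the $\epsilon$-multiplied terms need only leading-order identities, because every argument of $\epsilon$ is reduced modulo $\epsilon$. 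This is why the gauge sector can be $\mathbb D_0$-valued.

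\textbf{Invariance.} I show that each summand of $W_{\rm aux}$ transforms by $-\mathcal L_\xi$ of a top form. This leaves only horizontal divergences, since $\mathcal L_\xi\alpha=d\iota_\xi\alpha$ for a top form $\alpha$.
\begin{enumerate}
\item Frame rotations. Invariance under $r$ holds because $\Bsp$, Clifford multiplication $c_E$ and the fibre pairings are $\mathfrak{so}(E)$-equivariant. Consequently $J_\zeta(\Xi)$, $\Phi_\zeta$, and the contractions with $U$ and $\nu$ are invariant scalars.
\item Scale transformations. The weights cancel: $\Bsp(\zeta,\cdot\,\Xi)U$ carries weight $-\tfrac12-\tfrac12+1=0$, and $\langle h,\Phi_\zeta\rangle\nu$ carries weight $-1+1=0$. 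The inhomogeneous term $-\tfrac12 d\sigma\,\Bsp(\zeta,c_E(s^\flat)\zeta)$ vanishes because $\Bsp(\zeta,c_E(v)\zeta)=0$ for one-forms, by the symmetry conventions of \cite[(A.2)]{BrongersPaperI}. This is the same fact used in Proposition~\ref{prop:source-identity}.
\item Multiplier terms. These are invariant because the constraints transform covariantly with the stated weights and the multipliers transform by the dual action.
\end{enumerate}

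\textbf{Main obstacle.} The main work is verifying that the constraints are in fact covariant, which the statement asserts and which I must prove.
\begin{enumerate}
\item $C_U$ and $T$ are covariant because $D_\Theta$ is covariant, $K_e$ is equivariant, and $\nu\,\id$ scales like $e\wedge U$.
\item $C_\Xi$ is covariant by construction of $f\Xi$: the term $-\tfrac12 d\sigma\,\zeta$ reproduces $D_\Theta(u^{-1/2}\zeta)$ infinitesimally.
\item $C_H$ is the hard case. Under $\lambda$, the variation $\epsilon\, f\bigl(\tfrac14\cT_3(\Theta^{[0]},a)\bigr)$ must cancel $-d(f B)=\epsilon\, d b_\lambda$, using $d\Lambda$ exactness. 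Under $r$, the shift of $\Theta^{[0]}$ inside $\cT_3$ must cancel the change of the frame Chern--Simons term, while $\langle e\wedge D_\Theta e\rangle$ stays invariant.
\end{enumerate}
I would first try to verify the $C_H$ cancellation by the standard transgression variation formula, $\delta\cT_3=2\,d P(\lambda,da)$-type terms, which matches the definition of $b_\lambda$. If the coefficients fail to match, I would fall back on the relative Chern--Simons computation in \cite[(2.14)]{BrongersPaperI} with $\lambda'=\lambda+r^{[0]}$.
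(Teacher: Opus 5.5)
Your proposal is correct and follows the paper's proof. Closure of the $(\Theta,A,B;\xi,r,\lambda,\Lambda,c)$ sector is taken from the Courant/Green--Schwarz hierarchy of \cite[Appendix~D.1--D.2]{BrongersPaperI}, and the remaining fields close as tensor representations with scale weights. Invariance follows from covariance of the constraints, equivariance of the pairings, cancellation of the scale weights together with $\Bsp(\zeta,c_E(s^\flat)\zeta)=0$, and Lie derivatives of top forms. Your explicit transgression check, that the $\lambda$-variation of $\tfrac{\epsilon}{4}\cT_3(\Theta^{[0]},a)$ cancels $-\dd(fB)=\epsilon\,\dd b_\lambda$, fills in what the paper only asserts.

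One aside in your closure argument is wrong, though not needed. The difference $a=A-\Theta^{[0]}$ is tensorial only under the diagonal rotation $r$; this is why $\cT_3(\Theta^{[0]},a)$, and hence $C_H$, is simply $r$-invariant, with $B$ not transforming under $r$. Under $\lambda$, however, $a$ shifts by $-D_A\lambda$, and that shift is exactly the source of the relative term in $b_\lambda$. So the reduction to the cited hierarchy should go through the triangular change of variables of Appendix~\ref{app:gauge-hierarchy}, not a tensoriality argument.
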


\begin{proof}
The transformations of
\[
(\Theta,A,B;\xi,r,\lambda,\Lambda,c)
\]
are the relative-coordinate form of the Courant Lie-\(2\) and
Green--Schwarz differential. Its closure, including the two higher
descent identities for \(\Lambda\) and \(c\), is proved in
\cite[Appendix~D.1--D.2]{BrongersPaperI}.

The remaining fields carry the indicated tensor representations and
scale weights. The inhomogeneous term in \(f\Xi\) gives
\[
 fC_\Xi
 =
 -\mathcal L_\xi C_\Xi+s(r)C_\Xi-\frac12\sigma C_\Xi,
\]
and the other three constraints satisfy the corresponding covariance
relations displayed above. Hence the multiplier terms in
\(W_{\rm aux}\) are invariant modulo Lie derivatives.

It remains to consider the first two terms of
\eqref{eq:first-order-source}. Their homogeneous scale weights vanish,
and frame invariance follows from equivariance of the spinor and metric
pairings. The only inhomogeneous scale contribution is
\[
 -\frac12
 \bigl\langle J_\zeta(d\sigma\,\zeta)\wedge U\bigr\rangle_E,
\]
which vanishes because
\[
 \Bsp\!\left(\zeta,c_E(s^\flat)\zeta\right)=0
 \qquad (s\in E).
\]
Diffeomorphisms act on the integrand by a Lie derivative of a top form,
hence by a horizontal divergence. Therefore \(fW_{\rm aux}=0\).
\end{proof}

\section[The quartic Hamiltonian and cyclic L3 structure]{The quartic Hamiltonian and cyclic $L_3$ structure}\label{sec:cyclic-l3}\label{sec:cyclic}\label{sec:deformation-algebra}

\subsection{The quartic Hamiltonian}
The graded base $\mathscr B$ of Section~\ref{sec:auxiliary-symmetry} has
shifted cotangent space
\[
 \mathcal M_B=T^*[-1]\mathscr B,\qquad
 \omega_B=\delta\vartheta_B,
\]
where $\vartheta_B$ is the shifted density-valued cotangent potential. Set
\begin{equation}\label{eq:small-master}
 \cS_B=W_{\rm aux}+\iota_f\vartheta_B,
\end{equation}
and let $Q_B$ be its Hamiltonian vector field, defined by
\[
 \iota_{Q_B}\omega_B=-\delta\cS_B.
\]

\begin{proposition}[Hamiltonian structure of the first-order theory]\label{prop:small-master}
The functional \eqref{eq:small-master} satisfies
\begin{equation}\label{eq:small-cme}
 \{\cS_B,\cS_B\}=0,\qquad Q_B^2=0,\qquad
 \delta\omega_B=0,\qquad\cL_{Q_B}\omega_B=0.
\end{equation}
\end{proposition}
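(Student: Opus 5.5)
The plan is to treat $\cS_B=W_{\rm aux}+\iota_f\vartheta_B$ as the standard "function plus Hamiltonian lift of a cohomological vector field" on a shifted cotangent bundle. The classical master equation will then follow from two inputs: $f^2=0$ and $fW_{\rm aux}=0$ modulo horizontal divergences, both from Proposition~\ref{prop:gauge-theorem}. The two statements about $\omega_B$ are essentially formal, and $Q_B^2=0$ will follow from $\{\cS_B,\cS_B\}=0$ by the usual Jacobi argument.

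First, $\omega_B=\delta\vartheta_B$ is exact for the vertical differential, so $\delta\omega_B=\delta^2\vartheta_B=0$. It is nondegenerate because it is the canonical pairing between base coordinates and their shifted antifields, using the local formal framework of \cite[Section~3 and Appendix~C]{BrongersPaperI}. Since $\iota_{Q_B}\omega_B=-\delta\cS_B$, Cartan's formula gives $\cL_{Q_B}\omega_B=\delta\iota_{Q_B}\omega_B=-\delta^2\cS_B=0$, which holds modulo horizontal divergences in the variational bicomplex.

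Next I would compute the bracket term by term. Write $F=\iota_f\vartheta_B$; it is linear in the antifields, since it is the pairing of antifields with $f$ applied to the base coordinates.
\begin{itemize}
\item The bracket $\{F,F\}$ is, up to the sign conventions of the left graded derivations, $\iota_{[f,f]}\vartheta_B=2\iota_{f^2}\vartheta_B$. This vanishes by Proposition~\ref{prop:gauge-theorem}.
\item The cross term $\{F,W_{\rm aux}\}$ equals $\pm fW_{\rm aux}$. This is zero as a local functional, because $fW_{\rm aux}$ is a horizontal divergence and integrates to zero on the closed manifold $Y$.
\item The term $\{W_{\rm aux},W_{\rm aux}\}=0$, because $W_{\rm aux}$ depends only on base coordinates and the bracket pairs base coordinates with antifields.
\end{itemize}
Hence $\{\cS_B,\cS_B\}=0$. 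The Hamiltonian vector field of $\tfrac12\{\cS_B,\cS_B\}$ is $Q_B^2=\tfrac12[Q_B,Q_B]$, using the graded Jacobi identity for the local Poisson bracket. So $Q_B^2=0$ follows, and one can check directly that $Q_B$ restricted to base coordinates is $f$.

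The main obstacle is making the formal step ``$\{F,F\}\propto\iota_{f^2}\vartheta_B$'' rigorous in this mixed setting. Three features complicate it.
\begin{itemize}
\item The coefficient modules are mixed: $\mathbb D$ for geometric fields, $\mathbb D_0$ for gauge fields, and $\epsilon\mathbb D$ for the duals.
\item The connection variables are affine, so the cotangent fibres must be identified through displacements from $\Theta_0$ and $A_0$.
\item The Lie derivative terms produce total derivatives.
\end{itemize}
I would handle the first two by working with the nondegenerate pairing at $z\neq0$ where needed, as in the conventions of Section~\ref{sec:courant-g2}. I would verify the bracket identity there and then specialize polynomially to $z=\epsilon$, noting that the pairing $\mathbb D_0\times\epsilon\mathbb D\to\epsilon\mathbb D$ is given by $\jmath_\epsilon$. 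For the third, I would invoke the shifted-cotangent results of \cite[Appendix~C]{BrongersPaperI}, which state that for a local cohomological vector field on the base, its lift is a Hamiltonian with vanishing self-bracket modulo divergences. That reduces the remaining check to $f^2=0$ and $fW_{\rm aux}\simeq0$, which are already established.
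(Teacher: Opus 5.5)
Your proposal is correct and follows the paper's proof: both take $f^2=0$ and $fW_{\rm aux}=0$ from Proposition~\ref{prop:gauge-theorem}, feed them into the cotangent master-equation lemma of \cite{BrongersPaperI} to get $\{\cS_B,\cS_B\}=0$ and $Q_B^2=0$, and read off $\delta\omega_B=0$ and $\cL_{Q_B}\omega_B=0$ from exactness $\omega_B=\delta\vartheta_B$; your term-by-term expansion of the bracket simply unpacks that lemma. The detour through $z\neq0$ is not needed for this step, since the shifted cotangent pairing does not involve the Courant anomaly coefficient and the mixed duality is handled by $\jmath_\epsilon$ alone.
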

\begin{proof}
Proposition~\ref{prop:gauge-theorem} gives $f^2=0$ and $fW_{\rm aux}=0$.
The cotangent master-equation result \cite[Lemma~3.8]{BrongersPaperI}
therefore gives $\{\cS_B,\cS_B\}=0$ and $Q_B^2=0$. Since
$\omega_B=\delta\vartheta_B$, it is closed and preserved by its Hamiltonian
vector field.
\end{proof}
The restriction of $Q_B$ to $\mathscr B$ is $f$, and its cotangent component is
the variational cotangent lift, including the Euler--Lagrange derivative of
$W_{\rm aux}$.

Throughout this section, the subscript $\ast$ denotes evaluation at the
standard embedding. Write $e_\ast$ for the torsion-free coframe,
$\chi_\ast$ for its parallel unit spinor and $\Phi_\ast$ for the constant
dilaton. Choose the flat gerbe background $B_\ast$, with $\dd B_\ast=0$, and
set
\[
\begin{aligned}
 \Theta_\ast&=\Theta_{\LC}(e_\ast),&
 A_\ast&=\red{\Theta_\ast},&
 H_\ast&=0,\\
 \nu_\ast&=\vol_{e_\ast},&
 \zeta_\ast&=e^{-\Phi_\ast}\chi_\ast.
\end{aligned}
\]
Separate the fields as $q=(e,\zeta,B,\nu,A)$ and
$y=(U,\Xi,\Theta,h)$, and let $y_\ast$ be the constraint-graph value at
$q_\ast$. In particular,
\[
 U_\ast(s)=\iota_{e_\ast^{-1}s}\nu_\ast,\qquad
 h_\ast=0,\qquad
 \Xi_\ast=D_{\Theta_\ast}\zeta_\ast=0.
\]
Let $W_{\rm geom}$ denote the first two terms of \eqref{eq:first-order-source},
so that
\[
 W_{\rm aux}=W_{\rm geom}+\langle m,C\rangle,
\]
and put
\[
 \mathcal J_\ast:=(D_yC)_{(q_\ast,y_\ast)}.
\]
The stationary multiplier is
\begin{equation}\label{eq:stationary-multiplier}
 m_\ast=-(\mathcal J_\ast^\dagger)^{-1}(D_yW_{\rm geom})_\ast.
\end{equation}
The inverse exists by Lemma~\ref{lem:constraint-inverse}.

The constraint graph $y=y(q)$ satisfies
\[
 C(q,y(q))=0,\qquad
 W_{\rm aux}(q,y(q),m)=W_{\rm ext}(q).
\]
Hence
\[
 (D_mW_{\rm aux})_\ast=0,\qquad
 (D_yW_{\rm aux})_\ast
 =(D_yW_{\rm geom})_\ast+\mathcal J_\ast^\dagger m_\ast=0.
\]
The chain rule gives
\[
 (D_qW_{\rm ext})_\ast=(D_qW_{\rm aux})_\ast.
\]
The physical first variation vanishes at the standard embedding
\cite[Section~4.5, (4.7)]{BrongersPaperI}, so
\begin{equation}\label{eq:ext-stationary-point}
 (D_qW_{\rm ext})_\ast=0.
\end{equation}
Thus every Euler--Lagrange derivative of $W_{\rm aux}$ vanishes there. Let
$b_\ast\in\mathcal M_B$ have field and multiplier coordinates
$(q_\ast,y_\ast,m_\ast)$, with all ghosts and shifted cotangent variables
zero. Then
\begin{equation}\label{eq:parent-stationary-point}
 Q_B(b_\ast)=0,\qquad \cS_B(b_\ast)=0.
\end{equation}
We take the formal neighbourhood of $b_\ast$.

Assign polynomial degree one to every non-background graded coordinate on
$\mathcal M_B$, including ghosts and shifted cotangent variables. Every
component of $f$ has polynomial degree at most three by
Appendix~\ref{app:gauge-hierarchy} and the inherited hierarchy
\cite[(D.10)--(D.11)]{BrongersPaperI}. Since $\vartheta_B$ is linear in a
shifted cotangent variable and a base variation,
\[
 \deg(\iota_f\vartheta_B)\le4,\qquad \deg\cS_B\le4.
\]
Translation to $b_\ast$ does not increase degree, and $\omega_B$ has constant
coefficients in the independent field and cotangent coordinates. Therefore
\begin{equation}\label{eq:strict-l3-expansion}
 \cS_B=S_2+S_3+S_4,\qquad Q_B=Q_1+Q_2+Q_3.
\end{equation}

\begin{theorem}[Finite cyclic presentation]\label{thm:main-b}
The first-order fields, constraint multipliers, gauge and reducibility
hierarchy, and their shifted density-valued variational duals are locally
modelled on sections of a graded coefficient bundle
\[
 \mathcal E_B
 =(E_{\mathrm{fr}}\otimes_{\R}\mathbb D)
 \oplus(E_0\otimes_{\R}\mathbb D_0),
\]
where $E_{\mathrm{fr}}$ and $E_0$ are finite-rank graded real bundles. The
pairing is induced by the canonical shifted cotangent pairing on
$\mathcal M_B$, and
\[
 \deg\cS_B\le4,\qquad \{\cS_B,\cS_B\}=0.
\]
At $b_\ast$, the Taylor operations form an uncurved cyclic $L_3$ algebra:
\[
 \ell_n=0\qquad(n\ge4).
\]
\end{theorem}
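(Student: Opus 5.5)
The proof assembles three facts already in place. First, $\mathcal M_B$ is locally a space of sections of a finite-rank graded coefficient bundle. Second, the classical master equation holds (Proposition~\ref{prop:small-master}). Third, the degree bound \eqref{eq:strict-l3-expansion} holds at the stationary point $b_\ast$. The Taylor coefficients of $Q_B$ are then the brackets, and cyclicity comes from $\omega_B$.

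\textbf{Step 1: the coefficient bundle.} I read off $\mathcal E_B$ from Table~\ref{tab:first-order-fields}, the multiplier list and the ghost table. Every field is either $\mathbb D$-valued or $\mathbb D_0$-valued. The $\mathbb D_0$-valued ones are $A$ and $\lambda$. Their shifted duals take coefficients in $\epsilon\mathbb D$, which is isomorphic to $\mathbb D_0$ via $\jmath_\epsilon$ in \eqref{eq:ii-mixed-coefficients}. All remaining coordinates are sections of finite-rank bundles tensored with $\mathbb D$. These bundles are built from $\Lambda^\bullet T^*Y$, $E$, $\Delta$, $\mathfrak{so}(E)$ and their densitized duals $\Lambda^7T^*Y\otimes(-)^*$. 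Their ranks are finite because $Y$ is seven-dimensional. Connections become sections once they are recentred at $\Theta_\ast$ and $A_\ast$. The coframe becomes a section once it is recentred at $e_\ast$, and invertibility of $e$ is an open condition in the formal neighbourhood. Collecting these gives $E_{\mathrm{fr}}$ and $E_0$. The pairing is the canonical fibre pairing between a bundle and its densitized dual, integrated over $Y$. This is exactly the pairing induced by $\omega_B=\delta\vartheta_B$, whose coefficients are constant in these coordinates.

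\textbf{Step 2: Taylor expansion and the $L_\infty$ relations.} By \eqref{eq:parent-stationary-point}, $Q_B$ vanishes at $b_\ast$, so the expansion has no constant term and the algebra is uncurved. By \eqref{eq:strict-l3-expansion}, $\cS_B=S_2+S_3+S_4$ and $Q_B=Q_1+Q_2+Q_3$. I define $\ell_n$ as the (appropriately shifted) polarization of $Q_n$, equivalently as $n$-fold graded derivatives of $S_{n+1}$ at $b_\ast$. Then $\ell_n=0$ for $n\ge4$ because no terms of degree five or more appear. The relation $Q_B^2=0$, split by polynomial degree, gives the homotopy Jacobi identities in the usual way. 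Cyclicity follows because $S_{n+1}$ is a function and $\omega_B$ has constant coefficients. Hence $\omega_B(\ell_n(x_1,\dots,x_n),x_{n+1})$ is the graded-symmetric $(n+1)$-st derivative of $\cS_B$, which is invariant under cyclic permutations up to the Koszul sign.

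\textbf{Main obstacle.} The delicate point is the justification of $\deg\cS_B\le4$ and that the translation to $b_\ast$ is harmless. Three checks are needed here.
\begin{enumerate}
\item Every term of $W_{\rm aux}$ must be checked to have degree at most four. The term $\langle J_\zeta(\Xi)\wedge U\rangle$ is cubic, and $\langle h,\Phi_\zeta\rangle\nu$ is quartic. In the multiplier terms, the constraints are at most cubic: $C_H$ is cubic by the axial choice discussed before Lemma~\ref{lem:axial-contraction}, and $K_e(h)$ is quadratic. Each multiplier pairing therefore has degree at most four.
\item The components of $f$ must have degree at most three. This is where I would invoke the inherited Green--Schwarz hierarchy: for example, $fc$ contains the cubic term $P(\lambda,[\lambda,\lambda])$, and $fB$ contains $\epsilon b_\lambda$, which is at most cubic. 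Consequently $\iota_f\vartheta_B$ has degree at most four.
\item The nonzero background multiplier $m_\ast$ needs care. I would argue that recentring a polynomial of degree at most four does not raise the degree. Its linear part vanishes by stationarity, so no curvature appears.
\end{enumerate}
The mixed $\mathbb D/\mathbb D_0$ coefficients only require checking that every $\epsilon$-multiplied term takes reduced arguments, so the pairing is well defined.
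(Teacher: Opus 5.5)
Your proposal is correct and follows the paper's own proof essentially step by step. The shared steps are: a finite-rank coefficient bundle, with $\epsilon\mathbb D\cong\mathbb D_0$ on the residue summands; the master equation from Proposition~\ref{prop:small-master}; the degree bound \eqref{eq:strict-l3-expansion}, with $Q_B(b_\ast)=0$ removing the constant term; and cyclicity from the constant shifted symplectic pairing.

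There is one small bookkeeping slip. $K_e(h)=\tfrac12 h^a{}_{bc}\,e^b\wedge e^c$ is cubic, not quadratic, but $T$ is then still at most cubic, so $\langle m_T,T\rangle$ is at most quartic and your bound $\deg\cS_B\le4$ is unaffected.
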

\begin{proof}
The fields, multipliers and symmetry coordinates form a finite list of
finite-rank bundles. Their shifted density-valued duals use
$\epsilon\mathbb D\cong\mathbb D_0$ on the residue component. Proposition~\ref{prop:small-master}
gives the master equation. By \eqref{eq:strict-l3-expansion}, Hamiltonian
differentiation gives a vector field of degree at most three, and
\eqref{eq:parent-stationary-point} removes its constant term. Its Taylor
coefficients therefore satisfy the uncurved $L_\infty$ identities and vanish
above arity three. Cyclicity follows from the constant shifted symplectic
pairing by \cite[Appendix~C.4, Proposition~C.2]{BrongersPaperI}.
\end{proof}

The inherited scalar transformation contains
$fc=-\xi(c)+\eps P(\lambda,[\lambda,\lambda])/24$
\cite[(D.11)]{BrongersPaperI}. Its cubic term gives
\begin{equation}\label{eq:cartan-ternary-small}
 \ell_3(\lambda_1,\lambda_2,\lambda_3)
 =-\tfrac\eps4P(\lambda_1,[\lambda_2,\lambda_3]),
\end{equation}
with the suspension signs verified in Appendix~\ref{app:cyclic-conventions}.

\paragraph{Reduction to physical coordinates.}\label{sec:higher-interactions}
Auxiliary elimination recovers the physical variational Hamiltonian of
\cite[Section~4]{BrongersPaperI},
\[
 \cS_{\rm red}=W_{\rm ext}+\iota_{f_{\rm red}}\vartheta_{\rm red}.
\]
The elimination formulas contain inverse coframes, spinor normalization and
the induced connection, and are therefore nonpolynomial in physical
coordinates. The quartic degree bound for $\cS_B$ consequently need not
survive elimination. Appendix~\ref{app:interaction-coefficients} records the
flat-torus coefficient calculation used below.

\subsection{A nonzero quintic coefficient}
\label{subsec:quintic-coefficient}
Let $Y=T^7$ carry the flat metric $g_0$ and a parallel unit spinor $\chi_0$.
Set $B=0$, $A=\Theta_0$ and $\varrho=w\vol_0$ with constant $w>0$.
Write $c_0$ for Clifford multiplication at the fixed background metric
$g_0$. For a sufficiently small vector field $v$, define
\[
 \chi_v=\frac{\chi_0+c_0(v^\flat)\chi_0}{\sqrt{1+|v|_0^2}}.
\]
The spinor $c_0(v^\flat)\chi_0$ lies in the tangent space to the unit-spinor
sphere at $\chi_0$, and $v\mapsto\chi_v$ is a local coordinate chart near
$\chi_0$. Let $y^0$ have period $2\pi$ and let
$E_0=\partial_{y^0},E_1$ be parallel orthonormal vector fields. Set
\[
 v=\cos y^0\,E_0+(1+\sin y^0)\,E_1.
\]
The physical spinor family below is the straight coordinate line $tv$
in this chart:
\begin{equation}\label{eq:normalized-spinor-example}
 \chi(t)=\chi_{tv}=\frac{\chi_0+t\,c_0(v^\flat)\chi_0}{\sqrt{1+t^2|v|_0^2}},
 \qquad\zeta_\ph(t)=\sqrt w\,\chi(t),
\end{equation}
This spinor determines a positive form for the fixed metric. Put
\[
 \mathcal Q(v)=\Bsp\bigl(c_0(v^\flat)\chi_0,
               \mathscr D_0(c_0(v^\flat)\chi_0)\bigr),
\]
with $\mathscr D_0$ the background Dirac operator. Then
\begin{equation}\label{eq:rational-spinor-source}
 W(t)=w\int_Y\frac{-t\operatorname{div}_0v+t^2\mathcal Q(v)}
                         {1+t^2|v|_0^2}\,\vol_0.
\end{equation}
The normalization calculation is given in Appendix~\ref{app:interaction-coefficients}.
For this $v$, $\mathcal Q(v)=0$,
$\operatorname{div}_0v=-\sin y^0$ and $|v|_0^2=2+2\sin y^0$.
With $W_n=[t^n]W(t)$ and $V_6$ the volume in the other six directions,
\begin{equation}\label{eq:quintic-example}
 W_3=-2\pi wV_6,\qquad W_5=8\pi wV_6\ne0.
\end{equation}
Thus the physical functional has a nonzero quintic Taylor coefficient
in these local physical spinor coordinates.

\section{Product reduction and physical equations}
\label{sec:product-reduction}\label{sec:product}

We now specialize to
\[
Y=S^1\times X,
\]
with \(X\) a Calabi--Yau threefold, and restrict the seven-dimensional
theory to the product locus described below. Circle reduction then produces
a string Courant algebroid and variational functional over \(X\). On the
same locus, the heterotic supersymmetry and instanton equations split
naturally into holomorphic equations and conditions depending on the
Hermitian metric and real representative. This decomposition provides the
geometric motivation for the holomorphic lifting problem constructed in
Section~\ref{sec:holomorphic-lifting}.

\subsection{Product data and Courant reduction}
Let $X$ be a closed Calabi--Yau threefold with reference forms
$(\omega_0,\Psi_0)$ and constant reference dilaton $\Phi_0$, and set
$Y=S^1\times X$. Let $\partial_\theta$ generate the circle action and let
$\eta$ be the invariant closed one-form with
$\eta(\partial_\theta)=1$. Set
\[
 L=\int_{S^1}\eta>0,\qquad
 \vol_Y=\eta\wedge\vol_X,\qquad
 \frac i8\Psi_0\wedge\overline\Psi_0=\frac{\omega_0^3}{6}.
\]
Choose the spin structure on $S^1$ for which the background parallel spinor
on $Y$ is $S^1$-invariant. Fix the standard-embedding bundle
identifications so that the reference tangent and gauge connections have no
circle component. We also fix a differential-gerbe presentation on $X$ and
pull it back to $Y$.

For an invariant metric, let $R$ be the circle radius and $v$ its
horizontal mixing one-form. The unit circle coframe is $e^7$, where
\begin{equation}\label{eq:product-radius}
 R=\sqrt{g(\partial_\theta,\partial_\theta)},\qquad
 v=R^{-2}g(\partial_\theta,-)-\eta,
 \qquad e^7=R(\eta+v).
\end{equation}
The spinor determines an $SU(3)$ structure $(\omega,\Psi)$.
Write $\rho_X=\operatorname{Re}\Psi$ and
$\widehat\rho_X=\operatorname{Im}\Psi$. Our phase convention is
\begin{equation}\label{eq:product-forms}
 \varphi=e^7\wedge\omega+\rho_X,\qquad
 \psi=\tfrac12\omega^2-e^7\wedge\widehat\rho_X,
 \qquad \omega\wedge\Psi=0,
\end{equation}
with $i\Psi\wedge\overline\Psi/8=\omega^3/6$. Define
\begin{equation}\label{eq:product-weighted-volume}
 w=e^{-2\Phi},\qquad \Omega=w\Psi,
 \qquad \Omega_0=e^{-2\Phi_0}\Psi_0.
\end{equation}

Decompose the relative gauge coordinate and gerbe two-form as
\[
 a=a_1+\eta a_0,\qquad B=b_2+\eta\wedge b_1.
\]
The product locus is
\begin{equation}\label{eq:product-base-locus}
 R=1,\qquad v=0,\qquad b_1=0,\qquad a_0=0.
\end{equation}
Thus $g=\eta^2+g_X$, the gerbe representative is horizontal, and the
relative gauge connection has no circle component.

Let $\mathbb E_Y$ be the string Courant algebroid in the product splitting.
The isotropic circle action is generated by $\partial_\theta+0+0$, and we
write
\[
 K_c=\mathbb R\partial_\theta.
\]

\begin{proposition}\label{prop:real-circle-reduction}
On the product locus \eqref{eq:product-base-locus},
\begin{equation}\label{eq:real-circle-reduction}
 \mathbb E_X=(K_c^\perp/K_c)/S^1
 \simeq TX\oplus\ad P_T\oplus\ad P_G\oplus T^*X.
\end{equation}
This is the full real transitive string Courant algebroid determined by the
horizontal connection and flux.
\end{proposition}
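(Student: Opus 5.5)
I will compute the reduction directly in the product splitting, following the isotropic Courant reduction of \cite{BCGReduction}.

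First, I fix the generator $k=\partial_\theta+0+0$ and note that it is isotropic. By \eqref{eq:ii-courant-pairing}, $\langle k,k\rangle=\tfrac12(0+0)+\mathfrak c_z(0,0)=0$. On the product locus \eqref{eq:product-base-locus}, the connection $C=(C_T,A)$ has no $\eta$-component, because $a_0=0$ and the reference connections have no circle component. The flux $H$ is then horizontal and $\theta$-independent. Indeed, $b_1=0$, the gerbe presentation is pulled back from $X$, and the Chern--Simons transgression of horizontal connections is horizontal. Next I compute the orthogonal complement: $K_c^\perp=\{X+u+\xi:\xi(\partial_\theta)=0\}=TY\oplus\ad P_T\oplus\ad P_G\oplus\operatorname{Ann}(\partial_\theta)$. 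The quotient $K_c^\perp/K_c$ removes the $\partial_\theta$ direction from $TY$, which leaves $TX\oplus\ad P_T\oplus\ad P_G\oplus T^*X$ pulled back to $Y$. Taking $S^1$-invariant sections then gives sections over $X$.

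Second, I check that the bracket descends. For the reduction theorem I need the lifted action $[k,-]$ to be the infinitesimal $S^1$ action on $\mathbb E_Y$, and I need $K_c^\perp$ and $K_c$ to be preserved. In the splitting, the Dorfman bracket gives
\[
[\partial_\theta,X+u+\xi]=\mathcal L_{\partial_\theta}X+\bigl(\partial_\theta\cdot u-F_C(\partial_\theta,X)\bigr)+\mathcal L_{\partial_\theta}\xi-\iota_{\partial_\theta}\iota_XH-(\text{curvature term in }u).
\]
The terms involving $\iota_{\partial_\theta}F_C$ and $\iota_{\partial_\theta}H$ vanish because $C$ and $H$ are horizontal. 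Because $C$ has no circle component, $\partial_\theta\cdot u$ is the ordinary Lie derivative. The bracket of $k$ is therefore pure Lie derivative, so it generates the $S^1$ action and preserves $K_c$ and $K_c^\perp$. The general isotropic reduction result then gives a Courant algebroid structure on $(K_c^\perp/K_c)/S^1$.

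Third, I identify the reduced structure. For invariant sections of the form $X+u+\xi$ with horizontal components, the pairing restricts to $\tfrac12(\xi(Y)+\eta(X))+\mathfrak c_z(u,v)$ on $X$. The anchor is the projection to $TX$. The bracket is the string bracket built from the horizontal connection $C|_X$ and the flux $H|_X$, and the relation $dH=\mathfrak c_z(F_C,F_C)$ restricts to $X$. This presents $\mathbb E_X$ as the transitive string Courant algebroid of \eqref{eq:ii-courant-extension} over $X$ determined by $(C|_X,H|_X)$, with the full $\ad P_T\oplus\ad P_G$. I work at $z\neq0$ so that the pairing is nondegenerate, and then specialize to $z=\epsilon$ as in Section~\ref{sec:geometry}.

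The step I expect to be the main obstacle is the second one: verifying that the generator acts by Lie derivative. This requires careful bookkeeping of the $\eta$-components of the connection and the flux, including the gerbe gauge terms and the Chern--Simons term. Since the data are pulled back and $a_0=b_1=0$, I expect each of these terms to vanish.
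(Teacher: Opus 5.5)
Your proposal is correct and takes essentially the same route as the paper. Both identify $K_c^\perp/K_c$ with $\pi_X^*(TX\oplus\ad P_T\oplus\ad P_G\oplus T^*X)$ in the product splitting, use horizontality of the connection and flux to control the Dorfman bracket (you phrase this as $\partial_\theta$ acting by Lie derivative), restrict the anchor, pairing and anomaly identity, and invoke isotropic reduction \cite{BCGReduction}. One justification needs correcting, though it does not change the argument: the tangent connection $C_T$ and the leading part of $A$ are horizontal because the metric is a product ($R=1$, $v=0$) and $b_1=0$, through the determined Hull graph, not because the reference connections have no circle component.
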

\begin{proof}
In the product splitting,
\[
 K_c^\perp/K_c
 \cong \pi_X^*\bigl(TX\oplus\ad P_T\oplus\ad P_G\oplus T^*X\bigr),
\]
where $\pi_X:Y\to X$ is the product projection. The horizontal
connection and flux make the Dorfman bracket preserve the displayed
summands on invariant sections. The
anchor, pairing and anomaly identity are the restrictions of those on
$\mathbb E_Y$. Hence the quotient satisfies the hypotheses of isotropic
Courant reduction \cite{BCGReduction} and has the stated form.
\end{proof}

Write $(\mathbb E_Y,V_+,[\chi],\varrho,e)$ for the geometric data of
Section~\ref{sec:courant-g2}. Let $a$ be the anchor and
$j:T^*Y\to\mathbb E_Y$ the cotangent inclusion. The positive lift
$v_+=(a|_{V_+})^{-1}$ defines
\[
 g(U,V)=\langle v_+U,v_+V\rangle,\qquad
 s(U)=v_+U-j(gU).
\]
On the product locus, $s(\partial_\theta)=\partial_\theta$ in the product
splitting, so the isotropic line is the same $K_c$ as in
the preceding quotient.

\begin{proposition}[Intrinsic product data]\label{prop:intrinsic-product-geometry}
Let $\pi_c:K_c^\perp\to K_c^\perp/K_c$ be the quotient map. The full circle
quotient carries the positive rank-six bundle
\begin{equation}\label{eq:intrinsic-product-positive}
 V_{+,X}=\pi_c(V_+\cap K_c^\perp)/S^1.
\end{equation}
Its anchor metric is the restriction of $g$ to the horizontal tangent
bundle. The spinor line and oriented unit circle determine the $SU(3)$
structure in \eqref{eq:product-forms}. The normalized quotient density and weighted complex volume are
\begin{equation}\label{eq:intrinsic-product-density}
 \varrho_X=\frac1L\int_{S^1}\varrho=w\vol_6,\qquad
 \Omega=w\Psi,\qquad
 \frac i8\Omega\wedge\overline\Omega=w^2\vol_6,
 \qquad w=e^{-2\Phi}.
\end{equation}
\end{proposition}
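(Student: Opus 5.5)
The plan is to compute directly in the product splitting on the locus \eqref{eq:product-base-locus}. Since $R=1$ and $v=0$, the metric is $g=\eta^2+g_X$. Because $b_1=0$, $a_0=0$ and the reference connections have no circle component, the flux $H$ and the connection $C$ are horizontal and $S^1$-invariant.

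\textbf{Positive bundle and anchor metric.} Here $V_+$ is the graph
\[
 V_+=\{\,U+j(gU):U\in TY\,\},
\]
so $v_+\partial_\theta=\partial_\theta+\eta$. A vector $v_+U$ lies in $K_c^\perp$ exactly when
\[
 \langle v_+U,\partial_\theta\rangle=\tfrac12 g(U,\partial_\theta)=\tfrac12\eta(U)=0,
\]
that is, when $U$ is horizontal. Hence $V_+\cap K_c^\perp=v_+(\pi_X^*TX)$. This meets $K_c$ trivially, because $K_c$ is null while $V_+$ is positive definite. It follows that $\pi_c$ maps $V_+\cap K_c^\perp$ isomorphically onto a rank-six subbundle. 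Under the identification of Proposition~\ref{prop:real-circle-reduction}, this subbundle is the graph of $g_X$, which is $S^1$-invariant and therefore descends to $V_{+,X}$. The pairing on the quotient is the restriction of the pairing, so the anchor metric of $V_{+,X}$ is $g(v_+U,v_+V)=g_X(U,V)$ for horizontal $U,V$. Positivity follows from positivity on $V_+$.

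\textbf{$SU(3)$ structure.} The $G_2$ form $\varphi$ is determined by $[\chi]$ and $g$. On the product locus $e^7=\eta$, and $\eta$ is the oriented unit circle coframe. Contracting, set
\[
 \omega=\iota_{\partial_\theta}\varphi,\qquad \rho_X=\varphi-\eta\wedge\omega .
\]
Then $\omega$ and $\rho_X$ are horizontal and invariant, because $\chi$ is $S^1$-invariant by the choice of spin structure. Standard $G_2$ algebra then shows that $(\omega,\Psi=\rho_X+i\widehat\rho_X)$ is an $SU(3)$ structure for $g_X$. Here $\widehat\rho_X$ is defined so that $\psi=*\varphi$ takes the form in \eqref{eq:product-forms}. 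The normalization $i\Psi\wedge\overline\Psi/8=\omega^3/6=\vol_6$ and the identity $\omega\wedge\Psi=0$ come from the pointwise model at the flat $\mathbb R\oplus\mathbb C^3$ structure. The only thing to check is that the phase and orientation conventions make $\vol_Y=\eta\wedge\vol_X$ consistent with $\varphi\wedge\psi=7\vol_Y$.

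\textbf{Density and weighted volume.} By \eqref{eq:ii-density}, $\varrho=e^{-2\Phi}\vol_g=w\,\eta\wedge\vol_6$, and $\Phi$ is invariant. Integrating over the fibre gives $\int_{S^1}\varrho=Lw\vol_6$, so $\varrho_X=w\vol_6$. Then
\[
 \tfrac i8\Omega\wedge\overline\Omega=w^2\,\tfrac i8\Psi\wedge\overline\Psi=w^2\vol_6 .
\]

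The main obstacle is a convention issue, not a conceptual one. The sign and phase of $\widehat\rho_X$ must match both $\psi=*\varphi$ and the stated normalization of $\Psi$. I would settle this by checking it once at a point, in an adapted orthonormal frame with $\varphi=e^{7}\wedge(e^{12}+e^{34}+e^{56})+\operatorname{Re}\bigl((e^1+ie^2)(e^3+ie^4)(e^5+ie^6)\bigr)$.
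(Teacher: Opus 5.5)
Your proposal is correct and follows the paper's proof essentially step for step. The pairing computation $\langle v_+U,s(\partial_\theta)\rangle=\tfrac12 g(U,\partial_\theta)$ identifies $V_+\cap K_c^\perp$ with $v_+$ of the horizontal bundle, and the density follows from $\varrho=w\,\eta\wedge\vol_6$. The only difference is in the $SU(3)$ step: the paper uses the chiral spinors $\chi_\pm=(\chi\mp i\gamma_{\partial_\theta}\chi)/\sqrt2$ and their $\Spin(6)$ stabilizer, whereas you contract the $G_2$ form with the unit vector $\partial_\theta$ and check conventions in the flat model, which is equivalent.
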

\begin{proof}
Put $p_+=v_+\partial_\theta$. Since
$s(\partial_\theta)=p_+-j\eta$, the Courant pairing gives
\[
 \langle p_+,p_+\rangle=1,\qquad
 \langle v_+U,s(\partial_\theta)\rangle
 =\tfrac12g(U,\partial_\theta),
\]
and hence
\[
 V_+\cap K_c^\perp=v_+((\partial_\theta)^\perp).
\]
This bundle meets $K_c$ trivially, and its quotient anchor is an isomorphism
onto $TX$ with pairing $g_6=g|_{(\partial_\theta)^\perp}$.

Using the spinor pairing $\beta_\Delta$ of
Section~\ref{sec:courant-g2}, define
\[
 \chi_\pm=\frac{\chi\mp i\gamma_{\partial_\theta}\chi}{\sqrt2}.
\]
Then
\[
 \gamma_{\partial_\theta}\chi_\pm=\pm i\chi_\pm,
 \qquad
 \beta_{\Delta,\mathbb C}(\chi_-,\chi_+)=1.
\]
The stabilizer of the resulting unit chiral spinor in $\Spin(6)$ is
$SU(3)$. The associated forms are
\[
 \omega=\iota_{\partial_\theta}\varphi,\qquad
 \rho_X=\varphi|_{(\partial_\theta)^\perp},\qquad
 \widehat\rho_X=-\iota_{\partial_\theta}\psi,
\]
and hence satisfy \eqref{eq:product-forms}. Finally,
\[
 \varrho=w\,\eta\wedge\vol_6,
\]
so integration over the circle gives
\eqref{eq:intrinsic-product-density}.
\end{proof}

\subsection{The reduced functional}
\begin{theorem}[Off-shell functional reduction]\label{thm:functional-reduction}
For every formal field in the product locus,
\begin{equation}\label{eq:product-functional}
W_{\mathrm{Courant},G_2}\big|_{\rm product}
 =\frac L4\operatorname{Im}\int_X(H+i\,\dd\omega)\wedge\Omega.
\end{equation}
\end{theorem}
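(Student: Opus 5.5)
We start from the exterior form of the functional in \eqref{eq:ii-dirac-functional},
\[
 W_{\mathrm{Courant},G_2}=\frac14\int_Y e^{-2\Phi}\Bigl(H\wedge\psi-\tfrac12\,\dd\varphi\wedge\varphi\Bigr),
\]
which holds on the whole Hull-compatible configuration space. We then substitute the product-locus data. On the locus \eqref{eq:product-base-locus} we have $R=1$ and $v=0$, so $e^7=\eta$ is closed. The forms are $\varphi=\eta\wedge\omega+\rho_X$ and $\psi=\tfrac12\omega^2-\eta\wedge\widehat\rho_X$ by \eqref{eq:product-forms}. Every $X$-form ($\omega,\rho_X,\widehat\rho_X,\Phi,H$) is $S^1$-invariant and horizontal. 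For $H$ this holds because $b_1=0$, $a_0=0$ and the reference connections have no circle component, so the transgression term is horizontal as well. Since the integrand is a top form on $Y=S^1\times X$, only terms containing exactly one factor of $\eta$ survive. Each such term integrates over the circle to $L$ times an integral over $X$, after orienting by $\vol_Y=\eta\wedge\vol_X$.

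Next we compute each piece. The flux term gives $H\wedge\psi\mapsto -H\wedge\eta\wedge\widehat\rho_X=-\eta\wedge H\wedge\widehat\rho_X$, since $H\wedge\omega^2$ is a horizontal $7$-form and vanishes, and moving $\eta$ past the odd form $H$ gives a sign. For the torsion term, $\dd\varphi=-\eta\wedge\dd\omega+\dd\rho_X$ because $\dd\eta=0$. Then
\[
 \dd\varphi\wedge\varphi=-\eta\wedge\dd\omega\wedge\rho_X+\dd\rho_X\wedge\eta\wedge\omega+(\text{horizontal, zero}),
\]
and the second term equals $\eta\wedge\dd\rho_X\wedge\omega$. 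Using $\omega\wedge\rho_X=0$ we get $\dd\rho_X\wedge\omega=\dd\omega\wedge\rho_X$ up to an exact term (Leibniz with even $\omega$). After the weight $e^{-2\Phi}$ is included this is no longer exact, so we keep both terms explicitly and combine them using $\omega\wedge\Psi=0$, which gives $\dd(w\rho_X)\wedge\omega=\dd\omega\wedge w\rho_X+\dd(\ldots)$ only when weighted consistently. The cleaner route is to write the integrand directly in terms of $\Omega=w\Psi$. Then $\operatorname{Im}\bigl((H+i\,\dd\omega)\wedge\Omega\bigr)=w\bigl(H\wedge\widehat\rho_X+\dd\omega\wedge\rho_X\bigr)$. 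We verify that the $\eta$-components assemble to $\tfrac L4\int_X w\,(\pm H\wedge\widehat\rho_X\pm\dd\omega\wedge\rho_X)$ with matching signs. This requires one integration by parts on $X$: $\int_X w\,\dd\rho_X\wedge\omega=\int_X \dd w\wedge\rho_X\wedge\omega+\int_X w\,\rho_X\wedge \dd\omega$, and the first term vanishes because $\rho_X\wedge\omega=0$ pointwise. Hence the two torsion contributions combine to a multiple of $\int_X w\,\dd\omega\wedge\rho_X$, with the $\tfrac12$ cancelling to coefficient one.

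The main obstacle is sign and orientation bookkeeping. Three choices must be fixed consistently: the orientation $\eta\wedge\vol_X$ versus the phase convention \eqref{eq:product-forms}, the sign produced by commuting $\eta$ through odd forms, and the identification of $\operatorname{Im}$ with $\widehat\rho_X$ on the $H$-term and with $\rho_X$ on the $\dd\omega$-term. To check these, we would first test the formula on a torsion-free Calabi--Yau background, where both sides vanish. We would then test on a pure-flux perturbation, where $\eta\wedge\omega^3$ has positive orientation and the sign of $H\wedge\widehat\rho_X$ is fixed directly. Because the identity is off-shell and polynomial in $\epsilon$, it holds for every formal field on the product locus. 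The $\alpha'$ corrections enter only through $H$, which is kept as a single horizontal three-form throughout.
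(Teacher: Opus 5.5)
Your route is the paper's: substitute \eqref{eq:product-forms} into the exterior form \eqref{eq:ii-dirac-functional}, keep the terms with one factor of $\eta$, use $\omega\wedge\rho_X=0$, and integrate over the circle. The gap is in the signs. The statement is essentially a sign-and-coefficient identity, yet you leave the outcome as $\pm H\wedge\widehat\rho_X\pm\dd\omega\wedge\rho_X$ to be fixed by tests, and both signs you do commit to are wrong.

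First, $H$ is a three-form, so $H\wedge\eta=-\eta\wedge H$ and $H\wedge\psi=-H\wedge\eta\wedge\widehat\rho_X=+\eta\wedge H\wedge\widehat\rho_X$. With your sign the flux would enter as $-\operatorname{Im}(H\wedge\Omega)$, and the stated formula would be false.

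Second, $\omega\wedge\rho_X$ vanishes identically. Hence $\dd(\omega\wedge\rho_X)=\dd\omega\wedge\rho_X+\omega\wedge\dd\rho_X=0$ pointwise, which gives
\[
 \dd\rho_X\wedge\omega=-\dd\omega\wedge\rho_X
\]
exactly, not $+\dd\omega\wedge\rho_X$ up to an exact term. With your sign the two terms of $\dd\varphi\wedge\varphi=\eta\wedge(\dd\rho_X\wedge\omega-\dd\omega\wedge\rho_X)$ would cancel. Because the relation is pointwise, the weight $w$ is irrelevant and no integration by parts is needed. Your later integration-by-parts formula gives the right coefficient only after using $\rho_X\wedge\dd\omega=-\dd\omega\wedge\rho_X$ for two three-forms. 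You never state this, and it contradicts your earlier claim.

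With these corrections the computation closes without any testing. One gets $-\tfrac12\dd\varphi\wedge\varphi=\eta\wedge\dd\omega\wedge\rho_X$, so the integrand is $\tfrac14w\,\eta\wedge(H\wedge\widehat\rho_X+\dd\omega\wedge\rho_X)$. Circle integration with the fixed orientation $\vol_Y=\eta\wedge\vol_X$ then gives
\[
 \frac L4\int_Xw\,(H\wedge\widehat\rho_X+\dd\omega\wedge\rho_X)=\frac L4\operatorname{Im}\int_X(H+i\,\dd\omega)\wedge\Omega .
\]
The orientation is not a free choice: the normalization $\frac i8\Psi\wedge\overline\Psi=\omega^3/6$ gives $\varphi\wedge\psi=\frac76\,\eta\wedge\omega^3$, a positive multiple of $\eta\wedge\vol_X$. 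Your Calabi--Yau test could not detect a sign error in any case, since both sides vanish there.

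A minor point concerns horizontality of $H$. The transgression involves the determined connection $\Theta^u=\Theta^{\rm LC}(g)+C_g(\dd_Xb_2)$ and $A=(\Theta^u)^{[0]}+a_1$. These have no circle component because the metric is a product and $b_1=a_0=0$; that the reference connections lack a circle component is not by itself the reason.
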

\begin{proof}
The generalized Dirac identity \cite[Propositions~4.1--4.2]{BrongersPaperI}
gives \eqref{eq:ii-dirac-functional}. Since the product density is
$w\eta\wedge\vol_X$ and $H$ is horizontal,
\[
 H\wedge\psi=\eta\wedge H\wedge\widehat\rho_X.
\]
Differentiating $\omega\wedge\rho_X=0$ gives
\[
 \dd\varphi\wedge\varphi
 =\eta\wedge(\dd\rho_X\wedge\omega-\dd\omega\wedge\rho_X)
 =-2\eta\wedge\dd\omega\wedge\rho_X.
\]
After integration over the circle,
\[
 W_{\mathrm{Courant},G_2}\big|_{\rm product}
 =\frac L4\int_Xw\bigl(H\wedge\widehat\rho_X
                 +\dd\omega\wedge\rho_X\bigr),
\]
which is \eqref{eq:product-functional}. No field equations are used in this
reduction.
\end{proof}

\subsection{The physical equations}\label{sec:product-bps}
Write $\rho=\operatorname{Re}\Psi$,
$\widehat\rho=\operatorname{Im}\Psi$, $w=e^{-2\Phi}$ and
$\Omega=w\Psi$. The supersymmetry and instanton (BPS) residuals of
\cite[Section~2.5]{BrongersPaperI} are
\begin{equation}\label{eq:physical-bps-residuals}
 \begin{aligned}
 C&=\dd(w\psi),& \Sigma&=\dd\varphi\wedge\varphi,\\
 T&=H+*_7(\dd\varphi-2\dd\Phi\wedge\varphi),&
 I&=F_A\wedge\psi^{[0]}.
 \end{aligned}
\end{equation}
Here $C,\Sigma,T$ are $\mathbb D$-valued and $I$ is
$\mathbb D_0$-valued.
The physical problem also imposes $H^{[0]}=0$
\cite[Section~2]{BrongersPaperI}.
Let $J$ be defined by $\omega(X,Y)=g(JX,Y)$, and let $\mathcal J$
act on forms by applying $J$ to every argument. Thus
$\mathcal J\dd z^a=i\dd z^a$. When $J$ is integrable, set
$\dd^c=i(\bar\partial-\partial)$.

Throughout this subsection, we consider real formal deformations of the
fixed Calabi--Yau background satisfying the product conditions
\eqref{eq:product-base-locus}, with the invariant spin structure and the
fixed horizontal gerbe presentation. The geometric variables are
$\mathbb D$-valued, while
\[
 A\in\mathcal A(P_G)\otimes\mathbb D_0,
\]
and \(H,\Theta_H\) are determined by
\eqref{eq:determined-graph-small}. Type decompositions of gauge forms are
taken with respect to \(J^{[0]}\).

\begin{proposition}[BPS equations on the product locus]
\label{prop:product-bps-dictionary}
Under these assumptions,
\[
 C=\Sigma=T=I=0,\qquad H^{[0]}=0
\]
if and only if
\begin{equation}\label{eq:product-bps-six}
 \begin{aligned}
 d\Omega&=0,
 &d(w\omega^2)&=0,\\
 H&=i(\partial-\bar\partial)\omega=-d^c\omega,
 &F_A^{0,2}&=0,\\
 F_A\wedge(\omega^2)^{[0]}&=0,
 &d_Xb_2^{[0]}&=0.
 \end{aligned}
\end{equation}
\end{proposition}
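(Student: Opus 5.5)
The plan is to substitute the product ansatz into each residual of \eqref{eq:physical-bps-residuals} and split the result into its $\eta$-component and its horizontal component. On the product locus \eqref{eq:product-base-locus} every field is $S^1$-invariant and horizontal, and $\dd\eta=0$. By \eqref{eq:product-forms},
\[
 \varphi=\eta\wedge\omega+\rho,\qquad \psi=\tfrac12\omega^2-\eta\wedge\widehat\rho ,
\]
and $\dd=\dd_X$ on horizontal forms. A form $\alpha+\eta\wedge\beta$, with $\alpha,\beta$ horizontal, vanishes if and only if $\alpha=0$ and $\beta=0$. Each seven-dimensional equation therefore becomes a pair of six-dimensional equations. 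I treat the four residuals in increasing order of difficulty. I work with $\mathbb D$-coefficients throughout and reduce modulo $\epsilon$ only where the mixed convention of Section~\ref{sec:courant-g2} requires it. This happens for $I$, whose gauge types are taken with respect to $J^{[0]}$, and for $H^{[0]}$.

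\emph{The residuals $C$, $\Sigma$, $I$ and $H^{[0]}$.} First,
\[
 C=\tfrac12\dd(w\omega^2)+\eta\wedge\dd(w\widehat\rho),
\]
so $C=0$ is equivalent to $\dd(w\omega^2)=0$ together with $\dd(w\widehat\rho)=0$. Second, the proof of Theorem~\ref{thm:functional-reduction} gives $\Sigma=-2\eta\wedge\dd\omega\wedge\rho$, so $\Sigma=0$ is equivalent to $\dd\omega\wedge\rho=0$. Third,
\[
 I=\tfrac12F_A\wedge(\omega^2)^{[0]}-\eta\wedge F_A\wedge\widehat\rho^{[0]} .
\]
The horizontal part gives primitivity. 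For the $\eta$-part I use that, for a real two-form $F$, the map $F\mapsto F\wedge\widehat\rho$ into five-forms kills $\Lambda^{1,1}$ and is injective on $[\![\Lambda^{2,0}]\!]$. Both spaces have real rank six, and this is checked pointwise on $\C^3$. Hence $F_A\wedge\widehat\rho^{[0]}=0$ is equivalent to $F_A^{0,2}=0$. Fourth, by \eqref{eq:determined-graph-small}, $H=\dd B-\frac\epsilon4\cT_3$. Since $b_1=0$ and $B$ is horizontal, $H^{[0]}=\dd_Xb_2^{[0]}$.

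\emph{The torsion residual $T$.} This is the main step. Using $*_7(\eta\wedge\alpha)=*_6\alpha$ and $*_7\alpha=\pm\eta\wedge*_6\alpha$ on horizontal forms, I compute
\[
 \dd\varphi-2\dd\Phi\wedge\varphi
 = w^{-1}\dd(w\rho)-\eta\wedge w^{-1}\dd(w\omega).
\]
Its $*_7$ has a horizontal part $\pm *_6 w^{-1}\dd(w\omega)$ and an $\eta$-part $\pm\eta\wedge*_6w^{-1}\dd(w\rho)$. Since $H$ is horizontal, $T=0$ splits into two equations. The first is $\dd(w\rho)=0$, which together with $\dd(w\widehat\rho)=0$ gives $\dd\Omega=0$. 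The second is $H=\mp*_6w^{-1}\dd(w\omega)$. To convert the second equation into $H=-\dd^c\omega$, I use the $SU(3)$ torsion classes. The condition $\dd\Omega=0$ kills $W_1$ and $W_2$, so $J$ is integrable and $\dd\omega\wedge\rho=0$ holds automatically; this absorbs $\Sigma$. The condition $\dd(w\omega^2)=0$ fixes the Lee form, $W_4=2\,\dd\Phi$ with the paper's normalization. Substituting $\dd\omega=W_4\wedge\omega+W_3$ and using $*_6(W_3)=\mp\mathcal JW_3$ for primitive $(2,1)+(1,2)$ forms, together with $*_6(\alpha\wedge\omega)=\pm\mathcal J\alpha\wedge\omega$ for one-forms $\alpha$ (these are the identities \emph{I must check against the paper's orientation and phase conventions}), the second equation becomes $H=-\mathcal J\dd\omega=-\dd^c\omega$. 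The $W_4$ terms cancel exactly because of the dilaton shift.

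For the converse, every step above is an equivalence once $\dd\Omega=0$ and $\dd(w\omega^2)=0$ are assumed, so I run the same identities backwards. I expect the sign and normalization bookkeeping in the torsion equation to be the main obstacle. This means matching the $*_7$/$*_6$ conventions, the phase convention \eqref{eq:product-forms} and $\dd^c=i(\bar\partial-\partial)$, and checking that the coefficient of $W_4$ produced by $-2\dd\Phi\wedge\varphi$ is exactly the one that balance forces. I would first verify the needed identities pointwise in a unitary frame on $\C^3$ with $\omega=\frac i2\sum\dd z^a\wedge\dd\bar z^a$ and $\Psi=\dd z^1\wedge\dd z^2\wedge\dd z^3$. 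Formality in $\epsilon$ causes no extra difficulty, since all these identities are pointwise algebraic and hold over $\mathbb D$.
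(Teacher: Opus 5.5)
Your decomposition is the paper's. You split each residual into horizontal and $\eta$-components, read the $\eta$-parts of $C$ and $T$ as $\dd\operatorname{Im}\Omega$ and (up to $w$ and $*_6$) $\dd\operatorname{Re}\Omega$, and get $\Sigma$ for free. You then treat $I$ by type and handle the horizontal torsion equation with the Lefschetz decomposition of $\dd\omega$ and the $SU(3)$ Hodge identities. Your treatment of $C$, $\Sigma$, $I$, $H^{[0]}$ and the $\eta$-part of $T$ is correct. One small point: $H$ is a determined field, so its horizontality needs a line of justification. It holds because $\Theta^u=\Theta_{\LC}(g)+C_g(\dd_Xb_2)$ and $A=(\Theta^u)^{[0]}+a_1$ have no circle component, so $\cT_3$ is horizontal.

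There is, however, a concrete error in the step you yourself flagged as the crux. You substitute $\dd\omega=W_4\wedge\omega+W_3$ with $\omega\wedge W_3=0$. Then $\dd(w\omega^2)=2w(W_4-\dd\Phi)\wedge\omega^2$, so balance forces $W_4=\dd\Phi$, not $2\,\dd\Phi$. The dilaton shift therefore does not cancel the Lee term in $U=w^{-1}\dd(w\omega)=\dd\omega-2\dd\Phi\wedge\omega$; it reverses its sign, giving $U=W_3-W_4\wedge\omega$. If the Lee term really dropped out, you would obtain only $H=\pm\mathcal JW_3$, which is not $-\dd^c\omega$ unless $W_4=0$.

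What rescues the argument is exactly the opposite-sign pair of Hodge identities you wrote down. In the paper's orientation, $*_6W_3=\mathcal JW_3$ and $*_6(\lambda\wedge\omega)=-\mathcal J(\lambda\wedge\omega)$, so $*_6U=\mathcal J(W_3+W_4\wedge\omega)=\mathcal J\dd\omega$. The paper packages this as
\[
 T_h=H-*_6U=(H-\mathcal J\dd\omega)+2\mathcal J\bigl((\lambda-\dd\Phi)\wedge\omega\bigr),
\]
valid for any $\dd\omega=\beta+\lambda\wedge\omega$. This isolates the balance term and gives both directions at once.

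Your signs also need fixing. Since $T_h=H-*_6U$, the equation is $H=+\,\mathcal J\dd\omega$. With $\mathcal J\dd z^a=i\,\dd z^a$ and $\dd^c=i(\bar\partial-\partial)$, one has $\mathcal J\dd\omega=i(\partial-\bar\partial)\omega=-\dd^c\omega$. Your final line $H=-\mathcal J\dd\omega=-\dd^c\omega$ therefore contains two compensating slips. With these corrections your argument becomes the paper's proof.
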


The first three equations in \eqref{eq:product-bps-six} are
$\mathbb D$-valued; the gauge equations and the leading-flux equation are
$\mathbb D_0$-valued.
\begin{proof}
On the product locus, the determined flux \(H\) and the connections
\(\Theta_H,A\) are horizontal; see
Appendix~\ref{app:product-bps}. For a horizontal \(k\)-form \(\alpha\),
\[
 *_7\alpha=(-1)^k\eta\wedge *_6\alpha,
 \qquad
 *_7(\eta\wedge\alpha)=*_6\alpha.
\]
Set
\[
 U=d\omega-2d\Phi\wedge\omega,
 \qquad
 V=d\rho-2d\Phi\wedge\rho.
\]
Decomposing each residual into its horizontal and \(\eta\)-components gives
\begin{equation}\label{eq:product-bps-components}
 \begin{array}{c|l|l}
 &\text{horizontal}&\eta\text{-component}\\ \hline
 C&
 \displaystyle C_h=\frac12d(w\omega^2)&
 \displaystyle C_v=d(w\widehat\rho)\\[3pt]
 \Sigma&
 \Sigma_h=0&
 \displaystyle \Sigma_v=d\rho\wedge\omega-d\omega\wedge\rho\\[3pt]
 T&
 \displaystyle T_h=H-*_6U&
 \displaystyle T_v=*_6V\\[3pt]
 I&
 \displaystyle I_h=\frac12F_A\wedge(\omega^2)^{[0]}&
 \displaystyle I_v=-F_A\wedge\widehat\rho^{[0]} .
 \end{array}
\end{equation}
Since \(\Omega=w(\rho+i\widehat\rho)\), these formulas imply
\begin{equation}\label{eq:product-volume-origin}
 d\operatorname{Im}\Omega=C_v,
 \qquad
 d\operatorname{Re}\Omega=w*_6T_v.
\end{equation}
Moreover, differentiating \(\omega\wedge\rho=0\) gives
\[
 \Sigma
 =
 2\eta\wedge\omega\wedge *_6T_v.
\]
Thus \(C_v=T_v=0\) is equivalent to \(d\Omega=0\), and then
\(\Sigma=0\) automatically.

It remains to compare the horizontal equations \(C_h=T_h=0\) with
balance and the torsion equation. Wedge multiplication
\[
 A_\omega:\Omega^1(X)\longrightarrow\Omega^5(X),
 \qquad
 A_\omega(\lambda)=\lambda\wedge\omega^2,
\]
is a pointwise isomorphism. Define
\[
 b_\omega=A_\omega^{-1}(w^{-1}C_h),
 \qquad
 S_H=H-\mathcal Jd\omega.
\]
If
\[
 d\omega=\beta+\lambda\wedge\omega
\]
is the Lefschetz decomposition with $\beta$ primitive, the \(SU(3)\) Hodge identities of
Appendix~\ref{app:product-bps} give
\[
 *_6\beta=\mathcal J\beta,
 \qquad
 *_6(\lambda\wedge\omega)
 =-\mathcal J(\lambda\wedge\omega).
\]
Since
\[
 b_\omega=\lambda-d\Phi,
\]
we obtain
\begin{equation}\label{eq:product-bps-inverse}
 \begin{aligned}
 C_v&=d\operatorname{Im}\Omega,
 &
 T_v&=w^{-1}*_6d\operatorname{Re}\Omega,\\
 T_h&=S_H+2\mathcal J(b_\omega\wedge\omega),
 &
 \Sigma&=2\eta\wedge\omega\wedge *_6T_v.
 \end{aligned}
\end{equation}

Suppose first that
\[
 C=\Sigma=T=I=0,\qquad H^{[0]}=0.
\]
Equation \eqref{eq:product-volume-origin} gives \(d\Omega=0\), while
\(C_h=0\) gives
\[
 d(w\omega^2)=0.
\]
Because \(\Omega\) is nowhere vanishing and decomposable, its closure
implies that the almost-complex structure is integrable. Hence
\[
 \mathcal Jd\omega
 =
 i(\partial-\bar\partial)\omega.
\]
Balance gives \(b_\omega=0\), so \(T_h=0\) and
\eqref{eq:product-bps-inverse} yield
\[
 H=i(\partial-\bar\partial)\omega=-d^c\omega.
\]

For the gauge equations, wedge multiplication by
\(\widehat\rho^{[0]}\) vanishes on real \((1,1)\)-forms and is injective
on the real \((2,0)+(0,2)\) component. Since \(A\) is a real compact
connection,
\[
 I_v=0
 \quad\Longleftrightarrow\quad
 F_A^{0,2}=0.
\]
The remaining component is
\[
 I_h=0
 \quad\Longleftrightarrow\quad
 F_A\wedge(\omega^2)^{[0]}=0.
\]
Finally, on the product locus,
\[
 H_h^{[0]}=d_Xb_2^{[0]},
 \qquad
 H_v^{[0]}=0,
\]
so \(H^{[0]}=0\) gives \(d_Xb_2^{[0]}=0\). This proves
\eqref{eq:product-bps-six}.

Conversely, suppose that \eqref{eq:product-bps-six} holds. From
\(d\Omega=0\) and \eqref{eq:product-volume-origin},
\[
 C_v=T_v=0,
\]
and therefore \(\Sigma=0\). Balance gives \(C_h=0\) and
\(b_\omega=0\). The torsion equation says \(S_H=0\), so
\eqref{eq:product-bps-inverse} gives \(T_h=0\). Thus
\[
 C=\Sigma=T=0.
\]
The conditions \(F_A^{0,2}=0\) and
\(F_A\wedge(\omega^2)^{[0]}=0\), together with reality of \(A\), give
\(I_v=I_h=0\). Finally,
\(d_Xb_2^{[0]}=0\) and the product expression for the leading flux imply
\(H^{[0]}=0\). Hence
\[
 C=\Sigma=T=I=0,\qquad H^{[0]}=0,
\]
as required.
\end{proof}

\begin{corollary}[Leading Calabi--Yau geometry]\label{cor:leading-cy}
A physical solution under the hypotheses of
Proposition~\ref{prop:product-bps-dictionary} satisfies
\[
 \dd\omega^{[0]}=0,\qquad \dd\Psi^{[0]}=0,\qquad \dd\Phi^{[0]}=0.
\]
Its leading geometry is Calabi--Yau with constant dilaton. The
leading Hull connection is the Levi--Civita connection, and its
curvature is an $SU(3)$ instanton.
\end{corollary}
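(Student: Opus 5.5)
The plan is to reduce the system \eqref{eq:product-bps-six} modulo $\epsilon$ and read off the leading geometry, using only the equivalence in Proposition~\ref{prop:product-bps-dictionary}. Write $\omega_0'=\omega^{[0]}$, $\Psi'=\Psi^{[0]}$, $\Phi'=\Phi^{[0]}$ and $w'=e^{-2\Phi'}$. The condition $H^{[0]}=0$ combined with the torsion equation $H=-\dd^c\omega$ gives $\dd^c\omega^{[0]}=0$, taken with respect to $J^{[0]}$. That complex structure is integrable because $\dd\Omega^{[0]}=0$ and $\Omega^{[0]}$ is decomposable and nowhere vanishing.

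The key step is then to show $\dd\omega^{[0]}=0$. From $\dd^c\omega^{[0]}=0$ we get $(\partial-\bar\partial)\omega^{[0]}=0$. Since $\omega^{[0]}$ is a real $(1,1)$-form, $\partial\omega^{[0]}$ has type $(2,1)$ and $\bar\partial\omega^{[0]}$ has type $(1,2)$, so the two pieces vanish separately. Hence $\partial\omega^{[0]}=\bar\partial\omega^{[0]}=0$, that is, $\dd\omega^{[0]}=0$, and the leading metric is Kähler.

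Next use balance and the Lefschetz decomposition. The proof of Proposition~\ref{prop:product-bps-dictionary} shows that balance gives $b_\omega=\lambda-\dd\Phi=0$. Since $\dd\omega^{[0]}=0$, the Lee form $\lambda^{[0]}$ vanishes, so $\dd\Phi^{[0]}=0$ and the dilaton is constant on the connected manifold $X$. Then $\dd\Omega^{[0]}=w'\,\dd\Psi^{[0]}$ with $w'$ a nonzero constant, which gives $\dd\Psi^{[0]}=0$. The normalization $i\Psi\wedge\overline\Psi/8=\omega^3/6$ shows that $(\omega^{[0]},\Psi^{[0]})$ is a torsion-free $SU(3)$ structure, so the leading geometry is Calabi--Yau.

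For the connections, set $\epsilon=0$ in \eqref{eq:determined-graph-small}. This gives $(\Theta^u)^{[0]}=\Theta_{\LC}(e^{[0]})+C_e(\dd B^{[0]})=\Theta_{\LC}(e^{[0]})$, since $H^{[0]}=\dd_X b_2^{[0]}=0$. So the leading Hull connection is Levi--Civita. Its curvature is that of a Ricci-flat Kähler metric with holonomy in $SU(3)$, so it takes values in $\mathfrak{su}(3)$. The curvature is therefore of type $(1,1)$ and primitive, by the symmetry of the Riemann tensor together with Ricci-flatness; equivalently, $F^{0,2}=0$ and $F\wedge\omega^2=0$. This is the $SU(3)$-instanton condition.

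The main obstacle I expect is purely bookkeeping: confirming that $J^{[0]}$, used for the gauge types, coincides with the complex structure determined by $\Omega^{[0]}$, and that reduction mod $\epsilon$ commutes with $\dd^c$. This holds because $J$ depends algebraically on $\Psi$.
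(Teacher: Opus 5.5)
Your proof is correct and uses the same ingredients as the paper's: the leading torsion equation, $H^{[0]}=0$, balance, and invertibility of $A_\omega$. The difference is how you organize the key step.

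The paper works with the raw horizontal residual $T_h=H-*_6U$, where $U=\dd\omega-2\dd\Phi\wedge\omega$. The conditions $H^{[0]}=T_h^{[0]}=0$ give $\dd\omega^{[0]}=2\dd\Phi^{[0]}\wedge\omega^{[0]}$. Balance then reduces to $\dd\Phi^{[0]}\wedge(\omega^{[0]})^2=0$, and a single application of invertibility of $A_{\omega^{[0]}}$ yields $\dd\Phi^{[0]}=0$ and $\dd\omega^{[0]}=0$ together.

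You start instead from the holomorphic form $H=-\dd^c\omega=\mathcal J\dd\omega$ of the torsion equation. Since $\mathcal J$ is invertible on forms, which is what your type argument shows, $H^{[0]}=0$ gives K\"ahlerity immediately. You then use balance to make the dilaton constant. Your Lefschetz step can equally be phrased as $\dd(w\omega^2)^{[0]}=-2w^{[0]}\dd\Phi^{[0]}\wedge(\omega^{[0]})^2$ once $\dd\omega^{[0]}=0$.

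The trade-off is that $H=-\dd^c\omega$ was itself derived in Proposition~\ref{prop:product-bps-dictionary} from $T_h=0$. That derivation used integrability of $J$, the $SU(3)$ Hodge identities \eqref{eq:product-su3-hodge} and balance ($b_\omega=0$). So balance enters your argument twice and you inherit that machinery, whereas the paper's leading-order computation needs none of it.

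In exchange, your route cleanly separates the K\"ahler and constant-dilaton conclusions. For the last two assertions you also give slightly more than the paper does:
\begin{itemize}
\item you obtain torsion-freeness directly from $\dd\omega^{[0]}=\dd\Psi^{[0]}=0$, where the paper argues that a constant-norm holomorphic volume form is parallel;
\item you give an explicit reason, pair symmetry plus Ricci-flatness, why the Levi--Civita curvature is of type $(1,1)$ and primitive.
\end{itemize}
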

\begin{proof}
The equations $H^{[0]}=T_h^{[0]}=0$ give
$\dd\omega^{[0]}=2\dd\Phi^{[0]}\wedge\omega^{[0]}$.
Balance then gives
$\dd\Phi^{[0]}\wedge(\omega^{[0]})^2=0$. Invertibility of
$A_{\omega^{[0]}}$ implies $\dd\Phi^{[0]}=0$ and
$\dd\omega^{[0]}=0$. Weighted-volume closure gives
$\dd\Psi^{[0]}=0$. The holomorphic volume has constant norm for
the K\"ahler metric, so it is parallel and the holonomy lies in
$SU(3)$. The induced curvature is of type $(1,1)$ and primitive.
\end{proof}

\section[The holomorphic lifting L3 algebra]{The holomorphic lifting $L_3$ algebra}
\label{sec:holomorphic-lifting}
\subsection{Hermitian tangent subalgebroid}
\label{subsec:product-geometry}

The product Hull connection induces
\[
 \Theta_{H,X}
 =
 \nabla^{g_6}-\frac12g_6^{-1}H
\]
on \(TX\), but need not preserve the almost-complex structure \(J\).
Let \(P_H\subset P_T\) be the \(U(3)\)-reduction determined by
\((g_6,J)\), and let \(\Gamma\) be a connection on \(P_H\). Writing
\[
 t=\Theta_H-\Gamma,
\]
we use the corresponding Courant splitting with three-form
\[
 H_\Gamma
 =
 H_H-\frac z4\,\mathcal T_{3,P}(\Gamma,t).
\]
It satisfies
\[
 dH_\Gamma
 =
 \frac z4\left(
 P(R_\Gamma,R_\Gamma)-P(F_A,F_A)
 \right).
\]

\begin{proposition}[Hermitian tangent subalgebroid]
\label{prop:hermitian-carrier}
At nonzero anomaly coefficient \(z\), a splitting whose tangent
connection \(\Gamma\) preserves \(P_H\) restricts the real string Courant
algebroid to
\begin{equation}\label{eq:hermitian-carrier}
 \mathbb E_H
 =
 TX\oplus\ad P_H\oplus\ad P_G\oplus T^*X
 \subset\mathbb E_X.
\end{equation}
With respect to
\[
 E_{\mathbb C}
 \simeq
 T^{1,0}X\oplus T^{0,1}X\oplus\mathbb C\,\partial_\theta,
\]
the complexified tangent adjoint algebra is embedded by
\begin{equation}\label{eq:hermitian-carrier-trace}
 \iota:\mathfrak{gl}(3,\mathbb C)\hookrightarrow
       \mathfrak{so}(7,\mathbb C),
 \qquad
 \iota(M)=\operatorname{diag}(M,-M^{\mathsf T},0),
\end{equation}
and the restricted anomaly pairing is
\[
 c_z((M,a),(N,b))
 =
 \frac z4
 \left\{
 \operatorname{tr}_7(ab)-2\operatorname{tr}_3(MN)
 \right\}.
\]
\end{proposition}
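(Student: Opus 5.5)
We argue directly in the splitting determined by \(\Gamma\), using the explicit form of the transitive Courant algebroid from Proposition~\ref{prop:real-circle-reduction}. First we change splitting from the Hull splitting to the \(\Gamma\)-splitting. This is the standard connection-change isomorphism of string Courant algebroids. It shifts the tangent adjoint component by \(t\) and the three-form by the relative transgression. Hence in the new splitting \(\mathbb E_X\simeq TX\oplus\ad P_T\oplus\ad P_G\oplus T^*X\), with three-form \(H_\Gamma\) and connection \(C=(\Gamma,A)\).

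The Bianchi identity \(dH_\Gamma=\frac z4(P(R_\Gamma,R_\Gamma)-P(F_A,F_A))\) follows from \(dH_H=\mathfrak c_z(F_C,F_C)\) and the transgression formula \eqref{eq:ii-transgression-convention}, namely \(d\cT_3(\Gamma,t)=P(R_{\Gamma+t},R_{\Gamma+t})-P(R_\Gamma,R_\Gamma)\). We work at \(z\neq0\) so that the pairing is nondegenerate.

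Next we check closure of \(\mathbb E_H\). In a splitting the Dorfman bracket has the familiar form. Its adjoint part is
\[
 [u,v]+D^C_Xv-D^C_Yu+F_C(X,Y),
\]
and its cotangent part involves \(\iota_Y\iota_XH_\Gamma\), \(\mathfrak c_z(D^Cu,v)\) and \(\mathfrak c_z(F_C(X,\cdot),v)\). If \(\Gamma\) preserves \(P_H\), then \(D^\Gamma\) preserves \(\ad P_H\), \(R_\Gamma\in\Omega^2(\ad P_H)\), and \(\ad P_H\) is a Lie subalgebra bundle. So the adjoint components of brackets of sections of \(\mathbb E_H\) stay in \(\ad P_H\oplus\ad P_G\), and the remaining components land in \(TX\oplus T^*X\). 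The anchor and pairing restrict.

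The restricted pairing is nondegenerate because \(P\) restricted to \(\mathfrak u(3)\subset\mathfrak{so}(6)\) is nondegenerate: it is a multiple of \(\operatorname{tr}_3\) restricted to the compact form. Hence \(\mathbb E_H\) is a transitive Courant subalgebroid. Its transitivity and the presence of \(T^*X\) make it a string Courant algebroid for \(P_H\times P_G\).

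For the trace formula we complexify \(E=TX\oplus\mathbb R\partial_\theta\). Identify \(\mathbb C^6\) with \(T^{1,0}\oplus T^{0,1}\), where the complex bilinear metric pairs \(T^{1,0}\) with \(T^{0,1}\). An element \(M\in\mathfrak{gl}(3,\mathbb C)=\mathfrak u(3)_{\mathbb C}\) acts on \(T^{1,0}\) by \(M\). Skewness with respect to the off-diagonal metric forces the action \(-M^{\mathsf T}\) on the dual \(T^{0,1}\), and the action on \(\partial_\theta\) is zero, which gives \eqref{eq:hermitian-carrier-trace}. Then
\[
 \operatorname{tr}_7(\iota M\,\iota N)=\operatorname{tr}_3(MN)+\operatorname{tr}_3(M^{\mathsf T}N^{\mathsf T})=2\operatorname{tr}_3(MN).
\]
Substituting into \(\mathfrak c_z=\frac z4(P(u_T,v_T)-P(u_G,v_G))\) with \(P=-\operatorname{tr}_7\) gives \(\frac z4\{\operatorname{tr}_7(ab)-2\operatorname{tr}_3(MN)\}\).

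The main obstacle is bookkeeping for the splitting change. We must check that the cotangent component of the Dorfman bracket built from \(H_\Gamma\) is consistent, that is, that the Jacobi anomaly cancels with the new Bianchi identity. We would cite the general connection-change theorem for transitive Courant algebroids (Chen--Stiénon--Xu / \cite{StringAlgebroids}) rather than recompute. Reality and the sign conventions of \(\cT_{3,P}\) then only need comparison with \eqref{eq:ii-transgression-convention}.
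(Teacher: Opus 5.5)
Your proposal is correct and follows essentially the same route as the paper. Closure of \(\mathbb E_H\) comes from \(D_\Gamma\) preserving \(\ad P_H\), from \(R_\Gamma\in\Omega^2(\ad P_H)\) and from \([\mathfrak u(3),\mathfrak u(3)]\subset\mathfrak u(3)\); the pairing follows from \(\operatorname{tr}_7(\iota(M)\iota(N))=2\operatorname{tr}_3(MN)\); and the transgressed three-form \(H_\Gamma\) satisfies the restricted anomaly identity. Your explicit splitting change, the derivation of \(\iota\) from skewness in the null frame, and the nondegeneracy check only spell out steps the paper's proof takes for granted.
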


\begin{proof}
Since \(\Gamma\) preserves \(P_H\),
\[
 D_\Gamma\Omega^\bullet(\ad P_H)
 \subset\Omega^{\bullet+1}(\ad P_H),
 \qquad
 R_\Gamma\in\Omega^2(\ad P_H).
\]
Together with
\(
[\mathfrak u(3),\mathfrak u(3)]\subset\mathfrak u(3)
\),
this shows that the covariant-derivative, curvature and adjoint-bracket
terms in the splitting Dorfman bracket preserve
\eqref{eq:hermitian-carrier}; the remaining terms take values in
\(T^*X\). Moreover,
\[
 \operatorname{tr}_7(\iota(M)\iota(N))
 =
 2\operatorname{tr}_3(MN),
\]
so the restricted pairing has the stated form and is nondegenerate.
The transgressed three-form \(H_\Gamma\) satisfies the corresponding
restricted anomaly identity.
\end{proof}

\begin{remark}
The passage from \(\mathbb E_X\) to \(\mathbb E_H\) is a subalgebroid
restriction, not a quotient by complementary tangent directions. Indeed,
the trace-orthogonal complement of
\(\mathfrak u(3)\subset\mathfrak{so}(7)\) is nonzero and proper, hence
cannot be an ideal because \(\mathfrak{so}(7)\) is simple.
\end{remark}

For the comparison with the physical Hull representative, we henceforth
take \(\Gamma\) to be the Ehresmann--Libermann connection of
\((g_6,J)\), characterized by
\[
 \nabla^\Gamma g_6=0,
 \qquad
 \nabla^\Gamma J=0,
 \qquad
 (T_\Gamma)^{1,1}=0.
\]
It depends on the first derivatives of the almost-Hermitian structure
and agrees with the Chern connection when \(J\) is integrable
\cite[Lemma~2.2]{CanonicalHermitian}.
Section~\ref{sec:representatives} compares it with \(\Theta_H\).

\subsection{Transverse isotropic lifts}

Let
\[
 D_\mu=T^{0,1}_\mu X\subset T_{\mathbb C}X
\]
for a deformation of the reference antiholomorphic distribution, and let
$\mathcal I_\mu\subset\Omega^\bullet(X;\mathbb C)$ be the exterior ideal
generated by $\operatorname{Ann}(D_\mu)$.
A transverse isotropic lift is a subbundle
\[
 \mathcal L\subset\mathbb E_{H,\mathbb C},
 \qquad
 a|_{\mathcal L}:\mathcal L\xrightarrow{\sim}D_\mu.
\]
Such a lift determines, in addition to \(D_\mu\), tangent and gauge
partial connections and a string two-form datum.

Fix the Calabi--Yau standard-embedding background and write
\[
 C_0=(\Gamma_0,A_0)
\]
for its tangent and gauge connections. Their curvatures have type
\((1,1)\), and their characteristic forms cancel. The background positive
lift is
\[
 \mathcal L_+=e^{i\omega_0}T^{0,1}X,
\]
which is identified with \(T^{0,1}X\) by the closed transform
\(e^{-i\omega_0}\).

Write
\[
 \rho=(\kappa,\alpha)
\]
for the tangent and gauge connection deformations and set
\[
 C'=C_0+\rho.
\]
For a complex two-form \(B\), define
\[
 G=dB+T_c(C_0,\rho),
\]
where
\[
 T_c(C_0,\rho)
 =
 2c(\rho,F_0)
 +c(\rho,D_{C_0}\rho)
 +\frac13c(\rho,[\rho,\rho]).
\]
The corresponding change of Courant splitting is
\begin{equation}\label{eq:hol-connection-map}
 \begin{split}
 \Phi_{\rho,B}:\mathbb E_{C',G}&\longrightarrow\mathbb E_{C_0,0},\\
 X+r+\xi&\longmapsto
 X+r-\rho(X)+\xi+\iota_XB
 +2c(\rho,r)-c(\rho(X),\rho).
 \end{split}
\end{equation}
Every transverse isotropic lift sufficiently near the background can be
written in the form
\begin{equation}\label{eq:intrinsic-hol-lift}
 \mathcal L=\Phi_{\rho,B}(D_\mu).
\end{equation}
Only the class of \(B\) modulo \(\mathcal I_\mu^2\) enters
\eqref{eq:intrinsic-hol-lift}. Relative to the background type
decomposition, this class is parametrized by
\[
 x\in\Omega^{0,1}(T^{*1,0}X),
 \qquad
 b\in\Omega^{0,2}(X).
\]
We normalize \(x\) so that its Courant cotangent component is \(2x\).
An explicit local representative of \(B\) in these coordinates is
recorded in Appendix~\ref{app:holomorphic-q}.

\begin{proposition}[Involutivity of the lift]
\label{prop:intrinsic-lift-involutivity}
The lift \(\mathcal L\) of \eqref{eq:intrinsic-hol-lift} is involutive
if and only if
\begin{equation}\label{eq:intrinsic-lift-equations}
 [D_\mu,D_\mu]\subset D_\mu,
 \qquad
 F_{C'}=0\pmod{\mathcal I_\mu},
 \qquad
 G=0\pmod{\mathcal I_\mu^2}.
\end{equation}
\end{proposition}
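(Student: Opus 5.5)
The plan is to transport the involutivity question through the Courant isomorphism $\Phi_{\rho,B}$ of \eqref{eq:hol-connection-map} and then compute the Dorfman bracket of $\mathbb E_{C',G}$ on sections of $D_\mu$, viewed as a subbundle of that algebroid. Since $\Phi_{\rho,B}$ is an isomorphism of Courant algebroids $\mathbb E_{C',G}\to\mathbb E_{C_0,0}$ (this is the standard change-of-splitting statement: the shift by $\rho$ together with the $B$-field transform intertwines brackets precisely because $G=dB+T_c(C_0,\rho)$ and $F_{C'}=F_0+D_{C_0}\rho+\tfrac12[\rho,\rho]$), the lift $\mathcal L=\Phi_{\rho,B}(D_\mu)$ is involutive in $\mathbb E_{H,\mathbb C}$ if and only if $D_\mu\subset\mathbb E_{C',G}$ is involutive. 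I would verify the intertwining property first, at nonzero $z$, by checking that $\Phi_{\rho,B}$ preserves anchor and pairing directly, and that it preserves the bracket by the usual generator-by-generator check on vector fields and adjoint sections; the anomaly identity for $G$ is exactly what makes the cotangent component match.

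Next, in the splitting $(C',G)$ the Dorfman bracket of two vector fields $X,Y\in\Gamma(D_\mu)$ reads
\[
 [X,Y]_{C',G}=[X,Y]-F_{C'}(X,Y)+\iota_Y\iota_XG ,
\]
with the three components in $T_{\mathbb C}X$, the adjoint bundle and $T^*_{\mathbb C}X$ respectively. Hence $D_\mu$ is closed under the bracket if and only if $[D_\mu,D_\mu]\subset D_\mu$, $F_{C'}(X,Y)=0$ for all $X,Y\in D_\mu$, and $\iota_Y\iota_XG=0$ for all $X,Y\in D_\mu$. Isotropy of $D_\mu$ is automatic, because vector fields pair to zero. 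It then remains to translate these pointwise conditions into the ideal language: a two-form vanishes on $D_\mu\times D_\mu$ exactly when it lies in $\mathcal I_\mu$, and a three-form satisfies $\iota_Y\iota_XG=0$ for all $X,Y\in D_\mu$ exactly when it lies in $\mathcal I_\mu^2$. For the latter I would use a local coframe adapted to $D_\mu\oplus\bar D_\mu$: each monomial must contain at least two factors from $\operatorname{Ann}(D_\mu)$. This yields \eqref{eq:intrinsic-lift-equations}.

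Finally, consistency with the remark that only the class of $B$ modulo $\mathcal I_\mu^2$ enters should be checked. Replacing $B$ by $B+\beta$ with $\beta\in\mathcal I_\mu^2$ changes $G$ by $d\beta$. Since $[D_\mu,D_\mu]\subset D_\mu$ makes $\mathcal I_\mu$ a differential ideal, we have $d\beta\in\mathcal I_\mu^2$ modulo terms that vanish once the first condition holds, so the conditions are well defined. The main obstacle I expect is the bracket-intertwining of $\Phi_{\rho,B}$, specifically tracking the factors of $2$ in $2c(\rho,r)$ and the pairing normalization $\tfrac12$, so that the cotangent term produces exactly $T_c(C_0,\rho)$. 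I would handle this by expanding $\Phi_{\rho,B}$ as a composite of the pure $\rho$-shift and the $B$-transform and verifying each piece separately.
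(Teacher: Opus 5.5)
Your proposal is correct and follows the paper's own argument. You pass to the splitting $\mathbb E_{C',G}$, in which the lift is $D_\mu$ itself, write the Dorfman bracket of two sections as $[U,V]-F_{C'}(U,V)+\iota_V\iota_UG$, and read off the vector, adjoint and cotangent components as the three conditions. The extra steps you add are sound and are details the paper leaves implicit: checking that $\Phi_{\rho,B}$ preserves anchor, pairing and bracket; the adapted-coframe argument that vanishing on $\Lambda^2D_\mu$ (for two- resp.\ three-forms) means membership in $\mathcal I_\mu$ resp.\ $\mathcal I_\mu^2$; and independence from the choice of $B$ modulo $\mathcal I_\mu^2$.
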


\begin{proof}
In the splitting \(\mathbb E_{C',G}\), sections \(U,V\) of \(D_\mu\)
satisfy
\[
 U\circ_GV
 =
 [U,V]-F_{C'}(U,V)+\iota_V\iota_UG.
\]
The vector component lies in \(D_\mu\) precisely when \(D_\mu\) is
involutive. The adjoint component vanishes precisely when
\(F_{C'}\) vanishes on \(\Lambda^2D_\mu\), equivalently
\(F_{C'}=0\pmod{\mathcal I_\mu}\). The cotangent component vanishes for
all \(U,V\in D_\mu\) precisely when
\(G=0\pmod{\mathcal I_\mu^2}\).
\end{proof}

At the background let
\[
 \mathcal Q_0
 =
 (T^{0,1}X)^\perp/T^{0,1}X
 \simeq
 T^{1,0}X
 \oplus\ad P_{H,\mathbb C}
 \oplus\ad P_{G,\mathbb C}
 \oplus T^{*1,0}X.
\]
For \(s\in\Gamma(\mathcal Q_0)\) and
\(U\in\Gamma(T^{0,1}X)\), choose
\(\widetilde s\in\Gamma((T^{0,1}X)^\perp)\) representing \(s\) and set
\[
 (\bar\partial_Qs)(U)
 =
 [U,\widetilde s]\bmod T^{0,1}X.
\]
Write
\[
 y=(x,\rho,\mu)
\]
and use the pairing
\begin{equation}\label{eq:hol-signed-pairing}
 \langle y,y'\rangle
 =
 \langle\mu,x'\rangle
 +\langle x,\mu'\rangle
 +c(\rho,\rho').
\end{equation}
Exterior factors in the pairing occur in the displayed order.
Let \([\ ,\ ]\) denote the skew holomorphic Courant bracket on the
background extension, extended to \((0,\bullet)\)-forms by the Koszul
rule. Here $\partial b\in\Omega^{0,\bullet}(T^{*1,0}X)$ has
components $\partial_a b$ in holomorphic coordinates.

Define
\begin{equation}\label{eq:hol-mc-residuals}
 \begin{aligned}
 R_y&=
 \bar\partial_Qy-\frac12[y,y]-\frac12\partial b,\\
 R_b&=
 \bar\partial b
 -\frac12\langle y,\partial b\rangle
 +\frac16\langle y,[y,y]\rangle.
 \end{aligned}
\end{equation}

\begin{proposition}[Maurer--Cartan form of involutivity]
\label{prop:hol-mc-involutivity}
The equations \eqref{eq:intrinsic-lift-equations} are equivalent to
\begin{equation}\label{eq:hol-involutive-equivalence}
 R_y=0,
 \qquad
 R_b=0.
\end{equation}
\end{proposition}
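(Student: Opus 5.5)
Starting from the involutivity criterion of Proposition~\ref{prop:intrinsic-lift-involutivity}, we rewrite its three conditions in background coordinates. Parametrize $D_\mu=(1+\mu)T^{0,1}X$ with $\mu\in\Omega^{0,1}(T^{1,0}X)$. Use the closed transform $e^{-i\omega_0}$ to identify $\mathcal L_+$ with $T^{0,1}X$, and replace $B$ by its class modulo $\mathcal I_\mu^2$, described by $(x,b)$ as in Appendix~\ref{app:holomorphic-q}. The lift $\mathcal L=\Phi_{\rho,B}(D_\mu)$ is then the graph over $T^{0,1}X$ of a section $y=(x,\rho,\mu)$ of $\Omega^{0,1}(\mathcal Q_0)$, corrected by the scalar $b$. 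So rather than verify the three equations one by one, we compute the Dorfman bracket of two lifted sections $U+y(U)$, $V+y(V)$ in the background splitting $\mathbb E_{C_0,0}$. The bracket $\Phi_{\rho,B}$ is a Courant isomorphism $\mathbb E_{C',G}\to\mathbb E_{C_0,0}$; this is the standard gauge transformation formula, valid at $z\neq0$ and then specialized. Hence involutivity of $\mathcal L$ in either splitting is the same condition.

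The computation has three steps. First, for $U,V\in T^{0,1}X$ we expand
\[
[U+y(U),V+y(V)]\bmod \mathcal L .
\]
Its projection to $\mathcal Q_0$, after removing the $\mathcal L$-component, gives $(\bar\partial_Qy-\tfrac12[y,y])(U,V)$ plus the contribution of $b$. The linear term comes from the definition of $\bar\partial_Q$. The quadratic term is the Koszul extension of the skew holomorphic Courant bracket. Cubic terms in $y$ reach the $\mathcal Q_0$ part only through the pairing, and they land in the scalar component. Second, we sort by components. The $T^{1,0}$ part of $R_y$ is the Maurer--Cartan equation $\bar\partial\mu-\tfrac12[\mu,\mu]=0$, i.e.\ $[D_\mu,D_\mu]\subset D_\mu$. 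The adjoint part is $F_{C'}|_{\Lambda^2D_\mu}=0$, rewritten with $\rho$ pulled back along $1+\mu$. The $T^{*1,0}$ part matches the $\mathcal I_\mu/\mathcal I_\mu^2$ component of $G$, and the term $-\tfrac12\partial b$ enters because $\iota_{U}B$ contains $\partial b$ contributions. Third, the remaining condition, $G$ restricted to $\Lambda^3D_\mu$, i.e.\ $G\bmod\mathcal I_\mu$ beyond the transverse part, becomes the $(0,3)$ scalar equation $R_b=0$. Its cubic term $\tfrac16\langle y,[y,y]\rangle$ is the pulled-back $\tfrac13c(\rho,[\rho,\rho])$ of $T_c$ combined with the $\mu$-pullback of $\partial b$ and $x$. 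We then argue the equivalence is triangular. Given the lower components, each condition of \eqref{eq:intrinsic-lift-equations} differs from the corresponding component of $R_y,R_b$ by terms proportional to already-imposed components. Both implications therefore follow by induction on form degree or component order.

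The main obstacle is the scalar equation: matching the coefficients $-\tfrac12$ and $\tfrac16$ and the signs with the pairing \eqref{eq:hol-signed-pairing}, including the normalization that the cotangent component is $2x$. We will first check the cubic identity $\langle y,[y,y]\rangle$ against $T_c$ at $\mu=0$, where it reduces to the Chern--Simons cubic term. The $\mu$-dependence should then be handled by the standard fact that $\tfrac16\langle y,[y,y]\rangle$ is the Courant cubic obstruction, i.e.\ the Roytenberg--Weinstein Lie-2 structure. This is the identity guaranteeing that $d R_b$ closes modulo $R_y$.
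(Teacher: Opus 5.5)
Your route is viable and differs from the paper's, but two steps would not close as written.

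\textbf{Comparison of routes.} Your skeleton matches the paper's. The vector and adjoint components of $R_y$ are integrability of $D_\mu$ and partial flatness, the $T^{*1,0}$ component is the $\mathcal I_\mu/\mathcal I_\mu^2$ part of $G$, the $(0,3)$ part gives $R_b$, and the equivalence is triangular. The paper, however, never leaves the adapted splitting $\mathbb E_{C',G}$. There $\mathcal L=D_\mu$, and Proposition~\ref{prop:intrinsic-lift-involutivity} has already turned involutivity into the three form equations. What remains is an exterior expansion of $G=dB_0+T_c(C_0,\rho)$, with $B_0=2\theta^a\wedge x_a+b-\mu^a\wedge x_a$, in the moving coframe $\theta^a=dz^a+\mu^a$. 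This yields the single identity
\[
 G\equiv-2\theta^a\wedge R_{x,a}+R_b+\langle y,R_y\rangle\pmod{\mathcal I_\mu^2},
\]
from which triangularity is read off. You instead transport back to $\mathbb E_{C_0,0}$ and bracket graph sections, the standard deformation computation for an isotropic subbundle. This makes it transparent why the Courant bracket of $\mathcal Q_0$ appears. The cost is that you must handle the non-skew Dorfman bracket of sections carrying the isotropy correction and the $\iota_Ub$ shift. You must also still rewrite $\Phi_{\rho,B}(D_\mu)$ as a graph in the coordinates $(x,b)$, which is exactly where the paper's $B_0$ enters.

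\textbf{First repair: sign of $\mu$.} The paper's $D_\mu$ is annihilated by $\theta^a=dz^a+\mu^a$, so $D_\mu=(1-\mu)T^{0,1}X$. Likewise, $\Phi_{\rho,B}$ puts $-\rho(X)$ in the adjoint slot. With your $D_\mu=(1+\mu)T^{0,1}X$, integrability reads $\bar\partial\mu+\tfrac12[\mu,\mu]=0$ in the paper's bracket convention. That is not the vector component of $R_y$, so you must either use $(1-\mu)$ or flip the sign of the quadratic terms.

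\textbf{Second repair: the scalar component.} The $\Lambda^3$ component of the bracket, i.e.\ $G$ restricted to $\Lambda^3D_\mu$, is $R_b+\langle y,R_y\rangle$, not $R_b$. At $\mu=0$ it equals $\bar\partial b+c(\rho,\bar\partial_{C_0}\rho)+\tfrac13c(\rho,[\rho,\rho])$. The coefficient $-\tfrac16$ of $c(\rho,[\rho,\rho])$ in $R_b$ is $\tfrac13-\tfrac12$, where the $\tfrac12$ comes from pairing $\rho$ with the adjoint component of $R_y$. So your proposed check of $\tfrac16\langle y,[y,y]\rangle$ against the cubic of $T_c$ at $\mu=0$ succeeds only after this subtraction.

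The $\mu$-dependent terms $-\tfrac12\mu^a\partial_ab$, $-\tfrac12\mu^a\mu^b\partial_ax_b$, $\tfrac12\mu^a\partial_a\mu^bx_b$ and $-\tfrac12\mu^ac(\rho,\nabla_a\rho)$ come out of the same finite expansion. The Roytenberg--Weinstein cubic and the closure of $dR_b$ modulo $R_y$ are Bianchi-type facts. The paper uses them for $Q_N^2=0$ in Theorem~\ref{thm:derived-holomorphic}, but they do not establish this equivalence, which is a purely algebraic identity.
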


\begin{proof}
The vector and adjoint components of \(R_y\) are respectively the
integrability tensor of \(D_\mu\) and the curvature of the induced
partial connection. Their vanishing therefore gives the first two
equations in \eqref{eq:intrinsic-lift-equations}.

Once these components vanish, the image of \(G\) in
\(\Omega^3/\mathcal I_\mu^2\) has two remaining components: its
\(T^{*1,0}X\)-valued component and its scalar \((0,3)\)-component.
The linear coefficient is $-2R_x$. Once $R_x=0$, the scalar
coefficient is $R_b$, as computed in Appendix~\ref{app:holomorphic-q}.
Hence
\[
 G=0\pmod{\mathcal I_\mu^2}
 \quad\Longleftrightarrow\quad
 R_x=R_b=0.
\]
Together with the vector and adjoint equations this is
\eqref{eq:hol-involutive-equivalence}. The corresponding local
filtration calculation is given in Appendix~\ref{app:holomorphic-q}.
\end{proof}

\subsection{The derived lifting algebra}

The tangent partial connection is determined by the deformation of the
complex distribution and is therefore not an independent variable. For
\[
 v\in\Omega^{0,\bullet}(T^{1,0}X)
\]
define
\begin{equation}\label{eq:product-natural-graph}
 K_{\Gamma_0}(v)s
 =
 \nabla_vs-[v,s].
\end{equation}
We write
\[
 \kappa_e=K_{\Gamma_0}(\mu),
 \qquad
 \kappa=K_{\Gamma_0}(\mu^{[0]}).
\]
Thus \(\kappa_e\) is \(\mathbb D_{\mathbb C}\)-valued, whereas the
tangent argument in the anomaly pairing is reduced modulo \(\epsilon\).

The closed background volume $\Omega_0$ trivializes the canonical line.
In local holomorphic coordinates with $\Omega_0=dz^1\wedge dz^2\wedge dz^3$,
write $v=v^a\otimes\partial_a$ and define its action on
$s\in\Omega^{0,\bullet}(X)$ by
\begin{equation}\label{eq:canonical-line-action}
 \rho_\Omega(v)s
 =v^a\wedge\partial_a s
   +(\operatorname{div}_{\Omega_0}v)\wedge s,
 \qquad
 \operatorname{div}_{\Omega_0}v=\partial_a v^a.
\end{equation}
These formulas agree under changes of holomorphic volume coordinates.
They give the action on the Dolbeault complex of the canonical line.

\begin{lemma}[Compatibility of the induced tangent and canonical-line actions]
\label{lem:induced-action-compatibility}
Let
\[
 \widehat K(v)=(K_{\Gamma_0}(v),v)
\]
be the corresponding element of the tangent Atiyah complex. For
homogeneous \(v,w\),
\[
 d_{\rm At}\widehat K(v)=\widehat K(\bar\partial v),
 \qquad
 [\widehat K(v),\widehat K(w)]_{\rm At}
 =
 \widehat K([v,w]),
\]
and
\[
 [\rho_\Omega(v),\rho_\Omega(w)]_{\rm gr}
 =
 \rho_\Omega([v,w]),
 \qquad
 [\bar\partial,\rho_\Omega(v)]_{\rm gr}
 =
 \rho_\Omega(\bar\partial v).
\]
\end{lemma}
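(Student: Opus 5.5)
The plan is to reduce all four identities to local computations in holomorphic coordinates adapted to the flat background data. Near any point we choose holomorphic coordinates $z^a$ with $\Omega_0=dz^1\wedge dz^2\wedge dz^3$. The Chern connection $\Gamma_0$ of the Calabi--Yau metric then has connection one-form of type $(1,0)$. Write $\nabla=\partial+\theta$ in the holomorphic frame $\partial_a$, with $\theta\in\Omega^{1,0}(\mathfrak{gl}_3)$. Since $[v,\partial_b]=-(\partial_b v^a)\partial_a$, the definition \eqref{eq:product-natural-graph} gives
\[
 K_{\Gamma_0}(v)=\partial v+\iota_v\theta,
\]
that is, $K_{\Gamma_0}(v)^a{}_b=\partial_bv^a+v^c\theta_{cb}{}^a$. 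This is a $(0,\bullet)$-form valued in $\End T^{1,0}X$, linear over $\Omega^{0,\bullet}$ in the Koszul sense. The whole lemma then becomes an algebraic check on these formulas.

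For the Atiyah identities, I will use the Atiyah complex differential
\[
 d_{\rm At}(k,v)=\bigl(\bar\partial_{\Gamma_0}k-\iota_vR_{\Gamma_0},\ \bar\partial v\bigr),
\]
with the sign fixed by the splitting convention of \eqref{eq:hol-connection-map}.
\begin{itemize}
\item Since $\bar\partial$ commutes with $\partial_b$, we get $\bar\partial(\partial v)=\partial(\bar\partial v)$.
\item Differentiating the contraction term gives $\bar\partial(\iota_v\theta)=\iota_{\bar\partial v}\theta\pm\iota_v\bar\partial\theta$.
\item The curvature is $R_{\Gamma_0}=\bar\partial\theta$, of type $(1,1)$, so the last term is exactly cancelled by $-\iota_vR_{\Gamma_0}$.
\end{itemize}
This yields $d_{\rm At}\widehat K(v)=\widehat K(\bar\partial v)$.

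For the bracket identity, I will write
\[
 [\widehat K(v),\widehat K(w)]_{\rm At}=\bigl([K(v),K(w)]+\nabla_vK(w)\mp\nabla_wK(v),\ [v,w]\bigr).
\]
I then compare this with $K([v,w])$, using the Jacobi identity for vector fields and the fact that $K(v)s=\nabla_vs-[v,s]=\nabla_s v+T(v,s)$. For the Chern connection on a Kähler manifold the torsion vanishes, so $K(v)=\nabla v$ as an endomorphism. The identity then reduces to
\[
 \nabla_{[v,w]}-[\nabla_v,\nabla_w]=-R(v,w).
\]
This is a $(2,0)$ curvature term, which vanishes.

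I expect the main obstacle to be tracking the Koszul signs for form-valued $v,w$ in this bracket check. To control them, I will verify the identity for $v=\alpha\otimes V$ and $w=\beta\otimes W$, with $V,W$ holomorphic vector fields and $\alpha,\beta\in\Omega^{0,\bullet}$. The bracket $[v,w]$ expands by the Koszul rule into $\alpha\wedge\beta\,[V,W]$ plus terms in which $V$ acts on the components of $\beta$ and $W$ acts on the components of $\alpha$. On holomorphic $V,W$ the identity is classical. The extra derivative terms on $\alpha,\beta$ appear identically on both sides, because $K$ is $\Omega^{0,\bullet}$-linear up to the inhomogeneous term $\partial\alpha\otimes V$, and that term reproduces the same terms in $[v,w]$.

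For the canonical-line action, $\rho_\Omega(v)$ is the Lie derivative $\mathcal L_v$ acting on $s\,\Omega_0$, transported back through the trivialization $\Omega_0$. Indeed,
\[
 \mathcal L_v(s\,dz^{123})=\bigl(v(s)+(\partial_av^a)s\bigr)dz^{123}.
\]
The identity $[\mathcal L_v,\mathcal L_w]=\mathcal L_{[v,w]}$ for the graded Lie derivative along $T^{1,0}$-valued $(0,\bullet)$-forms gives the third identity. The fourth follows from $[\bar\partial,\mathcal L_v]=\mathcal L_{\bar\partial v}$, together with $\bar\partial\Omega_0=0$. Alternatively, one can compute both directly: the divergence term contributes $\partial_a\bar\partial v^a=\operatorname{div}(\bar\partial v)$, and the derivation term commutes as above. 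Invariance under changes of holomorphic volume coordinates was already noted after \eqref{eq:canonical-line-action}, so a local computation suffices.
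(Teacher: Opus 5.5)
Your route is sound in outline but differs from the paper's, and one displayed formula has the wrong sign.

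\textbf{Comparison with the paper.} The paper does not compute in coordinates. It represents the Atiyah algebroid faithfully by first-order operators, $(\kappa,v)\mapsto\nabla_v-\kappa$. By the very definition of $K_{\Gamma_0}$, the element $\widehat K(v)$ is then the operator $s\mapsto[v,s]$. Since $d_{\mathrm{At}}$ and $[\,,]_{\mathrm{At}}$ are graded commutators (with $\bar\partial$, and of such operators), the two Atiyah identities reduce to $[\bar\partial,[v,-]]=[\bar\partial v,-]$ and the Jacobi identity on $\Omega^{0,\bullet}(T^{1,0}X)$. This uses nothing about the torsion or curvature type of $\Gamma_0$.

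You instead use the K\"ahler property to write $K_{\Gamma_0}(v)=\nabla v$, and then discard curvature terms by type. That is legitimate for the Calabi--Yau background. Your $d_{\mathrm{At}}$ check via $R_{\Gamma_0}=\bar\partial\theta$ is correct: it gives $\bar\partial K(v)+R_a\wedge v^a=K(\bar\partial v)$, which is the paper's unary graph identity. It is simply more work than needed. For the canonical line, the two arguments are the same local Leibniz computation.

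\textbf{The sign to fix.} The problem is in your Atiyah bracket. In the convention where $\widehat K$ is a homomorphism, the endomorphism component is $\nabla_vk'\mp\nabla_wk-[k,k']-R(v,w)$, with Koszul signs understood; you wrote $+[k,k']$. With your sign the identity already fails on a flat chart. Take $v=z^1\partial_2$ and $w=z^2\partial_1$. Then $K(v)=E_{21}$ and $K(w)=E_{12}$ are constant, and $[v,w]=z^1\partial_1-z^2\partial_2$. Hence $K([v,w])=E_{11}-E_{22}=-[K(v),K(w)]$, while both covariant-derivative terms vanish.

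Once the sign is corrected, substituting $K(v)s=\nabla_sv$ leaves the residual $R(s,v)w+R(w,s)v+R(v,w)s$, not only $R(v,w)$. It vanishes term by term by type, as you say, or at once by the first Bianchi identity.

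\textbf{The canonical-line operator.} $\rho_\Omega(v)$ is not the full Lie derivative of $s\Omega_0$. For non-holomorphic $v$, $\mathcal L_v\Omega_0$ has the extra $(2,1)$-part $\bar\partial v^a\wedge\iota_{\partial_a}\Omega_0$. What you want is the $(3,\bullet)$-component, that is, $[\iota_v,\partial]$ acting on $\Omega^{3,\bullet}$. For this operator the commutator identities follow from $\partial^2=0$, $\partial\bar\partial+\bar\partial\partial=0$ and $[\bar\partial,\iota_v]=\iota_{\bar\partial v}$. The direct computation you give as a fallback, which is what the paper does, avoids the issue altogether.
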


\begin{proof}
The Atiyah element \((\kappa,v)\) acts on \(T^{1,0}X\) by the
first-order operator
\[
 s\longmapsto\nabla_vs-\kappa(s).
\]
For \(\kappa=K_{\Gamma_0}(v)\), this operator is \(s\mapsto[v,s]\).
The Atiyah differential and bracket are characterized by the
commutator of these first-order operators. The first two identities
therefore follow from
\[
 [\bar\partial,[v,-]]_{\rm gr}=[\bar\partial v,-],
 \qquad
 [[v,-],[w,-]]_{\rm gr}=[[v,w],-].
\]
The representation is faithful, so the corresponding Atiyah elements
agree.

The remaining identities follow from the graded Leibniz rule applied to
\eqref{eq:canonical-line-action} in holomorphic volume coordinates.
\end{proof}

In particular, the tangent graph
\(\kappa_e=K_{\Gamma_0}(\mu)\) is preserved by the unary and binary
operations below, but the ternary operation has no tangent component.

For cochain degree \(p\), set
\begin{equation}\label{eq:hol-graded-carrier}
 Z_p=(x_p,\alpha_p,\mu_p,b_p,s_p),
 \qquad
 -1\leq p\leq4,
\end{equation}
where
\[
 \begin{aligned}
 x_p&\in\Omega^{0,p}(T^{*1,0}X),
 &
 \alpha_p&\in\Omega^{0,p}(\ad P_{G,\mathbb C}),\\
 \mu_p&\in\Omega^{0,p}(T^{1,0}X),
 &
 b_p&\in\Omega^{0,p+1}(X),
 &
 s_p&\in\Omega^{0,p-1}(X).
 \end{aligned}
\]
We set \(\Omega^{0,q}(X)=0\) outside \(0\leq q\leq3\).
The variables \(x_p,\mu_p,b_p,s_p\) are free over
\(\mathbb D_{\mathbb C}\), while \(\alpha_p\) has coefficients in
\((\mathbb D_0)_{\mathbb C}\).

Introduce coefficients \(t_p\) of ghost number \(1-p\) and write
\begin{equation}\label{eq:hol-total-packages}
 Y=\sum_p y_pt_p,
 \qquad
 y_p=(x_p,\alpha_p,\mu_p),
 \qquad
 \mathbf b=\sum_p(-1)^{1-p}b_pt_p,
 \qquad
 \mathbf u=1+\sum_p(-1)^{1-p}s_pt_p.
\end{equation}
In the Courant pairing and bracket, the tangent adjoint component is
always evaluated on the graph
\(\kappa=K_{\Gamma_0}(\mu^{[0]})\).

Define the degree-one derivation \(Q_N=-q\) by
\begin{equation}\label{eq:hol-q}
 \begin{aligned}
 qY&=
 \bar\partial_QY-\frac12[Y,Y]-\frac12\partial\mathbf b,\\
 q\mathbf b&=
 \bar\partial\mathbf b
 -\frac12\langle Y,\partial\mathbf b\rangle
 +\frac16\langle Y,[Y,Y]\rangle,\\
 q\mathbf u&=
 \bar\partial\mathbf u-\rho_\Omega(\mu)\mathbf u.
 \end{aligned}
\end{equation}
For fields of ghost number zero, the Maurer--Cartan equation is
\[
 R_y=0,
 \qquad
 R_b=0,
 \qquad
 q u=0.
\]
On the involutive locus \(R_y=R_b=0\), the last equation is precisely
the closure of the weighted complex volume determined by
\((\mu,u)\).

\begin{theorem}[Derived holomorphic lifting]
\label{thm:derived-holomorphic}
In the formal neighbourhood of the Calabi--Yau standard embedding,
\eqref{eq:hol-q} defines a homological vector field
\[
 Q_N^2=0
\]
on the graded space \eqref{eq:hol-graded-carrier}. Its Taylor operations
satisfy
\[
 \ell_n=0
 \qquad
 (n\geq4).
\]
Its Maurer--Cartan functor is locally identified with the transverse
isotropic-lifting problem of
Proposition~\ref{prop:intrinsic-lift-involutivity}, with tangent graph
\eqref{eq:product-natural-graph}, closed weighted canonical line and the
fixed relative horizontal string presentation.
\end{theorem}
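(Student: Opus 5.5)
We prove $Q_N^2=0$ by splitting the generators into the string block $(Y,\mathbf b)$ and the canonical-line block $\mathbf u$. The string block is handled by comparison with the Courant algebroid identities. The line block is then a module over the tangent part. The degree bound $\ell_n=0$ for $n\ge4$ will follow once we check that $q$ is polynomial of degree at most three in the shifted coordinates, after the graph substitution $\kappa=K_{\Gamma_0}(\mu^{[0]})$.

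First, the string block. Consider the graded space $\mathcal Q_0$ and its Dolbeault extension. We use the skew holomorphic Courant bracket, the pairing \eqref{eq:hol-signed-pairing} and the derivation $\partial$ on $\Omega^{0,\bullet}(X)$ into $\Omega^{0,\bullet}(T^{*1,0}X)$. Together these form the Roytenberg--Weinstein Lie-$2$ data of a holomorphic Courant algebroid: a dg Lie algebra up to homotopy, with Jacobiator $\tfrac16\partial\langle y,[y,y]\rangle$ and derivation $\tfrac12\partial\langle y,y'\rangle$. The expansion $\bar\partial_Q+\text{Courant}$ on the total packages $Y,\mathbf b$ is the standard Chevalley--Eilenberg presentation of this Lie-$2$ algebra, twisted by $\bar\partial$. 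We therefore verify $q^2Y=0$ and $q^2\mathbf b=0$ degree by degree in polynomial order.

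At order one, $\bar\partial_Q^2=0$ holds because the background $\mathcal Q_0$ is a holomorphic bundle. At order two, $\bar\partial_Q$ is a derivation of $[\ ,\ ]$ and $\partial$ commutes with $\bar\partial$. At order three, the Courant Jacobi defect is cancelled by the $\partial\mathbf b$ term acting on the cubic part of $q\mathbf b$. At order four, invariance of the pairing gives $\langle Y,[Y,[Y,Y]]\rangle$ terms that cancel by Jacobi up to $\partial$-exact terms, and isotropy ($\langle\partial b,\partial b'\rangle=0$) kills the remainder.

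Second, the graph. Substituting $\kappa=K_{\Gamma_0}(\mu^{[0]})$ is consistent with $q$. By Lemma~\ref{lem:induced-action-compatibility}, $\widehat K$ intertwines $\bar\partial$ and brackets. So the unary and binary tangent-adjoint components of $qY$ are the images under $K_{\Gamma_0}$ of the $\mu$-components. The ternary term has no tangent component, since it sits in $\mathbf b$. Hence the graph is a $q$-invariant submanifold and $q^2=0$ restricts. The mixed coefficients ($\epsilon$ only through $\mu^{[0]}$ in $c$) are handled as in Section~\ref{sec:courant-g2}: establish the identities at nonzero $z$ and specialize polynomially.

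Third, the line block. The $\mathbf u$ equation is the Chevalley--Eilenberg differential of the module $\rho_\Omega$ over the vector-field part. Expanding $q^2\mathbf u$ gives
\[
\bar\partial^2\mathbf u-[\bar\partial,\rho_\Omega(\mu)]\mathbf u-\rho_\Omega(q\mu)\mathbf u+\rho_\Omega(\mu)\rho_\Omega(\mu)\mathbf u.
\]
Here $q\mu=\bar\partial\mu-\tfrac12[\mu,\mu]$, since the vector component of $qY$ involves only $\mu$. Lemma~\ref{lem:induced-action-compatibility} then gives cancellation: the last term is $\tfrac12\rho_\Omega([\mu,\mu])$.

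Finally, the Maurer--Cartan identification. In ghost number zero, Proposition~\ref{prop:hol-mc-involutivity} equates $R_y=R_b=0$ with involutivity, and Proposition~\ref{prop:intrinsic-lift-involutivity} with lifts \eqref{eq:intrinsic-hol-lift}. The equation $qu=0$ says that $u\,\Omega_0$ transported by $\mu$ is $\bar\partial_\mu$-closed, that is, $d\Omega=0$ for the deformed weighted volume. Gauge equivalences in negative degrees match the gerbe hierarchy $(b_p,s_p)$ through the $t_p$ packaging.

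I expect the main obstacle to be the order-four cancellation in $q^2\mathbf b$ together with its Koszul signs from the $(-1)^{1-p}$ packaging. That is where the cyclic identity must be used carefully; I would first check it on ghost-number-zero fields, then extend by the Koszul rule.
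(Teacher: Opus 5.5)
Your proof of $Q_N^2=0$ takes a genuinely different route from the paper's. You verify nilpotence componentwise and order by order, in three steps:
\begin{itemize}
\item the string block is treated as the Dolbeault twist of the Roytenberg--Weinstein identities of the background holomorphic Courant algebroid with independent tangent connection;
\item the graph is shown to be $q$-invariant by Lemma~\ref{lem:induced-action-compatibility};
\item the canonical line is treated as a dg module over $(\mu,q\mu)$.
\end{itemize}
The paper instead assembles the fields over a dg-Artin algebra into intrinsic total data: total-odd $\theta^a=dz^a+\mu^a$, $\boldsymbol\Omega=\mathbf u\,\theta^1\theta^2\theta^3$, $\mathcal N$ and $\mathcal B$. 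It defines $Q_N$ on the free jet algebra by demanding that \eqref{eq:hol-intrinsic-main} hold identically for $D=d_X+Q_N$. Nilpotence then follows from $D^2\theta^i=0$, the curved Bianchi identity plus transversality, the top coefficient of $D^2\boldsymbol\Omega$, and the Chern--Simons derivative identity with extraction of the constant and linear $\mathcal I$-coefficients.

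Your route makes the match with the known holomorphic Courant $L_3$ structure transparent. The paper's route treats all cochain degrees, ghosts and the mixed $\mathbb D/\mathbb D_0$ coefficients uniformly, with no Koszul bookkeeping.

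That bookkeeping already goes wrong in your canonical-line step. Take your displayed expansion together with $[\bar\partial,\rho_\Omega(\mu)]_{\rm gr}=\rho_\Omega(\bar\partial\mu)$ and $\rho_\Omega(\mu)^2=\tfrac12\rho_\Omega([\mu,\mu])$. The terms then add up to $-2\rho_\Omega(q\mu)\mathbf u$ rather than cancelling. In the forms-first convention, which is the one in which $D^2=Q_N^2$, $q$ anticommutes with $\bar\partial$, and the correct expansion is
\[
 q^2\mathbf u=-\bar\partial^2\mathbf u+[\bar\partial,\rho_\Omega(\mu)]_{\rm gr}\mathbf u-\rho_\Omega(q\mu)\mathbf u-\rho_\Omega(\mu)^2\mathbf u=0 .
\]

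The genuine gap is the Maurer--Cartan functor. Propositions~\ref{prop:intrinsic-lift-involutivity} and~\ref{prop:hol-mc-involutivity} identify ghost-number-zero solutions, and they extend verbatim to Artin coefficients in degree zero. But the functor is evaluated on $L\otimes\mathfrak m_R$ for dg-Artin $R$, and on $R\otimes\Omega_{\rm poly}(\Delta^n)$. A degree-one element there has components $y_p\otimes r$ in every cochain degree $p$ with $|r|=1-p$, and the simplices encode the gauge equivalences. Your sentence about matching gauge equivalences through the $t_p$ packaging does not supply this.

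What is missing is the total-form reformulation, with $\mu$ the full package and $D=d_X+d_R$. You would need to:
\begin{itemize}
\item show that the MC equation is equivalent to ideal stability, $D\boldsymbol\Omega=0$, partial flatness and the relative string equation modulo $\mathcal I^2$;
\item construct the inverse: normalize the generators by transversality, read $\mathbf u$ from the top product, read $x$ from the linear ideal coefficient, set $\mathbf b=B_c+\mu^ax_a$, and use the transgression compensator to show independence of the full extension of the partial connection;
\item check that all of this is natural in $R$ and $n$.
\end{itemize}
This is Proposition~\ref{prop:hol-local-descent}. Once you have it, the same intrinsic identities also give $Q_N^2=0$, so your order-by-order check becomes optional.
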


\begin{proof}
Let \(R\) be a differential graded Artin algebra and let \(D\) denote
the sum of the exterior differential on \(X\) and the coefficient
differential. The fields determine an annihilator ideal
\(\mathcal I_\mu\), a weighted decomposable three-form
\(\boldsymbol\Omega\), the induced tangent connection
\[
 \mathcal N
 =
 \Gamma_0+K_{\Gamma_0}(\mu^{[0]}),
\]
and a relative string two-form \(\mathcal B\). Their intrinsic
equations are
\begin{equation}\label{eq:hol-intrinsic-main}
 \begin{gathered}
 D\mathcal I_\mu\subset\mathcal I_\mu,
 \qquad
 D\boldsymbol\Omega=0,
 \qquad
 F_D(A)=0\pmod{\mathcal I_\mu^{[0]}},\\
 D\mathcal B
 +\frac\epsilon4
 \left\{
 \operatorname{CS}_D(A)-\operatorname{CS}_D(\mathcal N)
 \right\}
 =
 0\pmod{\mathcal I_\mu^2}.
 \end{gathered}
\end{equation}
The first equation gives the \(\mu\)-component of
\eqref{eq:hol-q}; partial flatness gives the gauge component;
\(D\boldsymbol\Omega=0\) gives the canonical-line component; and the
two components of the final equation modulo \(\mathcal I_\mu^2\) give
the \(x\)- and \(b\)-components. Lemma~\ref{lem:induced-action-compatibility}
shows that the induced tangent graph is preserved. Thus
\eqref{eq:hol-intrinsic-main} is precisely the intrinsic form of
\eqref{eq:hol-q}.

To prove nilpotence, regard
\[
 \mathscr D=d_X+Q_N
\]
as an operator on the free graded jet algebra, without assuming
\(Q_N^2=0\). The four expressions in
\eqref{eq:hol-intrinsic-main} are respectively the curvature of the
deformed distribution, the canonical-line differential, the gauge
partial connection and the relative string curvature. Their Bianchi
identities imply that applying \(\mathscr D\) to these residuals gives
expressions linear in the same residuals. Extracting their components
in the coordinates
\((Y,\mathbf b,\mathbf u)\) gives
\[
 Q_N^2=0.
\]
The local coefficient calculation is recorded in
Appendix~\ref{app:holomorphic-q}.

Finally, the right-hand side of \eqref{eq:hol-q} is at most cubic.
Hence
\[
 \deg Q_N\leq3,
 \qquad
 \ell_n=0\quad(n\geq4).
\]
The inverse passage from these coordinates to the transverse lift and
its relative gerbe data, including dg-Artin and \v{C}ech descent, is
given in Appendix~\ref{app:tangent-density-gerbe}.
\end{proof}

\begin{corollary}[Holomorphic string algebroid]
\label{cor:hol-string-algebroid}
For the coefficient-restored theory at nonzero real anomaly coefficient
$z$, a Maurer--Cartan solution of the
holomorphic lifting algebra determines an involutive transverse isotropic
lift
\[
 \mathcal L\subset\mathbb E_{H,\mathbb C}.
\]
Therefore
\[
 \mathcal Q_{\mathcal L}:=\mathcal L^\perp/\mathcal L
\]
is a holomorphic string algebroid. The same Maurer--Cartan solution
carries the closed weighted complex volume, so
\[
 \dd\Omega=0.
\]
\end{corollary}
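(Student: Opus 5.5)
The plan is to read the corollary off Theorem~\ref{thm:derived-holomorphic} together with Propositions~\ref{prop:intrinsic-lift-involutivity} and~\ref{prop:hol-mc-involutivity}, and then invoke the standard fact that an involutive Lagrangian-transverse lift gives a holomorphic Courant reduction. Concretely, start with a ghost-number-zero Maurer--Cartan solution $(x,\alpha,\mu,b,u)$ of $Q_N$. Restoring coefficients means replacing $\epsilon$ by the real parameter $z\neq0$ in the pairing $c$, so that $c_z$ on $\ad P_H\oplus\ad P_G$ is nondegenerate by Proposition~\ref{prop:hermitian-carrier}. The Maurer--Cartan equations are $R_y=R_b=0$ and $qu=0$. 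By Proposition~\ref{prop:hol-mc-involutivity}, the first two give \eqref{eq:intrinsic-lift-equations} for the data $(\mu,\rho,B)$. Here $\rho=(K_{\Gamma_0}(\mu),\alpha)$, and $B$ is the representative of Appendix~\ref{app:holomorphic-q} built from $(x,b)$. By Proposition~\ref{prop:intrinsic-lift-involutivity}, $\mathcal L=\Phi_{\rho,B}(D_\mu)$ is therefore an involutive subbundle of $\mathbb E_{H,\mathbb C}$. It is transverse because $a|_{\mathcal L}$ is an isomorphism onto $D_\mu$ by construction. It is isotropic because $D_\mu$ is isotropic in $\mathbb E_{C',G}$ (vectors pair to zero with vectors) and $\Phi_{\rho,B}$ is an isometry of the Courant pairings; this is the standard $B$-field/connection change, checked against \eqref{eq:hol-connection-map}. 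One point needs care: the tangent component must lie in $\ad P_H$. It does, since $K_{\Gamma_0}(\mu)$ preserves $J$ and the metric only up to the complexified $\mathfrak{gl}(3,\mathbb C)$ embedding \eqref{eq:hermitian-carrier-trace}, and that embedding is exactly what defines $\mathbb E_{H,\mathbb C}$.

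Next I would show that $\mathcal Q_{\mathcal L}=\mathcal L^\perp/\mathcal L$ is a holomorphic string algebroid. Here I follow the construction of Garc\'ia-Fern\'andez--Rubio--Tipler \cite{StringAlgebroids}. First, $D_\mu$ is involutive with $D_\mu\cap\overline{D_\mu}=0$ near the background, so it defines an integrable complex structure $J_\mu$ by Newlander--Nirenberg. Second, involutivity of $\mathcal L$ makes $\mathcal L^\perp$ closed under the bracket with $\mathcal L$. This gives a flat $\mathcal L$-connection $\bar\partial_{\mathcal Q}(U)s=[U,\tilde s]\bmod\mathcal L$ on $\mathcal Q_{\mathcal L}$, extending the background operator $\bar\partial_Q$, and hence a holomorphic structure. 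Third, the Dorfman bracket, the pairing and the anchor (projected to $T^{1,0}_\mu X$) descend to holomorphic sections. This is the isotropic reduction of \cite{BCGReduction} applied to the complex Courant algebroid with the involutive isotropic $\mathcal L$. Finally, the extension
\[
0\to T^{*1,0}_\mu\to\mathcal Q_{\mathcal L}\to\operatorname{At}\to0
\]
has Atiyah quotient equal to that of $P_{H,\mathbb C}\times P_{G,\mathbb C}$, with the induced holomorphic structures given by the partially flat connections $\Gamma_0+\kappa$ and $A_0+\alpha$. This is exactly the string algebroid structure, and it uses the nondegeneracy of $c_z$ at $z\neq0$.

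For $\dd\Omega=0$, I use the last Maurer--Cartan equation. On the involutive locus, $qu=0$ reads $\bar\partial u=\rho_\Omega(\mu)u$, and by the remark after \eqref{eq:hol-q} this is closure of the weighted volume $\Omega=u\,e^{\iota_\mu}\Omega_0$ determined by $(\mu,u)$. Equivalently, \eqref{eq:hol-intrinsic-main} contains $D\boldsymbol\Omega=0$, so at ghost number zero $\dd\Omega=0$.

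The main obstacle is the passage from formal to actual objects. Maurer--Cartan solutions live over Artin coefficients, so the complex structure, holomorphic structures and the reduced bracket must be understood as formal families. In that setting Newlander--Nirenberg is replaced by formal integrability, and Appendix~\ref{app:tangent-density-gerbe} supplies the inverse passage and the \v{C}ech descent of the gerbe data. I would argue order by order, or over the dg-Artin base, citing that appendix for descent.
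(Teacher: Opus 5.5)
Your proposal is correct and follows essentially the paper's route. The paper gets the involutive transverse lift with closed weighted canonical line directly from Theorem~\ref{thm:derived-holomorphic}, which you unpack through Propositions~\ref{prop:hol-mc-involutivity} and~\ref{prop:intrinsic-lift-involutivity} plus an explicit isotropy check; it then defines $(\bar\partial_{\mathcal Q_{\mathcal L}}[e])(U)=[U\circ e]$ via the Dorfman operation, uses isotropy, involutivity and the Courant identities for well-definedness and $\bar\partial_{\mathcal Q_{\mathcal L}}^2=0$, and cites the standard quotient construction of \cite{StringAlgebroids,GaugeStringAlgebroids}, exactly as you do, with your remarks on Newlander--Nirenberg, the Atiyah extension and formal coefficients only making explicit what the paper leaves to that citation and to Appendix~\ref{app:tangent-density-gerbe}.
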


\begin{proof}
Theorem~\ref{thm:derived-holomorphic} identifies Maurer--Cartan solutions
with involutive transverse lifts carrying the closed weighted canonical
line. For $U\in\Gamma(\mathcal L)$ and a class
$[e]\in\Gamma(\mathcal Q_{\mathcal L})$, define
\[
 (\bar\partial_{\mathcal Q_{\mathcal L}}[e])(U)=[U\circ e],
\]
where $\circ$ is the Dorfman operation. Isotropy and involutivity of
$\mathcal L$ make this well-defined on $\mathcal L^\perp/\mathcal L$;
the Courant identities and involutivity give
\[
 \bar\partial_{\mathcal Q_{\mathcal L}}^2=0.
\]
Thus the quotient is the holomorphic string algebroid of the standard
quotient construction
\cite{StringAlgebroids,GaugeStringAlgebroids}.
\end{proof}

\subsection{Relation to the holomorphic heterotic deformation theory}
\label{subsec:unary-structure}

We now compare the lifting algebra with the holomorphic heterotic
\(L_3\) algebra of \cite{FiniteDeformations}. In that construction the
tangent partial connection is an independent variable. Here it is
restricted to the differential graph
\[
 \kappa_e=K_{\Gamma_0}(\mu),
\]
and the weighted canonical-line component is added. The scalar gerbe
hierarchy, including cochain degree \(-1\), is unchanged.

Before imposing the tangent graph, let
\(\ell_i^{\mathrm{FD}}\) denote the operations of
\cite[Appendix~A.3, equation~(A.29)]{FiniteDeformations}, with the
signed anomaly pairing used here. With the present normalization of the
Courant cotangent component,
\[
 \ell_1=-\ell_1^{\mathrm{FD}},\qquad
 \ell_2=\ell_2^{\mathrm{FD}},\qquad
 \ell_3=\ell_3^{\mathrm{FD}}.
\]
The sign in \(\ell_1\) converts the field-equation convention of
\cite[equation~(A.27)]{FiniteDeformations} to the present
Maurer--Cartan convention. The remaining normalization comes from the
cotangent component \(2x\) and
\[
 \operatorname{tr}_T=2\operatorname{tr}_3.
\]
Lemma~\ref{lem:induced-action-compatibility} shows that the unary and
binary operations preserve the tangent graph; the ternary operation has
no tangent component. Thus the operations above restrict to the
holomorphic lifting algebra constructed in the preceding subsection.

We next compare the corresponding tangent complexes. Let \(C_N\) be
the tangent complex of \(Q_N\) at
\[
 x=\alpha=\mu=b=0,\qquad u=1.
\]
Write
\[
 s=\delta u,\qquad K=K_{\Gamma_0},
\]
and locally decompose the background curvatures as
\[
 F_{A_0}=dz^a\wedge F_{A,a},
 \qquad
 R_{\Gamma_0}=dz^a\wedge R_a.
\]
In the coordinates
\(Z_p=(x_p,\alpha_p,\mu_p,b_p,s_p)\) of
\eqref{eq:hol-graded-carrier}, the differential is
\begin{equation}\label{eq:unary-endpoint}
\begin{split}
 d_N^pZ=-\bigl(&\bar\partial x_a
 -\tfrac14\jmath_\epsilon\operatorname{tr}_V(F_{A,a}\wedge\alpha)
 +\tfrac14\jmath_\epsilon\operatorname{tr}_T(R_a\wedge K\mu^{[0]})
 +\tfrac12(-1)^p\partial_ab,\\
 &\bar\partial_{A_0}\alpha+F_{A,a}\wedge(\mu^a)^{[0]},
 \quad
 \bar\partial\mu,\quad
 \bar\partial b,\quad
 \bar\partial s+(-1)^p\operatorname{div}_{\Omega_0}\mu
 \bigr),
 \qquad -1\leq p\leq4.
\end{split}
\end{equation}
Geometric arguments are reduced modulo \(\epsilon\) only in the gauge
component and inside \(\jmath_\epsilon\). The tangent component is
determined by
\begin{equation}\label{eq:unary-tangent-row}
 \bar\partial_{\Gamma_0}K\mu
 +R_a\wedge\mu^a
 =
 K\bar\partial\mu,
\end{equation}
which is the unary identity of
Lemma~\ref{lem:induced-action-compatibility}. Thus the curvature terms
in \eqref{eq:unary-endpoint} remain present at the standard embedding.

For field variations, \(p=1\), the canonical-line component is the
linearization of the weighted complex volume:
\[
 \delta\Omega
 =
 s\Omega_0+\mu^a\wedge\iota_{\partial_a}\Omega_0.
\]
Its exterior derivative has components
\[
 \bar\partial\mu,
 \qquad
 \bar\partial s-\operatorname{div}_{\Omega_0}\mu.
\]
At the other end of the complex, scalar gerbe reducibility gives
\[
 d_N^{-1}c
 =
 \left(\frac12\partial c,-\bar\partial c\right)
\]
in the cotangent and gerbe components of degree zero, while the
canonical-line component extends to
\[
 C_N^4=\Omega^{0,3}(X;\mathbb D_{\mathbb C}).
\]

\begin{proposition}[Identification of the holomorphic complexes]
\label{prop:unary-holomorphic-identification}
Let \(F_K\) be the holomorphic heterotic deformation complex of
\cite{FiniteDeformations}, restricted to the tangent graph
\(\kappa_e=K\mu\) and with the trace and coefficient conventions of
\eqref{eq:unary-endpoint}. Adjoin in degree \(p\) the canonical-line
component
\[
 \Omega^{0,p-1}(X)
\]
with differential
\[
 \bar\partial s+(-1)^p\operatorname{div}_{\Omega_0}\mu,
\]
and denote the resulting complex by \(\widehat F_K\). Its differential
has the opposite sign to \(d_N\). Then
\begin{equation}\label{eq:unary-fd-identification}
 T_p:C_N^p\longrightarrow\widehat F_K^p,
 \qquad
 T_pZ=(-1)^p(x,K\mu,\alpha,\mu,b,s)
\end{equation}
is a chain isomorphism. The maps \(T_p\) are differential operators on
sections rather than morphisms of holomorphic vector bundles.
\end{proposition}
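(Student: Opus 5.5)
The plan is to check the two properties of \(T_p\) separately: bijectivity in each degree, then the chain relation \(T_{p+1}\circ d_N^p=d_{\widehat F_K}^p\circ T_p\). Because \(\widehat F_K\) is defined as a restriction of \(F_K\) to the tangent graph \(\kappa_e=K\mu\), its degree-\(p\) component consists exactly of tuples \((x,\kappa,\alpha,\mu,b,s)\) with \(\kappa=K\mu\). The map \(Z\mapsto(x,K\mu,\alpha,\mu,b,s)\) is therefore bijective onto \(\widehat F_K^p\), with inverse the projection that forgets \(\kappa\). The sign \((-1)^p\) is invertible, so each \(T_p\) is a linear isomorphism. It is a differential operator, not a bundle map, because \(K\mu\) involves \(\nabla\mu\) and \([\mu,-]\) through \eqref{eq:product-natural-graph}. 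It remains to compare the differentials component by component.

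\emph{Well-definedness.} I first check that \(d_{F_K}\) preserves the graph, so that \(\widehat F_K\) is a subcomplex. The \(\kappa\)-row of \(d_{F_K}\) is \(\bar\partial_{\Gamma_0}\kappa+R_a\wedge\mu^a\). On \(\kappa=K\mu\), this equals \(K\bar\partial\mu\) by \eqref{eq:unary-tangent-row}, which is the unary identity of Lemma~\ref{lem:induced-action-compatibility}. Hence the image lies on the graph, with tangent argument \(\bar\partial\mu\).

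\emph{Row-by-row comparison.} I then compare the remaining rows of \(F_K\) from \cite[(A.29)]{FiniteDeformations} with \eqref{eq:unary-endpoint}, using the normalizations already fixed: cotangent component \(2x\), \(\operatorname{tr}_T=2\operatorname{tr}_3\) via \eqref{eq:hermitian-carrier-trace}, the anomaly pairing \(c_z\), and \(\ell_1=-\ell_1^{\rm FD}\). For the \(x\)-row, the term \(\operatorname{tr}_T(R_a\wedge\kappa)\) becomes \(\operatorname{tr}_T(R_a\wedge K\mu^{[0]})\). For the \(\alpha,\mu,b\) rows the comparison is immediate. The adjoined \(s\)-row matches by construction, since its differential was defined as \(\bar\partial s+(-1)^p\operatorname{div}_{\Omega_0}\mu\). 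Degree \(-1\) is handled separately via \(d_N^{-1}c=(\tfrac12\partial c,-\bar\partial c)\) against the scalar gerbe row of \(F_K\). Degree \(4\) is handled separately because only \(s_4\in\Omega^{0,3}\) survives there.

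\emph{Signs, the main obstacle.} Since \(d_{\widehat F_K}=-\bar d\), where \(\bar d\) is the bracketed expression in \eqref{eq:unary-endpoint} (so that \(d_N=-\bar d\)), the chain relation reduces to
\[
(-1)^{p+1}(-\bar d)Z=(-1)^p\,\bar d Z ,
\]
once the \(p\)-dependence is matched. Most of \(\bar d\) is \(p\)-independent. The exceptions are \((-1)^p\partial_a b\) and \((-1)^p\operatorname{div}_{\Omega_0}\mu\), whose parity signs come from the suspension \(\mathbf b=\sum(-1)^{1-p}b_pt_p\) and \(\mathbf u\) in \eqref{eq:hol-total-packages}. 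I expect the difficulty to be checking that the overall \((-1)^p\) in \(T_p\) absorbs exactly these alternations against the \(p\)-independent conventions of \cite{FiniteDeformations}, i.e. that \(d_{\widehat F_K}\) uses the same \((-1)^p\) placement. I would first test this on field degree \(p=1\), where the linearization \(\delta\Omega=s\Omega_0+\mu^a\iota_{\partial_a}\Omega_0\) fixes the relative sign of \(\bar\partial s\) and \(\operatorname{div}\mu\). I would then propagate the result by degree. If a residual sign mismatch appears in the \(b\) or \(s\) rows, I would absorb it by rescaling those components with \((-1)^{p(p-1)/2}\) inside \(T_p\), which leaves bijectivity untouched.
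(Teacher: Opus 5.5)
Your proposal is correct and is essentially the paper's proof: the graph identity \eqref{eq:unary-tangent-row} restricts the independent-tangent differential to $\kappa_e=K\mu$, the other rows coincide, the inverse deletes $K\mu$, and the overall $(-1)^p$ turns $d_N=-\bar d$ into $d_{\widehat F_K}=+\bar d$. Your sentence asserting $d_{\widehat F_K}=-\bar d$ is a slip, although your display uses the correct sign. The sign obstacle you anticipate does not occur, because $d_{\widehat F_K}$ carries the same $(-1)^p$ in its $\partial_a b$ and $\operatorname{div}_{\Omega_0}\mu$ terms (Appendix~\ref{app:unary-comparison}). The paper uses these alternations only to see that $\widehat F_K$ squares to zero, since $\partial_a$ and $\operatorname{div}_{\Omega_0}$ commute with $\bar\partial$; and your $(-1)^{p(p-1)/2}$ fallback would not be admissible, since it changes the map $T_p$ fixed by the statement.
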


\begin{proof}
The background Bianchi identities and cancellation of the tangent and
gauge characteristic forms imply that the independent-tangent
Dolbeault differential of \cite{FiniteDeformations} squares to zero.
The graph identity \eqref{eq:unary-tangent-row} restricts this
differential to \(\kappa_e=K\mu\). In the gerbe and canonical-line
components, the signs in adjacent degrees cancel because the coefficient
operators $\partial_a$ and $\operatorname{div}_{\Omega_0}$ commute with
$\bar\partial$ in holomorphic volume coordinates.
Multiplication by \((-1)^p\) therefore gives the chain identity for
\(T_p\).

The inverse deletes the dependent tangent component \(K\mu\) and
multiplies the remaining components by \((-1)^p\).
Appendix~\ref{app:unary-comparison} records the independent-tangent
differential.
\end{proof}

The preceding comparison does not collapse the \(L_3\) structure to a
dg Lie algebra. This is already visible at the flat background.

\begin{example}[Constant modes on the flat torus]
\label{ex:flat-constant-modes}
Let
\[
 X=
 \bigl(\mathbb C/(2\pi\mathbb Z+2\pi i\mathbb Z)\bigr)^3
\]
with its flat Calabi--Yau structure and trivial background connections.
Forms with constant coefficients form an \(L_3\) subalgebra of the
holomorphic lifting algebra. Its differential vanishes, its geometric
and canonical-line components are abelian, and on constant gauge
parameters of cochain degree zero,
\[
 \ell_2(A,B)=-[A,B],\qquad
 \ell_3(A,B,C)
 =
 -\frac{\epsilon}{4}\operatorname{tr}_7(A[B,C]).
\]
Let \(E_{ij}\) be the standard skew matrix in the \(ij\)-plane. Since
\[
 [E_{23},E_{31}]=-E_{12},
 \qquad
 \operatorname{tr}_7(E_{12}^2)=-2,
\]
we obtain
\begin{equation}\label{eq:flat-nonzero-ternary}
 \ell_3(E_{12},E_{23},E_{31})
 =
 -\frac{\epsilon}{2}\neq0.
\end{equation}
Thus the holomorphic lifting algebra has a nonzero ternary bracket even
at a flat background.
\end{example}

\subsection{Hamiltonian formulation}
\label{subsec:hol-hamiltonian}

Set $c_{\mathrm{an}}=\epsilon/4$ and define the variational one-form
\begin{equation}\label{eq:hol-hamiltonian-potential}
 \begin{aligned}
 \vartheta_N&=\int_X\mathbf u\,\lambda_N\wedge\Omega_0,\\
 \lambda_N&=-\delta\mathbf b+\mu^a\delta x_a+x_a\delta\mu^a
 +c_{\mathrm{an}}\operatorname{tr}_V(\alpha\delta\alpha)
 -c_{\mathrm{an}}\operatorname{tr}_T(\kappa\delta\kappa).
 \end{aligned}
\end{equation}
Define
\begin{equation}\label{eq:hol-hamiltonian-definition}
 \omega_N=\delta\vartheta_N,
 \qquad
 I_N=\iota_{Q_N}\vartheta_N.
\end{equation}

\begin{proposition}[Hamiltonian identities]
\label{prop:hol-hamiltonian}
The variational Hamiltonian structure satisfies
\[
 \iota_{Q_N}\omega_N=-\delta I_N,
 \qquad
 \delta\omega_N=0,
 \qquad
 \mathcal L_{Q_N}\omega_N=0.
\]
The contraction map of $\omega_N$ has a finite local inverse near the
reference background.
\end{proposition}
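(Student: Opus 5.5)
The two closure statements are formal. Since $\omega_N=\delta\vartheta_N$ is exact in the variational bicomplex, $\delta\omega_N=\delta^2\vartheta_N=0$. The remaining work is the first identity and the local inverse.

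\textbf{Step 1: the contraction identity.} I will use the Cartan formula $\mathcal L_{Q_N}=[\iota_{Q_N},\delta]$, which holds modulo horizontal divergences because $\vartheta_N$ is integrated over $X$. This gives
\[
\iota_{Q_N}\omega_N=\iota_{Q_N}\delta\vartheta_N=\mathcal L_{Q_N}\vartheta_N-\delta\iota_{Q_N}\vartheta_N=\mathcal L_{Q_N}\vartheta_N-\delta I_N .
\]
So the first identity is equivalent to $\mathcal L_{Q_N}\vartheta_N=0$ modulo $\delta$-exact terms and horizontal divergences. Granting that, the third identity follows:
\[
\mathcal L_{Q_N}\omega_N=\delta\mathcal L_{Q_N}\vartheta_N=0 .
\]
Alternatively, it follows from $\mathcal L_{Q_N}\omega_N=\delta\iota_{Q_N}\omega_N=-\delta^2I_N=0$.

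\textbf{Step 2: invariance of $\vartheta_N$.} This is the main obstacle. I plan to compute $\mathcal L_{Q_N}$ on each ingredient of $\vartheta_N$, using \eqref{eq:hol-q}, and organize the result by the intrinsic data of Theorem~\ref{thm:derived-holomorphic}.

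The canonical-line part comes first. The factor $\mathbf u\,\Omega_0$ is the weighted volume, and $q\mathbf u=\bar\partial\mathbf u-\rho_\Omega(\mu)\mathbf u$. Its variation should combine with the $\mu$-dependence of the other terms into a total divergence. This uses the adjoint identity $\int_X(\rho_\Omega(v)s)\wedge\Omega_0=-(\pm)\int_X s\,\partial_a v^a\wedge\Omega_0+\ldots$, which follows from integration by parts in holomorphic volume coordinates.

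Next comes $\lambda_N$. Here $\lambda_N$ is the tautological one-form of the pairing \eqref{eq:hol-signed-pairing} together with the $-\delta\mathbf b$ term. This is the standard Courant/Roytenberg potential $-\delta\mathbf b+\tfrac12\langle Y,\delta Y\rangle$ up to the symmetrized $\mu x$ terms. I would check invariance degree by degree in the Taylor expansion $Q_N=Q_1+Q_2+Q_3$:
\begin{itemize}
\item the linear part $\bar\partial$ is skew for the integrated pairing;
\item the binary part is invariant because the Courant bracket is invariant for $c$;
\item the ternary part in $q\mathbf b$, namely $\tfrac16\langle Y,[Y,Y]\rangle$, is exactly compensated by the variation of $\tfrac12\langle Y,\delta Y\rangle$ under $-\tfrac12[Y,Y]$. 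This is the usual computation showing that $\tfrac16\langle Y,[Y,Y]\rangle$ is the Cartan three-form.
\end{itemize}

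Two further points need care. First, the term $-c_{\rm an}\operatorname{tr}_T(\kappa\delta\kappa)$ must be treated on the graph $\kappa=K_{\Gamma_0}(\mu^{[0]})$. For this I will use Lemma~\ref{lem:induced-action-compatibility}, which shows that $Q_N$ acts on $\kappa$ through $\bar\partial$ and $[\mu,-]$ compatibly with the graph. The $\delta\kappa$ term therefore transforms as an invariant pairing. Any leftover terms should be the derivative terms $\nabla_v$, which I expect to integrate to divergences. Second, the factor $\mathbf u$ multiplies everything, so the $\rho_\Omega(\mu)\mathbf u$ term must cancel the divergence terms arising from Lie-derivative-type actions of $\mu$ on the other fields. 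I expect this bookkeeping, including Koszul signs and the $\jmath_\epsilon$ coefficient conventions, to be the hardest part. I would verify it first on the quadratic part and then treat the cubic part separately.

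\textbf{Step 3: the local inverse.} At the background $\mathbf u=1$ and $\mu=x=0$, the form $\omega_N$ is the constant pairing
\[
-\delta\mathbf b\,\delta\mathbf u\text{-type terms}\;+\;2\,\delta\mu\,\delta x\;+\;c_{\rm an}\bigl(\operatorname{tr}_V(\delta\alpha\,\delta\alpha)-\operatorname{tr}_T(\delta\kappa\,\delta\kappa)\bigr)
\]
wedged with $\Omega_0$. The $\delta\kappa$ term is a differential operator in $\delta\mu$. The pairings $\mu\leftrightarrow x$ and $\mathbf b\leftrightarrow\mathbf u$ are perfect between complementary Dolbeault degrees by Serre duality against $\Omega_0$. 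The gauge pairing uses $\epsilon\mathbb D\cong\mathbb D_0$.

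The contraction map is therefore block-triangular: its diagonal blocks are these invertible algebraic pairings, and the $\kappa$-contribution lies off the diagonal in the $x$-block. It can be inverted by finite back-substitution, giving a finite differential operator. Near the background, $\omega_N$ differs from this constant form by nilpotent formal corrections, so the same inverse persists as a finite formal series.
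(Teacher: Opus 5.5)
Your proposal follows the same route as the paper's proof: Cartan's identity reduces everything to $\mathcal L_{Q_N}\vartheta_N=0$, which is checked term by term (Dolbeault and background-curvature terms, the anchor adjoint $(\mu^a\partial_a)^\dagger=-\mu^a\partial_a-\operatorname{div}_{\Omega_0}\mu$ with the divergence cancelled by $q\mathbf u$, invariant trace pairings, and the tangent graph via Lemma~\ref{lem:induced-action-compatibility}), and the finite inverse comes from triangularity of the contraction map. A few small corrections: you need $\mathcal L_{Q_N}\vartheta_N=0$ exactly, since invariance only modulo $\delta$-exact terms would replace $I_N$ by a different Hamiltonian; $\lambda_N=-\delta\mathbf b+\langle Y,\delta Y\rangle$ has unit coefficient rather than $\tfrac12$, which is the normalization matching the $\tfrac16$ in $q\mathbf b$ (compare the paper's count $\tfrac16-\tfrac12=-\tfrac13$); and the paper's triangularity, in the order $\delta\mathbf b,\delta x,\delta\alpha,\delta\mu,\delta\mathbf u$ with diagonal entries given by multiplication by $\mathbf u$, holds at every point near $\mathbf u=1$, so it gives a finite inverse directly without perturbing from the background.
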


\begin{proof}
These identities and the finite local inverse are verified in
Appendix~\ref{app:hol-hamiltonian-identities}.
\end{proof}
\section{Hamiltonian reduction to the holomorphic theory}
\label{sec:hamiltonian-reduction}
\label{sec:product-correspondence}

The product variational theory and the holomorphic lifting theory are
written in different geometric variables. The former uses the Hull
connection and real representatives, while the latter uses the
Hermitian tangent connection and complex partial-connection data. We
compare them in three steps. First, we replace the Hull representative
by an equivalent Hermitian one and add contractible variables needed to
compare the two presentations. We then identify their symplectic forms
and Hamiltonians by local first-order transformations. Finally, we
remove the remaining representative directions by characteristic
reduction.

\subsection{Representative change and stabilization}
\label{sec:representatives}

Recall that, for a coisotropic inclusion
\[
 i:C\hookrightarrow(M,\omega),
\]
a projection \(\pi:C\to M_{\rm red}\) is a characteristic quotient when
\[
 \ker D\pi=\ker i^*\omega.
\]
The restriction to the product locus is of this form. Let
\(\mathcal M_\circ\) be the circle-invariant variational theory after
auxiliary elimination and set
\begin{equation}\label{eq:product-first-span}
 \begin{gathered}
 n_\times=(R-1,v,b_1,a_0;\xi^7,\Lambda_7),
 \qquad
 C_\times=\{n_\times=0\},\\
 \mathcal M_\circ
 \xleftarrow{i_\times}
 C_\times
 \xrightarrow{\pi_\times}
 \mathcal M_\times.
 \end{gathered}
\end{equation}
Here \(\xi^7\) and \(\Lambda_7\) are the circle components of the
symmetry parameters, and \(\pi_\times\) forgets the momenta conjugate to
\(n_\times\). The restricted Hamiltonian is independent of these normal
momenta, so the quotient is the product variational theory of
\eqref{eq:product-functional}. It does not impose the
Euler--Lagrange equations normal to the product locus; these are
recorded in Appendix~\ref{app:coisotropic}. The auxiliary elimination
is compatible with the circle action by
Appendix~\ref{app:parent-elimination}.

The product theory is expressed using the Hull connection
\(\Theta_H\), whereas the holomorphic lifting theory uses the Hermitian
connection \(\Gamma\) of Section~\ref{subsec:product-geometry}. Passing
from one to the other changes both the tangent representative and the
gerbe coordinate. Put
\[
 t=(\Theta_H-\Gamma)^{[0]},
 \qquad
 a_H=A-\Theta_H^{[0]},
 \qquad
 a_\Gamma=A-\Gamma^{[0]}.
\]
The corresponding connection and two-form change is
\begin{equation}\label{eq:representative-T0}
 T_0:\qquad
 a_\Gamma=a_H+t,
 \qquad
 B_\Gamma
 =
 B_H-\frac{\epsilon}{4}P(t,a_H).
\end{equation}
Let \(\mathcal T_{3,P}\) denote the transgression
\eqref{eq:ii-transgression-convention}. Its triangle identity gives
\begin{equation}\label{eq:representative-triangle}
 \mathcal T_{3,P}(\Gamma^{[0]},a_\Gamma)
 =
 \mathcal T_{3,P}(\Theta_H^{[0]},a_H)
 +\mathcal T_{3,P}(\Gamma^{[0]},t)
 -\dd P(t,a_H).
\end{equation}

Let \(W_H\) and \(W_\Gamma\) denote the product functionals in the two
representatives, with the metric and \(SU(3)\)-structure fixed. Then
\begin{equation}\label{eq:representative-defect}
 \begin{aligned}
 W_H\circ T_0^{-1}
 &=
 W_\Gamma+\epsilon\mathcal D_\Gamma,\\
 \mathcal D_\Gamma
 &=
 \frac{L}{16}\operatorname{Im}
 \int_X
 \mathcal T_{3,P}(\Gamma^{[0]},t)
 \wedge\Omega^{[0]}.
 \end{aligned}
\end{equation}
Thus the change of representative leaves a first-order defect
\(\mathcal D_\Gamma\). The next proposition removes it by a local
finite-jet change of the geometric variables.

\begin{proposition}[Representative equivalence]
\label{thm:representative-equivalence}
There is an equivariant vector field \(\mathcal V_\Gamma\), depending
on finitely many derivatives and vanishing at the reference background,
such that
\[
 T_1(q)
 =
 q+\jmath_\epsilon\mathcal V_\Gamma(q^{[0]}),
 \qquad
 T=T_1\circ T_0,
\]
satisfies
\begin{equation}\label{eq:representative-equivalence}
 T^*W_\Gamma=W_H,
 \qquad
 T_*f_H=f_\Gamma.
\end{equation}
The variational cotangent lift of \(T\) is therefore a local real
formal Hamiltonian \(Q\)-isomorphism to first order in \(\epsilon\).
\end{proposition}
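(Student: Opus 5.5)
The plan is to remove the defect $\epsilon\mathcal D_\Gamma$ of \eqref{eq:representative-defect} by a first-order field redefinition. Because of the mixed coefficient convention \eqref{eq:ii-mixed-coefficients}, a shift $q\mapsto q+\jmath_\epsilon\mathcal V(q^{[0]})$ changes $W_\Gamma$ only by $\epsilon\,\langle \delta W_\Gamma/\delta q\,|^{[0]},\mathcal V(q^{[0]})\rangle$. This is a first-order term, with no higher corrections since $\epsilon^2=0$. The requirement $T^*W_\Gamma=W_H$ therefore reduces to the linear equation
\[
 \bigl\langle (\delta W_\Gamma)^{[0]},\mathcal V_\Gamma\bigr\rangle=-\mathcal D_\Gamma\quad\text{modulo horizontal divergences.}
\]
I will solve this by writing $\mathcal D_\Gamma$ itself as a variation along a geometric direction.

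The first step is to make $t$ explicit. Reduced modulo $\epsilon$, the difference between the Hull connection and the Ehresmann--Libermann connection is a first-order algebraic expression in $(g_6,J,d\omega,d\Psi,H)$, and it vanishes at the Calabi--Yau background. Writing $t=\mathcal T_{\rm cont}(j^1(\omega,\Psi),H^{[0]})$, I would note that $\mathcal T_{3,P}(\Gamma^{[0]},t)$ is, by \eqref{eq:ii-transgression-convention}, a polynomial in $t$, $D_\Gamma t$ and $R_\Gamma$. Hence $\mathcal D_\Gamma$ is a local functional of finite jet order.

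The second step uses the structure of $W_\Gamma$ from Theorem~\ref{thm:functional-reduction}. The variation of $\frac L4\operatorname{Im}\int(H+i\,d\omega)\wedge\Omega$ in the gerbe direction $\delta B=\beta$ is $\frac L4\operatorname{Im}\int d\beta\wedge\Omega$. Since $\mathcal D_\Gamma$ is itself paired against $\Omega^{[0]}$ in the same way, I would choose the gerbe component
\[
 \mathcal V_\Gamma^{B}=\tfrac14\,\mathcal P\bigl(\mathcal T_{3,P}(\Gamma^{[0]},t)\bigr),
\]
where $\mathcal P$ is a local primitive. Here I expect the main obstacle: $\mathcal T_{3,P}(\Gamma^{[0]},t)$ is not exact in general. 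Its exterior derivative is $P(R_{\Gamma+t},R_{\Gamma+t})-P(R_\Gamma,R_\Gamma)$, which is generically nonzero. What is exact is its difference from the Chern--Simons difference. I would therefore try a split: the $t$-linear and $t$-quadratic terms $P(t,D_\Gamma t)+\tfrac13P(t,[t,t])$ are absorbed by the gerbe shift $-\tfrac\epsilon4P(t,a_H)$ type terms and integration by parts, while the term $2P(t,R_\Gamma)$ is paired against $\Omega^{[0]}$. Since $t$ is built from $d\omega$, $d\Psi$ and $H$, this term should be expressible as $\langle(\delta W_\Gamma/\delta\omega,\delta W_\Gamma/\delta\Psi)^{[0]},\mathcal V^{\rm geom}\rangle$ with $\mathcal V^{\rm geom}$ algebraic in $R_\Gamma$ and $t$. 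Any residual piece not of this form must be checked to vanish modulo horizontal divergences using the Bianchi identity for $R_\Gamma$ and the type decomposition.

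Finally, equivariance follows because every ingredient ($\Gamma$, $\Theta_H$, $t$, $\mathcal T_{3,P}$, the primitive choice made in the fixed gerbe presentation) is natural under diffeomorphisms and covariant under frame and gauge rotations. The gerbe part of $f$ transforms $B_\Gamma$ exactly as dictated by the relative Green--Schwarz hierarchy. Thus $T_0$ intertwines $f_H$ and $f_\Gamma$ by the triangle identity \eqref{eq:representative-triangle}, and $T_1$ does so because $\mathcal V_\Gamma$ is an equivariant local vector field. The final assertion then follows: the cotangent lift of a local equivariant diffeomorphism preserving the functional and the symmetry differential preserves $\vartheta$ and $\iota_f\vartheta$, so it maps $\mathcal S_H$ to $\mathcal S_\Gamma$.
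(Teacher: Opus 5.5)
The reduction is right. You need $\int_X E_0(\mathcal V_\Gamma)=\mathcal D_\Gamma$ for the leading Euler--Lagrange covector $E_0$, with a plus sign, since $W_\Gamma\circ T_1$ must equal $W_H\circ T_0^{-1}=W_\Gamma+\epsilon\mathcal D_\Gamma$. Your equivariance and cotangent-lift conclusions are also fine once a suitable $\mathcal V_\Gamma$ exists.

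The gap is that you never show this equation has a local solution.
\begin{itemize}
\item A gerbe shift $\beta$ changes $W_\Gamma$ only by $\epsilon\frac L4\operatorname{Im}\int_X d\beta\wedge\Omega^{[0]}$, i.e.\ by terms proportional to $d\Omega^{[0]}$. It therefore cannot cancel defect terms carried by the components of $\mathcal K=dB+i\,d\omega$; those require shifts of $(\omega,\Psi,\Phi)$.
\item Your choice $\mathcal V^B=\frac14\mathcal P(\mathcal T_{3,P})$ fails because $\mathcal T_{3,P}(\Gamma^{[0]},t)$ is not closed. Even if it were, a primitive is not a finite-jet operator, so locality and equivariance would be lost.
\item The proposed split does not repair this. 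The gerbe shift in $T_0$ has already been used to produce the triangle identity. The combination $P(t,D_\Gamma t)+\frac13P(t,[t,t])$ is not of the form $d\beta\wedge\Omega$. The claims that $2P(t,R_\Gamma)\wedge\Omega^{[0]}$ ``should be expressible'' through the $(\omega,\Psi)$ Euler--Lagrange derivatives, and that leftover pieces ``must be checked'', are exactly what has to be proved.
\item Nothing in the argument shows that $\mathcal V_\Gamma$ vanishes at the reference.
\end{itemize}

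The missing idea is that the defect density vanishes on the zero set of the leading residuals $F=\bigl((d\Omega)^{2,2},(d\Omega)^{3,1},\mathcal K^{1,2},\mathcal K^{0,3}\bigr)$.
\begin{itemize}
\item Only $\mathcal T_{3,P}(\Gamma^{[0]},t)^{0,3}$ pairs with $\Omega^{[0]}$. Decompose $t^{0,1}=\mathcal L+\mathcal E$ in blocks along $T^{1,0}\oplus T^{0,1}$.
\item The blocks of $\mathcal E$, and the Nijenhuis term $Z$ in $(D_\Gamma t)^{0,2}$, are built from these residuals. The only non-residual block is $\mathcal L$, which comes from $\mathcal K^{2,1}$ and is strictly lower triangular.
\item Because $\Gamma$ preserves type, $\mathscr F_\Gamma=(R_{\Gamma^{[0]}})^{0,2}$ is block diagonal and $\bar D_\Gamma\mathcal L$ is strictly lower triangular. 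Hence $\mathcal L^2=0$ and $P(\mathcal L,\mathscr F_\Gamma)=P(\mathcal L,\bar D_\Gamma\mathcal L)=0$.
\item Every surviving term therefore contains a residual or a first derivative of one. After one derivative in a residual slot, each term still has a factor vanishing at the Calabi--Yau reference.
\end{itemize}

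You then need that $E_0=\mathsf A_{\rm EL}F$ for a pointwise bundle isomorphism $\mathsf A_{\rm EL}$. Its diagonal pairings are $\Lambda^{1,1}\times\Lambda^{2,2}$, $\Lambda^{0,2}\times\Lambda^{3,1}$, $\Lambda^{2,1}\times\Lambda^{1,2}$ and $\Lambda^{3,0}\times\Lambda^{0,3}$, and it is triangular otherwise. A homotopy formula in the residual slots, followed by integration by parts, gives $\mathcal D_\Gamma=\int_X\langle\mathcal H,F\rangle=\int_X E_0(\mathsf A_{\rm EL}^{-\mathsf T}\mathcal H)$.

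The resulting $\mathcal V_\Gamma=\mathsf A_{\rm EL}^{-\mathsf T}\mathcal H$ has all the required properties. It is tensorial and of finite jet order. It depends on $B^{[0]}$ only through $dB^{[0]}$, hence is equivariant. It vanishes at the reference. The finite-jet correction exists because the defect factors through the leading Euler--Lagrange covector, not because the transgression is exact.
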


\begin{proof}
Appendix~\ref{app:representative-change} proves that the defect in
\eqref{eq:representative-defect} factors through the leading
Euler--Lagrange covector:
\[
 \mathcal D_\Gamma
 =
 \int_X E_0(\mathcal V_\Gamma).
\]
Since \(T_1\) fixes leading coefficients and \(\epsilon^2=0\),
\[
 T_1^{-1}(q)
 =
 q-\jmath_\epsilon\mathcal V_\Gamma(q^{[0]}),
 \qquad
 T_1^*W_\Gamma
 =
 W_\Gamma+\epsilon\mathcal D_\Gamma.
\]
Together with \eqref{eq:representative-defect}, this gives
\(T^*W_\Gamma=W_H\).

The same appendix proves that \(\mathcal V_\Gamma\) is equivariant and
vanishes at the reference background. Hence \(T_*f_H=f_\Gamma\).
Finally, the cotangent lift
\[
 p_H=(DT)^\dagger p_\Gamma
\]
preserves the Liouville form, so the corresponding Hamiltonians are
identified.
\end{proof}

The product and holomorphic theories still treat the choice of Hermitian
frame and gerbe representative differently. To compare their Hamiltonian
structures on the same graded space, we first adjoin these representative
directions to the product theory as contractible cotangent pairs. Let
\[
 \mathcal U
\]
denote their sum and let \(\widetilde S\) denote the Hamiltonian on
\[
 \widetilde{\mathcal M}=\mathcal M_\times\oplus\mathcal U.
\]
The added coordinates consist of a Hermitian representative \(r_H\), a
real gerbe one-form \(m\), and its scalar parameter \(r_B\), together
with their contractible partners. Explicitly, this requires seven
cotangent pairs over \(\mathbb D\) and twenty-one over
\(\mathbb D_0\), whose differentials and contractions are given in
\eqref{eq:product-unit-pairs}.

On \(\widetilde{\mathcal M}\), the representative change of
Proposition~\ref{thm:representative-equivalence}, followed by the
common-frame and relative-gerbe coordinate changes and a cotangent
polarization, puts the product theory in the coordinates of the
holomorphic presentation. Appendix~\ref{app:canonical-maps} constructs
these transformations and their local inverses.

\subsection{Symplectic and Hamiltonian comparison}

In this comparison, $\mathcal M_N$ denotes the realification of the
holomorphic lifting space. We use $\vartheta_N,\omega_N,I_N$ for
$\frac L4\operatorname{Im}$ of the complex forms and functional in
Section~\ref{subsec:hol-hamiltonian}, respectively.

Let \(\mathcal R\) be the space of representative directions. Write
\[
 z=(r_H,m,r_B)\in\mathcal R
\]
for the representative coordinates and
\(\varpi\) for their conjugate momenta. Let
\(\omega_{\mathcal R}\) denote the corresponding canonical cotangent
form.

In the common coordinates, write
\[
 S_C=S_0+\jmath_\epsilon S_{1,C},
 \qquad
 I_N=I_0+\jmath_\epsilon I_1,
\]
where \(I_N\) is the holomorphic Hamiltonian of
\eqref{eq:hol-hamiltonian-definition}, with the real normalization of
\eqref{eq:product-dressed-primitive}. The leading differential \(Q_0\) preserves
\[
 C^{[0]}
 =
 \{\varpi^{[0]}=0\}.
\]

There are two first-order discrepancies between the stabilized product
theory and the holomorphic lifting theory. We first remove the
difference between their symplectic forms.

Let \(\widehat N\) be the total tangent connection determined by
\eqref{eq:product-natural-graph} in the common graded coordinates. Put
\[
 \mathcal C=\Gamma_C+j,
 \qquad
 b_C=\mathcal C-\widehat N,
 \qquad
 \widehat\Omega
 =
 e^{-\iota_{\widehat\xi}}\boldsymbol\Omega,
\]
where \(j\) is the tangent-frame ghost and \(\widehat\xi\) is the transformed
diffeomorphism ghost. Define
\begin{equation}\label{eq:comparison-beta}
 \beta
 =
 \frac{L}{16}\operatorname{Im}
 \int_X
 P\!\left(
 b_C,
 (\delta+\iota_{\delta\widehat\xi})(\mathcal C+\widehat N)
 \right)
 \widehat\Omega.
\end{equation}
The two symplectic forms differ by the exact first-order term
\[
 \omega_C
 =
 \omega_N+\omega_{\mathcal R}
 +\jmath_\epsilon\delta\beta.
\]

Let \(\omega_0\) be the leading symplectic form on the free geometric
variables and their duals. Since its contraction map is invertible,
there is a local vector field \(V\) satisfying
\[
 \iota_V\omega_0=-\beta.
\]
For
\[
 \Phi=\id+\jmath_\epsilon V,
\]
the first-order Moser calculation gives
\begin{equation}\label{eq:comparison-moser}
 \Phi^*\omega_C
 =
 \omega_N+\omega_{\mathcal R},
 \qquad
 VS_0=-\iota_{Q_0}\beta.
\end{equation}
The second identity is the induced first-order change in the
Hamiltonian. Lemma~\ref{lem:product-relative-moser} proves
\eqref{eq:comparison-moser} and the locality of \(V\).

After this symplectic correction, the remaining difference is a
relative tangent Chern--Simons term. Set
\[
 \widehat D
 =
 \dd+Q_0-\mathcal L_{\widehat\xi}.
\]
On \(C^{[0]}\),
Lemma~\ref{lem:product-source-total} gives
\[
 \widehat D^2=0,
 \qquad
 \widehat D\widehat\Omega=0.
\]
The relative transgression calculation then produces a local
functional \(\mathcal G\) of ghost number \(-1\) such that
\begin{equation}\label{eq:comparison-current-coboundary}
 \left.
 \bigl(S_1'-I_1\bigr)
 \right|_{C^{[0]}}
 =
 Q_0\mathcal G,
\end{equation}
where \(S_1'\) is defined by
\[
 \Phi^*S_C=S_0+\jmath_\epsilon S_1'.
\]
An explicit expression for \(\mathcal G\) is given
in \eqref{eq:product-current-generator}.

The proof of \eqref{eq:comparison-current-coboundary} is found in
Appendix~\ref{app:canonical-maps}. Its two relevant vanishings have
simple geometric origins: one transgression has horizontal degree
greater than six, while the other contains a factor in the annihilator
of \(\widehat\Omega\).

Extend \(\mathcal G\) independently of the representative momenta and
let
\[
 \Psi
 =
 \id+X_{\jmath_\epsilon \mathcal G}.
\]
For a functional \(G\) of ghost number \(-1\),
\[
 X_GS_0=-Q_0G,
\]
so the corresponding canonical transformation gives
\begin{equation}\label{eq:comparison-canonical}
 \begin{aligned}
 \widehat S
 &=
 \Psi^*\Phi^*S_C\\
 &=
 S_0+\jmath_\epsilon
 \bigl(S_1'-Q_0\mathcal G\bigr).
 \end{aligned}
\end{equation}
The leading Hamiltonians agree on $C=\{\varpi=0\}$ by the
comparison in Appendix~\ref{app:product-momenta}. Since the first-order
difference depends only on leading coefficients,
\eqref{eq:comparison-current-coboundary} gives
$\widehat S|_C=I_N$.

Both corrections act only in the ideal-valued free summands and are
the identity at leading order. They are therefore locally invertible by
reversing their first-order terms. Together with the coordinate changes
constructed in Appendix~\ref{app:canonical-maps}, they give a local
isomorphism
\[
 \mathscr T:\widetilde{\mathcal M}\xrightarrow{\sim}\widehat{\mathcal M}.
\]

\subsection{The characteristic quotient}

The remaining variables distinguish real representatives of the same
holomorphic data. In the final coordinates, let \(q\) denote the
holomorphic lifting variables and set
\begin{equation}\label{eq:product-final-span}
 C=\{\varpi=0\},
 \qquad
 i(q,z)=(q,z,0),
 \qquad
 \pi(q,z)=q.
\end{equation}
The constructions above fit into
\begin{equation}\label{eq:product-diagram}
 \begin{gathered}
 \mathcal M_\circ
 \xleftarrow{i_\times}
 C_\times
 \xrightarrow{\pi_\times}
 \mathcal M_\times,\\[2mm]
 \mathcal M_\times
 \xleftarrow{\operatorname{pr}}
 \widetilde{\mathcal M}=\mathcal M_\times\oplus\mathcal U
 \xrightarrow[\cong]{\mathscr T}
 \widehat{\mathcal M},\\[2mm]
 \widehat{\mathcal M}
 \xleftarrow{i}
 C
 \xrightarrow{\pi}
 \mathcal M_N.
 \end{gathered}
\end{equation}
The middle projection removes contractible factors. The first and last
spans are characteristic reductions.

\begin{theorem}[Hamiltonian realization]
\label{thm:hamiltonian-realization}
\label{prop:product-canonical}
\label{thm:main-d}
The map
\[
 \mathscr T:\widetilde{\mathcal M}\xrightarrow{\sim}\widehat{\mathcal M}
\]
is a local real formal Hamiltonian isomorphism, with
\[
 \mathscr T^*\widehat S=\widetilde S.
\]
On \(C\),
\begin{equation}\label{eq:final-coisotropic-identities}
 \begin{aligned}
 i^*\widehat\omega
 &=
 \pi^*\omega_N,\\
 i^*\widehat S
 &=
 \pi^*I_N,\\
 \ker i^*\widehat\omega
 &=
 \ker D\pi.
 \end{aligned}
\end{equation}
Hence the characteristic quotient carries the homological vector field
\(Q_N\) of the holomorphic lifting theory in every cochain degree.
\end{theorem}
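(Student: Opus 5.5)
The proof assembles the three stages of Section~\ref{sec:hamiltonian-reduction} into one composite map and checks the coisotropic identities in the final coordinates.

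\emph{Step 1: the composite $\mathscr T$.} Write $\mathscr T$ as the composite of the following maps, in order:
\begin{itemize}
\item the cotangent lift of the representative change $T=T_1\circ T_0$ of Proposition~\ref{thm:representative-equivalence}, extended by the identity on $\mathcal U$;
\item the common-frame and relative-gerbe coordinate changes and the cotangent polarization of Appendix~\ref{app:canonical-maps};
\item the Moser map $\Phi=\id+\jmath_\epsilon V$;
\item the canonical transformation $\Psi=\id+X_{\jmath_\epsilon\mathcal G}$.
\end{itemize}
Each factor is a local formal diffeomorphism. The first two have explicit local inverses. $\Phi$ and $\Psi$ are the identity modulo $\epsilon$ and act only in the ideal-valued summands, and $\epsilon^2=0$, so reversing their first-order terms inverts them.

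Each factor is Hamiltonian, so $\mathscr T^*\widehat S=\widetilde S$ follows by composition:
\begin{itemize}
\item The cotangent lift preserves the Liouville form and intertwines $W_H$ with $W_\Gamma$ and $f_H$ with $f_\Gamma$, by \eqref{eq:representative-equivalence}.
\item The polarization maps are canonical by construction.
\item $\Phi^*\omega_C=\omega_N+\omega_{\mathcal R}$ by \eqref{eq:comparison-moser}.
\item $\Psi$ is a first-order Hamiltonian flow, so it preserves the symplectic form exactly modulo $\epsilon^2$.
\end{itemize}
Throughout, $\widehat S$ is defined as the transported Hamiltonian, consistent with \eqref{eq:comparison-canonical}.

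\emph{Step 2: the first identity in \eqref{eq:final-coisotropic-identities}.} In the final coordinates $\widehat\omega=\pi_q^*\omega_N+\omega_{\mathcal R}$, where $\omega_{\mathcal R}$ is the canonical form $\delta z\wedge\delta\varpi$ summed over representative directions. Pull back along $i(q,z)=(q,z,0)$. Then $\delta\varpi=0$, so $i^*\omega_{\mathcal R}=0$ and $i^*\widehat\omega=\pi^*\omega_N$.

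\emph{Step 3: the Hamiltonian identity.} Expand $\widehat S=S_0+\jmath_\epsilon(S_1'-Q_0\mathcal G)$ as in \eqref{eq:comparison-canonical}.
\begin{itemize}
\item At leading order, $S_0|_C=I_0$ by the momentum comparison of Appendix~\ref{app:product-momenta}.
\item At first order, $\jmath_\epsilon$ only sees leading coefficients, so it suffices to restrict to $C^{[0]}$, where \eqref{eq:comparison-current-coboundary} gives $S_1'-Q_0\mathcal G=I_1$.
\end{itemize}
Hence $i^*\widehat S=\pi^*I_N$. This step carries the real content of the theorem: the coboundary identity \eqref{eq:comparison-current-coboundary}, with its two vanishings (a transgression of horizontal degree greater than six, and a factor annihilating $\widehat\Omega$), and the Moser identity $VS_0=-\iota_{Q_0}\beta$. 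I would rely on Lemmas~\ref{lem:product-relative-moser} and \ref{lem:product-source-total} for these rather than recompute them.

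\emph{Step 4: kernels and descent of $Q$.} Since $\omega_N$ is nondegenerate, with a local inverse by Proposition~\ref{prop:hol-hamiltonian}, we get $\ker i^*\widehat\omega=\ker\pi^*\omega_N=\ker D\pi$, namely the $z$-directions.

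For the statement about $Q_N$, note that $i^*\widehat S=\pi^*I_N$ does not involve $z$. So $\partial_z\widehat S$, which is the $\varpi$-component of the Hamiltonian vector field $\widehat Q$, vanishes on $C$. Hence $\widehat Q$ is tangent to $C$, and $C$ is preserved.

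On $C$, take the pairing of $i^*\widehat\omega$ with $\widehat Q$ against any tangent vector $u$ of $C$. This gives $-\delta\widehat S(u)=-\delta(\pi^*I_N)(u)$. The same holds for any lift of $Q_N$, by $\iota_{Q_N}\omega_N=-\delta I_N$. Nondegeneracy of $\omega_N$ then gives $D\pi(\widehat Q)=Q_N$.

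No ghost-number restriction enters at any point, since all maps are defined on the full graded spaces. Therefore the descent holds in every cochain degree.
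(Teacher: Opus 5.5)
Your proposal is correct and follows essentially the same route as the paper. You build $\mathscr T$ from the representative change, the coordinate and polarization maps of Appendix~\ref{app:canonical-maps}, the Moser map and the canonical transformation; you read off $i^*\widehat\omega=\pi^*\omega_N$ and the kernel from $\widehat\omega=\omega_N+\omega_{\mathcal R}$; you obtain $i^*\widehat S=\pi^*I_N$ from the leading momentum comparison together with \eqref{eq:comparison-current-coboundary}; and you descend $\widehat Q$ by $z$-independence, with your explicit nondegeneracy argument for $D\pi(\widehat Q)=Q_N$ only spelling out what the paper leaves to the identities of Section~\ref{subsec:hol-hamiltonian}.
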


\begin{proof}
Proposition~\ref{thm:representative-equivalence} identifies the Hull
and Hermitian product theories. The stabilization and coordinate
changes of Appendix~\ref{app:canonical-maps}, followed by
\eqref{eq:comparison-moser} and
\eqref{eq:comparison-canonical}, identify their symplectic forms and
Hamiltonians with those of \(\widehat{\mathcal M}\). This gives the local
Hamiltonian isomorphism \(\mathscr T\).

In the final coordinates,
\[
 \widehat\omega
 =
 \omega_N+\omega_{\mathcal R}.
\]
Restricting to \(\varpi=0\) gives
\[
 i^*\widehat\omega
 =
 \pi^*\omega_N,
 \qquad
 \ker i^*\widehat\omega
 =
 \ker D\pi
 =
 \langle\partial_z\rangle.
\]
Equation~\eqref{eq:comparison-current-coboundary} gives
\[
 i^*\widehat S
 =
 \pi^*I_N.
\]
In particular, the restricted Hamiltonian is independent of
\(z\). Its Hamiltonian vector field is therefore tangent to
\(C\) and descends through \(\pi\). On the quotient it satisfies
\[
 \iota_Q\omega_N=-\delta I_N.
\]
The Hamiltonian identities in Section~\ref{subsec:hol-hamiltonian}
identify this vector field with
\(Q_N\).
\end{proof}

\begin{corollary}[Linearized correspondence]
\label{prop:unary-source-span}
Let \(\mathcal C_\times\) be the tangent complex of the product theory
and \(\mathcal C_{N,\mathbb R}\) the realified holomorphic tangent complex.
Differentiating \eqref{eq:product-diagram} gives a correspondence of
these tangent complexes, with contractible stabilization and a surjective
characteristic projection in every degree. The characteristic
projection need not be a quasi-isomorphism. In degree \(-1\),
\begin{equation}\label{eq:unary-real-scalar-obstruction}
 \mathcal H^{-1}(\mathcal C_\times)
 =
 \underline{\mathbb D},
 \qquad
 \mathcal H^{-1}(\mathcal C_{N,\mathbb R})
 =
 \underline{\mathbb D}^{\,2}.
\end{equation}
\end{corollary}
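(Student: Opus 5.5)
The plan is to differentiate the three spans of \eqref{eq:product-diagram} at the reference point and handle each piece separately. This gives surjective characteristic projections and a contractible stabilization. The degree $-1$ cohomology is then computed directly on both sides.

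\textbf{Linearizing the spans.} The reference point lies on $C_\times$, $C$ and $C^{[0]}$, and all three spans are compatible with the homological vector fields. Linearizing therefore gives chain maps of tangent complexes
\[
\mathcal C_\circ\leftarrow T C_\times\rightarrow\mathcal C_\times,\qquad
\mathcal C_\times\leftarrow\mathcal C_\times\oplus T\mathcal U\xrightarrow{\cong}T\widehat{\mathcal M},\qquad
T\widehat{\mathcal M}\leftarrow TC\rightarrow\mathcal C_{N,\mathbb R}.
\]
Each piece is handled as follows.
\begin{itemize}
\item \emph{First span.} The projection $\pi_\times$ only forgets normal momenta, so it is surjective in every degree.
\item \emph{Middle span.} $D\mathscr T$ is an isomorphism by Theorem~\ref{thm:hamiltonian-realization}. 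The factor $T\mathcal U$ is a sum of cotangent pairs whose linear differential is an isomorphism between partners, by \eqref{eq:product-unit-pairs}. So it is acyclic and $\operatorname{pr}$ is a quasi-isomorphism.
\item \emph{Last span.} In the final coordinates $D\pi(q,z)=q$, which is visibly surjective degreewise. Its kernel is $\langle\partial_z\rangle$, and this kernel has no reason to be acyclic. That is why $\pi$ need not be a quasi-isomorphism.
\end{itemize}

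\textbf{Degree $-1$ on the holomorphic side.} In $C_N^{-1}$ only the gerbe scalar $c=b_{-1}\in\Omega^{0,0}$ survives, since $x,\alpha,\mu$ vanish in negative degree and $s_{-1}\in\Omega^{0,-2}=0$. By \eqref{eq:unary-endpoint} the differential is $c\mapsto(\tfrac12\partial c,-\bar\partial c)$. Its kernel is therefore the complex constants $\underline{\mathbb D_{\mathbb C}}$. After realification this is $\underline{\mathbb D}^{\,2}$.

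\textbf{Degree $-1$ on the product side.} Here the only degree $-1$ generator is the real scalar $c$ of Section~\ref{sec:auxiliary-symmetry}, restricted to the product locus. Its linear differential is $\Lambda$-component $dc$, so the kernel is $\underline{\mathbb D}$. The possible correction $\xi(c)$ vanishes at the background because the ghosts vanish there.

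\textbf{Main obstacle.} The delicate point is matching degree $-1$ through the middle span. One must confirm that the stabilization pairs in $\mathcal U$, in particular $r_B$, do not create or kill degree $-1$ cohomology. The difference $\underline{\mathbb D}$ versus $\underline{\mathbb D}^2$ should then be carried entirely by $\ker D\pi$. I would check this by writing $r_B$'s linear differential from \eqref{eq:product-unit-pairs} and verifying that its contraction is a homotopy on $T\mathcal U$. Under $\mathscr T$, the imaginary constant then appears as the $z$-direction $r_B$, which the quotient discards.
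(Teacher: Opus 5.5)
Your proposal is correct and follows the paper's own argument. You differentiate the diagram, use that the stabilization contributes only contractible summands and that the final characteristic projection is degreewise surjective, and then identify $\mathcal H^{-1}$ on the two ends as locally constant real gerbe scalars versus locally constant complex ones. The one step you leave implicit, which the paper states, is that neither end complex has a term in degree $-2$, so $\mathcal H^{-1}$ is just $\ker d^{-1}$. Your closing paragraph on $r_B$ is not needed, and it is slightly loose: $r_B$ sits in degree $0$, so what the quotient discards is the $r_B$-component of the differential of the degree $-1$ imaginary scalar $c_I$, not the class itself.
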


\begin{proof}
Differentiate \eqref{eq:product-diagram} at the common stationary
background. The stabilization contributes only contractible summands,
and the final characteristic projection is surjective in each degree.

In degree \(-1\), the real product variational complex has the locally
constant real scalar gerbe parameters. The realification of the
holomorphic complex has the corresponding complex scalar parameters,
hence two real copies. Neither complex has a term in degree \(-2\).
This gives \eqref{eq:unary-real-scalar-obstruction}.
\end{proof}

The quotient therefore captures the holomorphic deformation theory. It
does not impose balance, gauge primitivity or leading-flux
admissibility. These are additional conditions on a real representative
and are not constant along the characteristic fibres by
Proposition~\ref{prop:nonbasic-bps}. Consequently the reduction does
not, by itself, produce a physical representative over every
holomorphic solution.

\section{Consequences and open problems}
\label{sec}

The finite \(L_3\) model constructed above belongs to the resolved
theory, not to the physical functional obtained after eliminating the
auxiliary fields. The resolved Hamiltonian is quartic and its
homological vector field has no Taylor operations of arity greater than
three, whereas the flat-torus physical functional has a nonzero quintic Taylor
coefficient in the local physical coordinates of
Section~\ref{subsec:quintic-coefficient}. Elimination of auxiliary fields can therefore increase
polynomial arity. The construction gives a cyclic \(L_3\) presentation
in the stabilization class considered here, but does not determine
whether a cyclic dg Lie presentation exists. Example~\ref{ex:flat-constant-modes} gives a nonzero ternary bracket
at a flat background.

Furthermore, the holomorphic heterotic theory arises as a characteristic quotient of
the product variational theory, rather than as an equivalent
deformation problem. The quotient removes directions associated with
the choice of real representative and recovers the holomorphic
equations together with their gauge and identity structure. Conformal
balance, gauge primitivity and leading-flux admissibility are not
constant along these characteristic fibres. The same difference is
visible after linearization: the degree-\(-1\) cohomology of the real
product complex does not agree with that of the realified holomorphic
complex.

This leaves a lifting problem that is not addressed by the reduction.
Given a Maurer--Cartan solution of the holomorphic theory, one may ask
whether its characteristic fibre contains a real representative
satisfying balance, primitivity and the leading-flux condition, and
how such representatives are parametrized when they exist. Existence
and uniqueness of such representatives require additional geometric
and analytic input beyond the characteristic reduction.

There is also a separate question about the algebraic form of the
resolved theory. The present construction gives a finite cyclic \(L_3\)
model, but it remains open whether the ternary operation can be removed
by a further finite stabilization compatible with the cyclic structure.
Beyond first order in \(\alpha'\), one can ask instead whether a
finite-arity presentation continues to exist once the higher-order
anomaly and connection corrections are included.
\section*{Acknowledgements}
Selected tensor, sign, and coefficient identities were checked by exact symbolic computation.
OpenAI ChatGPT/Codex assisted with checking derivations, developing verification scripts, and
drafting the manuscript. Responsibility for the arguments and mathematical claims rests with
the author.
\clearpage
\appendix
\section{Auxiliary elimination and Hamiltonian stabilization}
\label{app:parent-elimination}\label{app:auxiliary}\label{sec:equivalence}\label{app:momenta}
The auxiliary variables are eliminated at the Hamiltonian level. We first
show that the four constraints form local coordinates in the auxiliary
directions, then apply the standard generalized-auxiliary-field
construction \cite{BBH,GrigorievParent,GrigorievRudinsky} to split off
the resulting contractible factors. All adjoints are variational adjoints
on the closed manifold.

For a finite-rank graded bundle $F\to Y$, an integer $k$, and
$R\in\{\mathbb D,\mathbb D_0\}$, write
\[
 C(F,k;R)^p=
 \bigl(\Gamma(F^{p-k})\oplus\Gamma(F^{p-k-1})\bigr)\otimes R,
 \qquad d_C(a,b)=(0,a),
\]
Let $\mathscr C(F,k;R)$ be the formal graded base associated with the
suspension of this complex, and set
\[
 \mathcal K(F,k;R)=T^*[-1]\mathscr C(F,k;R).
\]
For suspended coordinates $u,v$ and shifted covectors $p_u,p_v$,
\begin{equation}\label{eq:elementary-factor}
 \vartheta_{\mathcal K}
 =\int_Y(\langle p_u,\delta u\rangle+\langle p_v,\delta v\rangle),
 \qquad
 \mathcal S_{\mathcal K}=\int_Y\langle p_v,u\rangle.
\end{equation}
For $R=\mathbb D_0$, the covectors take values in $\epsilon\mathbb D$.
The identity complex is contracted by $h(a,b)=(b,0)$, and the shifted
cotangent lift gives the corresponding contractible Hamiltonian factor.

The four constraints take values in
\[
 F_C=(\Lambda^7T^*Y\otimes\End E)
 \oplus(T^*Y\otimes\Delta)
 \oplus(\Lambda^2T^*Y\otimes E)\oplus\Lambda^3T^*Y.
\]

\subsection{Constraint coordinates}

For a fixed invertible coframe $e=(e^a)$, let $E_a$ be the dual frame and
set
\begin{equation}\label{eq:aux-pointwise-maps}
\begin{aligned}
 L_e(k)^a&=k^a{}_b\wedge e^b,&
 K_e(h)^a&=\tfrac12h^a{}_{bc}e^b\wedge e^c,\\
 E_3(h)&=\tfrac16h_{abc}e^a\wedge e^b\wedge e^c,&
 A_e(t)&=\tfrac13e_a\wedge t^a.
\end{aligned}
\end{equation}
Then $A_eK_e=E_3$, and both $L_e$ and $E_3$ are pointwise
isomorphisms. For example,
\[
 (L_e^{-1}t)_{iab}=\tfrac12(t_{aib}+t_{bai}-t_{iba}).
\]

\begin{lemma}[Constraint coordinate inverse]\label{lem:constraint-inverse}
For fixed $q=(e,\zeta,B,\nu,A)$, the map
\[
 y=(U,\Xi,\Theta,h)\longmapsto
 v=(C_U,C_\Xi,T,C_H)
\]
has a local formal inverse. At prescribed values
$v=(v_U,v_\Xi,v_T,v_H)$ it is
\begin{equation}\label{eq:aux-full-inverse}
\begin{aligned}
 h^u&=E_3^{-1}(\dd B+v_H+A_ev_T),\\
 \Theta^u&=L_e^{-1}(v_T-\dd e-K_eh^u),\\
 t^u&=\cT_3(\red{\Theta^u},A-\red{\Theta^u}),\\
 h&=h^u-\tfrac\eps4E_3^{-1}t^u,\\
 \Theta&=L_e^{-1}(v_T-\dd e-K_eh),\\
 U_b&=\sum_a\iota_{E_a}(\delta^a_b\nu+(v_U)^a{}_b),\\
 \Xi&=\dd\zeta+\sg(\Theta)\zeta+v_\Xi.
\end{aligned}
\end{equation}
The linearization $\mathcal J=D_yC$ and its adjoint have local inverses
of finite differential order.
\end{lemma}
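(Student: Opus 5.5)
The plan is to solve the constraint system $C(q,y)=v$ by triangular back-substitution, in the order $h\to\Theta\to U\to\Xi$, using the $\epsilon$-filtration to decouple the one nonlinear coupling. The pointwise facts needed first are that $L_e$ and $E_3$ are isomorphisms and that $A_eK_e=E_3$. For $E_3$ this holds because $e$ is a coframe. For $L_e$ it is the usual uniqueness of a metric connection with prescribed torsion, and I would check it directly against the displayed inverse formula for $(L_e^{-1}t)_{iab}$. The identity $A_eK_e=E_3$ is \eqref{eq:axial-Ke} divided by three.

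\textbf{Flux and connection.} Write $\Theta=\Theta_0+k$, and expand $D_\Theta e$ as $\dd e+L_e(\Theta)$ in the frame convention of \eqref{eq:aux-pointwise-maps}. Then $T=v_T$ reads $L_e(\Theta)=v_T-\dd e-K_eh$. Applying $A_e$ and using the axial identity of Lemma~\ref{lem:axial-contraction} in the form $-A_e(D_\Theta e)=E_3h-A_eT$, the flux constraint becomes
\[
E_3h=\dd B+v_H+A_ev_T-\tfrac\epsilon4\cT_3\bigl(\Theta^{[0]},A-\Theta^{[0]}\bigr).
\]
Modulo $\epsilon$ the last term drops, so $h^{[0]}=(h^u)^{[0]}$ and hence $\Theta^{[0]}=(\Theta^u)^{[0]}$. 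Substituting $\Theta^u$ into the transgression therefore gives exactly $h=h^u-\tfrac\epsilon4E_3^{-1}t^u$. Because $\epsilon^2=0$, the argument is not circular: $t^u$ depends only on leading coefficients, which are already fixed by $h^u$. Once $h$ is known, $\Theta=L_e^{-1}(v_T-\dd e-K_eh)$.

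\textbf{Density and spinor.} With $\Theta$ determined, $C_\Xi=v_\Xi$ is solved explicitly for $\Xi$, since $\Xi$ appears linearly with coefficient one. For $C_U=v_U$, the equation $e\wedge U=\nu\,\id+v_U$ is pointwise solved by contracting with the dual frame, using $e^a\wedge\iota_{E_b}\nu=\delta^a_b\nu$. The inversion is only formal and local because $E_3^{-1}$, $L_e^{-1}$ and $E_a$ are rational in $e$ with $e^{[0]}$ invertible.

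\textbf{Linearization and its adjoint.} Differentiate the constraint map in $y$ at fixed $q$. This gives a block-triangular operator whose diagonal blocks are pointwise algebraic isomorphisms:
\begin{itemize}
\item $(e\wedge\cdot)$ on $U$,
\item the identity on $\Xi$,
\item $L_e$ on $\delta\Theta$ in $T$,
\item $K_e$ on $\delta h$ in $T$.
\end{itemize}
The off-diagonal blocks are $-\sg(\delta\Theta)\zeta$ in $C_\Xi$, a differential-operator coupling in the $C_H$ row, and $\epsilon$-terms coming from $\cT_3$. The resulting inverse is a differential operator of finite order. Its adjoint is the formal adjoint of that finite-order operator, again of finite order, and it inverts $\mathcal J^\dagger$.

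The main obstacle is the flux/connection block. There one must check that the axial identity genuinely decouples $C_H$ into an algebraic equation for $h$, with no derivatives of $\Theta$ left over: $\langle e\wedge D_\Theta e\rangle$ involves $\Theta$ only algebraically, but $\cT_3$ contains $D_{\Theta^{[0]}}a$. That derivative term survives only at order $\epsilon$, so the inverse gains derivatives but stays of finite order. This step relies on nilpotence of $\epsilon$ together with $\Theta$ being $\mathbb D$-valued while $A$ is $\mathbb D_0$-valued.
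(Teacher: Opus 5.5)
Your nonlinear inversion is the paper's own argument. The row combination $C_H+A_eT=E_3h-\dd B+\tfrac\eps4\cT_3$ determines $h$. Because of the factor $\eps$, the transgression only needs $\Theta^{[0]}=(\Theta^u)^{[0]}$. Then $\Theta$, $U$ and $\Xi$ follow triangularly. The gap is in your linearization paragraph, where the block structure you list is not correct:
\begin{itemize}
\item $K_e:\Lambda^3E^*\to\Lambda^2T^*Y\otimes E$ is injective but not an isomorphism (rank $35$ into rank $147$).
\item $K_e$ sits in the same $T$ row as $L_e$, so that row cannot supply diagonal blocks for both $\delta\Theta$ and $\delta h$.
\item The unmodified $C_H$ row has no $\delta h$ entry at all. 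Its $\delta\Theta$ entry is the algebraic map $-A_eL_e$ plus an $\eps$-order differential operator, not merely ``a differential-operator coupling''.
\end{itemize}
As written, the $(\Theta,h)$ block of $\mathcal J$ is therefore not triangular with invertible diagonal. So the claimed finite-order inverse of $\mathcal J$, and hence of $\mathcal J^\dagger$, is not yet justified.

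The fix is to apply to $\mathcal J$ the same row operation you used nonlinearly. Let $P$ replace the $C_H$ row by $C_H+A_eT$. In rows $(T,\,C_H+A_eT)$ and columns $(\delta\Theta,\delta h)$ the block becomes
\[
 \begin{pmatrix} L_e & K_e\\ \eps V_\Theta & E_3\end{pmatrix},\qquad E_3=A_eK_e .
\]
Here $\eps V_\Theta$, the variation of $\tfrac\eps4\cT_3$, is the only non-algebraic entry of the whole matrix. Thus $P\mathcal J=\mathcal J_0+\eps\mathcal J_1$, where $\mathcal J_0$ is pointwise and block upper triangular with diagonal $e\wedge\cdot$, $1$, $L_e$, $E_3$, hence invertible. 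Since $\eps^2=0$,
\[
 \mathcal J^{-1}=\bigl(\mathcal J_0^{-1}-\eps\,\mathcal J_0^{-1}\mathcal J_1\mathcal J_0^{-1}\bigr)P,
 \qquad (\mathcal J^\dagger)^{-1}=(\mathcal J^{-1})^\dagger ,
\]
which is exactly how the paper concludes. Equivalently, you could differentiate your explicit nonlinear inverse in $v$, which has first differential order.
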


\begin{proof}
The row combination $C_H+A_eT$ is
\[
 E_3h-\dd B+\frac\epsilon4\cT_3.
\]
This determines $h$, after which the torsion equation determines
$\Theta$. Since the transgression is multiplied by $\epsilon$, its
argument is needed only modulo $\epsilon$, which gives the first four
lines of \eqref{eq:aux-full-inverse}. The $U$ and $\Xi$ equations are
then triangular.

For the linearization, the $(\Theta,h)$ block becomes triangular after
the same row operation, with diagonal entries $L_e$ and $E_3$. The
remaining part is pointwise invertible. Writing
$\mathcal J=U+\epsilon V$, with $U$ this pointwise invertible operator,
\[
 \mathcal J^{-1}=U^{-1}-U^{-1}\epsilon VU^{-1},
 \qquad
 (\mathcal J^\dagger)^{-1}=(\mathcal J^{-1})^\dagger.
\]
\end{proof}

\subsection{Hamiltonian elimination}

Write
\[
 W_{\rm aux}=W_{\rm geom}(q,y)+\langle m,C(q,y)\rangle
\]
and include the surviving ghosts among $q$ and their covectors among
$\pi$. Write $K^\dagger\pi$ for the signed variational derivative in
$y$ of $\iota_f\vartheta_B$ on $\pi_y=\pi_m=0$, with the cotangent
signs of Appendix~\ref{app:cyclic-signs}. The auxiliary Hamilton
equations there are
\begin{equation}\label{eq:aux-euler-equations}
 C=0,
 \qquad
 W_{{\rm geom},y}+\mathcal J^\dagger m+K^\dagger\pi=0.
\end{equation}
Thus, with $y_{\rm gr}(q)$ denoting
\eqref{eq:aux-full-inverse} at $v=0$,
\[
 m_{\rm gr}
 =-(\mathcal J_{\rm gr}^\dagger)^{-1}
   (W_{{\rm geom},y}+K^\dagger\pi)
   \big|_{y=y_{\rm gr}(q)}.
\]
The resulting section
\[
 \iota_{\rm aux}(q,\pi)
 =(q,y_{\rm gr}(q),m_{\rm gr}(q,\pi),\pi,0,0)
\]
is invariant under $Q_B$, and
\begin{equation}\label{eq:aux-pullbacks}
 \iota_{\rm aux}^*\vartheta_B=\vartheta_q,
 \qquad
 \iota_{\rm aux}^*\cS_B
 =W_{\rm geom}(q,y_{\rm gr}(q))
  +\iota_{f_q|_{y=y_{\rm gr}(q)}}\vartheta_q.
\end{equation}
Indeed, \eqref{eq:aux-euler-equations} is regular by
Lemma~\ref{lem:constraint-inverse}, and the displayed formula is simply
its solution in $(y,m)$. The ideal generated by
$\pi_y,\pi_m,Q_B\pi_y,Q_B\pi_m$ and their jets is $Q_B$-stable because
$Q_B^2=0$, which proves invariance of the section.
Pullback of the Liouville form gives the first
identity in \eqref{eq:aux-pullbacks}; the second follows because the
constraints and auxiliary covectors vanish on the section.

The normal variables are now generalized auxiliary fields in the sense
of \cite{BBH,GrigorievParent,GrigorievRudinsky}. Taking
$v=C(q,y)$ as normal coordinates and shifting the multiplier by the
first variational remainder puts the functional in the form
$W_{\rm red}+\langle M,v\rangle$. Both changes are lifted to the
cotangent space. The Hamiltonian section may still have a
momentum-dependent $M$-coordinate. If that coordinate is $a(q,\pi)$,
the new-to-old canonical transformation generated by
$\langle\pi_M,a\rangle$ straightens the section. The generator has
formal degree at least three, since $a$ contains a surviving momentum
and a ghost. In the resulting coordinates,
\[
 \cS=\cS_{\rm red}+\langle M,v\rangle+R,
 \qquad R\in I_{\rm aux}^2\cap\mathfrak m^3,
\]
where $I_{\rm aux}$ is the ideal of the normal coordinates and their
jets, and $\mathfrak m$ is the ideal of the stationary background.
Invariance of the symplectic zero section excludes terms linear in the
normal coordinates. The elementary factor has the counting contraction
associated to \eqref{eq:elementary-factor}. Together with the master
equation, it removes each successive homogeneous term of $R$ by a
canonical transformation fixing that section. Consequently the auxiliary
normal directions split off the factor
\[
 \mathcal K(F_C,1;\mathbb D).
\]
This construction is local at each formal order because the only
inverse used before the contraction is the finite-order inverse of
Lemma~\ref{lem:constraint-inverse}.

\subsection{Scale and frame factors}

After auxiliary elimination, two further choices remain: the independent
normalization of $(\nu,\zeta)$ and the orthonormal coframe. Both give
free contractible factors.

For the scale symmetry set
\[
 \rho=\nu/\vol_e,
 \qquad t=\log\rho,
 \qquad \zeta_\ph=e^{t/2}\zeta,
 \qquad \eta_s=ft.
\]
Then $f\eta_s=0$ and the inverse ghost change is
$\sigma=\eta_s+\xi(t)$. The cotangent lift gives the elementary pair
\[
 u=-\eta_s,
 \qquad v=t,
 \qquad p_u=-p_{\eta_s},
 \qquad p_v=p_t,
\]
with Hamiltonian $\langle p_v,u\rangle$. Hence the density-scale
symmetry splits off
\[
 \mathcal K(\mathbb R,0;\mathbb D),
\]
and the quotient variable is the weighted spinor $\zeta_\ph$, with
$\varrho=|\zeta_\ph|^2\vol_e$.

For the frame symmetry use the right polar decomposition
\[
 E_{\rm fr}=R_{\rm fr}L,
 \qquad
 L=(E_{\rm fr}^{\mathsf T}E_{\rm fr})^{1/2},
\]
and put
\[
 q_{\rm fr}=\log R_{\rm fr},
 \qquad
 \eta_{\rm fr}=f(q_{\rm fr}).
\]
The frame action is locally free, so this is an invertible local ghost
change. Its cotangent lift again has the elementary form
\[
 u=-\eta_{\rm fr},
 \qquad v=q_{\rm fr},
 \qquad p_u=-p_{\eta_{\rm fr}},
 \qquad p_v=p_{q_{\rm fr}}.
\]
Transporting the gauge and gerbe coordinates through the same frame
change therefore splits off
\[
 \mathcal K(\mathfrak{so}(E),0;\mathbb D).
\]
The quotient is precisely the weighted-spinorial presentation used in
\cite{BrongersPaperI}.

\begin{theorem}[Hamiltonian stabilization]\label{thm:main-c}\label{thm:parent-equivalence}
Let $\mathcal M_{\rm red}$ be the physical-coordinate variational
Hamiltonian theory of \cite[Section~4]{BrongersPaperI}. Near the
stationary standard embedding,
\[
 \mathcal M_B\cong_Q\mathcal M_{\rm red}
 \times\mathcal K(F_C,1;\mathbb D)
 \times\mathcal K(\mathbb R,0;\mathbb D)
 \times\mathcal K(\mathfrak{so}(E),0;\mathbb D).
\]
The isomorphism and its inverse have finite differential order at each
formal degree and carry the distinguished physical inclusion to the
corresponding zero section.
\end{theorem}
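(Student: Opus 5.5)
The proof composes the three splittings constructed in this appendix, in the order auxiliary, scale, frame. At each stage I check three things: the map is a local Hamiltonian $Q$-isomorphism of finite differential order at each formal degree; it carries the distinguished physical inclusion to the zero section of the split-off factor; and the stationary point $b_\ast$ of \eqref{eq:parent-stationary-point} corresponds to the physical stationary point.

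\emph{Step 1 (auxiliary directions).} Lemma~\ref{lem:constraint-inverse} shows that $v=C(q,y)$ are local coordinates in the $y$-directions. Its inverse \eqref{eq:aux-full-inverse} has finite differential order, and so does $(\mathcal J^\dagger)^{-1}$. Use these coordinates together with the shifted multiplier $M$ and straighten the $M$-section by the canonical transformation generated by $\langle\pi_M,a\rangle$. This brings $\cS_B$ to the form $\cS_{\rm red}'+\langle M,v\rangle+R$ with $R\in I_{\rm aux}^2\cap\mathfrak m^3$. By \eqref{eq:aux-pullbacks}, $\cS_{\rm red}'$ is the pullback along $\iota_{\rm aux}$. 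Then remove $R$ degree by degree. Write $\cS^{(k)}$ for the Hamiltonian after $k$ steps. The master equation implies that the lowest homogeneous piece of $R$ is closed for the elementary differential $\{\langle M,v\rangle,-\}$. The contraction $h$ of $\mathcal K(F_C,1;\mathbb D)$ then supplies a generator $G_k$, built algebraically from that piece and its jets, whose flow removes it. Because each $G_k$ lies in $\mathfrak m^{k}$, the composite of the flows converges formally, and at each fixed formal degree only finitely many steps contribute. This yields $\mathcal M_B\cong_Q\mathcal M'\times\mathcal K(F_C,1;\mathbb D)$. I expect this step to be the main obstacle, for two reasons. The jets of $v$ must be treated as independent coordinates in the homological perturbation. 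And one must check that the $\mathbb D_0$ gauge sector, paired through $\jmath_\epsilon$, does not spoil invertibility. For the second point I would lean on the triangular $\mathcal J=U+\epsilon V$ structure from the proof of Lemma~\ref{lem:constraint-inverse}.

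\emph{Step 2 (scale and frame).} On $\mathcal M'$, make the ghost changes $\sigma\mapsto\eta_s=ft$ and $r\mapsto\eta_{\rm fr}=f(q_{\rm fr})$. These are invertible because the scale action and the frame action are locally free. Take the field coordinates to be $t=\log\rho$ and $q_{\rm fr}$, with complements $\zeta_\ph$ and the polar part $L$, and cotangent-lift all of these changes. The Hamiltonian is independent of $t$ and $q_{\rm fr}$, because $\zeta_\ph$, $\varrho$ and the frame-invariant data are invariant by \eqref{eq:physical-scale-invariants} and Proposition~\ref{prop:gauge-theorem}. It therefore splits as $\cS_{\rm red}+\langle p_t,-\eta_s\rangle+\langle p_{q_{\rm fr}},-\eta_{\rm fr}\rangle$, up to terms that Step 1's argument removes in the same way. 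This splits off $\mathcal K(\mathbb R,0;\mathbb D)\times\mathcal K(\mathfrak{so}(E),0;\mathbb D)$. The frame change transports $A$, $B$, $\Theta$ by a gauge rotation. Its finite order follows from analyticity of the polar decomposition and of $\log$ near the background.

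\emph{Step 3 (identification).} Show that the quotient equals $\mathcal M_{\rm red}$. On the physical slice the fields are $(e,\zeta_\ph,B,A)$ with $e$ a metric coframe. Proposition~\ref{prop:source-identity} together with \eqref{eq:ii-dirac-functional} identifies the remaining functional with $W_{\rm ext}$. The residual derivation is $f_{\rm red}$, where the frame ghost is replaced by the metric-dependent rotation of \cite[(D.3), (D.7)]{BrongersPaperI}. The Liouville forms agree by \eqref{eq:aux-pullbacks}. Composing the three isomorphisms gives the stated product decomposition. The distinguished physical inclusion goes to the zero section because at every stage it is exactly $v=0$, $t=0$, $q_{\rm fr}=0$ with the partner momenta vanishing.
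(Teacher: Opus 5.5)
Your proposal is correct and follows the paper's proof essentially step for step. The shared steps are: constraint coordinates from Lemma~\ref{lem:constraint-inverse}; straightening of the multiplier section and contraction of the remainder $R\in I_{\rm aux}^2\cap\mathfrak m^3$ by the elementary factor and the master equation; the scale and polar-frame ghost changes $\eta_s=ft$, $\eta_{\rm fr}=f(q_{\rm fr})$ producing elementary pairs; and identification of the quotient through Proposition~\ref{prop:source-identity} and the transported gauge hierarchy.

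Two refinements. First, the lowest piece of $R$ is closed for the full quadratic differential $\{\langle M,v\rangle+S^{\rm red}_2,-\}$, with $S^{\rm red}_2$ the quadratic part of $\cS_{\rm red}$, rather than for the elementary one alone; exactness still follows because the contraction of $\mathcal K$ commutes with the second summand. Second, the scale and frame splittings are exact, so the residual terms you allow for in Step~2 do not arise.
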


\begin{proof}
The first factor is the Hamiltonian elimination above. The scale and
frame coordinate changes give the remaining two factors. On their
quotient, Proposition~\ref{prop:source-identity} gives $W_{\rm ext}$,
and the transported gauge hierarchy is the one of
\cite[Appendix~D]{BrongersPaperI}. The variational cotangent lifts
therefore identify the full Hamiltonians.
\end{proof}

All maps are equivariant under circle translation of the product
background, so the construction restricts to the circle-invariant theory
used in Section~\ref{sec:hamiltonian-reduction}.

\section{Auxiliary Hamiltonian signs and coefficients}
\label{app:cyclic-signs}\label{app:mixed}

We use the shifted-cotangent and suspension conventions of
\cite[Section~3 and Appendix~C]{BrongersPaperI}. Only the mixed
$\mathbb D_0/\epsilon\mathbb D$ duality and the two Taylor coefficients
used in the main text are recorded here.

For a base coordinate $q$ of the graded space $\mathscr B$, let $\pi_q$
be its shifted density covector. With covectors written first,
\begin{equation}\label{eq:mixed-potential-small}
 \vartheta_B=
 \int_Y\left(
   \sum_{q\;\mathbb D\text{-valued}}\langle\pi_q,\delta q\rangle
   +\epsilon\sum_{q\;\mathbb D_0\text{-valued}}
      P(\bar\pi_q,\delta q^{[0]})
 \right),
 \qquad \omega_B=\delta\vartheta_B.
\end{equation}
Here $\pi_q=\jmath_\epsilon(\bar\pi_q)$ on a residue summand. Thus
\[
 \iota_f\vartheta_B
 =\int_Y\sum_q(-1)^{|\pi_q|}\langle\pi_q,fq\rangle,
\]
and the cotangent component of the Hamiltonian vector field is the
signed variational adjoint of $D(fq)$. For a differential coordinate
change $q'=T(q)$, its cotangent lift is
\begin{equation}\label{eq:actual-cotangent-lift}
 \pi=(DT)^\dagger\pi',
 \qquad
 \int\langle\pi',\delta T(q)\rangle
 =\int\langle\pi,\delta q\rangle
 \quad(\bmod\ \dd_{\rm hor}).
\end{equation}
This convention is used throughout Appendices~\ref{app:parent-elimination}
and~\ref{app:canonical-maps}.

\subsection{Taylor coefficients}
\label{app:cyclic-conventions}

At the stationary background the desuspended pairing is
\begin{equation}\label{eq:cyclic-pairing-sign}
 \langle x,y\rangle_B=(-1)^{|x|}\omega_B(sx,sy),
 \qquad |sx|=|x|-1,
\end{equation}
with the polarization convention of
\cite[Appendix~C.4]{BrongersPaperI}. Applied to the cubic scalar term
\(
 \epsilon P(\lambda,[\lambda,\lambda])/24
\), this gives
\[
 \ell_3(\lambda_1,\lambda_2,\lambda_3)
 =-\frac\epsilon4P(\lambda_1,[\lambda_2,\lambda_3]),
\]
as stated in Section~\ref{sec:cyclic-l3}.

\paragraph{The quintic coefficient.}\label{app:interaction-coefficients}
Let $\chi_0$ be the parallel background spinor and put
\[
 u(t)=\chi_0+t\,c_0(v^\flat)\chi_0.
\]
The Clifford identities and the vanishing one-form bilinear give
\[
 \Bsp(u(t),u(t))=1+t^2|v|_0^2,
 \qquad
 \Bsp(u(t),\mathscr D_0u(t))=-t\operatorname{div}_0v+t^2Q(v),
\]
where
\[
 Q(v)=\Bsp(c_0(v^\flat)\chi_0,\mathscr D_0(c_0(v^\flat)\chi_0)).
\]
For the variation used in Section~\ref{sec:cyclic-l3},
\[
 v=\cos y^0\,E_0+(1+\sin y^0)E_1,
\]
$Q(v)=0$, while
\[
 \operatorname{div}_0v=-\sin y^0,
 \qquad
 |v|_0^2=2+2\sin y^0.
\]
Consequently
\[
 W(t)=w\int_Y
 \frac{-t\operatorname{div}_0v+t^2Q(v)}{1+t^2|v|_0^2}\,\vol_0,
\]
and expansion gives
\[
 [t^3]W(t)=-2\pi wV_6,
 \qquad
 [t^5]W(t)=8\pi wV_6\neq0.
\]
This is the coefficient used in \eqref{eq:quintic-example}.

\section{Courant coordinates for the gauge hierarchy}
\label{app:gauge-hierarchy}\label{app:gauge}\label{app:gauge-descent}

Section~\ref{sec:auxiliary-symmetry} gives the symmetry differential
used in the first-order presentation. We record only its relation to the
standard Courant--Green--Schwarz coordinates and the global frame ghost.

On a contractible principal-bundle chart, let $\gamma$ and $\beta_C$ be
the degree-one and degree-two coordinates of the Courant Lie-$2$
hierarchy of \cite[Appendix~D.1--D.2]{BrongersPaperI}. With independent
frame ghost $r$, set
\begin{equation}\label{eq:ii-parent-courant-dictionary}
\begin{aligned}
 u_T&=r,&u_G&=r^{[0]}+\lambda,\\
 \eta_C&=-\Lambda-\tfrac\epsilon2P(A,\lambda),&
 \beta_C&=-c+\tfrac\epsilon4P(r^{[0]},\lambda)
                -\tfrac12\iota_\xi\eta_C,\\
 \gamma&=\xi+u_T+u_G+\eta_C.
\end{aligned}
\end{equation}
For a tangent connection $\Theta$, the relative two-form coordinate is
\[
 B_a=B-\frac\epsilon4P(\Theta^{[0]},A-\Theta^{[0]}).
\]
This triangular change of variables converts the Courant differential of
\cite[(D.2), (D.10)--(D.11)]{BrongersPaperI} into the formulas of
Section~\ref{sec:auxiliary-symmetry}. In particular,
\begin{equation}\label{eq:b-lambda}
 b_\lambda=\tfrac12P(\lambda,F_A)-\tfrac14P(a,D_A\lambda),
 \qquad a=A-\Theta^{[0]}.
\end{equation}
The constraint and multiplier transformations are then the induced
representation and its dual. Inspection of the differential in Section~\ref{sec:auxiliary-symmetry}
and of \eqref{eq:global-frame-ghost} below gives the cubic bound used
in Section~\ref{sec:cyclic-l3}.

For globalization, the local frame ghost transforms by
\[
 r_b=g^{-1}r_ag-g^{-1}\xi(g).
\]
A fixed background connection therefore gives the global adjoint
coordinate
\[
 r_{\rm cov}=r+\iota_\xi\Theta_0,
\]
with
\begin{equation}\label{eq:global-frame-ghost}
\begin{aligned}
 fr_{\rm cov}
   &=r_{\rm cov}^2-\nabla^0_\xi r_{\rm cov}
     +\tfrac12\xi^i\xi^j(R_0)_{ij},\\
 f\lambda
   &=\lambda^2-\nabla^0_\xi\lambda
     +[\lambda,r_{\rm cov}^{[0]}].
\end{aligned}
\end{equation}
Thus the local formulas glue without choosing a global parallel frame.

\section{Transverse lifts, the homological differential and Hamiltonian identities}
\label{app:holomorphic-q}\label{app:holomorphic-endpoint}

\subsection{The complete Courant operations}

Before imposing the tangent graph, write \(y=(x,\rho,\mu)\) and
\(y'=(x',\rho',\mu')\) for homogeneous bundle-valued \((0,p)\)- and \((0,q)\)-forms. All products below have the displayed wedge order. Let \(c\) denote the signed anomaly pairing on the tangent and gauge adjoint factors. The operations are
\begin{equation}\label{eq:hol-ambient-operations}
 \begin{aligned}
 (\bar\partial_Qy)_a&=\bar\partial x_a-c(F_a,\rho),&
 (\bar\partial_Qy)_\rho&=\bar\partial_{C_0}\rho+F_a\mu^a,&
 (\bar\partial_Qy)^a&=\bar\partial\mu^a,\\
 [y,y']_a&=C_a(y,y')+\tfrac12c(\nabla_a\rho,\rho')
                         -\tfrac12c(\rho,\nabla_a\rho'),\\
 [y,y']_\rho&=-[\rho,\rho']_{\mathrm{gr}}
          +\mu^a\nabla_a\rho'-(\nabla_a\rho)\mu'^a,\\
 [y,y']^a&=\mu^d\partial_d\mu'^a-(\partial_d\mu^a)\mu'^d,\\
 \langle y,y'\rangle&=\mu^ax'_a+x_a\mu'^a+c(\rho,\rho'),
 \end{aligned}
\end{equation}
where the geometric cotangent expression is
\begin{equation}\label{eq:hol-geometric-bracket}
 \begin{split}
 C_a(y,y')={}&\mu^d\partial_dx'_a-(\partial_dx_a)\mu'^d
 -\tfrac12\mu^d\partial_ax'_d+\tfrac12(\partial_ax_d)\mu'^d\\
 &+\tfrac12(\partial_a\mu^d)x'_d-\tfrac12x_d\partial_a\mu'^d.
 \end{split}
\end{equation}
In the gauge component, geometric coefficients multiplying a gauge
variable are reduced modulo $\epsilon$. The tangent component uses
the full geometric coefficients before restriction to the graph. These are Dolbeault operations on the string extension.
Their cochain grading is distinct from the two-term grading of the
Courant symmetry algebra.

On the graph, write the background curvatures as
\(F_{A_0}=dz^a\wedge F_{A,a}\) and
\(R_{\Gamma_0}=dz^a\wedge R_a\). The unary operator becomes
\begin{equation}\label{eq:hol-induced-unary}
 \begin{aligned}
 (\bar\partial_Qy)_a={}&\bar\partial x_a
 -\tfrac\epsilon4\operatorname{tr}_V(F_{A,a}\alpha)
 +\tfrac\epsilon4\operatorname{tr}_T(R_aK\mu^{[0]}),\\
 (\bar\partial_Qy)_\alpha={}&\bar\partial_{A_0}\alpha
                                 +F_{A,a}(\mu^a)^{[0]},\qquad
 (\bar\partial_Qy)^a=\bar\partial\mu^a.
 \end{aligned}
\end{equation}

\subsection{The exterior expansion of the obstruction}

With $\theta^a=dz^a+\mu^a$, set
\[
 B_0=2\theta^a\wedge x_a+b-\mu^a\wedge x_a,
 \qquad d_\mu=\bar\partial-\mu^a\wedge\partial_a,
\]
a direct expansion of $dB_0+T_c(C_0,\rho)$ in powers of the moving
ideal $\mathcal I=(\theta^a)$ gives the residuals used in the main text. The coefficient linear in $\theta^a$ is
$-2R_{x,a}$, where
\begin{equation}\label{eq:hol-Rx-components}
 \begin{split}
 R_{x,a}={}&\bar\partial x_a-c(F_a,\rho)
 -\mu^b\wedge\partial_bx_a+\tfrac12\mu^b\wedge\partial_ax_b
 -\tfrac12\partial_a\mu^b\wedge x_b\\
 &+\tfrac12c(\rho,\nabla_a\rho)-\tfrac12\partial_ab,
 \end{split}
\end{equation}
and the constant coefficient, after subtracting
$\langle y,R_y\rangle$, is
\begin{equation}\label{eq:hol-Rb-components}
 \begin{split}
 R_b={}&\bar\partial b-\tfrac12\mu^a\wedge\partial_ab
 -\tfrac12\mu^a\wedge\mu^b\wedge\partial_ax_b
 +\tfrac12\mu^a\wedge\partial_a\mu^b\wedge x_b\\
 &-\tfrac12\mu^a\wedge c(\rho,\nabla_a\rho)
 -\tfrac16c(\rho,[\rho,\rho]).
 \end{split}
\end{equation}
Thus, for $G=dB_0+T_c(C_0,\rho)$,
\[
 G=-2\theta^a\wedge R_{x,a}+R_b+\langle y,R_y\rangle
 \pmod{\mathcal I^2}.
\]
The residuals above are the $x$- and scalar components of
\eqref{eq:hol-mc-residuals}. The remaining two components are the
integrability tensor of $T^{0,1}_\mu X$ and the curvature of the induced
partial connection. Consequently, integrability, partial flatness and vanishing of the
string curvature modulo $\mathcal I^2$ are together equivalent to
$R_y=R_b=0$.

\subsection{The homological differential}

Let \(R\) be an augmented nilpotent dg-Artin algebra over \(\mathbb C\),
and put \(R_n=R\otimes\Omega_{\mathrm{poly}}(\Delta^n)\). Tensor the fixed
complex mixed modules with the nilpotent external ideal over
\(\mathbb C\). Write \(D=d_X+d_{R_n}\). The total ideal generators are
\(\theta^a=dz^a+\mu^a\), now total-odd. The total volume is
\(\boldsymbol\Omega=\mathbf u\theta^1\theta^2\theta^3\). The tangent-connection graph gives \(\mathcal N=\Gamma_0+K(\mu^{[0]})\). The intrinsic
equations are
\begin{equation}\label{eq:hol-intrinsic-total}
 \begin{gathered}
 D\mathcal I\subset\mathcal I,\qquad D\boldsymbol\Omega=0,\qquad
 F_D(A)=0\pmod{\mathcal I^{[0]}},\\
 D\mathcal B+\tfrac\epsilon4
       \{\operatorname{CS}_D(A)-\operatorname{CS}_D(\mathcal N)\}
                                      =0\pmod{\mathcal I^2}.
 \end{gathered}
\end{equation}
Before graph restriction the full tangent partial-flatness equation is
also imposed. Here \(\operatorname{CS}_D\) uses the corresponding matrix
trace, with \(\operatorname{tr}_T\) in the tangent contribution.

Fix the background two-form presentation \(B_{\mathrm{bg}}\). The total
string coordinate is
\begin{equation}\label{eq:hol-intrinsic-twoform}
\begin{split}
 \mathcal B={}&B_{\mathrm{bg}}+B_0
 +\tfrac\epsilon4\{\operatorname{tr}_V(A_0\alpha)
                         -\operatorname{tr}_T(\Gamma_0\kappa)\},\\
 B_0={}&2\theta^ax_a+\mathbf b-\mu^ax_a.
 \end{split}
\end{equation}
The reference trace terms follow from
\(\operatorname{CS}_D(C_0+\rho)-\operatorname{CS}_D(C_0)
=T_D(C_0,\rho)-D\operatorname{tr}(C_0\rho)\).

To determine $Q_N$, use the completed free graded algebra of field
coordinates and their jets, with $D=d_X+Q_N$, without yet assuming
$Q_N^2=0$. The part of $D\theta$ independent of the generators of
$\mathcal I$ determines $Q_N\mu$. Partial flatness and volume closure
determine $Q_N\rho$ and $Q_N\mathbf u$. The constant and linear
$\mathcal I$-coefficients of the string equation determine
$Q_N\mathbf b$ and $Q_Nx$. In the total graded fields
\eqref{eq:hol-total-packages}, this gives \eqref{eq:hol-q} and

\begin{equation}\label{eq:hol-universal-identities}
 \begin{aligned}
 D\theta^i&=\theta^a\partial_a\mu^i,\\
 D_{C_0}\rho&=\theta^a\nabla_a\rho-F_a\mu^a-\rho^2,\\
 T_D(C_0,\rho)&=\theta^a\{2c(F_a,\rho)-c(\rho,\nabla_a\rho)\}
                -\mu^ac(F_a,\rho)-\tfrac13c(\rho,\rho^2),\\
 DB_0+T_D(C_0,\rho)&=2\theta^a\theta^b\partial_ax_b.
 \end{aligned}
\end{equation}
The cubic in the third line is
\(-c(\rho,[\rho,\rho])/6\). Its sign includes the crossing of the
odd \(\rho\) and the odd coframe. Thus the coefficient calculation also
fixes the all-degree scalar sign.

We prove \(Q_N^2=0\) before restricting to any solution locus. The
tensor-product sign gives \(D^2=Q_N^2\). Applying \(D\) to the first identity
of \eqref{eq:hol-universal-identities} yields
\begin{equation}\label{eq:hol-Q2-anchor}
 D^2\theta^i=
 \theta^b(\partial_b\mu^a)(\partial_a\mu^i)
 -\theta^a(\partial_a\mu^b)(\partial_b\mu^i)
 -\theta^a\theta^b\partial_a\partial_b\mu^i=0.
\end{equation}
The first two terms cancel after exchanging \(a,b\), and the last term
vanishes by antisymmetry. Hence \(Q_N^2\mu=0\).

For an arbitrary odd derivation the curved Bianchi identity is
\begin{equation}\label{eq:hol-curved-bianchi}
 D F_D(C)+[C,F_D(C)]=D^2C.
\end{equation}
Its use does not presuppose \(D^2=0\). Ideal stability and the universal
partial-curvature equation put the left-hand side in \(\mathcal I\), or in
\(\mathcal I^{[0]}\) for gauge coefficients. The right-hand side
\(Q_N^2\rho\) consists only of reference antiholomorphic forms with
external coefficients. Transversality makes the intersection of this
space with the moving ideal zero. Thus \(Q_N^2\rho=0\). After imposing
the tangent graph, its tangent component also follows from
\(Q_N^2\mu=0\).

Apply \(D^2\) to \(\mathbf u\theta^1\theta^2\theta^3\). Since $D^2\theta^a=0$, its top coefficient is
\(Q_N^2\mathbf u\), which vanishes. The connection components now satisfy $Q_N^2\rho=0$,
so the usual Chern--Simons derivative identity is valid. Differentiate
the last equation of \eqref{eq:hol-intrinsic-total}. The curvature lies
in \(\mathcal I\), and therefore its Pontryagin form lies in \(\mathcal I^2\). It follows
that \(Q_N^2\mathcal B\in \mathcal I^2\). The already-vanishing \(\mu,\rho\)
components and the even parity of \(Q_N^2\) give
\begin{equation}\label{eq:hol-Q2-twoform}
 Q_N^2\mathcal B=2\theta^aQ_N^2x_a
                  +Q_N^2\mathbf b-\mu^aQ_N^2x_a.
\end{equation}
The right-hand side has only linear and constant ideal degree. Their
independent coefficients force \(Q_N^2x=Q_N^2\mathbf b=0\).
The argument applies to all coefficients and jets, including ghosts and
upper cochain degrees. In the mixed category
\(\epsilon(\mathcal I^{[0]})^2\) maps into \(\mathcal I^2\), so the proof needs no division
by \(\epsilon\).

\subsection{The Taylor brackets}

For homogeneous \(Z=(y,b,s)\), \(Z'=(y',b',s')\),
\(Z''=(y'',b'',s'')\) of degrees \(p,q,r\), the brackets in the
plus-sign Maurer--Cartan convention are
\begin{equation}\label{eq:hol-all-brackets}
 \begin{aligned}
 \ell_1Z={}&-\bigl(
 \bar\partial_Qy+\tfrac12(-1)^p\partial b,
 \bar\partial b,\bar\partial s+(-1)^p\operatorname{div}_{\Omega_0}\mu\bigr),\\
 \ell_2(Z,Z')={}&\bigl([y,y'],
 \tfrac12\{\langle y,\partial b'\rangle
                   -(-1)^{pq}\langle y',\partial b\rangle\},
 \rho_\Omega(\mu)s'
             -(-1)^{pq}\rho_\Omega(\mu')s\bigr),\\
 \ell_3(Z,Z',Z'')={}&\tfrac13(-1)^{p+q+r}
 \bigl(0,\langle y,[y',y'']\rangle
       +(-1)^{p(q+r)}\langle y',[y'',y]\rangle\\
 &\hspace{44mm}+(-1)^{r(p+q)}\langle y'',[y,y']\rangle,0\bigr),\\
 \ell_n={}&0\qquad(n\geq4).
 \end{aligned}
\end{equation}
The operations in \eqref{eq:hol-ambient-operations} are pulled back along
the tangent graph. The gerbe and canonical-line complexes have internal degrees
$-1$ and $+1$, respectively.

The signs in \eqref{eq:hol-all-brackets} are those obtained from the suspension convention of \cite[Appendix~C]{BrongersPaperI}, with horizontal forms ordered before coefficient ghosts. In particular, the unary operator acquires the displayed minus sign, while the binary suspension and coefficient-crossing signs cancel.

For fields of cochain degree one,
\(\ell_1Z+\ell_2(Z,Z)/2+\ell_3(Z,Z,Z)/6\) is minus the three
residuals in \eqref{eq:hol-q}. The free-algebra square-zero proof supplies
every polarized identity, including arity four. At arity five the
composition also vanishes directly: \(\ell_3\) takes values in the gerbe component and vanishes
when an argument lies in that component. 

\subsection{The variational Hamiltonian identities}
\label{app:hol-hamiltonian-identities}

Set $c_{\mathrm{an}}=\epsilon/4$ and
\[
 \vartheta_N=\int_X\mathbf u\,\lambda_N\wedge\Omega_0,
 \qquad
 \lambda_N=-\delta\mathbf b+\mu^a\delta x_a+x_a\delta\mu^a
 +c_{\mathrm{an}}\operatorname{tr}_V(\alpha\delta\alpha)
 -c_{\mathrm{an}}\operatorname{tr}_T(\kappa\delta\kappa).
\]
With $Q_N=-q$, substitution of \eqref{eq:hol-q} gives
\[
 \iota_{Q_N}\vartheta_N
 =\int_X\mathbf u\{q\mathbf b+\langle Y,qY\rangle\}\Omega_0
 =I_N.
\]
The coefficient $-1/3$ of the cubic term in $I_N$ is the difference
$1/6-1/2$ between the scalar equation and the contraction of the
quadratic $Y$-term.

It remains to check invariance of the primitive. The Dolbeault terms
integrate to zero on the closed manifold. For the anchor terms one uses
\[
 (\mu^a\partial_a)^\dagger
 =-\mu^a\partial_a-\operatorname{div}_{\Omega_0}\mu,
\]
so the contribution from the volume coefficient is exactly cancelled by
$q\mathbf u$. In the gauge and tangent traces, invariance of the matrix
pairing and integration by parts combine the quadratic and cubic terms;
the background-curvature contributions cancel against the curvature
terms in the cotangent component of $qY$. The tangent calculation is the
same with the opposite trace sign and then restricts along the strict
differential graph. Thus
\[
 \mathcal L_{Q_N}\vartheta_N=0.
\]
Cartan's identity therefore gives
\[
 \iota_{Q_N}\omega_N=-\delta I_N,
 \qquad
 \delta\omega_N=0,
 \qquad
 \mathcal L_{Q_N}\omega_N=0.
\]

Finally, the contraction map of $\omega_N$ is triangular in the order
$\delta\mathbf b,\delta x,\delta\alpha,\delta\mu,\delta\mathbf u$.
Its diagonal entries are multiplication by the invertible coefficient
$\mathbf u$, together with the nondegenerate matrix trace and the fixed
module identification $\mathbb D_0\simeq\epsilon\mathbb D$ on the gauge
block. The differential graph contributes only finite-order off-diagonal
terms. Hence the contraction map has a finite local inverse near
$\mathbf u=1$.

\section{Gerbe descent and intrinsic identification}
\label{app:tangent-density-gerbe}

\subsection{The scalar Cartan coefficient}

We compare the scalar gerbe component with the string-Courant Lie-\(2\) convention
of \cite{BrongersPaperI}. Write \(r\) for the tangent-frame ghost,
\(t=r^{[0]}\) for its reduction, \(\lambda\) for the relative gauge
ghost and \(k=t+\lambda\) for the absolute gauge ghost. Let \(\xi\)
be the vector ghost, \(\eta\) the canonical covector ghost,
\(\beta\) the canonical scalar, and \(c\) the relative scalar. The
invariant polynomial is \(P=-\operatorname{tr}_7\).

Before restricting the tangent connection to its graph, the parameter
convention on holomorphic sections is
\begin{equation}\label{eq:hol-cartan-orientation}
 \mu_{(0)}=-\xi^{1,0},\quad
 \alpha_{(0)}=-k,\quad
 \kappa_{e,(0)}=-r,\quad
 \kappa_{(0)}=-t,\quad
 2x_{(0)}=-\eta^{1,0},\quad
 b_{(-1)}=-\beta.
\end{equation}
The subscript denotes the relevant cochain component.

The absolute scalar is
\(c_a=c-\epsilon P(t,\lambda)/4\), and
\(\beta=-c_a-\iota_\xi\eta/2\). Therefore
\begin{equation}\label{eq:hol-cartan-scalar-shift}
 b_{(-1)}=c-\tfrac\epsilon4P(t,\lambda)
                           +\tfrac12\iota_\xi\eta.
\end{equation}
The sign reversal already appears in the quadratic rules:
\(Q_N\alpha=-\alpha^2\), whereas the physical BRST derivation \(f\)
has \(fk=k^2\). At zero vector ghost, the pure-adjoint cubic in
\eqref{eq:hol-q} is
\[
 Q_Nb_{(-1)}
 =\frac{\epsilon}{24}
   \{\operatorname{tr}_V(\alpha[\alpha,\alpha])
                  -\operatorname{tr}_T(\kappa[\kappa,\kappa])\}.
\]
Applying \eqref{eq:hol-cartan-orientation} and \(P=-\operatorname{tr}\)
gives
\begin{equation}\label{eq:hol-cartan-coefficient}
 Q_Nb_{(-1)}
   =\frac{\epsilon}{24}\{P(k,[k,k])-P(t,[t,t])\}
   =fc_a\qquad(\xi=0).
\end{equation}
The tangent trace is the rank-seven trace restricted to the Hermitian
subalgebra. The standard Cartan identity for
\(P(a,[a,a])\), together with \eqref{eq:hol-cartan-scalar-shift},
recovers the mixed common-frame terms and the relative coefficient
\(\epsilon/24\). The remaining vector-ghost terms follow from the
transport part of the same Courant differential. Thus the scalar part of
\eqref{eq:hol-all-brackets} agrees with the relative gerbe hierarchy;
locally constant degree-\(-1\) scalars are central.

\subsection{Local intrinsic identification}

Fix the background bundle and relative gerbe presentation, including
its overlap data and topological sector. All deformations reduce to
that background under the external augmentation, and relative arrows
reduce to the identity. Nilpotent corrections to overlap data are
allowed. The two-form is the relative class modulo \(\mathcal I^2\) used in
\eqref{eq:hol-intrinsic-total}.

Let \(R\) be an augmented nilpotent dg-Artin algebra over
\(\mathbb C\), and set
\(R_n=R\otimes\Omega_{\mathrm{poly}}(\Delta^n)\).
First complexify each fixed real mixed module. Then tensor it with
\(\mathfrak m_R\otimes\Omega_{\mathrm{poly}}(\Delta^n)\) over
\(\mathbb C\), with differential \(D=d_X+d_{R_n}\). The geometric
and gauge coefficient types are unchanged. Transversality means
that the reference antiholomorphic form algebra maps isomorphically
onto the quotient by \(\mathcal I\). It gives unique normalized generators
\(\theta^a=dz^a+\mu^a\). These generators are total-odd, so their
exterior change of variables is invertible for graded coefficients.

Let \(\mathcal L_{\mathrm{hol}}\) be the graph-restricted graded
sheaf with operations \eqref{eq:hol-all-brackets}. For a chart
\(U\subset X\), define \(\mathfrak F_U(R)_n\) to consist of the
intrinsic total data satisfying \eqref{eq:hol-intrinsic-total},
with invertible volume coefficient and the chosen relative string
presentation.
Different full extensions of a partial connection are identified
using their transgression compensator.

\begin{proposition}\label{prop:hol-local-descent}
The coordinate formula \eqref{eq:hol-intrinsic-twoform} and normalized
coefficient extraction give a natural simplicial isomorphism
\[
 \operatorname{MC}_\bullet
 \bigl(\Gamma(U,\mathcal L_{\mathrm{hol}})
                     \otimes_{\mathbb C}\mathfrak m_R\bigr)
       \cong\mathfrak F_U(R).
\]
The isomorphism includes all external differentials and simplex degrees.
\end{proposition}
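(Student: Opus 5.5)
The plan is to build the two directions of the isomorphism explicitly, simplex degree by simplex degree, and then check they are mutually inverse and natural.

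\emph{Forward map.} Start from a Maurer--Cartan element
\[
Z=(x,\alpha,\mu,b,s)\in\Gamma(U,\mathcal L_{\mathrm{hol}})\otimes\mathfrak m_R\otimes\Omega_{\mathrm{poly}}(\Delta^n).
\]
Assemble the total packages $Y,\mathbf b,\mathbf u$ of \eqref{eq:hol-total-packages}, with $\mathbf u=1+s$. From these form the following total data:
\begin{itemize}
\item the moving generators $\theta^a=dz^a+\mu^a$ and the ideal $\mathcal I$ they generate;
\item the volume $\boldsymbol\Omega=\mathbf u\,\theta^1\theta^2\theta^3$;
\item the gauge partial connection $A_0+\alpha$ and the graph tangent connection $\mathcal N=\Gamma_0+K(\mu^{[0]})$;
\item the string coordinate $\mathcal B$ of \eqref{eq:hol-intrinsic-twoform}.
\end{itemize}
The universal identities \eqref{eq:hol-universal-identities}, expanded in powers of $\mathcal I$ as in the computation of $G=-2\theta^aR_{x,a}+R_b+\langle y,R_y\rangle$ modulo $\mathcal I^2$, show the following. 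The Maurer--Cartan equation $\ell_1Z+\frac12\ell_2(Z,Z)+\frac16\ell_3(Z,Z,Z)=0$ holds with $D=d_X+d_{R_n}$ in place of $Q_N$ if and only if the intrinsic equations \eqref{eq:hol-intrinsic-total} hold. The equivalence goes componentwise:
\begin{itemize}
\item ideal stability $\Leftrightarrow$ the $\mu$-component;
\item partial flatness $\Leftrightarrow$ the $\alpha$-component;
\item $D\boldsymbol\Omega=0$ $\Leftrightarrow$ the $s$-component;
\item the linear and constant $\mathcal I$-coefficients of the string equation $\Leftrightarrow$ the $x$- and $b$-components.
\end{itemize}
The key point is that the coefficient extraction of Appendix~\ref{app:holomorphic-q} was carried out in the free graded jet algebra, so it applies verbatim with total-odd external coefficients. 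The sign bookkeeping, with horizontal forms ordered before coefficients, is exactly the convention fixed for \eqref{eq:hol-all-brackets}.

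\emph{Inverse map.} Start from intrinsic data in $\mathfrak F_U(R)_n$. Transversality says the reference $(0,\bullet)$-algebra maps isomorphically onto the quotient by $\mathcal I$. This gives unique normalized generators $\theta^a=dz^a+\mu^a$, hence $\mu$. The volume coefficient $\mathbf u$ is read off as the top coefficient of $\boldsymbol\Omega$ in $\theta^1\theta^2\theta^3$. The gauge deformation $\alpha$ is obtained by restricting the partial connection to the moving distribution and transporting it back along the normalized frame. The remaining gauge freedom, namely different full extensions of the partial connection, is absorbed by the transgression compensator; this is precisely the ambiguity quotiented in the definition of $\mathfrak F_U(R)$. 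Finally, reduce $\mathcal B$ modulo $\mathcal I^2$ and subtract the reference trace terms, using
\[
\operatorname{CS}_D(C_0+\rho)-\operatorname{CS}_D(C_0)=T_D(C_0,\rho)-D\operatorname{tr}(C_0\rho).
\]
Writing the remainder in the basis $\{1,\theta^a\}$ modulo $\mathcal I^2$ determines $x$ from the linear coefficient and then $\mathbf b$ from the constant coefficient after removing $-\mu^ax_a$. Each step is a triangular, invertible change of variables, because the $\theta^a$ are odd and the corrections are nilpotent in $\mathfrak m_R$. The two constructions are therefore mutually inverse.

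\emph{Naturality.} Everything is built from $D=d_X+d_{R_n}$ and pointwise coefficient extraction. Hence the construction commutes with:
\begin{itemize}
\item the face and degeneracy maps of $\Omega_{\mathrm{poly}}(\Delta^\bullet)$;
\item morphisms of dg-Artin algebras;
\item restriction to smaller charts.
\end{itemize}
This gives a simplicial isomorphism that includes the external differentials.

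\emph{Main obstacle.} I expect the hard step to be the inverse passage for the string coordinate. One must show that the normalized extraction of $(x,\mathbf b)$ from the class of $\mathcal B$ modulo $\mathcal I^2$ is well defined for graded, nonzero-simplex-degree coefficients, and that it is independent of the extension choices used to define the Chern--Simons terms. I would first check independence directly from the triangle identity for $\mathcal T_{3,P}$, as in \eqref{eq:representative-triangle}. I would then verify that the mixed-coefficient convention, under which $\epsilon(\mathcal I^{[0]})^2\subset\mathcal I^2$, allows the tangent and gauge Chern--Simons corrections to be handled without dividing by $\epsilon$.
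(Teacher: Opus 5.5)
Your proposal follows the paper's proof essentially step for step. The forward direction goes through the universal identities \eqref{eq:hol-universal-identities}. The inverse normalizes the ideal, reads off $\mathbf u$, the partial connection and (via $K$) the tangent connection, removes the reference compensator, and splits $B_0=2\theta^ax_a+B_c \pmod{\mathcal I^2}$ to recover $x$ and $\mathbf b=B_c+\mu^ax_a$. Naturality then comes from compatibility with dg-Artin maps, external differentials and simplex pullbacks.

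The extension-independence you flag as the main obstacle is exactly the paper's opening step. There $\operatorname{CS}_D(C'+s)-\operatorname{CS}_D(C')=T_D(C',s)-D\operatorname{tr}(C's)$, and $T_D(C',s)\in\mathcal I^2$ because $s\in\mathcal I$, partial flatness puts the curvature in $\mathcal I$, and $D\mathcal I\subset\mathcal I$.
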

\begin{proof}
Ideal stability implies \(D(\mathcal I^2)\subset \mathcal I^2\), and partial flatness
puts curvature in \(\mathcal I\). Its Pontryagin form therefore lies in \(\mathcal I^2\).
Suppose a full extension changes from \(C'\) to \(C'+s\), with
\(s\in\mathcal I\). The transgression identity gives
\[
 \operatorname{CS}_D(C'+s)-\operatorname{CS}_D(C')
      =T_D(C',s)-D\operatorname{tr}(C's),\qquad
 T_D(C',s)\in \mathcal I^2.
\]
The induced two-form change is \(+c_{\mathrm{an}}
\operatorname{tr}(C's)\) in the gauge trace and its
negative in the tangent trace. Thus the string equation is
independent of the full extension, with its given one-form and
scalar parameters. Equivalently, in the combined signed notation,
\[
 B_0\longmapsto B_0+c(\rho,s)\pmod{\mathcal I^2},\qquad
 \mathcal B\longmapsto\mathcal B+c(C',s)\pmod{\mathcal I^2}.
\]
Only form directions have been extended. The gauge extension
remains \((\mathbb D_0)_{\mathbb C}\)-valued.

The forward formula is verified by
\eqref{eq:hol-universal-identities}. Its last identity is the
relative string equation after adding the reference trace
compensator. The inverse first normalizes the ideal, then extracts
\(\mathbf u\) from its top product and the partial connection from
its action on $T^{0,1}_\mu X$. The tangent connection is
computed from \(K\). Remove the reference compensator from $\mathcal B$ and write
\[
 B_0=2\theta^a\wedge x_a+B_c\pmod{\mathcal I^2}.
\]
The linear ideal coefficient, divided by two, gives $x$, and
\[
 \mathbf b=B_c+\mu^a\wedge x_a.
\]
These recover the two-form coordinates uniquely. The construction
commutes with dg-Artin maps, external differentials and polynomial
simplex pullbacks, so the degreewise bijections assemble to the stated
simplicial isomorphism.
\end{proof}

\subsection{Global descent in the fixed formal problem}

Choose a cover \(\{U_i\}\) trivializing the fixed background bundle
and gerbe data. The relative transgression cochains are a two-form \(j\), a one-form \(a_R\) and a scalar \(t_R\), of composition degrees one, two and three, respectively. Their total identities, with \(\delta_{\mathrm C}\) the
Čech differential and \(\Delta T\) the overlap difference of
transgressions, are
\begin{equation}\label{eq:hol-descent-cochains}
 Dj=\Delta T,\qquad Da_R=\delta_{\mathrm C}j,\qquad
 Dt_R=\delta_{\mathrm C}a_R,\qquad
 \delta_{\mathrm C}t_R=0.
\end{equation}
These are the Stokes identities for the total transgression on polynomial simplices and identify changes of presentation on both sides of Proposition~\ref{prop:hol-local-descent}.

\begin{proposition}\label{prop:hol-global-descent}\label{prop:product-global-formal-gerbe}
For the fixed background and gerbe presentation there are natural weak equivalences
\[
 \begin{split}
 \operatorname{MC}_\bullet
 \bigl(\Gamma(X,\mathcal L_{\mathrm{hol}})
                          \otimes_{\mathbb C}\mathfrak m_R\bigr)
 &\simeq
 \underset{[p]\in\Delta}{\operatorname{holim}}
 \prod_{i_0,\ldots,i_p}
 \operatorname{MC}_\bullet
 \bigl(\Gamma(U_{i_0\cdots i_p},\mathcal L_{\mathrm{hol}})
                          \otimes_{\mathbb C}\mathfrak m_R\bigr)\\
 &\simeq
 \underset{[p]\in\Delta}{\operatorname{holim}}
 \prod_{i_0,\ldots,i_p}\mathfrak F_{U_{i_0\cdots i_p}}(R).
 \end{split}
\]
\end{proposition}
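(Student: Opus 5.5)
The first weak equivalence is a descent statement for the Maurer--Cartan functor of a sheaf of $L_3$ algebras, and the second is obtained by applying the local isomorphism of Proposition~\ref{prop:hol-local-descent} levelwise. I would prove them in that order.

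\textbf{First equivalence.} The sheaf $\mathcal L_{\mathrm{hol}}$ consists of smooth $(0,\bullet)$-forms with values in finite-rank bundles. It is therefore fine, and in particular $\Gamma$-acyclic. Its operations \eqref{eq:hol-all-brackets} are local differential operators.

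\begin{enumerate}
\item Use the standard Čech resolution
\[
 \Gamma(X,\mathcal L_{\mathrm{hol}})\to\operatorname{Tot}\prod\Gamma(U_{i_0\cdots i_p},\mathcal L_{\mathrm{hol}}).
\]
Together with a partition of unity subordinate to $\{U_i\}$, this gives a filtered homotopy equivalence of the underlying complexes, compatible with the nilpotent filtration by powers of $\mathfrak m_R$.
\item Invoke the descent theorem for $\operatorname{MC}_\bullet$ of nilpotent $L_\infty$ algebras (Hinich, Getzler). For a cosimplicial $L_\infty$ algebra that is levelwise nilpotent, and with degreewise surjective coface data, the Maurer--Cartan functor commutes with the homotopy limit.
\item Apply this to the cosimplicial $L_3$ algebra $[p]\mapsto\prod\Gamma(U_{i_0\cdots i_p},\mathcal L_{\mathrm{hol}})\otimes\mathfrak m_R$. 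The coface maps are restrictions, hence strict $L_3$ morphisms. Its homotopy limit is computed by the Thom--Whitney totalization. That totalization is weakly equivalent to global sections by acyclicity and the Goldman--Millson/Getzler invariance of $\operatorname{MC}_\bullet$ under filtered quasi-isomorphisms.
\end{enumerate}

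\textbf{Second equivalence.} Proposition~\ref{prop:hol-local-descent} gives, for each finite intersection, a simplicial isomorphism
\[
 \operatorname{MC}_\bullet\bigl(\Gamma(U_{i_0\cdots i_p},\mathcal L_{\mathrm{hol}})\otimes\mathfrak m_R\bigr)\cong\mathfrak F_{U_{i_0\cdots i_p}}(R).
\]
It remains to check that these isomorphisms commute with the cosimplicial structure maps. On the Maurer--Cartan side those maps are restrictions. On the intrinsic side they must also transport the relative string presentation between charts. Here I use the descent cochains \eqref{eq:hol-descent-cochains}.

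\begin{enumerate}
\item The two-form coordinate $\mathcal B$ of \eqref{eq:hol-intrinsic-twoform} is defined relative to the chosen local presentation. On overlaps it changes by the transgression data $j$, $a_R$, $t_R$.
\item On the Maurer--Cartan side, this change of presentation should act as a gauge transformation: a $(-1)$- and $0$-degree gerbe shift together with the central degree-$(-1)$ scalars. This is consistent because locally constant degree-$(-1)$ scalars are central, as shown in the Cartan coefficient computation.
\item The identities $Dj=\Delta T$, $Da_R=\delta_{\mathrm C}j$, $Dt_R=\delta_{\mathrm C}a_R$ and $\delta_{\mathrm C}t_R=0$ are exactly the coherence data making the levelwise isomorphisms into a map of cosimplicial simplicial sets up to specified homotopies. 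Equivalently, they give an isomorphism after twisting the intrinsic side by these gauge actions.
\item A levelwise weak equivalence of cosimplicial Kan complexes induces a weak equivalence of homotopy limits. Naturality in $R$ follows from the naturality already established in Proposition~\ref{prop:hol-local-descent}.
\end{enumerate}

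\textbf{Main obstacle.} I expect the difficulty to lie in this coherence step. The intrinsic functor $\mathfrak F$ depends on the chosen relative presentation, so the local isomorphisms are only natural up to the transgression compensators. These compensators must be shown to assemble into a homotopy-coherent cosimplicial map rather than a strict one. My first attempt would be to absorb them into the cosimplicial structure: redefine the coface maps of $\mathfrak F$ by composing restriction with the change-of-presentation gauge action determined by $(j,a_R,t_R)$. I would then check that the cosimplicial identities hold strictly, using the total Stokes identities and $\delta_{\mathrm C}t_R=0$. If strictness fails at the scalar level, I would fall back on rectifying via the Thom--Whitney model, where the Čech and polynomial-simplex directions are already combined.
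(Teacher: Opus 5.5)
Your proposal is correct. For the second equivalence it follows the paper; for the first it takes a different route.

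\textbf{Second equivalence.} You apply Proposition~\ref{prop:hol-local-descent} levelwise and use the descent cochains \eqref{eq:hol-descent-cochains} to match presentations, which is what the paper does. One bookkeeping point: the Maurer--Cartan coordinates $(x,\alpha,\mu,b,s)$ are global tensorial fields. The left-hand cosimplicial object is therefore plain restriction and needs no gauge twisting. All chart dependence sits in $\mathfrak F$, through $B_{\mathrm{bg}}$ and the reference compensator in \eqref{eq:hol-intrinsic-twoform}. Your fallback of building $(j,a_R,t_R)$ into the coface maps of $\mathfrak F$ is thus the right mechanism. Checking the cosimplicial identities from the Stokes identities is exactly the paper's step.

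\textbf{First equivalence.} You pass through the Thom--Whitney totalization, a Hinich-type commutation of $\operatorname{MC}_\bullet$ with homotopy limits, and Goldman--Millson invariance. The paper argues directly:
\begin{itemize}
\item fine sheaves have degreewise exact augmented \v{C}ech resolutions;
\item the $\mathfrak m_R$-adic filtration has central, hence abelian, successive quotients;
\item on these quotients $\operatorname{MC}_\bullet$ is a simplicial vector space determined by the underlying complex, so descent reduces to exactness;
\item induction on nilpotence length finishes.
\end{itemize}
Your route makes the comparison map from global sections explicit and reuses standard machinery. The paper's route is self-contained and insensitive to grading, and that matters here. Hinich's descent theorem is stated for nonnegatively graded dg Lie algebras. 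By contrast, $\mathcal L_{\mathrm{hol}}$ has a degree $-1$ gerbe component, a ternary bracket and dg-Artin coefficients. The $\mathbb Z$-graded nilpotent $L_\infty$ version you actually need is proved precisely by the paper's filtration induction.

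Also, ``degreewise surjective coface data'' is the wrong hypothesis: \v{C}ech cofaces are restrictions and are not surjective. What the homotopy-limit comparison uses is Reedy fibrancy. That holds here because the matching maps, built from codegeneracies, are surjective, and $\operatorname{MC}_\bullet$ turns surjections into Kan fibrations.
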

\begin{proof}
The underlying graded sheaves are fine, so their augmented Čech
resolution is exact degreewise. Filter the Maurer--Cartan functors by
powers of the nilpotent external ideal. Each successive quotient is
abelian, and Proposition~\ref{prop:hol-local-descent} identifies the
corresponding Čech totalizations. Induction over the nilpotence length
gives the first weak equivalence, while
\eqref{eq:hol-descent-cochains} gives the second.
\end{proof}

\section{Unary comparison with the independent-tangent complex}
\label{app:unary-comparison}

We record the independent-tangent Dolbeault differential used in
Proposition~\ref{prop:unary-holomorphic-identification}. The background is
the fixed Calabi--Yau standard embedding, with constant dilaton and zero
flux.

Write $W=T^{1,0}X$ and $\mathfrak h=\mathfrak{gl}(W)$. The Hermitian
tangent algebra is embedded in
$\mathfrak g_T=\mathfrak{so}(7,\mathbb C)$ by
\[
 j(M)=\operatorname{diag}(M,-M^{\mathsf T},0),
\]
and the standard embedding gives a parallel map
$\iota:\mathfrak g_T\to\mathfrak g_G$. Hence
$F_{A,a}=\iota(jR_a)$ and
$\operatorname{tr}_T(jM\,jN)=2\operatorname{tr}_3(MN)$.

Before imposing the tangent graph, take degree-$p$ variables
\[
 (x,\kappa_e,\alpha,\mu,b),
\]
with the first four entries of Dolbeault degree $p$ and
$b\in\Omega^{0,p+1}(X)$. The tangent entry is
$\mathbb D_{\mathbb C}$-valued and the gauge entry is
$(\mathbb D_0)_{\mathbb C}$-valued. On
$y=(x,\kappa_e,\alpha,\mu)$ the background operator is
\begin{equation}\label{eq:unary-independent-operator}
\begin{aligned}
 (D_Qy)_{x,a}&=\bar\partial x_a
 -\tfrac14\jmath_\epsilon\operatorname{tr}_V(F_{A,a}\wedge\alpha)
 +\tfrac14\jmath_\epsilon
             \operatorname{tr}_T(jR_a\wedge\kappa_e^{[0]}),\\
 (D_Qy)_{\kappa_e}&=
          \bar\partial_{\Gamma_0}\kappa_e+jR_a\wedge\mu^a,\\
 (D_Qy)_\alpha&=\bar\partial_{A_0}\alpha
                                  +F_{A,a}\wedge(\mu^a)^{[0]},\\
 (D_Qy)_\mu&=\bar\partial\mu,
\end{aligned}
\end{equation}
and
\[
 d_F^p(y,b)=\bigl(D_Qy+\tfrac12(-1)^p\partial b,\bar\partial b\bigr).
\]
Adjoining the canonical-line entry
$s\in\Omega^{0,p-1}(X)$ adds the component
\[
 \bar\partial s+(-1)^p\operatorname{div}_{\Omega_0}\mu.
\]
The coefficient-restored operator is the unary operator of
\cite[Appendix~A]{FiniteDeformations}; the normalization used there is
$x_{\rm printed}=4x/z$, $b_{\rm printed}=4b/z$ at nonzero anomaly
coefficient $z$.

The induced tangent connection is the differential graph
\[
 \kappa_e=jK_{\Gamma_0}(\mu).
\]
It is preserved because
\begin{equation}\label{eq:unary-graph-app}
 \bar\partial_{\Gamma_0}\bigl(jK_{\Gamma_0}(\mu)\bigr)
 +jR_a\wedge\mu^a
 =jK_{\Gamma_0}(\bar\partial\mu),
\end{equation}
which is the unary identity of
Lemma~\ref{lem:induced-action-compatibility}. Restricting
\eqref{eq:unary-independent-operator} to this graph, adjoining the
canonical-line component, and multiplying degree $p$ by $(-1)^p$ gives
exactly the chain isomorphism of
Proposition~\ref{prop:unary-holomorphic-identification}.

\section{The representative-change calculation}
\label{app:representative-change}\label{app:product-coordinates}
This appendix proves the two technical facts used in
Section~\ref{sec:representatives}: the Hull--Hermitian functional defect
factors through the leading geometric residuals, and the corresponding
Euler map is pointwise invertible. Put $c_{\rm an}=\epsilon/4$ and write
$C$ for the Ehresmann--Libermann connection denoted by $\Gamma$ in the
main text.

\subsection{Residual factorization}
\label{app:product-maps}
Set
\begin{equation}\label{eq:product-connection-difference-type}
 t=\Theta_H^{[0]}-C^{[0]}=(\Theta_H-C)^{[0]}
 \in\Omega^1(\mathfrak{so}(TY))\otimes\mathbb D_0.
\end{equation}
The connection and two-form changes are
\eqref{eq:representative-T0}--\eqref{eq:representative-defect}.
In this appendix $D_C$ denotes the functional $\mathcal D_\Gamma$
with $C=\Gamma$.
All tensors in the first-order coefficient below are reduced modulo
$\epsilon$. We write $\operatorname{Tr}_P(A_1\cdots A_k)
=-\operatorname{tr}_7(A_1\cdots A_k)$, with exterior factors in the
displayed order.

Choose a type coframe $\theta^i$ and put
\[
 \mathfrak n^i=(\dd\theta^i)^{0,2},\qquad
 \mathcal K=\dd B+i\,\dd\omega.
\]
Write $n_{\bar k\bar l,\bar j}
=g_{i\bar j}\mathfrak n^i{}_{\bar k\bar l}$.
In the decomposition $T^{1,0}\oplus T^{0,1}$ write
\[
 t^{0,1}=\mathcal L+\mathcal E,
 \qquad
 \mathcal L=\begin{pmatrix}0&0\\L&0\end{pmatrix},
 \qquad
 \mathcal E=\begin{pmatrix}U&V\\0&W\end{pmatrix}.
\]
The contorsion formula gives
\begin{equation}\label{eq:product-hull-blocks}
 \begin{aligned}
 U^i{}_j&=\tfrac12g^{i\bar l}\mathcal K_{j\bar k\bar l}\bar\theta^{\bar k},&
 W^{\bar i}{}_{\bar j}
   &=-\tfrac12g^{l\bar i}\mathcal K_{l\bar k\bar j}\bar\theta^{\bar k},\\
 L^{\bar i}{}_j&=-\tfrac12g^{l\bar i}\mathcal K_{jl\bar k}\bar\theta^{\bar k},&
 V^i{}_{\bar j}
   &=g^{i\bar l}\left(\tfrac12\mathcal K_{\bar k\bar l\bar j}
        +n_{\bar k\bar l,\bar j}-n_{\bar k\bar j,\bar l}\right)
                                         \bar\theta^{\bar k}.
 \end{aligned}
\end{equation}
Let $\mathscr F_C=(R_{C^{[0]}})^{0,2}$ and
$Z=\sum_i t_i\mathfrak n^i$. Type preservation makes $\mathscr F_C$
block diagonal, and $(D_Ct)^{0,2}=\bar D_Ct^{0,1}+Z$. Expanding the
relative transgression yields
\begin{equation}\label{eq:product-residual-factor}
 \begin{aligned}
 \mathcal T_{3,P}(C,t)^{0,3}
 ={}&2P(\mathcal E,\mathscr F_C)
 +P(\mathcal E,\bar D_C\mathcal L)
 +P(\mathcal L,\bar D_C\mathcal E)
 +P(\mathcal E,\bar D_C\mathcal E)\\
 &+P(\mathcal L+\mathcal E,Z)
 +2\operatorname{Tr}_P(\mathcal L\mathcal E^2)
 +\tfrac23\operatorname{Tr}_P(\mathcal E^3).
 \end{aligned}
\end{equation}
Here $\mathcal L^2=0$,
$P(\mathcal L,\mathscr F_C)=P(\mathcal L,\bar D_C\mathcal L)=0$.
Every surviving term contains one of the residuals
$(\dd\Omega)^{2,2}$, $(\dd\Omega)^{3,1}$,
$\mathcal K^{1,2}$, $\mathcal K^{0,3}$, or a first covariant derivative
of one of them. Thus the density of $D_C$ vanishes when these residual
jets vanish.

\subsection{The Euler map and finite-jet correction}
Let $\mathcal Q_{\rm geo}$ be the real bundle of fields
$(\omega,\Psi,\Phi,B)$ satisfying the algebraic $SU(3)$ relations and set
\begin{equation}\label{eq:product-residual-bundle}
 \mathcal R=
 \big(\Lambda^{2,2}\oplus\Lambda^{3,1}
      \oplus\Lambda^{1,2}\oplus\Lambda^{0,3}\big)_{\mathbb R},
 \qquad
 F=((\dd\Omega)^{2,2},(\dd\Omega)^{3,1},
                         \mathcal K^{1,2},\mathcal K^{0,3}).
\end{equation}
The leading functional satisfies
\begin{equation}\label{eq:product-real-euler}
 \delta W_0=\frac L4\operatorname{Im}\int_X
 \{-\delta(B+i\omega)\wedge\dd\Omega
                   +\dd(B+i\omega)\wedge\delta\Omega\}
 =:E_0(\delta q),
\end{equation}
which defines a pointwise map
$\mathsf A_{\rm EL}:\mathcal R\to
T^*\mathcal Q_{\rm geo}\otimes\operatorname{Dens}_X$ by
$E_0=\mathsf A_{\rm EL}F$.

In the coordinates $(x,b,\mu,u)$ with $\Omega=u\Omega_\mu$, its diagonal
pairings are
\[
 \Lambda^{1,1}\times\Lambda^{2,2},\qquad
 \Lambda^{0,2}\times\Lambda^{3,1},\qquad
 \Lambda^{2,1}\times\Lambda^{1,2},\qquad
 \Lambda^{3,0}\times\Lambda^{0,3},
\]
with unit factors $2,u$ and the nonzero volume. The remaining terms are
lower triangular in this order. Passing to the moving $(2,0)$ quotient
only adds the shear
\[
 (\dd C_{20})^{0,3}=0,
 \qquad
 (\dd C_{20})^{1,2}=C_{ij}\theta^j\wedge\mathfrak n^i.
\]
Hence $\mathsf A_{\rm EL}$ is a pointwise real bundle isomorphism.

Let $F_I=\nabla_I F$ denote the finitely many residual jets occurring in
the density $\operatorname{Dens}_C$ of \eqref{eq:representative-defect}.
Use the explicit factorization \eqref{eq:product-residual-factor} to
regard this density as a polynomial in formal slots $F_I$, with its
other tensor factors fixed. Define
\begin{equation}\label{eq:product-euler-homotopy}
 \mathcal H_C=\sum_I(\nabla_I)^\dagger
 \int_0^1
 \frac{\partial\operatorname{Dens}_C}{\partial F_I}
       (z;sF_J)\,\dd s,
 \qquad
 \mathcal V_C=\mathsf A_{\rm EL}^{-\mathsf T}\mathcal H_C.
\end{equation}
Since the defect density vanishes when all residual jets vanish, the
fundamental theorem of calculus along \(sF_I\), followed by integration
by parts, gives
\begin{equation}\label{eq:residual-factorization}
 D_C=\int_X\langle\mathcal H_C,F\rangle
     =\int_XE_0(\mathcal V_C).
\end{equation}
Thus \(\mathcal V_C\) is local of finite jet order. At the Calabi--Yau reference the factors in
\eqref{eq:product-residual-factor} vanish as sections; after one
derivative with respect to a residual variable a vanishing factor remains. Consequently
$\mathcal V_C$ vanishes at the reference. The construction is tensorial and
depends on $B^{[0]}$ only through $\dd B^{[0]}$, so it commutes with the
order-zero diffeomorphism, frame and gerbe differential. These are the
properties used in Proposition~\ref{thm:representative-equivalence}.

\section{Coordinate maps and tangent-current transformations}
\label{app:canonical-maps}
We give the maps defined in Section~\ref{sec:hamiltonian-reduction},
their inverses, and their action on the full variational primitive.
All conjugations are those of the real formal comparison.
Put $c_{\rm an}=\epsilon/4$, interpreted as $\jmath_\epsilon/4$
on reduced arguments.

The construction below is the coordinate realization of the map
\(\mathscr T\) in Theorem~\ref{thm:hamiltonian-realization}. It is the
composite of the Hull--Hermitian representative change, common-frame
and gerbe maps, cotangent polarization, and the two first-order
Hamiltonian corrections described in Section~\ref{sec:hamiltonian-reduction}.

\subsection{Common frames and field coordinates}
\label{app:product-fields}

We express the tangent and gauge connections in a common frame, so that
their transgressions use the same trace convention. The tangent-frame
action is induced by the geometry, while the gauge action remains the
full orthogonal action. Both connections and their ghosts are transformed
together.
Let \(F_0\) be the fixed complex null frame, \(F_g\) the moving type
frame whose conjugate block is made metric-dual, and \(I_g\) the
positive isometry from the reference metric to \(g\).
Let $\mathsf Q$ be the matrix of the bilinear form in the null frame. Then
\begin{equation}\label{eq:product-common-frame}
 F=F_0^{-1}I_g^{-1}F_g,\qquad F^{\mathsf T}\mathsf QF=\mathsf Q,\qquad F|_0=1 .
\end{equation}
Extend by the fixed circle line. The determinant is one near the
reference, so \(F\) belongs to the same \(\SO(7,\C)\) bundle.
Geometric $\mathbb{D}$-valued connections transform by
$\Theta'=F^{-1}\Theta F+F^{-1}\dd F$.
For the gauge connection use the reduced frame $F_c=\red F$:
\[
 A'=F_c^{-1}AF_c+F_c^{-1}\dd F_c\quad\text{in }\mathbb{D}_0.
\]
In particular, the preliminary pullback to the symmetric metric frame contains
\((I_gF_0)^{-1}\dd(I_gF_0)\).

To display the ghost signs, write \(\xi_p\) for the physical
diffeomorphism ghost of the earlier sections and
\(\widehat\xi=-\xi_p\) for the Cartan-dressed ghost.
Let $j_g(\xi_p)$ be the infinitesimal compensating rotation of the
symmetric metric frame, as defined in \cite{BrongersPaperI}. Put
\begin{equation}\label{eq:product-frame-ghosts}
 \begin{gathered}
 j_{\rm pos}=\iota_{\xi_p}\Gamma_0-j_g(\xi_p),\\
 h_F=F^{-1}(f-\mathcal L_{\widehat\xi})F,\qquad
 j'=F^{-1}j_{\rm pos}F+h_F,\\
 \lambda'=F_c^{-1}\lambda F_c,\qquad g=\red{j'}-\lambda',\\
 \mathcal B'=B-c_{\rm an}P(\red{\Theta'},a')+i\omega,\qquad
 \Gamma'_{\rm gerbe}=-\Lambda-2c_{\rm an}P(\lambda',A'),\\
 c'_G=-c+c_{\rm an}P(\lambda',\red{j'}),\qquad a'=A'-\red{\Theta'}.
 \end{gathered}
\end{equation}
The primed entries are in the transformed common frame, while
\(B,\Lambda,c\) are the physical relative gerbe coordinates.
After this display we suppress primes in the gauge and transgression
formulas: $j$ denotes $\red{j'}$, and their tangent connection symbols
denote the reduced geometric connections. The full $\mathbb{D}$-valued geometric
frame action is the one above, before reduction. In particular the
reference $\Theta$ in Section~\ref{app:product-gerbe} and the connection
$\Gamma_C$ in Section~\ref{app:product-current} have type $\mathbb{D}_0$.

Write
\(\Omega=u\prod_a(\dd z^a+\mu^a)\) in local volume coordinates.
Let $\mathcal B_{\rm bg}$ be the fixed background two-form in this
complex chart. In the following splitting compensator, $\alpha=A-A_0$
and $\kappa=\Theta-\Gamma_0$ denote the full connection displacements.
With the joint connection compensators included in \(\mathcal B\), put
\[
 \mathcal B_{\rm spl}=\mathcal B-\mathcal B_{\rm bg}
 -c_{\rm an}\{\tr_V(A_0\alpha)-\tr_T(\Gamma_0\kappa)\}.
\]
This is the reference splitting of \eqref{eq:hol-intrinsic-twoform}.
The field map is
\begin{equation}\label{eq:product-field-chart}
 \mathcal B^\flat=\mathcal B_{\rm spl}
                -e^{\iota_\mu}(\mathcal B_{\rm spl})_0^{2,0},\qquad
 2x=(\mathcal B^\flat)^{1,1}_0,\qquad
 b=(\mathcal B^\flat)^{0,2}_0-\mu^ax_a.
\end{equation}
For a connection in this frame put $\Delta A=A-A_0$. After projection,
use $\alpha$ for its moving partial component:
\[
 \alpha=(\Delta A)^{0,1}_0-\iota_\mu(\Delta A)^{1,0}_0,\qquad
 a_+=e^{\iota_\mu}(\Delta A)^{1,0}_0,\qquad A^\flat=A_0+\alpha.
\]
The additional two-form shift is
\(-c_{\rm an}\tr_V(A^\flat a_+)\). The tangent term has the opposite
sign. These shifts follow from the same transgression triangle as
\eqref{eq:representative-triangle}.

For the inverse construction, let $V$ be the full class in the moving
two-form quotient, after restoring the background and connection
compensators subtracted above. The real two-form and Hermitian form are
\[
 B=\operatorname{Re}V^{1,1}+V^{0,2}+\overline{V^{0,2}},
 \qquad
 \omega=\operatorname{Im}V^{1,1},
\]
so that $\mathcal B_{\rm phys}=B+i\omega$.
The overline here is complex conjugation. The identity $\lVert\Omega\rVert^2=8e^{-4\Phi}$ recovers $\Phi$, and
\eqref{eq:product-forms} recovers the metric and normalized spinor
bilinears. The $\mathbb{D}_0$-valued gauge connection is recovered from its partial
connection and a compatible Hermitian reduction $r_H$.
The reconstruction equation \eqref{eq:product-compatible-connection}
is solved using the moving type projections.
All remaining \(\eps\)-shifts have the inverse obtained by reversing
their sign on the unchanged data modulo $\eps$.

\subsection{Gerbe representative maps and contractible factors}
\label{app:product-gerbe}

The Hermitian reduction and real gerbe one-form specify a real
representative of the holomorphic data. Their transformations use the relative
transgression cochains of the fixed gerbe presentation.
Let $j,a_R,t_R$ be the relative transgression cochains constructed in
Appendix~\ref{app:tangent-density-gerbe}. They satisfy
\[
 \dd j=\Delta T,\qquad \dd a_R=\delta j,\qquad
 \dd t_R=\delta a_R,\qquad \delta t_R=0,
\]
and are natural under the near-identity changes of presentation used below.

For these finite formulas, \(A\) is the complexified connection modulo $\eps$,
and \(\mathcal B_R=\beta+i\omega\) is the free complex two-form in the fixed
relative gerbe chart. A near-identity complex orthogonal gauge
transformation \(k\), together with a free complex one-form
\(\Lambda_R\), acts by
\[
 A^k=k^{-1}Ak+k^{-1}\dd k,\qquad
 \mathcal B'_R=\mathcal B_R+c_{\rm an}j(A;k)+\dd\Lambda_R.
\]
The positive Hermitian reduction modulo $\eps$ satisfies
\(r_H^\dagger=r_H\), \(r_H^{\mathsf T}=r_H^{-1}\), and transforms as
\[
 r_H^k=k^{-1}r_H(k^{-1})^\dagger .
\]
The dagger in these finite metric formulas is Hermitian conjugation.
The added metric \(m\) is a free real one-form, and \(\tau\) is its
free real scalar gauge parameter. Put
\[
 p=\sqrt{r_H},\qquad p'=\sqrt{r_H^k},\qquad k_K=p^{-1}kp'.
\]
Then \(k_K\) is compact, and the polar arrow has endpoints
\(\widehat A=A^p\) and \(\widehat A'=(A^k)^{p'}=\widehat A^{k_K}\).
The finite polar and gerbe maps are
\begin{equation}\label{eq:product-polar-gerbe}
 \begin{gathered}
 \widehat A=A^p,\qquad
 z_R=\Lambda_R+c_{\rm an}
       \{a_R(A;k,p')-a_R(A;p,k_K)\},\\
 \widehat{\mathcal B}_R=\mathcal B_R+c_{\rm an}j(A;p)-\tfrac i2\dd m,\qquad
 m'=m+2\operatorname{Im}z_R+\dd\tau .
 \end{gathered}
\end{equation}
The descent identities give the corresponding composition and scalar reducibility laws.

Let $\sigma_H$ be the logarithmic polar coordinate, $r_B$ the real
scalar metric-arrow coordinate, $\beta_\sigma$ the polar ghost,
$\widehat u_B$ the shifted imaginary one-form ghost and $c_I$ the
imaginary scalar ghost. In these adapted coordinates the added base
pairs have
\begin{equation}\label{eq:product-unit-pairs}
 \begin{array}{lll}
 f\sigma_H=\beta_\sigma,&f\beta_\sigma=0,&\mathsf h\beta_\sigma=\sigma_H,\\
 fm=2\widehat u_B,&f\widehat u_B=0,&\mathsf h\widehat u_B=m/2,\\
 fr_B=-2c_I,&fc_I=0,&\mathsf h c_I=-r_B/2.
 \end{array}
\end{equation}
Including the signed cotangent partners, these are elementary contractible Hamiltonian factors of Appendix~\ref{app:parent-elimination}.

\subsection{Ghost dressing and the cotangent chart}
\label{app:product-momenta}

Ghost dressing first reorganizes the total forms by a finite contraction
with the diffeomorphism ghost. Cotangent polarization is a separate step:
the variational one-form then identifies which transformed components
are momenta, including all dependence on the moving frame.
Use bold letters for totalized forms with coefficient ghost numbers.
Write $\mu_k$ for the horizontal $k$-form component of the total
Beltrami variable, so $\boldsymbol z^a=dz^a+\sum_{k=0}^3\mu_k^a$.

\paragraph{Ghost dressing.}
There is a unique real ghost \(\widehat\xi\), near the reference, satisfying
\[
 [e^{-\iota_{\widehat\xi}}\boldsymbol z^a]_{[0]}
 =\mu_0^a-\widehat\xi^a-\iota_{\widehat\xi}\mu_1^a
  +\tfrac12\iota_{\widehat\xi}^2\mu_2^a
  -\tfrac16\iota_{\widehat\xi}^3\mu_3^a=0 .
\]
Together with its conjugate, this equation has the invertible real
coframe matrix as its linearization. Applying that matrix inverse and
substituting recursively solves it in the formal completion.
Each nonlinear correction raises formal order. Set
\(\widehat\Omega=e^{-\iota_{\widehat\xi}}\boldsymbol\Omega\).
It has no components of horizontal degree below three.
For the gauge trace put
\[
 \widetilde{\mathcal B}=\mathcal B_{\rm spl}+c_{\rm an}\tr_V(A_0\alpha),\qquad
 e^{-\iota_{\widehat\xi}}(A_0+\alpha)=\mathsf A+P_2+P_3,\quad
 \mathsf A=g+a_{\rm part}.
\]
Here \(g\) has horizontal form degree zero and \(a_{\rm part}\) horizontal form degree one.
In particular,
\begin{equation}\label{eq:product-complete-ghost}
 g=\alpha_{[0]}-\iota_{\widehat\xi}(A_0+\alpha_{[1]})
       +\tfrac12\iota_{\widehat\xi}^2\alpha_{[2]}
       -\tfrac16\iota_{\widehat\xi}^3\alpha_{[3]}.
\end{equation}
Define
\[
 P_{\geq2}=P_2+P_3,\qquad
 V=e^{-\iota_{\widehat\xi}}\widetilde{\mathcal B}-c_{\rm an}\tr_V(\mathsf A P_{\geq2}).
\]
This last shift has the inverse obtained by addition.

\paragraph{Cotangent polarization.}
Let \(J_\delta=\iota_{\delta\widehat\xi}\). The dressed primitive is
\begin{equation}\label{eq:product-dressed-primitive}
 \begin{split}
 \vartheta=\frac L4\operatorname{Im}\int_X\big[
 &-\delta V+c_{\rm an}\tr_V(\mathsf A\delta\mathsf A
                                      +2P_{\geq2}\delta\mathsf A)-J_\delta V\\
 &+c_{\rm an}\tr_V(\mathsf A J_\delta\mathsf A
             +2P_{\geq2}J_\delta\mathsf A+P_{\geq2}J_\delta P_{\geq2})\big]\widehat\Omega .
 \end{split}
\end{equation}
Take the component of horizontal degree six. Replace
\(-\delta V_3\widehat\Omega_3\) by
\(+V_3\delta\widehat\Omega_3\) modulo an exact vertical differential.
The resulting primitive is semibasic over
\[
 (\widehat\Omega_3,V_2;\widehat\xi,V_1;V_0;a_{\rm part},g).
\]
The ordered coefficients of \eqref{eq:product-dressed-primitive} define the
new momenta. To compare them with the undressed chart, put
\[
 \vartheta_{\rm und}=\frac L4\operatorname{Im}\int_X
 [-\delta\widetilde{\mathcal B}
       +c_{\rm an}\tr_V(\mathbf A\delta\mathbf A)]\boldsymbol\Omega,
 \qquad \mathbf A=A_0+\alpha .
\]
Modulo the moving ideal square,
\[
 \mathcal B_{\rm spl}=2\boldsymbol z^ax_a+\mathbf b-\mu^ax_a,
 \qquad
 (\delta\mathcal B_{\rm spl})\boldsymbol\Omega
 = (\delta\mathbf b-\mu^a\delta x_a-x_a\delta\mu^a)\boldsymbol\Omega.
\]
The reference trace in $-\delta\widetilde{\mathcal B}$ cancels the
corresponding term in $c_{\rm an}\tr_V(\mathbf A\delta\mathbf A)$,
so the holomorphic top component is the potential \(\vartheta_N\) of Appendix~\ref{app:hol-hamiltonian-identities}. The momentum transformation is triangular:
the $V_0,V_1,V_2$ coefficients recover
$\widehat\Omega_6,\widehat\Omega_5,\widehat\Omega_4$; the
$\widehat\Omega_3$ coefficient gives $V_3$; the $a_{\rm part}$ and $g$
coefficients give $P_2$ and $P_3$ successively; and the $\widehat\xi$ coefficient
then determines $V_4$ through the real coframe inverse. Thus the
polarization map and its inverse are finite and local. Dependence of the
moving component bases only adds lower-triangular terms already contained
in \eqref{eq:product-dressed-primitive}.

To compute the Hamiltonian in these coordinates, let \(D_c=\dd-\mathcal L_{\widehat\xi}\) and \(b_c=[\widehat\xi,\widehat\xi]\). Its
positive trace contribution and reduction modulo $\eps$ are obtained from
\begin{equation}\label{eq:product-dressed-hamiltonian}
 \begin{split}
 \frac L4\operatorname{Im}\int_X\big\{
 &D_cV+c_{\rm an}\tr_V(\mathsf A D_c\mathsf A+\tfrac23\mathsf A^3)
       +2c_{\rm an}\tr_V(P_{\geq2}(D_c\mathsf A+\mathsf A^2))\\
 &-\tfrac12[\iota_{b_c}V+
 c_{\rm an}\tr_V(\mathsf A\iota_{b_c}\mathsf A
       +2P_{\geq2}\iota_{b_c}\mathsf A+P_{\geq2}\iota_{b_c}P_{\geq2})]
 -c_{\rm an}e^{-\iota_{\widehat\xi}}\CS(A_0)\big\}\widehat\Omega .
 \end{split}
\end{equation}
This follows from
\(e^{-\iota_{\widehat\xi}}\dd e^{\iota_{\widehat\xi}}
 =D_c-\iota_{b_c}/2\).
Terms with two \(P_{\geq2}\)'s vanish by horizontal form degree except the displayed
insertion term. Formula \eqref{eq:product-dressed-hamiltonian}
verifies the functional and all momentum coefficients together.
The tangent trace is treated with its opposite sign and its reference
subtraction in the same frame.

To recover the compatible $\mathbb{D}_0$-valued gauge connection, require
\begin{equation}\label{eq:product-compatible-connection}
 A=a_{\rm part}+\psi,\qquad
 \psi=\theta^ib_i,\qquad
 \dd r_H+Ar_H+r_HA^\dagger=0 .
\end{equation}
The partial value of \(A\) is already prescribed. Its conjugate value
is fixed by the last equation. Changing between the conjugate type
frames uses \(1-\mu\overline\mu\) and its conjugate. These are
pointwise units. This determines \(b_i\) from the first derivatives
of \(r_H\). The inverse takes the partial connection of a compatible
pair \((A,r_H)\).

The accompanying gerbe map is
\begin{equation}\label{eq:product-compatible-gerbe}
 \mathcal B=V_2+c_{\rm an}\tr_V(a_{\rm part}\psi),\qquad
 \Gamma_{\rm gerbe}=-V_1-c_{\rm an}\tr_V(ga_{\rm part}),\qquad c_G=-V_0.
\end{equation}
Its functional identity follows from
\(\CS(A)-\CS(a_{\rm part})
 =T(a_{\rm part},\psi)-\dd\tr(a_{\rm part}\psi)\).
The transgression times \(\widehat\Omega_3\) vanishes because every
term has a factor in the deformed holomorphic ideal. The full connection law and the
metric law preserve \eqref{eq:product-compatible-connection}.
Uniqueness therefore proves off-shell equivariance. Apply the
variational cotangent lift to \eqref{eq:product-compatible-connection}
and \eqref{eq:product-compatible-gerbe}, including the derivatives of
\(r_H\). Together with \eqref{eq:product-dressed-hamiltonian}, this
identifies the Hamiltonian with density and gerbe variables.

The connection, gerbe and momentum maps give the chart in which the
remaining functional difference is the tangent current. The following
calculation removes that difference on the representative coisotropic.

\subsection{The total tangent current}
\label{app:product-currents}

\label{app:product-current}
Let
\[
 C^{[0]}=\{\varpi^{[0]}=0\}.
\]
After the Moser correction, write
\(\Phi^*S_C=S_0+\jmath_\epsilon S_1'\) and
\(I_N=I_0+\jmath_\epsilon I_1\). We prove
\[
 \left.(S_1'-I_1)\right|_{C^{[0]}}=Q_0\mathcal G
\]
by constructing a relative tangent-transgression primitive. Here
\(\widehat N\) is the transformed tangent graph and \(\Gamma_C\) the
Hermitian connection in the common frame.

\begin{lemma}[The source total differential]\label{lem:product-source-total}
Let $Q_0$ be the order-zero source Hamiltonian derivation after the maps
of Section~\ref{app:product-momenta}, restricted to
$C^{[0]}=\{\varpi^{[0]}=0\}$. Put
\[
 \widehat{\boldsymbol z}=e^{-\iota_{\widehat\xi}}\boldsymbol z,
 \qquad \widehat{\mathcal I}=(\widehat{\boldsymbol z}^{\,1},
                  \widehat{\boldsymbol z}^{\,2},
                  \widehat{\boldsymbol z}^{\,3}),\qquad
 \widehat D=\dd+Q_0-\mathcal L_{\widehat\xi}.
\]
Then, in the total field, ghost and equation algebra,
\begin{equation}\label{eq:product-source-total-identities}
 Q_0\widehat\xi=\tfrac12[\widehat\xi,\widehat\xi],\qquad
 \widehat D^2=0,\qquad
 \widehat D\widehat\Omega=0,\qquad
 \widehat D\widehat{\mathcal I}\subseteq\widehat{\mathcal I}.
\end{equation}
\end{lemma}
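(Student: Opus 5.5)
We prove the four identities in \eqref{eq:product-source-total-identities} in order, each from the previous ones, working in the free graded algebra of total fields, ghosts and equations, as in the proof of Theorem~\ref{thm:derived-holomorphic}.

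\textbf{The ghost law.} The first identity is the transport of the diffeomorphism law. The physical ghost satisfies $f\xi_p=-\tfrac12[\xi_p,\xi_p]$. The dressed ghost $\widehat\xi$ is defined by the dressing equation of Section~\ref{app:product-momenta}, so $Q_0\widehat\xi$ is obtained by applying $Q_0$ to $[e^{-\iota_{\widehat\xi}}\boldsymbol z^a]_{[0]}=0$. We would first check that $Q_0$ acts on the total Beltrami variables $\boldsymbol z^a$ by the leading-order form of the ideal law, $D\boldsymbol z^i=\boldsymbol z^a\partial_a\mu^i$, transported through the ghost-dressing map. This holds because on $C^{[0]}$ the polarization of Section~\ref{app:product-momenta} identifies the leading primitive with $\vartheta_N$, so $Q_0$ agrees with the leading part of $Q_N$ after conjugation by $e^{\iota_{\widehat\xi}}$. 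Differentiating the dressing equation and using invertibility of the coframe linearization then isolates $Q_0\widehat\xi=\tfrac12[\widehat\xi,\widehat\xi]$. The sign flip relative to $f\xi_p$ is consistent with $\widehat\xi=-\xi_p$ to leading order.

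\textbf{Nilpotence of $\widehat D$.} Expanding $\widehat D^2$ gives
\[
\widehat D^2=Q_0^2+[\dd,Q_0]-[\dd,\mathcal L_{\widehat\xi}]-[Q_0,\mathcal L_{\widehat\xi}]+\mathcal L_{\widehat\xi}^2 .
\]
The terms simplify as follows. $Q_0$ commutes with $\dd$, since it is a local evolutionary derivation. $\mathcal L_{\widehat\xi}$ commutes with $\dd$. We have $\mathcal L_{\widehat\xi}^2=\tfrac12\mathcal L_{[\widehat\xi,\widehat\xi]}$ for odd $\widehat\xi$, and $[Q_0,\mathcal L_{\widehat\xi}]=\mathcal L_{Q_0\widehat\xi}$ because $Q_0$ commutes with the Lie-derivative action. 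Finally, $Q_0^2=0$ on $C^{[0]}$: the leading Hamiltonian restricted to the coisotropic $\varpi^{[0]}=0$ is the leading holomorphic one (Appendix~\ref{app:product-momenta}), and $Q_N^2=0$ holds by Theorem~\ref{thm:derived-holomorphic}. Substituting the ghost law, the last two terms give $\mathcal L_{\frac12[\widehat\xi,\widehat\xi]}$ with opposite signs, and they cancel. Hence $\widehat D^2=0$.

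\textbf{Closure and ideal stability.} We use the conjugation identity
\[
\widehat D\,e^{-\iota_{\widehat\xi}}=e^{-\iota_{\widehat\xi}}(\dd+Q_0^{\rm und})
\]
on undressed total forms. It follows from $e^{-\iota_{\widehat\xi}}\dd e^{\iota_{\widehat\xi}}=D_c-\iota_{b_c}/2$ together with the ghost law, which absorbs the $\iota_{b_c}$ term into $Q_0\widehat\xi$. The identities $\widehat D\widehat\Omega=0$ and $\widehat D\widehat{\mathcal I}\subseteq\widehat{\mathcal I}$ then reduce to the undressed total identities $D\boldsymbol\Omega=0$ and $D\mathcal I\subset\mathcal I$ of \eqref{eq:hol-intrinsic-total}. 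These hold universally in the free algebra, up to terms in the equation ideal, by \eqref{eq:hol-universal-identities} and the construction of $Q_N\mathbf u$ and $Q_N\mu$.

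\textbf{Main obstacle.} The hard step is the reduction of the source derivation $Q_0$ on $C^{[0]}$ to the holomorphic one, including its action on the real representative coordinates. In particular one must show that the representative directions $(r_H,m,r_B)$ and the metric compensator $j_g$ do not contribute to the actions on $\widehat\xi$, $\boldsymbol z$ and $\widehat\Omega$. Our plan for this step is to use the triangular structure of the polarization together with the contractible-pair form \eqref{eq:product-unit-pairs}. Since these coordinates enter only through $f$-exact pairs, their contributions to $Q_0$ on the Beltrami, volume and ghost variables should vanish at $\varpi^{[0]}=0$.
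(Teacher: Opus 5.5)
There is a genuine gap. Your whole argument rests on the claim that, on $C^{[0]}$, $Q_0$ acts on $\widehat\xi$, $\boldsymbol z$ and every component of $\widehat\Omega$ as the leading part of $Q_N$ does. You leave this open as the ``main obstacle'', and the plan you sketch for it does not work.

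In the final chart the representative coordinates are not decoupled $f$-exact pairs. The polar and gerbe maps \eqref{eq:product-polar-gerbe} feed $p=\sqrt{r_H}$ and $\dd m$ into $\widehat A$ and $\widehat{\mathcal B}_R$. The characteristic reduction removes $(z,\varpi)$ but not the ghost partners of \eqref{eq:product-unit-pairs}, which is why the projection fails to be a quasi-isomorphism in degree $-1$ (Corollary~\ref{prop:unary-source-span}). A contractibility argument therefore cannot be the mechanism.

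What actually makes the $q$-components of $Q_0$ along $C$ independent of $z$ is the leading Hamiltonian identity $S_0|_C=\pi^*I_0$. Agreement of primitives alone, which is what your ghost-law paragraph invokes, identifies no vector fields. Also, the components $\widehat\Omega_4,\widehat\Omega_5,\widehat\Omega_6$ are momenta in the dressed polarization. The Hamiltonian identity is itself read off from \eqref{eq:product-dressed-hamiltonian}, so you would be proving the lemma through a strictly stronger statement.

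Several steps are also misjustified:
\begin{itemize}
\item $Q_N^2=0$ controls $Q_0^2$ only on functions of $q$. It says nothing about the $\partial_z$-components $\pm\partial S_0/\partial\varpi|_{\varpi=0}$, so $Q_0^2=0$ as an operator must come from the source master equation.
\item If the holomorphic identities held only up to the equation ideal, you would get $\widehat D\widehat\Omega$ in that ideal, not zero. In fact $D\boldsymbol\Omega=0$ and $D\mathcal I\subset\mathcal I$ hold identically, because they define $Q_N\mathbf u$ and $Q_N\mu$.
\item Transporting $Q_0$ through $e^{\iota_{\widehat\xi}}$ in the ghost-law step, as you write, is circular. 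The conjugate $e^{-\iota_{\widehat\xi}}Q_0e^{\iota_{\widehat\xi}}=Q_0\pm\iota_{Q_0\widehat\xi}$ already contains the unknown $Q_0\widehat\xi$.
\end{itemize}

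The missing idea is that the target theory is not needed. All four identities are intrinsic to the source Hamiltonian in the dressed chart, and this is how the paper argues.
\begin{itemize}
\item \emph{Ghost law.} It is the source diffeomorphism law transported by $\widehat\xi=-\xi_p$, as you yourself observe: $Q_0\widehat\xi=-f\xi_p=\tfrac12[\widehat\xi,\widehat\xi]$.
\item \emph{Nilpotence.} Your Cartan computation then gives $\widehat D^2=0$, using $Q_0^2=0$ from the master equation and the fact that $Q_0$ anticommutes with $\dd$ in the forms-first convention.
\item \emph{Closure.} $V$ enters the dressed primitive \eqref{eq:product-dressed-primitive} only through $-(\delta+J_\delta)V\,\widehat\Omega$. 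It enters the dressed Hamiltonian \eqref{eq:product-dressed-hamiltonian} only through $(D_cV-\tfrac12\iota_{b_c}V)\widehat\Omega$. The Hamilton equation obtained by varying $V$, at leading coefficient, is exactly $\widehat D\widehat\Omega=0$.
\item \emph{Ideal stability.} No second reduction is needed. Since $\widehat\Omega=\widehat u\,\widehat{\boldsymbol z}^{\,1}\widehat{\boldsymbol z}^{\,2}\widehat{\boldsymbol z}^{\,3}$ with $\widehat u$ invertible, $\widehat{\boldsymbol z}^{\,a}\widehat\Omega=0$. Applying $\widehat D$ and using closure gives $(\widehat D\widehat{\boldsymbol z}^{\,a})\widehat\Omega=0$. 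Hence $\widehat D\widehat{\boldsymbol z}^{\,a}$ lies in the annihilator of $\widehat\Omega$, which is $\widehat{\mathcal I}$.
\end{itemize}
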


\begin{proof}
With forms placed before coefficients, $Q_0$ anticommutes with $\dd$.
The ghost equation $Q_0\widehat\xi=[\widehat\xi,\widehat\xi]/2$ and the Cartan identities
therefore give $\widehat D^2=0$. The Hamilton equation obtained by varying
the dressed two-form in \eqref{eq:product-dressed-primitive}, at leading
coefficient, is $\widehat D\widehat\Omega=0$. Finally
$\widehat\Omega=\widehat u\,\widehat{\boldsymbol z}^{\,1}
\widehat{\boldsymbol z}^{\,2}\widehat{\boldsymbol z}^{\,3}$ with
$\widehat u$ invertible. Since the annihilator of multiplication by
$\widehat\Omega$ is the ideal $\widehat{\mathcal I}$, applying $\widehat D$ to
$\widehat{\boldsymbol z}^{\,a}\widehat\Omega=0$ yields
$\widehat D\widehat{\boldsymbol z}^{\,a}\in\widehat{\mathcal I}$.
\end{proof}

Thus the source Hamiltonian gives the nilpotent total differential,
volume closure and ideal stability before the following connection and
current comparison. Write the horizontal two-form component of
$\widehat{\boldsymbol z}$ as $\nu_2$, and put \(\widehat Q=Q_0-\mathcal L_{\widehat\xi}\). Let \(j\) be the
full geometric tangent-frame ghost. The moving type projections are
those of \(\widehat\Omega_3\).
\begin{equation}\label{eq:product-current-frame}
 \begin{gathered}
 u_j=P^{1,0}\partial_j,\qquad B_j=P^{0,1}\partial_j,\qquad
 T^a{}_j=\iota_{B_j}\nu_2^a,\qquad g_T=e^T,\\
 E_C=\Gamma_C-\widehat N_1-[T,j]+\widehat QT,\qquad
 b_i=\iota_{u_i}E_C,\qquad a_I=\widehat{\boldsymbol z}^{\,i}b_i,\\
 \mathcal C'=g_T(\widehat N+a_I)g_T^{-1}
                       -(\widehat Dg_T)g_T^{-1}.
 \end{gathered}
\end{equation}

\begin{lemma}[Connection matching in low form degree]
\label{lem:product-current-low}
The total connection \(\mathcal C'\) of
\eqref{eq:product-current-frame} satisfies
\((\mathcal C')_{[0]}=j\) and \((\mathcal C')_{[1]}=\Gamma_C\)
with every ghost and momentum component included. The construction
uses only the moving coframe inverse and finite differentiation.
\end{lemma}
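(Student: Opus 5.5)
The plan is to compute the form-degree zero and one components of \(\mathcal C'\) directly from \eqref{eq:product-current-frame}. The only inputs are the ghost dressing of Appendix~\ref{app:product-momenta}, the identities of Lemma~\ref{lem:product-source-total}, and the defining properties of the graph \(\widehat N\). Write \(T=T_{[0]}+\dots\) according to total ghost and form degree. Since \(\nu_2\) is the horizontal two-form component of \(\widehat{\boldsymbol z}\), the contraction \(T^a{}_j=\iota_{B_j}\nu_2^a\) has horizontal degree one. Hence \(T\), and with it \(g_T-1\), is nilpotent and has no form-degree-zero component. I would therefore expand
\[
 \mathcal C'=\widehat N+a_I+[T,\widehat N+a_I]-\widehat DT+O(T^2)
\]
and organize the calculation by form degree. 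I would use \((\widehat D g_T)g_T^{-1}=\widehat DT+\tfrac12[T,\widehat DT]+\dots\) and keep only the terms that can contribute in degrees \(0\) and \(1\).

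\emph{Degree zero.} The term \(a_I=\widehat{\boldsymbol z}^{\,i}b_i\) begins in form degree one. This holds because the dressing normalizes \([\widehat{\boldsymbol z}^{\,a}]_{[0]}=0\), which is exactly the defining equation of \(\widehat\xi\). Since \(T\) has no degree-zero part, \((\mathcal C')_{[0]}=\widehat N_{[0]}-(\widehat QT)_{[0]}\) up to corrections quadratic in \(T\). Each such correction contains a \(T\) factor, so none of them survives in degree zero. It then remains to check \(\widehat N_{[0]}=j\). This I would get from the transformed tangent graph: \(\widehat N=\mathcal N\) built from \(K(\mu^{[0]})\) is dressed by \(e^{-\iota_{\widehat\xi}}\), and its ghost component is the frame ghost \(j\) plus a \(K\)-term evaluated on \(\mu_0-\widehat\xi=0\). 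This is the same mechanism as the parameter dictionary \eqref{eq:hol-cartan-orientation}.

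\emph{Degree one.} In degree one the contributions are \(\widehat N_1\), then \(\widehat{\boldsymbol z}^{\,i}_{[1]}b_i\), then \([T,j]\) coming from the degree-zero part of \(\widehat N\), and finally \(-(\widehat QT)\) together with \(-\dd T_{[0]}\), where the latter vanishes. The definition \(E_C=\Gamma_C-\widehat N_1-[T,j]+\widehat QT\) is designed so that \(\widehat N_1+[T,j]-\widehat QT=\Gamma_C-E_C\). What remains is to show \(\widehat{\boldsymbol z}^{\,i}_{[1]}\iota_{u_i}E_C=E_C\). To do this I would decompose \(E_C\) along the moving type projections of \(\widehat\Omega_3\). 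The \((1,0)\) part is recovered as \(\widehat{\boldsymbol z}^{\,i}\iota_{u_i}\) because \(u_j=P^{1,0}\partial_j\) is dual to the moving coframe. For the \((0,1)\) part I would show \(\iota_{B_j}E_C=0\), and this is where the \(T\)-correction enters. The \(B_j\)-contraction of \(\widehat QT\) should cancel the antiholomorphic discrepancy between the Hermitian connection \(\Gamma_C\) and the graph connection \(\widehat N_1\). Both are partial connections along the moving \(T^{0,1}\), and they agree there up to the ghost-induced shear measured by \(\nu_2\). For this I would use \(\widehat D\widehat{\mathcal I}\subseteq\widehat{\mathcal I}\) from Lemma~\ref{lem:product-source-total}, applied to \(\widehat{\boldsymbol z}\) in form degrees one and two.

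\emph{Main obstacle.} I expect the hardest step to be this vanishing of the \(T^{0,1}\)-component of \(E_C\) once all ghosts and momenta are included. Signs from total parity and the nilpotent higher \(T\)-terms could spoil the naive cancellation there. I would first verify it at linear order in ghosts, using \eqref{eq:product-source-total-identities}. Then I would argue by induction on formal degree, using that each higher \(T\)-term raises ghost degree while fixing form degree. The finiteness assertion is then immediate, since the only inverse used is that of the moving coframe, through \(P^{1,0}\), \(P^{0,1}\) and \(u_j\), and everything else is finite differentiation and the exponential of the nilpotent \(T\).
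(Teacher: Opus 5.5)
Your degree-zero argument is essentially correct. Your degree-one reduction matches the paper's structure: because $T$ has horizontal degree one, only $[T,\widehat N_0]=[T,j]$ and $-\widehat QT$ enter through degree one, and the definition of $E_C$ reduces $(\mathcal C')_{[1]}=\Gamma_C$ to showing $E_C=\theta^ib_i$, i.e.\ that $E_C$ has no moving antiholomorphic component.

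That vanishing is the whole content of the lemma, and your proposal gives no mechanism that can establish it. You assert that $\Gamma_C$ and $\widehat N_1$ ``agree along the moving $T^{0,1}$ up to the ghost-induced shear'' and propose to derive this from $\widehat D\widehat{\mathcal I}\subseteq\widehat{\mathcal I}$. Ideal stability concerns only the dressed Beltrami and ghost data, so it cannot say anything about $\Gamma_C$. The only link between $\Gamma_C$ and the moving complex structure is the property that characterizes it, $(T_\Gamma)^{1,1}=0$ for the Ehresmann--Libermann connection, and your proposal never invokes it. Vanishing mixed torsion is what identifies the moving antiholomorphic part of $\Gamma_C$ with that of $n+\Delta$. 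Here $n^a{}_b=\partial_b\widehat\mu_1^a$, $\Delta^a{}_j=-\iota_{B_j}M^a$ and $M=\bar\partial\widehat\mu_1-\tfrac12[\widehat\mu_1,\widehat\mu_1]$. Without this input there is no reason for the $(0,1)$-part of $E_C$ to vanish.

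Ideal stability does enter the paper's proof, but in a precise computational role. Its degree-two component gives $\widehat Q\nu_2^a=-M^a+\theta^b\iota_{\partial_b\widehat\xi}\nu_2^a+\nu_2^bj^a{}_b$, and therefore $\widehat QT=-\Delta+R_{\widehat\xi}\nu_2+[T,j]+\theta^b\Lambda_b$. Combine this with the explicit dressed graph $\widehat N_0=j$, $\widehat N_1=n+R_{\widehat\xi}\nu_2+\Gamma_{0,a}\theta^a$. Substituting into the definition of $E_C$ gives the closed formula $E_C=\Gamma_C-n-\Delta-\Gamma_{0,a}\theta^a+\theta^b\Lambda_b$, in which the shear term $R_{\widehat\xi}\nu_2$ and the commutator $[T,j]$ cancel exactly. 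Only after this exact computation does the torsion condition finish the argument, since the remaining terms carry a factor $\theta^a$.

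Your fallback plan is also misdirected. You propose to verify at linear order in ghosts and then induct on formal degree, because higher $T$-terms supposedly ``raise ghost degree while fixing form degree.'' As you noted earlier, each power of $T$ raises horizontal degree, so the terms $T^k$ with $k\ge2$ never reach degrees $0$ or $1$. There is nothing to induct on; the identity is exact once $\widehat QT$ has been computed.
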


\begin{proof}
Write $\theta^a=\widehat{\boldsymbol z}^{\,a}_{[1]}$ and
$\widehat\mu_1^a=\theta^a-dz^a$. In a local coordinate splitting, put
\[
 j^a{}_b=\theta^a(\partial_b\widehat\xi),\qquad
 n^a{}_b=\partial_b\widehat\mu_1^a,\qquad
 (R_{\widehat\xi}\nu_2)^a{}_b
 =\iota_{\partial_b\widehat\xi}\nu_2^a.
\]
The dressed graph connection has components
\[
 \widehat N_0=j,\qquad
 \widehat N_1=n+R_{\widehat\xi}\nu_2+\Gamma_{0,a}\theta^a.
\]
Set $M=\bar\partial\widehat\mu_1-\tfrac12[\widehat\mu_1,\widehat\mu_1]$.
The degree-two ideal equation of Lemma~\ref{lem:product-source-total}
gives, for $\widehat Q=Q_0-\mathcal L_{\widehat\xi}$,
\[
 \widehat Q\nu_2^a=-M^a
 +\theta^b\iota_{\partial_b\widehat\xi}\nu_2^a
 +\nu_2^b j^a{}_b.
\]
Define
\[
 \Delta^a{}_j=-\iota_{B_j}M^a,\qquad
 (\Lambda_b)^a{}_j
 =\iota_{B_j}\iota_{\partial_b\widehat\xi}\nu_2^a.
\]
Covariance of the type projections and the graded contraction rule yield
\[
 \widehat QT=-\Delta+R_{\widehat\xi}\nu_2+[T,j]+\theta^b\Lambda_b.
\]
Substitution in \eqref{eq:product-current-frame} therefore gives
\[
 E_C=\Gamma_C-n-\Delta-\Gamma_{0,a}\theta^a+\theta^b\Lambda_b.
\]
The vanishing mixed torsion of the Ehresmann--Libermann connection
identifies its moving antiholomorphic part with that of $n+\Delta$.
The other terms have a factor $\theta^a$, so $E_C=\theta^ib_i$.
Expanding the transformed connection through horizontal degree one now
shows
\[
 (\mathcal C')_{[0]}=j,\qquad
 (\mathcal C')_{[1]}
 =\widehat N_1+E_C+[T,j]-\widehat QT=\Gamma_C.
\]
Higher terms of the exponential have horizontal degree at least two.
\end{proof}

Put \(\mathcal C=\Gamma_C+j\) and \(a_C=\mathcal C'-\mathcal C\).
The reconstructed metric is invariant
under the gerbe action with metric representatives and is equivariant
under real diffeomorphisms. Hence
\(\delta_c\Gamma_C=D_{\Gamma_C}j\), \(\delta_cj=-j^2\), where
$\delta_c$ is the coefficient derivation of $\widehat Q$ in the
forms-first convention. Consequently
\[
 F_{\widehat D}(\mathcal C)=F_{\Gamma_C}.
\]

\begin{lemma}[Six-dimensional transgression vanishing]
\label{lem:product-current-degree}
On the coisotropic subspace modulo $\eps$ \(C^{[0]}\),
\[
 T_{3,\widehat D}(\mathcal C,a_C)\widehat\Omega=0,
 \qquad T_{3,\widehat D}(\widehat N,a_I)\widehat\Omega=0.
\]
\end{lemma}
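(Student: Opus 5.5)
Both vanishings will be proved by a counting argument in the total bigraded algebra: a wedge product with $\widehat\Omega$ is killed either because it exceeds horizontal degree six or because one factor lies in the annihilator of $\widehat\Omega$. By Lemma~\ref{lem:product-source-total}, $\widehat\Omega=\widehat u\,\widehat{\boldsymbol z}^{\,1}\widehat{\boldsymbol z}^{\,2}\widehat{\boldsymbol z}^{\,3}$ with $\widehat u$ invertible, it has no components of horizontal degree below three, and its annihilator is the ideal $\widehat{\mathcal I}$. Also $\widehat D\widehat{\mathcal I}\subseteq\widehat{\mathcal I}$. I would use these facts throughout, working on $C^{[0]}$ and reducing all tangent coefficients modulo $\eps$.

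For the first identity I would use Lemma~\ref{lem:product-current-low}. It gives $(\mathcal C')_{[0]}=j=\mathcal C_{[0]}$ and $(\mathcal C')_{[1]}=\Gamma_C=\mathcal C_{[1]}$. Hence $a_C=\mathcal C'-\mathcal C$ has horizontal form degree at least two.

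Now expand $T_{3,\widehat D}(\mathcal C,a_C)=2P(a_C,F_{\widehat D}(\mathcal C))+P(a_C,D_{\mathcal C}a_C)+\tfrac13P(a_C,[a_C,a_C])$. The cubic term has horizontal degree at least six. The quadratic term has degree at least $2+3=5$. The curvature term uses $F_{\widehat D}(\mathcal C)=F_{\Gamma_C}$, which is a horizontal two-form, so it has degree at least four. Multiplying by $\widehat\Omega$ adds at least three, so every product has horizontal degree at least seven. These products therefore vanish on the six-manifold $X$. This is the horizontal-degree-greater-than-six mechanism mentioned in the main text.

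I need one check here: that the total-degree contributions from the ghost-valued components of $\widehat D$ cannot lower the horizontal degree. They cannot, because $\widehat Q$ preserves horizontal degree while $\dd$ raises it.

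For the second identity, $a_I=\widehat{\boldsymbol z}^{\,i}b_i$ lies in $\widehat{\mathcal I}$ by construction. In $T_{3,\widehat D}(\widehat N,a_I)$ the curvature term $2P(a_I,F_{\widehat D}(\widehat N))$ and the cubic term each contain a factor of $a_I$, so they lie in $\widehat{\mathcal I}$. The remaining term is $P(a_I,D_{\widehat N}a_I)$. Here $D_{\widehat N}a_I=(\widehat D\widehat{\boldsymbol z}^{\,i})b_i-\widehat{\boldsymbol z}^{\,i}D_{\widehat N}b_i$, and this lies in $\widehat{\mathcal I}$ by ideal stability. Every term of the transgression is therefore in $\widehat{\mathcal I}$, and its product with $\widehat\Omega$ vanishes.

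The main obstacle is the first identity, specifically justifying $F_{\widehat D}(\mathcal C)=F_{\Gamma_C}$ with no lower-degree ghost pieces. This needs $\delta_c\Gamma_C=D_{\Gamma_C}j$ and $\delta_cj=-j^2$ on $C^{[0]}$, which follow from the equivariance of the reconstructed metric stated just before the lemma. I would verify this by computing $F_{\widehat D}(\Gamma_C+j)$ degree by degree in the forms-first sign convention. I would also check that Lemma~\ref{lem:product-current-low} holds with all ghost and momentum components, as it asserts, so that $a_C$ genuinely starts in horizontal degree two.
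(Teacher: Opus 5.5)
Your argument is correct and is essentially the paper's proof: the first transgression vanishes because every term times $\widehat\Omega$ has horizontal degree at least $7$ on the six-manifold $X$. This uses that $a_C$ has degree at least $2$ by Lemma~\ref{lem:product-current-low} and that $F_{\widehat D}(\mathcal C)=F_{\Gamma_C}$. The second vanishes because every term contains the factor $a_I\in\widehat{\mathcal I}$, which annihilates $\widehat\Omega$.

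One small slip: $D_{\mathcal C}a_C$ only has horizontal degree at least $2$, not $3$, since $\widehat Q$ and the bracket with $j=\mathcal C_{[0]}$ preserve horizontal degree, as your own check notes. The quadratic term therefore has degree at least $4$, which gives the paper's counts $7,7,9$; your conclusion is unaffected. Your ideal-stability step for $D_{\widehat N}a_I$ is also unnecessary, because each term of $T_{3,\widehat D}(\widehat N,a_I)$ already has $a_I$ as its first argument.
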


\begin{proof}
Lemma~\ref{lem:product-current-low} gives horizontal form degree at least
two for \(a_C\), and \(\widehat D\) preserves or raises that horizontal form degree.
After multiplication by $\widehat\Omega$, the three terms of the
first transgression have horizontal degrees at least $7,7,9$,
respectively, and hence vanish on $X$.
The second transgression has a factor
in the \(\widehat D\)-stable ideal
\((\widehat{\boldsymbol z}^{\,1},\widehat{\boldsymbol z}^{\,2},
\widehat{\boldsymbol z}^{\,3})\). Each generator annihilates
\(\widehat\Omega\), so this transgression also vanishes. 
\end{proof}

For a formula compatible with the fixed global anomaly data set
\[
 b_C=\mathcal C-\widehat N,\qquad
 C_s=(\widehat N+a_I)^{e^{-sT}},\qquad \ell_s=-T.
\]
The primitive and its normalized generator are
\begin{equation}\label{eq:product-current-generator}
 \begin{aligned}
 \tau&=\int_0^1\left[
 2P(\ell_s,F_{\widehat D}(C_s))
       -P(C_s-\widehat N,D_{C_s}\ell_s)\right]\dd s,\\
 \mathcal G&=\frac L{16}\operatorname{Im}\int_X
                       [\tau+P(b_C,a_C)]\wedge\widehat\Omega .
 \end{aligned}
\end{equation}
\begin{lemma}[Relative current primitive]\label{lem:product-current-primitive}
With the relative \(B\) shift included, the unweighted tangent-current
difference is the \(Q_0\) differential of
\(\int_X[\tau+P(b_C,a_C)]\widehat\Omega\). Consequently its real
normalized coefficient is \(Q_0\mathcal G\).
\end{lemma}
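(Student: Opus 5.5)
The plan is a homotopy argument for the relative Chern--Simons form along a path of connections, using the nilpotent total differential $\widehat D$ of Lemma~\ref{lem:product-source-total}. Since $\widehat D^2=0$ and $\widehat D\widehat\Omega=0$ on $C^{[0]}$, for any total connection $\mathcal A$ the standard identity
\[
 \widehat D\,\CS_{\widehat D}(\mathcal A)=P(F_{\widehat D}(\mathcal A),F_{\widehat D}(\mathcal A))
\]
holds without using field equations. Both Pontryagin terms that will appear, for $\widehat N+a_I$ and for $\mathcal C$, are killed by $\widehat\Omega$: the first because partial flatness puts the curvature in $\widehat{\mathcal I}$, the second because $F_{\widehat D}(\mathcal C)=F_{\Gamma_C}$ is horizontal of degree $4$ and its product with $\widehat\Omega_{\ge3}$ exceeds degree six. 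I also read the unweighted tangent-current difference as
\[
 c_{\rm an}\bigl(\CS_{\widehat D}(\mathcal C)-\CS_{\widehat D}(\widehat N)\bigr)\widehat\Omega
\]
plus the relative $B$-shift. This identification is the step I am least sure of; it comes from comparing the dressed Hamiltonian \eqref{eq:product-dressed-hamiltonian} with $I_N$ after the Moser step.

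The difference is split along the chain $\widehat N\to\widehat N+a_I\to\mathcal C'\to\mathcal C$.

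\emph{First leg.} The triangle identity \eqref{eq:representative-triangle} gives
\[
 \CS(\widehat N+a_I)-\CS(\widehat N)=T_{3,\widehat D}(\widehat N,a_I)-\widehat D P(\widehat N,a_I),
\]
and the transgression term is removed by Lemma~\ref{lem:product-current-degree}.

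\emph{Second leg.} $\mathcal C'$ is the $g_T=e^T$ gauge transform of $\widehat N+a_I$. Along the path $C_s=(\widehat N+a_I)^{e^{-sT}}$ with generator $\ell_s=-T$, the $s$-derivative of $\CS(C_s)$ is
\[
 \widehat D\bigl[P(\ell_s,\cdot)\ \text{terms}\bigr]+2P(\ell_s,F(C_s))\text{-type terms}.
\]
Integrating over $s$ shows that the gauge change of Chern--Simons equals $\widehat D\tau$, up to a curvature-insertion term. That insertion pairs $F(C_s)$, which is conjugate to a curvature in $\widehat{\mathcal I}$, and is therefore annihilated by $\widehat\Omega$. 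The explicit form of $\tau$ in \eqref{eq:product-current-generator} is chosen to match this homotopy formula.

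\emph{Third leg.} Writing $\mathcal C'=\mathcal C+a_C$, the triangle identity gives
\[
 T_{3,\widehat D}(\mathcal C,a_C)-\widehat D P(\mathcal C,a_C),
\]
and the first vanishing of Lemma~\ref{lem:product-current-degree} removes the transgression term.

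Summing the three legs, the boundary terms $P(\widehat N,a_I)$ and $P(\mathcal C,a_C)$ combine with the relative $B$-shift, which contributes $-c_{\rm an}P(\widehat N,b_C)$-type terms from the common reference, into $\widehat D P(b_C,a_C)$. Getting this bookkeeping right, with all signs consistent, is the main obstacle. I would fix the signs by checking the recombination on the linear-in-ghost component first, where $\mathcal C_{[0]}=\widehat N_0=j$ by Lemma~\ref{lem:product-current-low}.

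Finally, multiply by $\widehat\Omega$ and integrate. Since $\widehat D\widehat\Omega=0$, we have $\widehat D(\beta)\widehat\Omega=\widehat D(\beta\widehat\Omega)$. The horizontal $d$-part integrates to zero on closed $X$, and the $\mathcal L_{\widehat\xi}$-part is a Lie derivative of a top form, hence also integrates to zero. What remains is $Q_0\int_X[\tau+P(b_C,a_C)]\widehat\Omega$. Applying the real normalization $\frac L4\operatorname{Im}$ together with $c_{\rm an}=\epsilon/4$ gives the factor $L/16$, hence $Q_0\mathcal G$.
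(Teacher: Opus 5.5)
Your route is the paper's. You use the same chain $\widehat N\to\widehat N+a_I\to\mathcal C'\to\mathcal C$, the same gauge path $C_s=(\widehat N+a_I)^{e^{-sT}}$ with $\ell_s=-T$, the two vanishings of Lemma~\ref{lem:product-current-degree} at the ends, the triangle identity on the last leg, and the same passage from $\widehat D$ to $Q_0$ with the factor $L/16$. Your reading of the current difference is also right: the relative $B$ shift $+\widehat D P(\widehat N,b_C)$ turns $\CS_{\widehat D}(\mathcal C)-\CS_{\widehat D}(\widehat N)$ into $T_{3,\widehat D}(\widehat N,b_C)$. That transgression is exactly what the paper shows equals $\widehat D\{[\tau+P(b_C,a_C)]\widehat\Omega\}$.

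The second leg is mis-accounted, however, and as written your recombination does not close. Because you differentiate the absolute form, the paper's conventions give
\[
 \partial_s\CS_{\widehat D}(C_s)
 =\widehat D\bigl\{2P(\ell_s,F_{\widehat D}(C_s))-P(C_s,D_{C_s}\ell_s)\bigr\}.
\]
This differs from the integrand of $\tau$ by $-\widehat D P(\widehat N,D_{C_s}\ell_s)$, not by a curvature insertion. Since $\widehat N$ is $s$-independent, this difference integrates to the boundary term $-\widehat D P(\widehat N,b_C+a_C-a_I)$, and you need that term.

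With only the boundary terms you list ($-\widehat DP(\widehat N,a_I)$, $+\widehat DP(\mathcal C,a_C)$ and the $B$ shift), your identity misses the required one by $\widehat D\{P(\widehat N,b_C+a_C)\widehat\Omega\}$. Neither ideal membership nor horizontal degree removes this residual. Once the missing term is included, the boundary terms combine to $\widehat DP(\mathcal C,a_C)-\widehat DP(\widehat N,a_C)=\widehat DP(b_C,a_C)$, as required.

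The curvature term $2P(\ell_s,F_{\widehat D}(C_s))$ is already part of $\tau$ and is $\widehat D$-exact by the Bianchi identity, so you need not discard it against $\widehat\Omega$. Discarding it would require $F_{\widehat D}(\widehat N)\in\widehat{\mathcal I}$. That input is not among the facts of Lemma~\ref{lem:product-source-total}, and the paper never uses it; Lemma~\ref{lem:product-current-degree} only needs $a_I\in\widehat{\mathcal I}$. For the same reason your Pontryagin preliminaries play no role.

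The paper avoids this bookkeeping by varying the fixed-reference transgression $T_{3,\widehat D}(\widehat N,C_s-\widehat N)$ directly. Its $s$-derivative is exactly $\widehat D$ of the integrand of $\tau$. This choice also keeps every term global and compatible with the fixed anomaly data, whereas absolute Chern--Simons forms are only chart-local.
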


\begin{proof}
For the convention \(C^h=h^{-1}Ch+h^{-1}\widehat Dh\), the path has
\(C_{s=0}=\widehat N+a_I\), \(C_{s=1}=\mathcal C'\), and
\(\partial_sC_s=D_{C_s}\ell_s\) with \(\ell_s=-T\).
The reference \(\widehat N\) is independent of \(s\), but need not
be \(Q_0\)-closed. Fixed-reference variation and the Bianchi identity give
\[
 \partial_sT_{3,\widehat D}(\widehat N,C_s-\widehat N)
 =\widehat D\{2P(\ell_s,F_{\widehat D}(C_s))
              -P(C_s-\widehat N,D_{C_s}\ell_s)\}.
\]
Integrating, using Lemma~\ref{lem:product-current-degree} at the first
endpoint, and then applying the relative triangle at the second yields
\begin{align*}
 T_{3,\widehat D}(\widehat N,b_C+a_C)\widehat\Omega
     &=\widehat D(\tau\widehat\Omega),\\
 [T_{3,\widehat D}(\widehat N,b_C+a_C)
       -T_{3,\widehat D}(\widehat N,b_C)]\widehat\Omega
     &=-\widehat D\{P(b_C,a_C)\widehat\Omega\}.
\end{align*}
Thus
\begin{equation}\label{eq:product-current-relative-identity}
 T_{3,\widehat D}(\widehat N,b_C)\widehat\Omega
 =\widehat D\{[\tau+P(b_C,a_C)]\widehat\Omega\}.
\end{equation}
The exterior derivative on $X$ and the Lie derivative of a top form integrate to zero, leaving \(Q_0\). The relative compensator makes the primitive compatible with the fixed gerbe presentation.
\end{proof}

The cotangent polarization of Section~\ref{app:product-momenta}
puts the independent gauge and tangent traces in the same symplectic
chart. Varying the relative transgression in that chart gives the
source-minus-target primitive
\begin{equation}\label{eq:product-relative-beta}
 \beta=\frac L{16}\operatorname{Im}\int_X
 P\bigl(b_C,(\delta+\iota_{\delta\widehat\xi})
          (\mathcal C+\widehat N)\bigr)\widehat\Omega,
 \qquad b_C=\mathcal C-\widehat N.
\end{equation}
This is the only part of the full momentum transformation needed below;
the omitted coefficients are obtained from the same variational
cotangent lift and are triangular in horizontal degree.

\begin{lemma}[Relative first-order symplectic coordinates]
\label{lem:product-relative-moser}
Let $\vartheta_{\mathcal R}$ be the cotangent potential for the
representative variables, so $\delta\vartheta_{\mathcal R}=\omega_{\mathcal R}$.
In the common mixed chart write
\[
 \vartheta_C=\vartheta_*+\jmath_\epsilon\beta_C,\qquad
 \vartheta_N+\vartheta_{\mathcal R}
 =\vartheta_*+\jmath_\epsilon\beta_N,\qquad
 \beta=\beta_C-\beta_N.
\]
Let \(\omega_0\) be the nondegenerate leading symplectic form on the
free block and define \(V\) by
\[
 \iota_V\omega_0=-\beta.
\]
Then
\[
 \Phi=\id+\jmath_\epsilon V,\qquad
 \Phi^{-1}=\id-\jmath_\epsilon V
\]
are local inverse maps and
\begin{equation}\label{eq:product-current-darboux} \Phi^*\omega_C=\omega_N+\omega_{\mathcal R},
 \qquad VS_0=-\iota_{Q_0}\beta.
\end{equation}
\end{lemma}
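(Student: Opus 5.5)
This is a first-order Moser argument in the ring $\mathbb D$, where $\epsilon^2=0$ makes every expansion exact at linear order. The plan has four steps.

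\textbf{Step 1: the vector field $V$.} The contraction map of $\omega_0$ has a finite local inverse on the free block. This follows from the triangular structure recorded in Appendix~\ref{app:hol-hamiltonian-identities} and the polarization of Section~\ref{app:product-momenta}. Hence $V:=-\omega_0^{-1}\beta$ is well defined and local of finite jet order. Since $V$ enters only through $\jmath_\epsilon V$, only its leading coefficients matter. Because $\epsilon^2=0$, the identity $(\id+\jmath_\epsilon V)\circ(\id-\jmath_\epsilon V)=\id$ is exact. The only point to check is that $V$, evaluated on the shifted argument, changes only by $\epsilon\cdot\epsilon$ terms, which vanish; this holds because the correction is evaluated on $q^{[0]}$.

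\textbf{Step 2: the symplectic form.} I would compute
\[
\Phi^*\vartheta_C=\vartheta_*+\jmath_\epsilon\bigl(\mathcal L_V\vartheta_*+\beta_C\bigr).
\]
Here only the leading part $\vartheta_*$ sees the shift, since $\beta_C$ already carries the factor $\epsilon$. Taking $\delta$ and using $\omega_0=\delta\vartheta_*$ at leading order gives
\[
\Phi^*\omega_C=\omega_*+\jmath_\epsilon\bigl(\delta\iota_V\omega_0+\delta\beta_C\bigr).
\]
Substituting $\iota_V\omega_0=-\beta=-\beta_C+\beta_N$ leaves $\omega_*+\jmath_\epsilon\delta\beta_N$, which equals $\omega_N+\omega_{\mathcal R}$.

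\textbf{Step 3: the Hamiltonian.} The first-order change of the Hamiltonian is $\jmath_\epsilon VS_0$. The leading master equation gives $\iota_{Q_0}\omega_0=-\delta S_0$, so
\[
VS_0=\iota_V\delta S_0=-\iota_V\iota_{Q_0}\omega_0=\iota_{Q_0}\iota_V\omega_0=-\iota_{Q_0}\beta.
\]
The middle step needs graded signs; the paper's conventions make the total parity even, which I would check.

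\textbf{Main obstacle.} The hardest step is justifying that $\beta$, as given in \eqref{eq:product-relative-beta}, really is $\beta_C-\beta_N$ in the common mixed chart. This requires the polarization computation comparing \eqref{eq:product-dressed-primitive} with its tangent-trace analogue, and showing that the difference is exactly the relative transgression variation $P(b_C,(\delta+\iota_{\delta\widehat\xi})(\mathcal C+\widehat N))\widehat\Omega$. I would derive it from $\delta\operatorname{CS}(\mathcal C)-\delta\operatorname{CS}(\widehat N)$ modulo exact terms, using $\tr(\mathcal C\delta\mathcal C)-\tr(\widehat N\delta\widehat N)=\tr(b_C\,\delta(\mathcal C+\widehat N))+\delta(\cdots)$. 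The $\delta$-exact remainder changes only the primitive, not $\omega$, so it can be absorbed into $\beta$ without affecting the statement.
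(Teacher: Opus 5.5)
Your proof is correct and is the paper's own argument, the first-order Moser calculation, written out in more detail. Like the paper, you use $\mathcal L_V\omega_0=\delta\iota_V\omega_0=-\delta\beta$, the leading Hamilton equation $\iota_{Q_0}\omega_0=-\delta S_0$, and $\epsilon^2=0$ to get $\Phi^{-1}=\id-\jmath_\epsilon V$. The step you flag as the main obstacle is not needed for this lemma, since the lemma simply defines $\beta:=\beta_C-\beta_N$. Identifying this $\beta$ with \eqref{eq:product-relative-beta} belongs to the subsequent application of the lemma, which the paper asserts separately.
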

\begin{proof}
This is the first-order Moser calculation:
$\mathcal L_V\omega_0=-\delta\beta$, while
$\iota_{Q_0}\omega_0=-\delta S_0$ gives
$VS_0=-\iota_{Q_0}\beta$. Since the correction is
$\epsilon$-linear, $\id\pm\jmath_\epsilon V$ are inverse.
\end{proof}

We apply the lemma to \eqref{eq:product-relative-beta}.
Contracting with \(Q_0\) and using
\(Q_0\widehat\xi=[\widehat\xi,\widehat\xi]/2\) removes the insertion
term in the relative differential. Lemma~\ref{lem:product-current-primitive}
then gives, on the leading representative locus,
\begin{equation}\label{eq:product-current-coboundary-app}
 \left.(\Phi^*S_C-I_N)\right|_{C^{[0]}}
 =\jmath_\epsilon\,Q_0\mathcal G,
\end{equation}
where \(\mathcal G\) is the local tangent-current primitive of
\eqref{eq:product-current-generator}.

Extend \(\mathcal G\) independently of the representative momenta and
set
\[
 \Psi=\id+\epsilon X_{\mathcal G}.
\]
The first-order canonical transformation formula gives
\begin{equation}\label{eq:product-current-pullback}
 \widehat S
 =\Psi^*\Phi^*S_C
 =S_0+\epsilon(S_1'-Q_0\mathcal G).
\end{equation}
By \eqref{eq:product-current-coboundary-app},
\[
 \widehat S|_C=I_N,\qquad C=\{\varpi=0\}.
\]
The restricted Hamiltonian is independent of the representative
coordinate \(z\); hence its Hamiltonian vector field preserves \(C\)
and descends to the characteristic quotient. This is the final
canonical correction in Theorem~\ref{thm:hamiltonian-realization}.

\section{Normal equations on the product restriction}
\label{app:coisotropic}\label{app:product}

The first characteristic restriction fixes the product locus and removes
the momenta conjugate to its normal coordinates. This appendix records
the corresponding Euler--Lagrange equations.

\subsection{The invariant functional before product restriction}

Set $w=e^{-2\Phi}$ and decompose
\[
 H=h_3+\eta\wedge h_2,
 \qquad
 \mathcal T_3=\mathcal T_h+\eta\wedge\mathcal T_v.
\]
With
\[
 h_3=\dd b_2-\frac\epsilon4\mathcal T_h,
 \qquad
 h_2=-\dd b_1-\frac\epsilon4\mathcal T_v,
\]
the circle-invariant functional is
\begin{equation}\label{eq:product-invariant-functional}
 W_{\rm inv}
 =\frac L4\int_Xw\left[
 R(h_3-v\wedge h_2)\wedge\widehat\rho_X
 +R\,\dd\omega\wedge\rho_X
 +\frac12(h_2-R^2\dd v)\wedge\omega^2
 \right].
\end{equation}
Indeed, with
\[
 \rho_{\rm raw}=\rho_X+Rv\wedge\omega,
 \qquad
 \psi_4=\frac12\omega^2-Rv\wedge\widehat\rho_X,
 \qquad
 \psi_3=-R\widehat\rho_X,
\]
one has
\(
 \varphi=\rho_{\rm raw}+\eta\wedge R\omega
\)
and
\(
 \psi=\psi_4+\eta\wedge\psi_3
\).
Substitution in the seven-dimensional functional gives
\eqref{eq:product-invariant-functional} after using
$\omega\wedge\rho_X=0$.

For an invariant $k$-form $q=q_h+\eta\wedge q_v$, the cotangent term
reduces as
\begin{equation}\label{eq:product-momentum-sign}
 \int_Yp\wedge\delta q
 =L\int_X\left[
 p_v\wedge\delta q_h+(-1)^{7-k}p_h\wedge\delta q_v
 \right].
\end{equation}
This fixes the normalization of the normal momenta used by the
Hamiltonian product restriction.

\subsection{Normal variations}
\label{app:product-normal}

At the product locus vary the excluded coordinates by
\[
 r=\delta R,
 \qquad
 \nu=\delta v,
 \qquad
 \beta_1=\delta b_1,
 \qquad
 \tau=\delta a_0.
\]
Keeping the remaining product variables fixed gives
\[
 \delta g=2r\eta^2+\eta\otimes\nu+\nu\otimes\eta,
 \qquad
 \delta B=\eta\wedge\beta_1,
 \qquad
 \delta a=\eta\tau.
\]
Let $\Theta=(\Theta^u)^{[0]}$ and put
\[
 Z=(D\Theta^u)(\delta g,\delta B)^{[0]}.
\]
The relative transgression varies by
\begin{equation}\label{eq:normal-transgression-variation}
 \delta\mathcal T_3
 =2P(Z+\delta a,F_A)-2P(Z,R_\Theta)
   +\dd\{2P(Z,a)-P(a,\delta a)\},
\end{equation}
and hence
\[
 \delta H
 =\dd\delta B-\frac14\jmath_\epsilon(\delta\mathcal T_3)
 =\delta h_3+\eta\wedge\delta h_2.
\]
Differentiating \eqref{eq:product-invariant-functional} gives
\begin{equation}\label{eq:product-normal-variation}
 \left.\delta_nW\right|_{\rm product}
 =\frac L4\int_Xw\left[
 r\{H\wedge\widehat\rho+\dd\omega\wedge\rho\}
 +\delta h_3\wedge\widehat\rho
 +\frac12\delta h_2\wedge\omega^2
 -\frac12\dd\nu\wedge\omega^2
 \right].
\end{equation}
Substituting \eqref{eq:normal-transgression-variation} and integrating by
parts gives the Euler--Lagrange covector in each normal direction.

At leading order, the $b_1$ and $v$ variations are
\begin{equation}\label{eq:product-normal-balance}
 \left.\delta_{b_1}W\right|_{\rm product}
 =-\frac L8\int_X\delta b_1\wedge\dd(w\omega^2),
 \qquad
 \left.\delta_vW\right|_{\rm product}
 =-\frac L8\int_X\delta v\wedge\dd(w\omega^2).
\end{equation}
Thus both normal directions recover conformal balance. For the vertical
gauge variation $\delta a=\eta\tau$ one finds
\begin{equation}\label{eq:product-normal-gauge}
 \left.\delta_{a_0}W\right|_{\rm product}
 =\jmath_\epsilon\left(
 -\frac L{16}\int_Xw^{[0]}P(\tau,F_A)\wedge(\omega^2)^{[0]}
 +\frac L{32}\int_XP(a_1,\tau)\wedge\dd(w\omega^2)^{[0]}
 \right).
\end{equation}
On the balance locus, nondegeneracy of $P$ and injectivity of
$\jmath_\epsilon$ identify this equation with gauge primitivity.

The radius variation is controlled by
\begin{equation}\label{eq:normal-radius-identity}
 H\wedge\widehat\rho+\dd\omega\wedge\rho
 =T_h\wedge\widehat\rho-\Sigma_v.
\end{equation}
On a physical solution, Corollary~\ref{cor:leading-cy} makes the leading
geometry Calabi--Yau and the leading tangent and gauge curvatures
$SU(3)$-instantons. The algebraic curvature/transgression pairings with
$\widehat\rho^{[0]}$ and $(\omega^2)^{[0]}$ in the normal variation
therefore vanish by type and primitivity. The exact transgression terms
vanish after integration by parts because
\[
 \dd\Omega^{[0]}=0,
 \qquad
 \dd(w\omega^2)^{[0]}=0.
\]
Together with balance and
\eqref{eq:normal-radius-identity}, this makes every normal
Euler--Lagrange component vanish. Thus the product restriction is
compatible with the physical zero locus, although the normal equations
are not imposed by the characteristic quotient itself.

\section{Product BPS identities and representative dependence}
\label{app:product-bps}

This appendix collects the pointwise $SU(3)$ identities used in
Proposition~\ref{prop:product-bps-dictionary} and proves that the
remaining physical conditions are not constant along the fibres of the
final characteristic quotient.

\subsection{Product Hodge identities}

For a horizontal $k$-form $\alpha$,
\begin{equation}\label{eq:product-hodge}
 *_7\alpha=(-1)^k\eta\wedge *_6\alpha,
 \qquad
 *_7(\eta\wedge\alpha)=*_6\alpha.
\end{equation}
Since $\dd\eta=0$ and the fields are circle-invariant,
\[
 \dd(\alpha_h+\eta\wedge\alpha_v)
 =\dd_X\alpha_h-\eta\wedge\dd_X\alpha_v.
\]
With the conventions of Section~\ref{sec:product-reduction}, this gives
\[
 \dd\varphi=\dd\rho-\eta\wedge\dd\omega,
 \qquad
 \dd\psi=\omega\wedge\dd\omega+\eta\wedge\dd\widehat\rho.
\]
These formulas yield the horizontal and vertical residuals displayed in
\eqref{eq:product-bps-components}.

The determined Hull graph is horizontal on the product locus. Indeed,
\(
 \Theta^u=\Theta^{\rm LC}(g)+C_g(\dd_Xb_2)
\)
has no circle component, and the same is true of
$A=(\Theta^u)^{[0]}+a_1$. Hence
\begin{equation}\label{eq:product-horizontal-flux}
 H=\dd_Xb_2-\frac14\jmath_\epsilon
 \mathcal T_3((\Theta^u)^{[0]},a_1)
\end{equation}
is horizontal. The same argument applies to $\Theta_H=\Theta^{\rm LC}(g)+C_g(H)$.

Differentiating $\omega\wedge\rho=0$ gives
\[
 \Sigma_v=-2\dd\omega\wedge\rho
          =2\omega\wedge V,
\]
with $V$ as in the proof of Proposition~\ref{prop:product-bps-dictionary}.
Together with \eqref{eq:product-hodge}, this gives
\eqref{eq:product-volume-origin} and the relation between $\Sigma$ and
$T_v$.

For the horizontal torsion equation, decompose
\[
 \dd\omega=\beta+\lambda\wedge\omega,
 \qquad
 \omega\wedge\beta=0.
\]
On real three-forms the $SU(3)$-equivariant Hodge operator satisfies
\begin{equation}\label{eq:product-su3-hodge}
 *_6\beta=\mathcal J\beta,
 \qquad
 *_6(\lambda\wedge\omega)
 =-\mathcal J(\lambda\wedge\omega).
\end{equation}
These are the standard $SU(3)$-equivariant Hodge identities on the primitive and $\Lambda^1\wedge\omega$ summands.
Since
\[
 w^{-1}C_h=(\lambda-\dd\Phi)\wedge\omega^2,
\]
one has $b_\omega=\lambda-\dd\Phi$, and
\eqref{eq:product-su3-hodge} gives
\begin{equation}\label{eq:product-torsion-inverse-app}
 T_h=S_H+2\mathcal J(b_\omega\wedge\omega).
\end{equation}
On the integrable balanced locus, $T_h=0$ is therefore equivalent to
\[
 H=i(\partial-\bar\partial)\omega=-\dd^c\omega.
\]

For the gauge equation, a real two-form decomposes as
\[
 \Lambda^2_{\mathbb R}
 =\mathbb R\omega\oplus\Lambda^{1,1}_{0,\mathbb R}
  \oplus(\Lambda^{2,0}\oplus\Lambda^{0,2})_{\mathbb R}.
\]
The first two summands wedge trivially with $\widehat\rho$, while
\[
 F^{2,0}\wedge\widehat\rho
 =-\frac1{2i}F^{2,0}\wedge\overline\Psi,
 \qquad
 F^{0,2}\wedge\widehat\rho
 =\frac1{2i}F^{0,2}\wedge\Psi.
\]
The two volume-wedge maps are isomorphisms onto their targets. Thus, for
a real compact connection,
\[
 F_A\wedge\widehat\rho^{[0]}=0
 \quad\Longleftrightarrow\quad
 F_A^{0,2}=0,
\]
and the remaining component of the instanton equation is primitivity.
Finally, \eqref{eq:product-horizontal-flux} gives
$H^{[0]}=\dd_Xb_2^{[0]}$.

\subsection{Dependence on the representative}

In a flat complex chart $z^a=x^a+iy^a$, put
\[
 \omega_0=\dd x^1\wedge\dd y^1
          +\dd x^2\wedge\dd y^2
          +\dd x^3\wedge\dd y^3.
\]
The following variations fix the holomorphic data and change only the
real representatives.

\begin{proposition}\label{prop:nonbasic-bps}
The leading-flux, conformal-balance and Hermitian Yang--Mills metric
residuals are not constant along the fibres of the final characteristic
quotient.
\end{proposition}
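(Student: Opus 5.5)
The plan is to exhibit, for each of the three residuals, an explicit one-parameter family of real representatives that stays inside a single characteristic fibre of $\pi:C\to\mathcal M_N$ while the residual changes. The fibre directions are $\ker D\pi=\langle\partial_z\rangle$ with $z=(r_H,m,r_B)$. Equivalently, I will vary the real data in ways that leave the holomorphic lifting data $(\mu,\alpha,x,b,\mathbf u)$ fixed. I work in the flat chart with reference Kähler form $\omega_0$ and trivial background connections, and I use local compactly supported perturbations so that the argument applies on any $X$.

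\textbf{Leading flux.} I vary the real gerbe one-form $m$ by $m\mapsto m+\delta m$. By \eqref{eq:product-polar-gerbe} this changes $\widehat{\mathcal B}_R$ by $-\tfrac i2\dd\delta m$, hence changes the real two-form $B=\operatorname{Re}V^{1,1}+V^{0,2}+\overline{V^{0,2}}$ by a real exact term. I will choose $\delta m$ so that this term lies in the real $(2,0)+(0,2)$ part, for example $\delta m=f(x)\,\dd x^1$ with a suitable complex phase absorbed. The holomorphic coordinates, extracted by \eqref{eq:product-field-chart} modulo $\mathcal I_\mu^2$, then change only by a $\bar\partial$-trivial amount that is compensated within the fibre. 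However, $d_Xb_2^{[0]}$ changes by $\dd\,\delta B^{[0]}$, which is nonzero for generic $f$. By Proposition~\ref{prop:product-bps-dictionary}, $H^{[0]}=d_Xb_2^{[0]}$, so the leading-flux residual is not constant along the fibre.

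\textbf{Conformal balance.} I vary the Hermitian form inside its class while fixing $\Omega$ and $J$. Take $\omega_s=\omega_0+s\,i\partial\bar\partial\phi$ with $\phi$ compactly supported, and change $\operatorname{Im}V^{1,1}$ accordingly. This is absorbed by $\operatorname{Re}V$ and the metric-arrow coordinate $r_B$ through \eqref{eq:product-unit-pairs}, so the holomorphic data is unchanged. I will also keep $w$ fixed, because $\|\Omega\|^2=8e^{-4\Phi}$ computed with the new metric changes $\Phi$. I will then compute
\[
\frac{\dd}{\dd s}\Big|_{s=0}\dd(w\omega_s^2)=2\,\dd\big(w\,\omega_0\wedge i\partial\bar\partial\phi\big)+\dd(\dot w\,\omega_0^2).
\]
At leading order the first term is closed, so the nonzero contribution is $\dd(\dot w)\wedge\omega_0^2$ with $\dot w\propto\Delta\phi$. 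The map $A_\omega$ is injective, so this is nonzero whenever $\dd\Delta\phi\ne0$.

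\textbf{Hermitian Yang--Mills.} I vary the Hermitian reduction $r_H$ along $r_H=e^{s\sigma}$ with $\sigma$ Hermitian and compactly supported, keeping the partial connection $\alpha$ fixed. By \eqref{eq:product-compatible-connection}, the compatible connection changes by $\bar\partial$- and $\partial$-terms in $\sigma$. The first variation of the primitivity residual is then
\[
\frac{\dd}{\dd s}\Big|_{s=0}F_A\wedge\omega_0^2=c\,\Delta_{\bar\partial}\sigma\,\omega_0^3
\]
for some nonzero constant $c$, which is nonzero for nonconstant $\sigma$.

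\textbf{Main obstacle.} The hard step is verifying that each variation is genuinely tangent to $\ker D\pi$ in the final coordinates after the maps $\mathscr T$. In particular, the Moser and canonical corrections $\Phi,\Psi$ are only first order in $\epsilon$. I will handle this by working at leading order, where those corrections act as the identity, and by using the explicit inverse formulas of Appendix~\ref{app:canonical-maps}.
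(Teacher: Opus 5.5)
Your overall strategy matches the paper's: hold the holomorphic data $q$ fixed, move only the representative coordinates $z=(r_H,m,r_B)$ in a flat chart, and use cutoffs. Your balance and Hermitian Yang--Mills computations agree with the paper's. Its $m=-2F(x^1)\,\dd y^1$ is $m=-\dd^c\phi$ with $\phi''=2f$, i.e.\ $\omega_t=\omega_0+t\,i\partial\bar\partial\phi$, and its $r_H=1+t(x^1)^2iZ$ is a special case of your $\sigma$.

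The leading-flux step, however, has a genuine gap. Moving $m$ does not change $B$ by an exact form. The reconstruction $B=\operatorname{Re}V^{1,1}+V^{0,2}+\overline{V^{0,2}}$, $\omega=\operatorname{Im}V^{1,1}$ replaces the $(2,0)$-part of the shift $-\frac i2\dd m$ by the conjugate of its $(0,2)$-part. This gives $\delta B=\frac i2\{(\dd m)^{2,0}-(\dd m)^{0,2}\}$ and $\delta\omega=-\frac12(\dd m)^{1,1}$. If $\delta B$ were exact, as you assert, then $\dd\,\delta B^{[0]}=0$ and $H^{[0]}$ would not move, so your ``however'' sentence contradicts your own premise. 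Worse, splitting $\dd(\dd m)=0$ into types gives $\dd\,\delta B=\frac i2(\bar\partial-\partial)(\dd m)^{1,1}=-\dd^c\delta\omega$. Your prescription of choosing $\delta m$ so that the change has pure type $(2,0)+(0,2)$ is therefore exactly the case $(\dd m)^{1,1}=0$, in which the leading flux does \emph{not} change. The flux moves only through the $(1,1)$-part of $\dd m$, so you need $\dd^c(\dd m)^{1,1}\neq0$. The paper's choice $m=(x^1)^2\dd x^2$ works and gives $\dd\,\delta B=-\frac12\dd x^1\wedge\dd y^1\wedge\dd x^2$. Since $m$ is real, there is also no ``complex phase'' to absorb.

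There are further corrections. The characteristic fibre is $\ker D\pi=\langle\partial_z\rangle$ at fixed $q$. There is no room for a ``$\bar\partial$-trivial change of the holomorphic coordinates compensated within the fibre'': gauge directions are not characteristic, and any change of $q$ leaves the fibre.

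In the balance step, the shift $\delta\omega=i\partial\bar\partial\phi$ is not absorbed by $r_B$ or $\operatorname{Re}V$. The scalar $r_B$ can at most shift $m$ by an exact one-form, which leaves $\dd m$, hence $(B,\omega)$, unchanged. Changing $\operatorname{Re}V$ at fixed $m$ changes the holomorphic class. The correct direction is the $m$-direction with $m=-\dd^c\phi$; then $(\dd m)^{2,0}=0$, so $\delta B=0$ and $\delta\omega=i\partial\bar\partial\phi$. You also keep $\Omega$, not $w$, fixed, as your own $\dot w\propto\Delta\phi$ requires.

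In the gauge step, take $\sigma$ with values in $i\,\mathfrak{so}(7)$ so that $r_H^{\mathsf T}=r_H^{-1}$. With $A^{0,1}$ fixed, only $\delta A^{1,0}=-\partial\sigma$ appears, and $\delta F\wedge\omega_0^2\propto\Delta\sigma\,\vol_6$, as you state.
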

\begin{proof}
For a real one-form $m$, the gerbe representative map changes the real
two-form by
\[
 \delta B_R=\frac i2\{(\dd m)^{2,0}-(\dd m)^{0,2}\}.
\]
Taking $m=(x^1)^2\dd x^2$ gives
\[
 \delta B_R=-\frac{x^1}{2}
 (\dd x^1\wedge\dd y^2+\dd y^1\wedge\dd x^2),
 \qquad
 \dd\delta B_R=-\frac12\dd x^1\wedge\dd y^1\wedge\dd x^2\neq0.
\]
Since $H^{[0]}=\dd B^{[0]}$, the leading-flux residual changes.

For balance, consider
\[
 \omega_t=(1+t f(x^1))\dd x^1\wedge\dd y^1
          +\dd x^2\wedge\dd y^2
          +\dd x^3\wedge\dd y^3.
\]
Choose a local primitive $F$ with $F'(x^1)=f(x^1)$ and take the real
gerbe representative one-form
\[
 m=-2F(x^1)\dd y^1.
\]
The transformation \eqref{eq:product-polar-gerbe} gives
\[
 \delta\omega=-\tfrac12(\dd m)^{1,1}
 =f(x^1)\dd x^1\wedge\dd y^1.
\]
This produces the displayed $\omega_t$ to first order while leaving
the holomorphic quotient data fixed.
At fixed $\Omega$, volume normalization gives
$w_t=(1+tf)^{-1/2}$, and hence
\[
 \dd(w_t\omega_t^2)
 =-t f'(x^1)(1+tf)^{-3/2}
   \dd x^1\wedge\dd x^2\wedge\dd y^2
   \wedge\dd x^3\wedge\dd y^3,
\]
which is nonzero when $f'$ is nonzero.

For the gauge equation, take $A^{0,1}=0$ and a nonzero
$Z\in\mathfrak{so}(7)$. With $h=iZ$, the Hermitian reduction
\[
 r_H=1+t(x^1)^2h+O(t^2)
\]
has compatible-connection variation
$\delta A^{1,0}=-x^1\dd z^1h$. Therefore
\[
 \delta F=Z\,\dd x^1\wedge\dd y^1,
 \qquad
 \delta F\wedge\omega_0^2=2Z\,\vol_6\neq0,
\]
while the partial connection and holomorphic bundle are unchanged.
These local variations may be multiplied by cutoff functions, so the
same tests apply on a closed background.
\end{proof}

\bibliographystyle{amsplain}
\bibliography{references}

@article{FiniteDeformations,
  author = {Ashmore, Anthony and {de la Ossa}, Xenia and Minasian, Ruben and Strickland-Constable, Charles and Svanes, Eirik Eik},
  title = {Finite deformations from a heterotic superpotential: holomorphic {Chern--Simons} and an {$L_\infty$} algebra},
  journal = {J. High Energy Phys.},
  volume = {10},
  year = {2018},
  pages = {179},
  doi = {10.1007/JHEP10(2018)179},
  eprint = {1806.08367},
  archivePrefix = {arXiv},
  note = {\href{https://doi.org/10.1007/JHEP10(2018)179}{doi:10.1007/JHEP10(2018)179}, \href{https://arxiv.org/abs/1806.08367}{arXiv:1806.08367}}
}

@article{DLSHeterotic,
  author = {{de la Ossa}, Xenia and Larfors, Magdalena and Svanes, Eirik Eik},
  title = {The infinitesimal moduli space of heterotic {$G_2$} systems},
  journal = {Commun. Math. Phys.},
  volume = {360},
  number = {2},
  year = {2018},
  pages = {727--775},
  doi = {10.1007/s00220-017-3013-8},
  eprint = {1704.08717},
  archivePrefix = {arXiv},
  note = {\href{https://doi.org/10.1007/s00220-017-3013-8}{doi:10.1007/s00220-017-3013-8},
          \href{https://arxiv.org/abs/1704.08717}{arXiv:1704.08717}}
}

@article{MSS,
  author = {McOrist, Jock and Sticka, Martin and Svanes, Eirik Eik},
  title = {The physical moduli of heterotic {$G_2$} string compactifications},
  journal = {J. High Energy Phys.},
  volume = {05},
  year = {2025},
  pages = {219},
  doi = {10.1007/JHEP05(2025)219},
  eprint = {2409.13080},
  archivePrefix = {arXiv},
  note = {\href{https://doi.org/10.1007/JHEP05(2025)219}{doi:10.1007/JHEP05(2025)219},
          \href{https://arxiv.org/abs/2409.13080}{arXiv:2409.13080}}
}

@article{BPS,
  author = {Kupka, Julian and Strickland-Constable, Charles and Svanes, Eirik Eik and Tennyson, David and Valach, Fridrich},
  title = {{BPS} complexes and {Chern--Simons} theories from {$G$}-structures in gauge theory and gravity},
  journal = {J. High Energy Phys.},
  volume = {10},
  year = {2025},
  pages = {192},
  doi = {10.1007/JHEP10(2025)192},
  eprint = {2406.03550},
  archivePrefix = {arXiv},
  note = {\href{https://doi.org/10.1007/JHEP10(2025)192}{doi:10.1007/JHEP10(2025)192},
          \href{https://arxiv.org/abs/2406.03550}{arXiv:2406.03550}}
}

@article{G2Superpotential,
  author = {{de la Ossa}, Xenia and Larfors, Magdalena and Magill, Matthew and Svanes, Eirik Eik},
  title = {Superpotential of three dimensional {$\mathcal N=1$} heterotic supergravity},
  journal = {J. High Energy Phys.},
  volume = {01},
  year = {2020},
  pages = {195},
  doi = {10.1007/JHEP01(2020)195},
  eprint = {1904.01027},
  archivePrefix = {arXiv},
  note = {\href{https://doi.org/10.1007/JHEP01(2020)195}{doi:10.1007/JHEP01(2020)195}, \href{https://arxiv.org/abs/1904.01027}{arXiv:1904.01027}}
}

@article{MSSMetric,
  author = {McOrist, Jock and Sticka, Martin and Svanes, Eirik Eik},
  title = {The heterotic {$G_2$} moduli space metric},
  journal = {J. High Energy Phys.},
  volume = {11},
  year = {2025},
  pages = {016},
  doi = {10.1007/JHEP11(2025)016},
  eprint = {2502.16093},
  archivePrefix = {arXiv},
  note = {\href{https://doi.org/10.1007/JHEP11(2025)016}{doi:10.1007/JHEP11(2025)016}, \href{https://arxiv.org/abs/2502.16093}{arXiv:2502.16093}}
}

@incollection{QuantumG2,
  author = {{de la Ossa}, Xenia and Larfors, Magdalena and Magill, Matthew and Svanes, Eirik Eik},
  title = {Quantum aspects of heterotic {$G_2$} systems},
  booktitle = {2024 MATRIX Annals, Part II},
  publisher = {Springer},
  series = {MATRIX Book Series},
  volume = {8},
  year = {2026},
  pages = {253--293},
  doi = {10.1007/978-3-032-16206-9_16},
  eprint = {2412.14715},
  archivePrefix = {arXiv},
  note = {\href{https://doi.org/10.1007/978-3-032-16206-9_16}{doi:10.1007/978-3-032-16206-9\_16}, \href{https://arxiv.org/abs/2412.14715}{arXiv:2412.14715}}
}

@article{GFDirac,
  author = {Garcia-Fernandez, Mario},
  title = {Ricci flow, {Killing} spinors, and {T}-duality in generalized geometry},
  journal = {Adv. Math.},
  volume = {350},
  year = {2019},
  doi = {10.1016/j.aim.2019.04.038},
  pages = {1059--1108},
  eprint = {1611.08926},
  archivePrefix = {arXiv},
  note = {\href{https://arxiv.org/abs/1611.08926}{arXiv:1611.08926}}
}

@article{StringAlgebroids,
  author = {Garcia-Fernandez, Mario and Rubio, Roberto and Tipler, Carl},
  title = {Holomorphic string algebroids},
  journal = {Trans. Amer. Math. Soc.},
  volume = {373},
  year = {2020},
  number = {10},
  doi = {10.1090/tran/8149},
  pages = {7347--7382},
  eprint = {1807.10329},
  archivePrefix = {arXiv},
  note = {\href{https://arxiv.org/abs/1807.10329}{arXiv:1807.10329}}
}

@article{CourantLie2,
  author = {Roytenberg, Dmitry and Weinstein, Alan},
  title = {Courant algebroids and strongly homotopy {Lie} algebras},
  doi = {10.1023/A:1007452512084},
  journal = {Lett. Math. Phys.},
  volume = {46},
  year = {1998},
  pages = {81--93},
  eprint = {math/9802118},
  archivePrefix = {arXiv},
  note = {\href{https://arxiv.org/abs/math/9802118}{arXiv:math/9802118}}
}

@article{CanonicalHermitian,
  author = {Tosatti, Valentino and Weinkove, Ben and Yau, Shing-Tung},
  title = {Taming symplectic forms and the {Calabi--Yau} equation},
  journal = {Proc. London Math. Soc. (3)},
  volume = {97},
  number = {2},
  year = {2008},
  eprint = {math/0703773},
  archivePrefix = {arXiv},
  pages = {401--424},
  doi = {10.1112/plms/pdn008},
  note = {\href{https://doi.org/10.1112/plms/pdn008}{doi:10.1112/plms/pdn008}; \href{https://arxiv.org/abs/math/0703773}{arXiv:math/0703773}}
}

@article{BBH,
  author = {Barnich, G. and Brandt, F. and Henneaux, M.},
  title = {Local {BRST} cohomology in the antifield formalism: {I}. {General} theorems},
  journal = {Commun. Math. Phys.},
  volume = {174},
  year = {1995},
  pages = {57--92},
  doi = {10.1007/BF02099464},
  eprint = {hep-th/9405109},
  archivePrefix = {arXiv},
  note = {\href{https://doi.org/10.1007/BF02099464}{doi:10.1007/BF02099464}; \href{https://arxiv.org/abs/hep-th/9405109}{arXiv:hep-th/9405109}}
}

@article{GrigorievParent,
  author = {Grigoriev, Maxim},
  title = {Parent formulations, frame-like {Lagrangians}, and generalized auxiliary fields},
  journal = {J. High Energy Phys.},
  volume = {12},
  year = {2012},
  pages = {048},
  doi = {10.1007/JHEP12(2012)048},
  eprint = {1204.1793},
  archivePrefix = {arXiv},
  note = {\href{https://doi.org/10.1007/JHEP12(2012)048}{doi:10.1007/JHEP12(2012)048}; \href{https://arxiv.org/abs/1204.1793}{arXiv:1204.1793}}
}

@article{GrigorievRudinsky,
  author = {Grigoriev, Maxim and Rudinsky, Dmitry},
  title = {Notes on the {$L_\infty$}-approach to local gauge field theories},
  journal = {J. Geom. Phys.},
  volume = {190},
  year = {2023},
  pages = {104863},
  doi = {10.1016/j.geomphys.2023.104863},
  eprint = {2303.08990},
  archivePrefix = {arXiv},
  note = {\href{https://doi.org/10.1016/j.geomphys.2023.104863}{doi:10.1016/j.geomphys.2023.104863}; \href{https://arxiv.org/abs/2303.08990}{arXiv:2303.08990}}
}

@misc{BrongersPaperI,
 author = {Brongers, Bram},
 title = {Derived deformation theory of heterotic {$G_2$} systems near the standard embedding},
 year = {2026}, eprint = {2609.35071}, archivePrefix = {arXiv},
 note = {\href{https://arxiv.org/abs/2609.35071}{arXiv:2609.35071}}
}

@misc{DLSRestrictions,
 author = {{de la Ossa}, Xenia and Larfors, Magdalena and Svanes, Eirik Eik},
 title = {Restrictions of Heterotic {$G_2$} Structures and Instanton Connections},
 year = {2017}, eprint = {1709.06974}, archivePrefix = {arXiv},
 note = {\href{https://arxiv.org/abs/1709.06974}{arXiv:1709.06974}}
}

@article{GaugeStringAlgebroids,
 author = {Garcia-Fernandez, Mario and Rubio, Roberto and Tipler, Carl},
 title = {Gauge theory for string algebroids},
 journal = {J. Differential Geom.},
 volume = {128}, number = {1}, year = {2024}, pages = {77--152},
 doi = {10.4310/jdg/1721075260}, eprint = {2004.11399}, archivePrefix = {arXiv},
 note = {\href{https://doi.org/10.4310/jdg/1721075260}{doi:10.4310/jdg/1721075260}, \href{https://arxiv.org/abs/2004.11399}{arXiv:2004.11399}}
}

@article{BCGReduction,
 author = {Bursztyn, Henrique and Cavalcanti, Gil R. and Gualtieri, Marco},
 title = {Reduction of {Courant} algebroids and generalized complex structures},
 journal = {Adv. Math.}, volume = {211}, number = {2}, year = {2007}, pages = {726--765},
 doi = {10.1016/j.aim.2006.09.008}, eprint = {math/0509640}, archivePrefix = {arXiv},
 note = {\href{https://doi.org/10.1016/j.aim.2006.09.008}{doi:10.1016/j.aim.2006.09.008}, \href{https://arxiv.org/abs/math/0509640}{arXiv:math/0509640}}
}
\end{document}